\newcommand{\N}{\mathbb{N}}
\newcommand{\R}{\mathbb{R}}
\newcommand{\K}{\mathbb{K}}
\newcommand{\U}{\mathfrak{U}}
\newcommand{\B}{\mathfrak{B}}
\newtheorem{prop}{Proposition}[section]
\newtheorem{cor}[prop]{Corollary}
\newtheorem{teo}[prop]{Theorem}
\newtheorem{lema}[prop]{Lemma}
\theoremstyle{definition}
\newtheorem*{ex}{Example}
\newtheorem{defi}[prop]{Definition}
\newtheorem*{obs}{Remark}
\newtheorem*{acknowledgements}{Acknowledgements}
\title{Glueing spaces without identifying points}
\author{Lucas H. R. de Souza}
\begin{document}

\maketitle

\def\eod{\hfill$\square$}

\begin{abstract}\, \,  In this paper we develop the theory of Artin-Wraith glueings for topological spaces. As an application, we show that some categories of compactifications of coarse spaces that agree with the coarse structures are invariant under coarse equivalences. As a consequence, if X and Y are some well behaved metric spaces that are coarse equivalent, then they have the same space of ends (generalizing the well known fact that works on quasi-isometric proper geodesic metric spaces). As another application, we show that for every compact metrizable space $Y$, there exists only one, up to homeomorphisms, compactification of the Cantor set minus one point such that the remainder is homeomorphic to $Y$.
\end{abstract}

\let\thefootnote\relax\footnote{Mathematics Subject Classification (2010). Primary: 54D35, 54E45; Secondary: 20F65, 57M07.}
\let\thefootnote\relax\footnote{Keywords: Artin-Wraith glueing, compactification, space of ends, Cantor set minus one point, coarse spaces.}

\tableofcontents

\section*{Introduction}

Let $X$ and $Y$ be topological spaces and maps $f: Closed(X) \rightarrow Closed(Y)$ and $g: Closed(Y) \rightarrow Closed(X)$ that preserves the empty set and finite unions, $\forall A \in Closed(X)$, $g \circ f (A) \subseteq A$ and $\forall B \in Closed(Y)$, $f \circ g (B) \subseteq B$. From $f$ and $g$, we construct a topological space $X+_{f,g}Y$ where the set is $X\dot{\cup} Y$ and it extends both topologies. Reciprocally, if a space $Z$ is the union of two disjoint subspaces $X$ and $Y$, then there exists a unique pair of maps $f$ and $g$ with those properties such that $Z = X+_{f,g}Y$.

This construction is called Artin-Wraith glueing. This appears in Topos Theory in a completely analogous way. It appears in \cite{KP} in its full generality. In the topological case that we are interested in, the special case where $g$ is a constant map equal to the empty set is equivalent to say that the space $X$ is open in $X+_{f,g}Y$. It turns out to be a convenient tool to work with compactifications of locally compact spaces. This construction is used in \cite{Ge2} in the proof of the existence of the Attractor-Sum compactification of a group that acts on a Hausdorff compact space with the convergence property (i.e. a compactification of the group with the boundary equal to the space that the group acts on and such that the induced action still has the convergence property).

Our first objective with this paper is to develop the theory Artin-Wraith glueing for the context of topological spaces. Due to a lack of knowledge on Topos Theory from the author, it is not clear which propositions on the sections $3$, $4$ and $5$ are already proved in the context of Topos Theory. For the sake of completeness, we are proving all propositions anyway. We used that construction on two preprints \cite{So} and \cite{So3} of Geometric Group Theory to do a correspondence theory of perspective compactifications on the first one and to blow up bounded parabolic points on perspective compactifications on the second one. These tools developed in the present paper are quite useful to simplify constructions of some topological spaces and to give quite simple proofs of continuity of some maps. Actually, let's consider a map $X+_{f,g}Y \rightarrow Z+_{h,j}W$ that sends $X$ to $Z$ and $Y$ to $W$. If it is continuous when restricted to $X$ and when restricted to $Y$, then the continuity of the whole map is equivalent to a diagram problem (\textbf{Proposition \ref{mapacontinuo}}).

Our second objective with this paper is to give some applications of this theory of Artin-Wraith glueings to compactifications of Hausdorff locally compact spaces. From a proper map $X \rightarrow Y$, where $X$ and $Y$ are Hausdorff locally compact spaces, we are able to transfer functorialy a compactification of $Y$ to a compactification of $X$, preserving remainders (\textbf{Proposition \ref{pullbackdecompactificacoes}}). This has four major consequences:

\begin{enumerate}
    \item If $(X,\varepsilon)$ and $(Y,\zeta)$ are locally compact paracompact Hausdorff spaces with suitable coarse structures (in the sense of John Roe's book \cite{Ro}) and if they are coarse equivalent, then there exists a correspondence between the metrizable compactifications of $X$ that agrees with $\varepsilon$ and metrizable compactifications of $Y$ that agrees with $\zeta$ (Theorem \ref{Teorema2}). Furthermore, if such coarse equivalence is continuous with continuous quasi-inverse, then there exists a correspondence between the compactifications of $X$ that agrees with $\varepsilon$ and compactifications of $Y$ that agrees with $\zeta$ (Theorem \ref{Teorema1}). The same construction was used by Guilbault and Moran \cite{GM} on their Boundary Swapping Theorem to $E\mathcal{Z}$-structures of groups and ours is a direct generalization. In \cite{So} we also have a similar construction for groups that generalizes a construction from Gerasimov's Attractor-Sum Theorem \cite{Ge2}. In the future we intend to unify those two constructions.
    \item If $(X, d)$ and $(Y,d')$ are proper metric spaces that are locally connected and perspectively connected (see \textbf{Definition \ref{perspectivelyconnected}}) and are coarse equivalent, then their spaces of ends are homeomorphic (\textbf{Proposition \ref{coarseequivimpliesendshomeo}}). This result is well known for proper geodesic spaces that are quasi-isometric (Proposition 8.29 of \cite{BH}).
    \item If $X$ is the coproduct of the Hausdorff compact spaces $\{C_{i}\}_{i\in \Gamma}$, then the category of compactifications of $\Gamma$ (with the discrete topology) is isomorphic with a full subcategory of the compactifications of $X$ (\textbf{Theorem \ref{teoremaprincipal}}).
    \item If $Z$ is a compact metrizable space, then there exists, up to homeomorphisms, a unique compactification of the Cantor set minus one point such that the remainder is homeomorphic to $Z$ (\textbf{Theorem \ref{Cantor}}). This is an analogue to a theorem of Pelczy\'nski that does the same thing for compactifications of the natural numbers with the discrete topology (p. 87 of \cite{Pe}).
\end{enumerate}

\begin{acknowledgements}This paper contains part of my Masters' thesis and my PhD thesis. Both 
thesis were written under the advisorship of Victor Gerasimov, to whom I am
grateful for our several discussions and lots of things that I’ve learned during
this meantime.

I would like to thank Peter Faul who pointed out to me the existence of Artin-Wraith glueings.

\end{acknowledgements}

\section{Preliminaries}

This section contains some notation and well known results that are used through this paper.

We use the symbol $\Box$, besides its usual purpose, at the end of a proposition, lemma or theorem to say that its proof follows immediately from the previous considerations.

\begin{prop}(RAPL - Right Adjoints Preserves Limits, Proposition 3.2.2 of \cite{Bo}) Let $F: \mathcal{C} \rightarrow \mathcal{D}$ and $G: \mathcal{D} \rightarrow \mathcal{C}$ be two functors with $G$ adjoint to $F$. If $H: \mathcal{E} \rightarrow \mathcal{D}$ is a functor that possesses a limit, then $\lim\limits_{\longleftarrow} (G\circ H)$ exists and it is equal to $G(\lim\limits_{\longleftarrow} H)$.
\end{prop}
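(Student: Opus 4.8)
The plan is to verify directly that $G$ carries a limiting cone of $H$ to a limiting cone of $G \circ H$, transposing cones across the adjunction. Write $F \dashv G$ for the adjunction (so $G$ is the right adjoint named in the statement), and let $\Phi$ denote the natural bijection $\mathrm{Hom}_{\mathcal{C}}(C, G(D)) \to \mathrm{Hom}_{\mathcal{D}}(F(C), D)$, natural in $C$ and $D$. Set $L = \lim\limits_{\longleftarrow} H$ with its limiting cone $(p_{e}: L \to H(e))_{e \in \mathcal{E}}$. Since $G$ is a functor, $(G(p_{e}): G(L) \to G(H(e)))_{e}$ is a cone over $G \circ H$ with apex $G(L)$; the whole content of the proposition is that this cone is limiting, which in particular gives both the existence of $\lim\limits_{\longleftarrow}(G \circ H)$ and its identification with $G(L)$.

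To prove universality I would start from an arbitrary cone $(f_{e}: C \to G(H(e)))_{e}$ over $G \circ H$ and apply $\Phi$ to each leg, obtaining $\hat{f}_{e} = \Phi(f_{e}): F(C) \to H(e)$. The first key step is to check that $(\hat{f}_{e})_{e}$ is a cone over $H$: for $\alpha: e \to e'$ in $\mathcal{E}$ one needs $H(\alpha) \circ \hat{f}_{e} = \hat{f}_{e'}$, which follows from the cone identity $G(H(\alpha)) \circ f_{e} = f_{e'}$ by applying the naturality square of $\Phi$ in the $\mathcal{D}$-variable to the morphism $H(\alpha)$. The universal property of $L$ then yields a unique $u: F(C) \to L$ with $p_{e} \circ u = \hat{f}_{e}$ for all $e$. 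Transposing back, $\tilde{u} = \Phi^{-1}(u): C \to G(L)$, and naturality of $\Phi$ gives $\Phi(G(p_{e}) \circ \tilde{u}) = p_{e} \circ u = \hat{f}_{e} = \Phi(f_{e})$, so $G(p_{e}) \circ \tilde{u} = f_{e}$ because $\Phi$ is injective. Uniqueness of the mediating morphism transports through $\Phi$ in the same way: any competitor $v$ has $\Phi(v)$ satisfying the defining equations of $u$, forcing $\Phi(v) = u$ and hence $v = \tilde{u}$.

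An equivalent and perhaps cleaner route, which I would mention, is purely representable: for every $C \in \mathcal{C}$ one has the chain of isomorphisms
\[
\mathrm{Hom}_{\mathcal{C}}(C, G(L)) \cong \mathrm{Hom}_{\mathcal{D}}(F(C), L) \cong \lim\limits_{\longleftarrow}\, \mathrm{Hom}_{\mathcal{D}}(F(C), H(-)) \cong \lim\limits_{\longleftarrow}\, \mathrm{Hom}_{\mathcal{C}}(C, (G\circ H)(-)),
\]
natural in $C$, where the outer isomorphisms are the adjunction and the middle one uses that the representable functor $\mathrm{Hom}_{\mathcal{D}}(F(C), -)$ preserves limits; Yoneda then identifies $G(L)$ with $\lim\limits_{\longleftarrow}(G \circ H)$. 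I expect the only genuine subtlety in either version to be the bookkeeping with naturality of $\Phi$ — covariant in the $\mathcal{D}$-argument when turning the cone condition into a cone over $H$, and in the $\mathcal{C}$-argument when verifying the factorization — since everything beyond that is a formal consequence of the universal property of $L$. The remaining verifications (that $G$ applied to a cone is a cone, and that limits in $\mathbf{Set}$ compute as cones) are routine.
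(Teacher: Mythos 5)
Your proof is correct. There is nothing in the paper to compare it against: the paper does not prove this proposition, but states it as a known preliminary and cites Proposition 3.2.2 of \cite{Bo} in place of a proof. Your cone-transposition argument (and the representable/Yoneda variant you sketch) is the standard textbook proof --- essentially the one found in the cited reference --- and your handling of the two naturality directions of $\Phi$, which is the only delicate point, is accurate.
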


\subsection{Uniform structures}

\begin{defi}Let $X$ be a set, $Y \subseteq X$ and $u \subseteq X \times X$. We define the $u$-neighborhood of $Y$ by $\mathfrak{B}(Y,u) = \{x \in X: \exists y \in Y: \ (x,y)\in u\}$. We say that $Y$ is $u$-small if $Y\times Y \subseteq u$ and we denote by $Small(u)$ the set of all subsets of $X$ that are $u$-small.
\end{defi}

\begin{prop}\label{small}Let $f:(X_{1},\U_{1}) \rightarrow (X_{2},\U_{2})$ be a uniformly continuous map, $u \in \U_{2}$ and $Y \subseteq X_{2}$. If $Y \in Small(u)$, then $f^{-1}(Y) \in Small((f^{2})^{-1}(u))$. If $f$ is surjective, then the converse is also true.\eod
\end{prop}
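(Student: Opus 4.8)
The plan is to unwind both definitions and thereby reduce the statement to an elementary containment of Cartesian products. Recall that $Y \in Small(u)$ means precisely $Y \times Y \subseteq u$, and that $(f^{2})^{-1}(u) = \{(x,x') \in X_{1} \times X_{1} : (f(x),f(x')) \in u\}$, where $f^{2} = f \times f$. Thus the assertion $f^{-1}(Y) \in Small((f^{2})^{-1}(u))$ means exactly $f^{-1}(Y) \times f^{-1}(Y) \subseteq (f^{2})^{-1}(u)$. The uniform continuity of $f$ enters only to guarantee that $(f^{2})^{-1}(u)$ is a genuine entourage of $\U_{1}$, so that the smallness conclusion is even meaningful; the inclusions themselves are purely set-theoretic.

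For the forward implication I would take an arbitrary pair $(x,x') \in f^{-1}(Y) \times f^{-1}(Y)$, so that $f(x),f(x') \in Y$, and observe that $(f(x),f(x')) \in Y \times Y \subseteq u$ by hypothesis. By definition of the preimage this says exactly $(x,x') \in (f^{2})^{-1}(u)$, which gives the desired inclusion. Note that this direction uses neither surjectivity nor uniform continuity beyond the well-definedness remark above.

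For the converse, assuming $f$ surjective, I would start from a pair $(y,y') \in Y \times Y$ and use surjectivity to choose preimages $x,x'$ with $f(x)=y$ and $f(x')=y'$. Since $y,y' \in Y$, these preimages lie in $f^{-1}(Y)$, so $(x,x') \in f^{-1}(Y) \times f^{-1}(Y) \subseteq (f^{2})^{-1}(u)$ by hypothesis, and this unwinds to $(y,y') = (f(x),f(x')) \in u$. Hence $Y \times Y \subseteq u$, that is, $Y \in Small(u)$.

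The only point requiring any care---hence the sole ``obstacle''---is the role of surjectivity in the converse: without it $f^{-1}(Y)$ need not map onto $Y$ (it could even be empty while $Y$ is large), so one cannot reconstruct every pair of $Y$ from pairs in $f^{-1}(Y)$, and the converse genuinely fails. Everything else is a direct chase through the definitions, which is exactly why the statement is flagged as following immediately from the preceding considerations.
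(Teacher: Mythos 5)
Your proof is correct and is exactly the direct definitional unwinding that the paper treats as immediate (the proposition is stated with the $\square$ symbol, meaning no proof is given beyond "follows from the previous considerations"). Both directions, including the observation that surjectivity is genuinely needed for the converse and that uniform continuity only ensures $(f^{2})^{-1}(u)\in\U_{1}$, match what the paper implicitly relies on.
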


\begin{prop}(Proposition 10, $\S2.7$, Chapter 2 of \cite{Bou}) Let $\Gamma$ be a directed set, $\{(X_{\alpha},\U_{\alpha}), f_{\alpha_{1}\alpha_{2}}\}_{\alpha,\alpha_{1},\alpha_{2} \in \Gamma}$ an inverse system of uniform spaces, $B_{\alpha}$ a basis for $\U_{\alpha}, \ (X,\U) = \lim\limits_{\longleftarrow} X_{\alpha}$ and $\pi_{\alpha}: X \rightarrow X_{\alpha}$ the projection maps. Then, the set $\{(\pi_{\alpha}^{2})^{-1}(b): \alpha \in \Gamma, \ b \in B_{\alpha}\}$ is a basis for $\U$.
\end{prop}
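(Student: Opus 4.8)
The plan is to use the fact that $\U$, being the uniformity carried by the inverse limit, is by construction the initial (coarsest) uniformity on $X$ making every projection $\pi_{\alpha}$ uniformly continuous. For such an initial uniformity there is a standard description of a basis: the finite intersections $\bigcap_{i=1}^{n}(\pi_{\alpha_{i}}^{2})^{-1}(u_{i})$, with $\alpha_{i}\in\Gamma$ and $u_{i}\in\U_{\alpha_{i}}$, form a basis for $\U$. Since each $B_{\alpha_{i}}$ is a basis for $\U_{\alpha_{i}}$, I may moreover take $u_{i}\in B_{\alpha_{i}}$. Thus the whole proposition reduces to two points: that each set $(\pi_{\alpha}^{2})^{-1}(b)$ with $b\in B_{\alpha}$ belongs to $\U$, which is immediate from the uniform continuity of $\pi_{\alpha}$; and that every finite intersection of the above form contains a single set $(\pi_{\alpha}^{2})^{-1}(b)$ with $\alpha\in\Gamma$ and $b\in B_{\alpha}$.

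The main step establishes the second point by exploiting directedness. Given indices $\alpha_{1},\dots,\alpha_{n}$, I first choose, by directedness of $\Gamma$, an upper bound $\alpha$ with $\alpha\geq\alpha_{i}$ for all $i$. The compatibility relations of the inverse system give $\pi_{\alpha_{i}}=f_{\alpha_{i}\alpha}\circ\pi_{\alpha}$, and passing to the squared maps $\pi_{\alpha}^{2}=\pi_{\alpha}\times\pi_{\alpha}$ this becomes $\pi_{\alpha_{i}}^{2}=(f_{\alpha_{i}\alpha})^{2}\circ\pi_{\alpha}^{2}$. Consequently $(\pi_{\alpha_{i}}^{2})^{-1}(b_{i})=(\pi_{\alpha}^{2})^{-1}\big(((f_{\alpha_{i}\alpha})^{2})^{-1}(b_{i})\big)$ for each $i$, and intersecting over $i$ I obtain $\bigcap_{i=1}^{n}(\pi_{\alpha_{i}}^{2})^{-1}(b_{i})=(\pi_{\alpha}^{2})^{-1}(v)$, where $v=\bigcap_{i=1}^{n}((f_{\alpha_{i}\alpha})^{2})^{-1}(b_{i})$.

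It then remains to note that $v\in\U_{\alpha}$: each transition map $f_{\alpha_{i}\alpha}$ is uniformly continuous, so $((f_{\alpha_{i}\alpha})^{2})^{-1}(b_{i})\in\U_{\alpha}$ by the very definition of uniform continuity, and $v$, being a finite intersection of entourages, is again an entourage. As $B_{\alpha}$ is a basis for $\U_{\alpha}$, there is $b\in B_{\alpha}$ with $b\subseteq v$, whence $(\pi_{\alpha}^{2})^{-1}(b)\subseteq(\pi_{\alpha}^{2})^{-1}(v)=\bigcap_{i=1}^{n}(\pi_{\alpha_{i}}^{2})^{-1}(b_{i})$. This exhibits the required basic set inside each finite intersection and, combined with the first point, shows that $\{(\pi_{\alpha}^{2})^{-1}(b):\alpha\in\Gamma,\ b\in B_{\alpha}\}$ is a basis for $\U$.

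I expect the only delicate point to be the bookkeeping with the squared maps and with the direction convention of the bonding maps $f_{\alpha_{i}\alpha}$ (which go from the larger index down to each $\alpha_{i}$, so that $\pi_{\alpha_{i}}$ factors through $\pi_{\alpha}$). Conceptually the entire argument is the single observation that directedness allows one to replace a finite family of projections by one projection to a common upper bound, collapsing finite intersections of subbasic entourages into single preimages.
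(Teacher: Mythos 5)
Your proof is correct and complete. Note that the paper itself gives no proof of this statement: it is listed in the Preliminaries section as a known result, quoted directly from Bourbaki (Proposition 10, \S 2.7, Chapter 2), so there is nothing internal to compare against. Your argument is the standard one: the limit uniformity is the initial uniformity for the projections, whose basis consists of finite intersections $\bigcap_{i=1}^{n}(\pi_{\alpha_{i}}^{2})^{-1}(b_{i})$, and directedness together with the compatibility $\pi_{\alpha_{i}} = f_{\alpha_{i}\alpha}\circ\pi_{\alpha}$ collapses each such intersection onto a single preimage $(\pi_{\alpha}^{2})^{-1}(v)$ with $v\in\U_{\alpha}$, inside which a basis element $b\in B_{\alpha}$ can be placed. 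All the steps you flag as delicate (the squared maps, the direction of the bonding maps, membership $v\in\U_{\alpha}$ via uniform continuity of the $f_{\alpha_{i}\alpha}$) are handled correctly.
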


\subsection{Topology}

\begin{defi}Let $X$ be a topological space. A family $\{F_{\alpha}\}_{\alpha \in \Gamma}$ of subsets of $X$ is locally finite if $\forall x \in X, \ \exists U$ a neighborhood of $x$ such that $U\cap F_{\alpha} \neq \emptyset$ only for a finite subset of $\Gamma$.
\end{defi}

\begin{prop}(Proposition 4, $\S2.5$, Chapter 1 of \cite{Bou}) Let $X$ be a topological space and $\{F_{\alpha}\}_{\alpha \in \Gamma}$ a locally finite family of closed sets of $X$. Then, $\bigcup_{\alpha \in \Gamma} F_{\alpha}$ is closed.
\end{prop}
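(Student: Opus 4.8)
The plan is to prove the contrapositive form of the statement: I will show that the complement of $F := \bigcup_{\alpha \in \Gamma} F_{\alpha}$ is open. Local finiteness is precisely the hypothesis that allows me, in a neighbourhood of any given point, to replace the arbitrary union by a finite one; and for a finite union the conclusion is just the standard fact that a finite union of closed sets is closed. So the whole proof is a localisation argument that reduces the infinite union to the finite case.

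Concretely, I would fix a point $x \in X \setminus F$ and construct an open neighbourhood of $x$ that misses $F$. First I invoke local finiteness to obtain a neighbourhood $U$ of $x$ meeting $F_{\alpha}$ for only finitely many indices, say $\alpha_{1},\dots,\alpha_{n}$; shrinking if necessary, I may take $U$ open. Since each $F_{\alpha_{i}}$ is closed, the finite union $\bigcup_{i=1}^{n} F_{\alpha_{i}}$ is closed, so $V := U \setminus \bigcup_{i=1}^{n} F_{\alpha_{i}}$ is open. Because $x \notin F$, in particular $x \notin F_{\alpha_{i}}$ for every $i$, hence $x \in V$.

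The one step that actually carries content is checking that $V \cap F = \emptyset$. If $y \in V$, then $y \in U$, so $y$ can belong to $F_{\alpha}$ only for $\alpha \in \{\alpha_{1},\dots,\alpha_{n}\}$, since $U$ meets no other member of the family; but $V$ was formed from $U$ by deleting exactly $\bigcup_{i=1}^{n} F_{\alpha_{i}}$, so $y$ lies in none of those sets either. Therefore $y \notin F$. This shows $V \subseteq X \setminus F$, so $V$ is an open neighbourhood of $x$ contained in the complement of $F$. As $x \in X \setminus F$ was arbitrary, $X \setminus F$ is open and $F$ is closed.

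I do not expect a genuine obstacle here, as this is an elementary fact; the only point requiring care is that every step must make essential use of the finiteness of the set of indices whose sets meet $U$, since the statement is false without local finiteness (for instance the sets $[1/n,1]$ in $\R$ are closed but their union $(0,1]$ is not). Alternatively, one could prove the slightly sharper identity $\overline{\bigcup_{\alpha} F_{\alpha}} = \bigcup_{\alpha} \overline{F_{\alpha}}$ for any locally finite family, where the nontrivial inclusion $\subseteq$ uses the same localisation argument, and then specialise to the case where each $F_{\alpha}$ is already closed; but the direct complement-is-open argument above is the shortest route.
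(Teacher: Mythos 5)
Your proof is correct, and the localisation argument (pass to a neighbourhood meeting only finitely many $F_{\alpha}$, then use that a finite union of closed sets is closed) is exactly the standard route; note that the paper itself gives no proof of this statement, since it is quoted as a known preliminary from Bourbaki, so there is nothing in the paper to compare against. Your alternative remark, proving $\mathrm{Cl}_{X}\bigl(\bigcup_{\alpha} F_{\alpha}\bigr) = \bigcup_{\alpha} \mathrm{Cl}_{X}(F_{\alpha})$ for locally finite families, is in fact the form in which Bourbaki states and proves the result, so both versions you describe are faithful to the cited source.
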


\begin{prop}\label{liftnet}Let $f: X \rightarrow Y$ be a continuous closed map. If $\{x_{i}\}_{i \in \N}\subseteq X$ is a net such that $\{f(x_{i})\}_{i \in \N}$ converges to $y \in Y$ and $f^{-1}(y)$ is a single point, then $\{x_{i}\}_{i \in \N}$ converges to $f^{-1}(y)$. \eod
\end{prop}

\begin{prop}(Proposition 9, $\S4.4$, Chapter 1 of \cite{Bou}) Let $\Gamma$ be a directed set, $\{X_{\alpha}, f_{\alpha_{1}\alpha_{2}}\}_{\alpha,\alpha_{1},\alpha_{2} \in \Gamma}$ an inverse system of topological spaces, $B_{\alpha}$ a basis for $X_{\alpha}, \ X = \lim\limits_{\longleftarrow}X_{\alpha}$ and $\pi_{\alpha}: X \rightarrow X_{\alpha}$ the projection maps. Then, the set $\{\pi_{\alpha}^{-1}(b): \alpha \in \Gamma, \ b \in B_{\alpha}\}$ is a basis for $X$.
\end{prop}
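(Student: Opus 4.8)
The plan is to realize $X = \lim\limits_{\longleftarrow} X_{\alpha}$ as a subspace of the product $\prod_{\alpha \in \Gamma} X_{\alpha}$ and to exploit the directedness of $\Gamma$ to collapse finite intersections of subbasic open sets into single preimages. Recall that the product topology has as a subbasis the sets $p_{\alpha}^{-1}(U)$ with $U$ open in $X_{\alpha}$ and $p_{\alpha}$ the product projection, and that $\pi_{\alpha}$ is the restriction of $p_{\alpha}$ to $X$. Since $X \cap p_{\alpha}^{-1}(U) = \pi_{\alpha}^{-1}(U)$, the subspace topology on $X$ is generated by the subbasis $\{\pi_{\alpha}^{-1}(U) : \alpha \in \Gamma, \ U \text{ open in } X_{\alpha}\}$, and a basis is obtained by taking finite intersections of such sets.

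The key step is to show that these finite intersections are themselves single preimages, and here directedness is essential. Given $\bigcap_{i=1}^{n} \pi_{\alpha_{i}}^{-1}(U_{i})$ with each $U_{i}$ open in $X_{\alpha_{i}}$, I would pick an upper bound $\beta \in \Gamma$ with $\beta \geq \alpha_{i}$ for all $i$, which exists because $\Gamma$ is directed. The compatibility relation $\pi_{\alpha_{i}} = f_{\alpha_{i}\beta} \circ \pi_{\beta}$ between the projections and the bonding maps gives $\pi_{\alpha_{i}}^{-1}(U_{i}) = \pi_{\beta}^{-1}(f_{\alpha_{i}\beta}^{-1}(U_{i}))$, so that $\bigcap_{i=1}^{n} \pi_{\alpha_{i}}^{-1}(U_{i}) = \pi_{\beta}^{-1}\big(\bigcap_{i=1}^{n} f_{\alpha_{i}\beta}^{-1}(U_{i})\big)$, where $V := \bigcap_{i} f_{\alpha_{i}\beta}^{-1}(U_{i})$ is open in $X_{\beta}$ by continuity of the bonding maps. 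Hence $\{\pi_{\alpha}^{-1}(U) : \alpha \in \Gamma, \ U \text{ open in } X_{\alpha}\}$ is already a basis, not merely a subbasis.

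Finally, I would refine the open sets $U$ to members of the given bases $B_{\alpha}$. To verify the basis criterion, take any open $W \subseteq X$ and any $x \in W$. By the previous paragraph there are $\beta \in \Gamma$ and $U$ open in $X_{\beta}$ with $x \in \pi_{\beta}^{-1}(U) \subseteq W$. Then $\pi_{\beta}(x) \in U$, and since $B_{\beta}$ is a basis for $X_{\beta}$ there is $b \in B_{\beta}$ with $\pi_{\beta}(x) \in b \subseteq U$; consequently $x \in \pi_{\beta}^{-1}(b) \subseteq \pi_{\beta}^{-1}(U) \subseteq W$. This exhibits a member of $\{\pi_{\alpha}^{-1}(b) : \alpha \in \Gamma, \ b \in B_{\alpha}\}$ sandwiched between $x$ and $W$, which establishes the claim.

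The main obstacle is the collapsing step of the second paragraph: without directedness of $\Gamma$ one could only assert that finite intersections form a basis, and it is precisely the existence of a common upper bound $\beta$ together with the relation $\pi_{\alpha_{i}} = f_{\alpha_{i}\beta} \circ \pi_{\beta}$ that upgrades the subbasis into a basis. The remaining refinement to members of $B_{\beta}$ is routine, relying only on the definition of a basis of a single space.
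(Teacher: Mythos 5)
Your proof is correct. Note, however, that the paper itself offers no proof of this statement: it appears in the Preliminaries as a known result, cited directly from Bourbaki (Proposition 9, \S 4.4, Chapter 1), so there is no in-paper argument to compare against. Your argument --- realizing the inverse limit inside the product, using directedness and the compatibility relation $\pi_{\alpha_{i}} = f_{\alpha_{i}\beta}\circ\pi_{\beta}$ to collapse finite intersections of subbasic sets into a single preimage $\pi_{\beta}^{-1}(V)$, and then refining $V$ to a member of $B_{\beta}$ --- is the standard one and is essentially the proof given in Bourbaki.
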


\begin{prop}(Aleksandrov Theorem, $\S 10.4$, Chapter I of \cite{Bou}) Let $X$ be a Hausdorff compact space and $\sim$ an equivalence relation on $X$. Then $X/\sim$ is Hausdorff if and only if $\sim$ is closed in $X^{2}$.
\end{prop}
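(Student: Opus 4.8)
The plan is to treat the two implications separately, dealing with the easy direction first. For the forward implication, suppose $X/\!\sim$ is Hausdorff and let $q \colon X \to X/\!\sim$ denote the quotient map. Since $X/\!\sim$ is Hausdorff, its diagonal $\Delta$ is closed in $(X/\!\sim)^2$; because $q \times q \colon X^2 \to (X/\!\sim)^2$ is continuous and ${\sim} = (q\times q)^{-1}(\Delta)$, the relation $\sim$ is the preimage of a closed set and hence closed in $X^2$. Note that this direction uses neither compactness nor the Hausdorff property of $X$ itself.

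For the converse, assume $\sim$ is closed in $X^2$; here compactness is essential. First I would show that $q$ is a closed map. Given a closed set $C \subseteq X$, its saturation is $q^{-1}(q(C)) = \pi_1\big((X \times C) \cap {\sim}\big)$, where $\pi_1$ is the first projection. Since $C$ is closed and $\sim$ is closed, $(X\times C)\cap {\sim}$ is a closed subset of the compact space $X^2$, hence compact; its image under the continuous map $\pi_1$ is compact, and therefore closed because $X$ is Hausdorff. Thus $q^{-1}(q(C))$ is closed, which by the definition of the quotient topology means $q(C)$ is closed, so $q$ is closed. The same computation applied to $C = \{x\}$ shows that every equivalence class is closed in $X$.

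It then remains to separate points of $X/\!\sim$. Given distinct classes $[x] \ne [y]$, their preimages $q^{-1}([x])$ and $q^{-1}([y])$ are disjoint closed subsets of $X$; as $X$ is compact Hausdorff it is normal, so there are disjoint open sets $U \supseteq q^{-1}([x])$ and $V \supseteq q^{-1}([y])$. These need not be saturated, so I would invoke the following consequence of $q$ being closed: if an open set $U$ contains a fiber $q^{-1}(p)$, then $W := (X/\!\sim) \setminus q(X\setminus U)$ is an open neighbourhood of $p$ with $q^{-1}(W) \subseteq U$. Applying this to both fibers yields open neighbourhoods $W_x \ni [x]$ and $W_y \ni [y]$ with $q^{-1}(W_x) \subseteq U$ and $q^{-1}(W_y) \subseteq V$; since $U \cap V = \emptyset$ and $q$ is surjective, $W_x \cap W_y = \emptyset$, establishing the Hausdorff property of the quotient.

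The main obstacle is the converse direction, and within it the key leverage point is the passage from closedness of $\sim$ to closedness of the map $q$ — this is exactly where compactness of $X$ enters, via the facts that projections of compact sets are compact and that compact subsets of a Hausdorff space are closed. Once $q$ is known to be closed, normality of the compact Hausdorff space $X$ together with the fiber-thickening trick mechanically produces the required disjoint saturated open sets.
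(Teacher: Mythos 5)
Your proof is correct. Note that the paper does not actually prove this statement: it appears in the Preliminaries section as a quoted classical result, cited to Bourbaki (\S 10.4, Chapter I of \cite{Bou}), so there is no in-paper argument to compare yours against. Your argument is the standard one for this theorem and is complete: the forward direction via ${\sim} = (q\times q)^{-1}(\Delta)$ is right (and indeed needs no hypotheses on $X$), and in the converse the identity $q^{-1}(q(C)) = \pi_{1}\bigl((X\times C)\cap{\sim}\bigr)$, compactness of $X^{2}$, and Hausdorffness of $X$ correctly yield that $q$ is a closed map, after which normality of $X$ and the fiber-thickening argument (shrinking $U$, $V$ to saturated neighbourhoods via $W = (X/\!\sim)\setminus q(X\setminus U)$) produce the required disjoint open sets in the quotient.
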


\begin{prop}(Corollary 3.1.20 of \cite{En}) Let $X$ be a Hausdorff compact space, $m$ an infinite cardinal and $\{X_{\alpha}\}_{\alpha \in \Gamma}$ a family of subspaces of $X$ such that $X = \bigcup_{\alpha \in \Gamma} X_{\alpha}, \ \# \Gamma \leqslant m$ and $\forall \alpha \in \Gamma, \ \omega(X_{\alpha}) \leqslant m$ (where $\omega(Y)$ is the lowest cardinality of a basis of $Y$). Then $\omega(X) \leqslant m$.
\end{prop}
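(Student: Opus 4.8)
The plan is to reduce the statement to the interaction between two cardinal invariants: the weight $\omega$ and the \emph{network weight}. Recall that a \emph{network} for a space $Y$ is a family $\mathcal{N}$ of subsets such that for every $y \in Y$ and every open $U \ni y$ there is $N \in \mathcal{N}$ with $y \in N \subseteq U$, and that the network weight $\mathrm{nw}(Y)$ is the least cardinality of a network. Since every basis is in particular a network, $\mathrm{nw}(Y) \leqslant \omega(Y)$ for every space $Y$; in particular each $X_{\alpha}$ satisfies $\mathrm{nw}(X_{\alpha}) \leqslant \omega(X_{\alpha}) \leqslant m$. The reason to detour through $\mathrm{nw}$ is that weight itself does not behave well under unions of non-open subspaces (a basis of $X_{\alpha}$ consists of traces $V \cap X_{\alpha}$, which need not be open in $X$), whereas network weight does.

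First I would prove the union estimate $\mathrm{nw}(X) \leqslant m$. For each $\alpha$ pick a network $\mathcal{N}_{\alpha}$ for $X_{\alpha}$ with $\#\mathcal{N}_{\alpha} \leqslant m$ and set $\mathcal{N} = \bigcup_{\alpha \in \Gamma}\mathcal{N}_{\alpha}$, regarding each member as a subset of $X$. Given $x \in X$ and an open $W \ni x$ in $X$, choose $\alpha$ with $x \in X_{\alpha}$; then $W \cap X_{\alpha}$ is open in $X_{\alpha}$ and contains $x$, so some $N \in \mathcal{N}_{\alpha}$ satisfies $x \in N \subseteq W \cap X_{\alpha} \subseteq W$. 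Hence $\mathcal{N}$ is a network for $X$, and since $m$ is infinite, $\#\mathcal{N} \leqslant m \cdot m = m$, giving $\mathrm{nw}(X) \leqslant m$.

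The hard part, and where compactness and Hausdorffness finally enter, is the classical equality $\omega(X) = \mathrm{nw}(X)$ for a compact Hausdorff $X$; this is what upgrades the network back into a genuine basis. To prove it I would start from a network $\mathcal{N}$ with $\#\mathcal{N} = \mathrm{nw}(X)$ and use regularity together with normality, both automatic for compact Hausdorff spaces. For every pair $(M,N) \in \mathcal{N}^{2}$ with $\overline{M} \cap \overline{N} = \emptyset$, fix by normality an open set $U_{M,N} \supseteq \overline{M}$ disjoint from an open $V_{M,N} \supseteq \overline{N}$, and let $\mathcal{B}$ be the collection of all finite intersections of the sets $U_{M,N}$. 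To see $\mathcal{B}$ is a basis, take $x \in W$ with $W$ open: by regularity and the network property pick $M \in \mathcal{N}$ with $x \in M$ and $\overline{M} \subseteq W$; then for each $y$ in the compact set $X \setminus W$ use regularity and the network to choose $N_{y} \in \mathcal{N}$ with $y \in N_{y}$ and $\overline{N_{y}} \cap \overline{M} = \emptyset$, extract a finite subcover $N_{y_{1}},\dots,N_{y_{k}}$ of $X \setminus W$, and observe that $\bigcap_{i} U_{M,N_{y_{i}}}$ is a member of $\mathcal{B}$ containing $x$ and disjoint from $\bigcup_{i} V_{M,N_{y_{i}}} \supseteq X\setminus W$, hence contained in $W$. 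Counting the pairs shows $\#\mathcal{B} \leqslant (\mathrm{nw}(X))^{2} = \mathrm{nw}(X)$, so $\omega(X) \leqslant \mathrm{nw}(X)$; the reverse inequality is the trivial one noted above. Combining this equality with the union estimate yields $\omega(X) = \mathrm{nw}(X) \leqslant m$, as desired. The single genuine obstacle is precisely this weight-versus-network-weight equality, and it is exactly the point at which compactness is indispensable: the union step alone never uses it, but passing from a network back to a basis of the same size cannot be done without the finite-subcover argument.
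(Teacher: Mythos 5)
The paper itself contains no proof of this statement: it is quoted in the Preliminaries as Corollary 3.1.20 of Engelking's \emph{General Topology}, so there is no internal argument to compare yours against. What you have written is, in substance, a reconstruction of the cited source's own route: Engelking derives 3.1.20 from the additivity of network weight together with Theorem 3.1.19, which is exactly your key lemma that $\omega(Y)=\mathrm{nw}(Y)$ for compact Hausdorff $Y$. Your decomposition is the correct one, and the union estimate is handled properly --- networks are indeed the invariant that tolerates traces on non-open subspaces, and your count $m\cdot m=m$ is fine.

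There is, however, one step that fails as literally written. In the proof that $\omega(X)\leqslant\mathrm{nw}(X)$ you cover the compact set $X\setminus W$ by the network members $N_{y}$ and ``extract a finite subcover $N_{y_{1}},\dots,N_{y_{k}}$''. Network members need not be open (that is the entire reason for introducing networks), so compactness cannot be applied to this cover; an $N_{y}$ may even have empty interior. The repair is immediate and uses only objects you already defined: since $\overline{N_{y}}\cap\overline{M}=\emptyset$, the pair $(M,N_{y})$ is one of those for which $U_{M,N_{y}}$ and $V_{M,N_{y}}$ were fixed, and $y\in N_{y}\subseteq\overline{N_{y}}\subseteq V_{M,N_{y}}$; hence the \emph{open} sets $V_{M,N_{y}}$, $y\in X\setminus W$, cover $X\setminus W$, and the finite subcover should be extracted from these. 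This yields $X\setminus W\subseteq\bigcup_{i}V_{M,N_{y_{i}}}$, which is precisely the inclusion your final sentence invokes, and the rest of the argument goes through verbatim. A cosmetic remark: the cardinality of $\mathcal{B}$ is bounded by the number of \emph{finite subsets} of $\mathcal{N}^{2}$ rather than by $(\mathrm{nw}(X))^{2}$, but since $\mathrm{nw}(X)$ is infinite both quantities equal $\mathrm{nw}(X)$, so the conclusion $\omega(X)\leqslant\mathrm{nw}(X)\leqslant m$ stands once the subcover step is reworded.
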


\begin{cor}\label{uniaometrizavel}Let $X$ be a Hausdorff compact space where there exists a family of subspaces $\{X_{n}\}_{n\in \N}$ such that each one has a countable basis and $X = \bigcup_{n\in \N}X_{n}$. Then $X$ is metrizable. \eod
\end{cor}

\begin{cor}Let $X$ be a Hausdorff locally compact space with countable basis and $Z$ be a metrizable compact space. If $Y$ is a compactification of $X$ such that the remainder is homeomorphic to $Z$. Then $Y$ is metrizable. \eod
\end{cor}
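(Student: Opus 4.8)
The plan is to reduce everything to Corollary \ref{uniaometrizavel}, so the only real work is to exhibit $Y$ as a countable union of subspaces each admitting a countable basis. First I would record that, by the very definition of a compactification, $Y$ is a Hausdorff compact space in which $X$ sits as a dense subspace carrying its original topology; in particular the standing hypotheses of Corollary \ref{uniaometrizavel} that $Y$ be Hausdorff and compact are automatically met. The one point meriting a moment's care is that the subspace topology on $X$ inherited from $Y$ coincides with its given topology, which is precisely what being a compactification guarantees, so that the assumed countable basis of $X$ is genuinely a countable basis for $X$ viewed as a subspace of $Y$.

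Next I would decompose $Y = X \cup (Y \setminus X)$. The subspace $X$ has a countable basis by hypothesis. The remainder $Y \setminus X$ is homeomorphic to $Z$, and since $Z$ is a metrizable compact space it is second countable; hence the remainder too has a countable basis in its subspace topology. Thus $Y$ is covered by two subspaces, each second countable.

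Finally I would apply Corollary \ref{uniaometrizavel} with the family $\{X_{n}\}_{n \in \N}$ given by $X_{1} = X$, $X_{2} = Y \setminus X$, and $X_{n} = \emptyset$ for $n \geqslant 3$ (the empty set trivially having a countable basis), exhibiting $Y$ as a countable union of subspaces each with a countable basis and concluding that $Y$ is metrizable. I do not anticipate any genuine obstacle: the statement is essentially a repackaging of the fact that a finite union of second-countable subspaces of a compact Hausdorff space is again second countable, which is exactly the content already extracted in Corollary \ref{uniaometrizavel}; this is why the result is flagged as following immediately from the preceding considerations.
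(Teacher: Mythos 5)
Your proof is correct and is precisely the argument the paper intends: the corollary is stated with the end-of-proof square because it follows immediately from Corollary \ref{uniaometrizavel} applied to the decomposition $Y = X \cup (Y - X)$, with $X$ second countable by hypothesis and the remainder second countable because it is homeomorphic to the compact metrizable space $Z$. Your additional care about the subspace topology on $X$ agreeing with its original topology is a reasonable point to make explicit, but it is the same proof.
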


\begin{obs}Compactifications for us are always Hausdorff spaces such that the original space is a dense subspace, unless we say otherwise.
\end{obs}

\subsection{Coarse geometry}

The next definitions and propositions follow John Roe's book \cite{Ro}.

\begin{defi}Let $X$ be a set. A coarse structure on $X$ is a set $\varepsilon \subseteq X \times X$ satisfying:

\begin{enumerate}
    \item The diagonal $\Delta X \in \varepsilon$,
    \item If $e \in \varepsilon$ and $e' \subseteq e$, then $e' \in \varepsilon$,
    \item If $e,e' \in \varepsilon$ then $e\cup e' \in \varepsilon$,
    \item If $e \in \varepsilon$  then $e^{-1} = \{(a,b): (b,a) \in e\} \in \varepsilon$,
    \item If $e,e' \in \varepsilon$  then $e' \circ e = \{(a,b): \exists c \in X: (a,c) \in e, (c,b) \in e'\} \in \varepsilon$.
\end{enumerate}

\end{defi}

\begin{defi}We say that a subset $B$ of a coarse space $(X,\varepsilon)$ is bounded if $B\times B \in \varepsilon$.
\end{defi}

\begin{defi}Let $X$ be a topological space. A subset of $X$ is topologically bounded (or relatively compact) if its closure on $X$ is compact. We say that $(X,\varepsilon)$ is a proper coarse space if the coarse structure has a neighborhood of the diagonal $\Delta X$ and every bounded subset of $X$ is topologically bounded.
\end{defi}

\begin{defi}A coarse space $(X,\varepsilon)$ is coarsely connected if for every $(x,y) \in X \times X$, $\exists e \in \varepsilon:$ $(x,y) \in e$.
\end{defi}

\begin{defi}Let $X$ be a topological space. A subset $e \subseteq X\times X$ is proper if $\forall B \subseteq X$ topologically bounded, $\B(B,e)$ and $\B(B, e^{-1})$ are topologically bounded.
\end{defi}

\begin{defi}Let $(M,d)$ be a metric space. The bounded coarse structure associated to the metric $d$ is the collection of sets $e \subseteq X\times X$ such that $\sup\{d(x,y): (x,y)\in e\} < \infty$. We denote this coarse structure by $\varepsilon_{d}$.
\end{defi}

\begin{obs}If $(M,d)$ is a proper metric space (in the sense that every closed ball is compact), then $(M,\varepsilon_{d})$ is a coarsely connected proper coarse space. \end{obs}

\begin{prop}\label{propercontrolled}(Proposition 2.23 of \cite{Ro}) Let $(X,\varepsilon)$ be a coarsely connected proper coarse space. A subset of $X$ is bounded if and only if it is topologically bounded. Moreover, every element of $\varepsilon$ is proper.
\end{prop}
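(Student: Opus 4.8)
The plan is to prove the two directions of the equivalence separately and then deduce the ``moreover'' clause from the equivalence itself. One direction is immediate: by the definition of a proper coarse space, every bounded subset is topologically bounded. So the real content lies in the converse, namely that a topologically bounded set $B$ is bounded, i.e. $B \times B \in \varepsilon$; this is where I would concentrate the effort, and the last assertion will follow from it by a closely related composition argument.

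For the converse, I would first exploit the hypothesis that $\varepsilon$ contains a neighbourhood $u$ of the diagonal $\Delta X$. Passing to an open set $O$ with $\Delta X \subseteq O \subseteq u$ and using the product topology, for each $x \in X$ I can choose an open $U_x \ni x$ with $U_x \times U_x \subseteq O \subseteq u$. The family $\{U_x\}_{x\in X}$ is then an open cover of $X$, and in particular of the compact set $\overline{B}$. Extracting a finite subcover yields centres $x_1,\dots,x_n$ with $B \subseteq \bigcup_i U_{x_i}$ and each $U_{x_i}\times U_{x_i}\subseteq u$. Coarse connectedness provides, for every pair $(x_i,x_j)$, an element of $\varepsilon$ containing it, and the finite union of these is a single $e_1\in\varepsilon$ with $\{x_1,\dots,x_n\}^{2}\subseteq e_1$. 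The decisive step is a chain argument: given $(a,b)\in B\times B$ with $a\in U_{x_i}$ and $b\in U_{x_j}$, the path $a \rightsquigarrow x_i \rightsquigarrow x_j \rightsquigarrow b$ shows $(a,b)\in u\circ e_1\circ u$, since $(a,x_i),(x_j,b)\in u$ and $(x_i,x_j)\in e_1$. Hence $B\times B\subseteq u\circ e_1\circ u\in\varepsilon$, and downward closure gives $B\times B\in\varepsilon$, so $B$ is bounded.

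For the ``moreover'' clause, fix $e\in\varepsilon$ and a topologically bounded $B$; by the equivalence just established $B$ is bounded, so $B\times B\in\varepsilon$. To prove $\B(B,e)$ topologically bounded it suffices, again by the equivalence, to prove it bounded, and here I would use the inclusion $\B(B,e)\times\B(B,e)\subseteq e^{-1}\circ(B\times B)\circ e$: if $(a,b)$ lies on the left, choose $y_a,y_b\in B$ with $(a,y_a),(b,y_b)\in e$, and the chain $a\rightsquigarrow y_a\rightsquigarrow y_b\rightsquigarrow b$ (using $(y_b,b)\in e^{-1}$) places $(a,b)$ in the composite. Since $e,e^{-1},B\times B\in\varepsilon$, the right-hand side is in $\varepsilon$, so $\B(B,e)$ is bounded and therefore topologically bounded; applying the same reasoning to $e^{-1}\in\varepsilon$ disposes of $\B(B,e^{-1})$, whence $e$ is proper.

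The main obstacle I anticipate is essentially bookkeeping: keeping the order of composition correct under the convention $e'\circ e=\{(a,b):\exists c,\ (a,c)\in e,\ (c,b)\in e'\}$, and carefully extracting the square neighbourhoods $U_x\times U_x\subseteq u$ from the mere fact that $u$ is a neighbourhood of the diagonal. The conceptual content—cover by diagonal neighbourhoods, compactness, and stitching the centres together via coarse connectedness—is straightforward; the only real care needed is in the associativity and direction of the composites and in the routine handling of the empty-set case.
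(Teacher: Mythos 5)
Your proof is correct, and since the paper states this proposition without proof (it is quoted as Proposition 2.23 of Roe's book in the Preliminaries section), there is no internal argument to compare it against. Your route --- covering the compact closure $\overline{B}$ by squares $U_{x}\times U_{x}$ contained in a controlled open neighbourhood of the diagonal, joining the finitely many centres via coarse connectedness and downward/union closure, and then deducing properness of every controlled set from the inclusion $\mathfrak{B}(B,e)\times\mathfrak{B}(B,e)\subseteq e^{-1}\circ(B\times B)\circ e$ together with the already-established equivalence --- is the standard argument, essentially the one in Roe, and all the compositions are taken in the correct order for the paper's convention on $e'\circ e$.
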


\begin{defi}A map $f: (X,\varepsilon) \rightarrow (Y,\zeta)$ is coarse if $\forall e \in \varepsilon$, $f(e)\in \zeta$ and $\forall B \subseteq Y$ bounded, $f^{-1}(B)$ is bounded.
\end{defi}

\begin{defi}Let $S$ be a set and $(X,\varepsilon)$ a coarse space. Two maps $f,g: S \rightarrow X$ are close if $\{(f(s),g(s)): s \in S\} \in \varepsilon$.
\end{defi}

\begin{defi}Two coarse spaces $(X,\varepsilon)$ and $(Y,\zeta)$ are coarsely equivalent if there exists two coarse maps $f: X \rightarrow Y$ and $g: Y \rightarrow X$ that are quasi-inverses, i.e. $f\circ g$ is close to $id_{Y}$ and $g \circ f$ is close to $id_{X}$. A coarse map $f: (X,\varepsilon) \rightarrow (Y,\zeta)$ is a coarse embedding if it is a coarse equivalence between $X$ and $f(X)$, with the coarse structure given by $\{A \in \zeta: A \subseteq f(X)^{2}\}$.
\end{defi}

\begin{prop}(Theorem 2.27 of \cite{Ro}) Let $X$ be a locally compact paracompact space, $W$ a compactification of $X$, $\partial X = W - X$ and $e \subseteq X \times X$. The following conditions are equivalent:

\begin{enumerate}
    \item $Cl_{W^{2}}(e) \cap (W^{2} - X^{2}) \subseteq \Delta \partial X$.
    \item $e$ is proper and if  $\{(x_{\gamma},y_{\gamma})\}_{\gamma \in \Gamma}$ is net contained in $e$ such that $\lim x_{\gamma} = x \in \partial X$, then $\lim y_{\gamma} = x$.
    \item $e$ is proper and $\forall x \in \partial X$, $\forall V$ neighborhood of $x$ in $W$, there exists $U$ a neighborhood of $x$ such that $U \subseteq V$ and $e \cap (U \times (X-V)) = \emptyset$.
\end{enumerate}

We say that $e$ is perspective if it satisfies these equivalent definitions. We denote by $\varepsilon_{W}$ the set of perspective subsets of $X \times X$. Then $(X,\varepsilon_{W})$ is a coarsely connected proper coarse space.
\end{prop}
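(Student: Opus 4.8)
The plan is to first establish the equivalence of (1), (2) and (3), and then to verify directly that $\varepsilon_{W}$ is a coarsely connected proper coarse structure. Two facts are used throughout. Since $X$ is locally compact, Hausdorff and dense in the Hausdorff space $W$, it is open in $W$; hence $\partial X$ is compact and $X^{2}$ is open in $W^{2}$. Moreover, for $A \subseteq X$ one has $Cl_{X}(A) = Cl_{W}(A) \cap X$, so $A$ is topologically bounded if and only if $Cl_{W}(A) \cap \partial X = \emptyset$. Finally, because $W^{2}$ is compact Hausdorff, $(p,q) \in Cl_{W^{2}}(e)$ exactly when some net in $e$ converges to $(p,q)$, and net limits are unique; I will read condition (1) in this net language freely, noting also that (1) is invariant under the coordinate flip $e \mapsto e^{-1}$, which fixes both $W^{2}-X^{2}$ and $\Delta\partial X$.

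For the equivalences, consider (1)$\Rightarrow$(2). To see $e$ is proper, take $B$ topologically bounded and suppose $\B(B,e)$ is not; then there is a net $x_{\gamma} \in \B(B,e)$ with $x_{\gamma} \to p \in \partial X$, and choosing $b_{\gamma} \in B$ with $(x_{\gamma},b_{\gamma}) \in e$ and passing to a subnet with $b_{\gamma} \to q \in Cl_{W}(B) \subseteq X$ gives $(p,q) \in Cl_{W^{2}}(e) \cap (W^{2}-X^{2})$ with $q \in X$, contradicting (1); the same works for $\B(B,e^{-1})$ by flip-invariance. For the net condition, if $(x_{\gamma},y_{\gamma}) \in e$ with $x_{\gamma} \to x \in \partial X$ but $y_{\gamma} \not\to x$, a subnet gives $y_{\gamma} \to q \neq x$, so $(x,q)$ is an off-diagonal point of $Cl_{W^{2}}(e) \cap (W^{2}-X^{2})$, again contradicting (1). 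For (2)$\Rightarrow$(1), take $(p,q) \in Cl_{W^{2}}(e) \cap (W^{2}-X^{2})$ with a net $(x_{\gamma},y_{\gamma}) \to (p,q)$ in $e$. If $p \in \partial X$, the net condition forces $y_{\gamma} \to p$, so $q=p$ by uniqueness of limits. The delicate case is $q \in \partial X$ with $p \in X$: here I take a relatively compact open $N_{p} \ni p$, note $x_{\gamma} \in N_{p}$ eventually, so $y_{\gamma} \in \B(N_{p},e^{-1})$, which is topologically bounded by properness, forcing $q \in Cl_{W}(\B(N_{p},e^{-1})) \subseteq X$ and contradicting $q \in \partial X$; hence $p \in \partial X$ and we reduce to the first case. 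The equivalence (2)$\Leftrightarrow$(3) is a routine net argument: if (3) fails for some $x$ and open $V$, the pairs witnessing $e \cap (U \times (X-V)) \neq \emptyset$ over the neighbourhoods $U \ni x$ form a net with first coordinate tending to $x$ and second coordinate trapped in the closed set $W-V$, contradicting $y_{\gamma}\to x$; conversely (3) says precisely that for a net with $x_{\gamma}\to x\in\partial X$ the $y_{\gamma}$ eventually enter every neighbourhood of $x$.

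For the coarse structure, axioms (1)--(4) are immediate from formulation (1): $\Delta X$ is perspective because $Cl_{W^{2}}(\Delta X)=\Delta W$ meets $W^{2}-X^{2}$ exactly in $\Delta\partial X$; downward closure and finite unions follow from $Cl_{W^{2}}(e')\subseteq Cl_{W^{2}}(e)$ and $Cl_{W^{2}}(e\cup e')=Cl_{W^{2}}(e)\cup Cl_{W^{2}}(e')$; and invariance under $e\mapsto e^{-1}$ was noted above. For the composition axiom I use the net form: given $(p,q)\in Cl_{W^{2}}(e'\circ e)\cap(W^{2}-X^{2})$, choose $(a_{\gamma},c_{\gamma})\in e$ and $(c_{\gamma},b_{\gamma})\in e'$ with $a_{\gamma}\to p$, $b_{\gamma}\to q$, and a subnet with $c_{\gamma}\to r$. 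If $p\in\partial X$, applying (2) to $e$ gives $r=p$ and then (2) to $e'$ gives $q=p$; if $q\in\partial X$, applying (2) to ${e'}^{-1}$ and then to $e^{-1}$ gives $r=q$ and $p=q$. Either way $(p,q)\in\Delta\partial X$, so $e'\circ e$ is perspective. Coarse connectedness is trivial, since each singleton $\{(x,y)\}\subseteq X^{2}$ is perspective. For properness of the space, if $B\neq\emptyset$ and $B\times B$ is perspective, fix $b_{0}\in B$; were $Cl_{W}(B)$ to meet $\partial X$ at some $p$, a net $a_{\gamma}\to p$ in $B$ would give $(a_{\gamma},b_{0})\in B\times B$ with $(p,b_{0})$ off-diagonal in the boundary region, contradicting perspectivity — hence every bounded set is topologically bounded.

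The one genuinely delicate point, which I expect to be the main obstacle, is producing a perspective neighbourhood of $\Delta X$ in $X^{2}$, as required for properness in the stated sense. A fixed entourage of the unique uniformity of $W$ will not do: intersected with $X^{2}$ it fails to pinch to the diagonal along $\partial X$, and its closure picks up off-diagonal points $(p,q)$ with $p\in\partial X$ and $q\in X$. The plan is instead to build a locally finite (in $X$) open cover $\{U_{i}\}$ of $X$ by relatively compact sets that is \emph{fine near the boundary}, meaning that for each $p\in\partial X$ and each neighbourhood $V$ of $p$ in $W$ there is a neighbourhood $U\ni p$ such that every $U_{i}$ meeting $U$ is contained in $V$; here local compactness and paracompactness of $X$, together with compactness of $\partial X$, are exactly what makes such a cover available. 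Setting $N=\bigcup_{i}U_{i}\times U_{i}$ then yields an open neighbourhood of $\Delta X$ for which condition (3) holds by construction, while local finiteness makes $N$ proper, so $N\in\varepsilon_{W}$ and the verification is complete.
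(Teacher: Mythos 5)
Your verification of the equivalences (1)$\Leftrightarrow$(2)$\Leftrightarrow$(3), of the coarse structure axioms, of coarse connectedness, and of the implication ``bounded $\Rightarrow$ topologically bounded'' is correct and complete; since the paper gives no argument at all for this proposition (it is quoted verbatim from Roe's book), those parts stand on their own. The genuine gap is exactly the point you flag at the end: the existence of a locally finite, relatively compact open cover of $X$ that is fine near the boundary is asserted (``local compactness and paracompactness of $X$, together with compactness of $\partial X$, are exactly what makes such a cover available'') but never constructed. Note that this is not an auxiliary lemma; it is equivalent to the properness claim itself. Your argument derives a controlled neighbourhood $N$ of $\Delta X$ from such a cover, and conversely, from any controlled neighbourhood $N$ of $\Delta X$ one recovers such a cover: choose for each $x$ a relatively compact open box $B_{x}\times B_{x}\subseteq N$ around $(x,x)$, observe that $\bigcup_{x}B_{x}\times B_{x}$ is controlled (downward closure), so that condition (3) applied to it yields precisely the fineness property, and then take a locally finite refinement. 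So the unproved claim carries the entire content of the properness assertion, and nothing about properness has actually been established.

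Moreover, that claim cannot be proved from the stated hypotheses, because the properness assertion is false for general Hausdorff compactifications. Take $X=[0,\infty)$ (locally compact, paracompact, even $\sigma$-compact and metrizable) and $W=\beta X$ its Stone--\v{C}ech compactification. Let $N$ be any neighbourhood of $\Delta X$ in $X^{2}$. For each integer $n\geqslant 1$ there is $t_{n}\in(0,1/8)$ with $(n,n+t_{n})\in N$. Choose a continuous $f\colon X\to[0,1]$ made of disjoint bumps with $f(n)=0$ and $f(n+t_{n})=1$ for all $n$, and let $\beta f\colon\beta X\to[0,1]$ be its extension. Taking limits along a free ultrafilter on the integers, $(n,n+t_{n})\to(p,q)$ in $W^{2}$ with $p,q\in\partial X$ (both sequences leave every compact subset of $X$), and $\beta f(p)=0\neq 1=\beta f(q)$ forces $p\neq q$; hence $Cl_{W^{2}}(N)\cap(W^{2}-X^{2})\not\subseteq\Delta\partial X$ and $N\notin\varepsilon_{W}$. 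So $\varepsilon_{\beta X}$ contains no neighbourhood of the diagonal at all, and $(X,\varepsilon_{\beta X})$ is not a proper coarse space in the sense used here, even though it is a coarsely connected coarse structure all of whose bounded sets are topologically bounded. Your cover does exist when $W$ is metrizable --- essentially by your own metric heuristic, covering $X$ by balls whose radius is a quarter of the distance to $\partial X$ and refining by paracompactness --- so the properness part of the proposition genuinely needs an extra hypothesis of that kind on $W$; as reproduced in the paper, for an arbitrary compactification $W$, the final step of your proposal (and the properness clause of the statement itself) cannot be repaired.
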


\section{Artin-Wraith glueing}

\subsection{Construction}

This section is entirely analogous to Theorem 1.1 of \cite{KP}.

\begin{defi}Let $X$ and $Y$ be topological spaces. We say that a map $f: Closed(X) \rightarrow Closed (Y)$ is admissible if  $\forall A,B \in Closed(X)$, $f(A\cup B) = f(A)\cup f(B)$ and $f(\emptyset) = \emptyset$. Let's fix an admissible map $f$. Let's declare $A \subseteq X\dot{\cup} Y$ as a closed set if $A \cap X \in Closed(X), \ A \cap Y \in Closed(Y)$ and $f(A\cap X) \subseteq A$. Therefore, let's denote by $\tau_{f}$ the set of the complements of this closed sets and $X+_{f}Y = (X\dot{\cup} Y,\tau_{f})$.
\end{defi}

\begin{prop}Actually, $\tau_{f}$ is a topology.
\end{prop}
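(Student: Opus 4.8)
The plan is to verify that the collection of declared closed sets forms a closed-set system, which is exactly equivalent to $\tau_{f}$ being a topology. Concretely I would check three things: that both $\emptyset$ and the whole space $X\dot{\cup}Y$ are declared closed, that the declared closed sets are stable under finite unions, and that they are stable under arbitrary intersections.

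First, the two extreme cases are immediate. For $\emptyset$ we have $\emptyset\cap X = \emptyset \in Closed(X)$, $\emptyset\cap Y = \emptyset \in Closed(Y)$, and $f(\emptyset) = \emptyset \subseteq \emptyset$, the last equality being part of admissibility. For the whole space, $(X\dot{\cup}Y)\cap X = X$ and $(X\dot{\cup}Y)\cap Y = Y$ are closed in their respective spaces, and $f(X)\subseteq X\dot{\cup}Y$ holds trivially.

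For finite unions, given declared closed sets $A$ and $B$, I would intersect $A\cup B$ with each factor: $(A\cup B)\cap X = (A\cap X)\cup(B\cap X)$ is closed in $X$ and likewise on the $Y$ side, since $Closed(X)$ and $Closed(Y)$ are stable under finite unions. The condition on $f$ then follows directly from admissibility: $f\bigl((A\cup B)\cap X\bigr) = f(A\cap X)\cup f(B\cap X)\subseteq A\cup B$, using the hypotheses $f(A\cap X)\subseteq A$ and $f(B\cap X)\subseteq B$.

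The only genuinely delicate point is arbitrary intersections, because $f$ is assumed to preserve finite unions and the empty set but not intersections. The key observation I would isolate first is that admissibility forces $f$ to be monotone: if $C\subseteq D$ in $Closed(X)$, then $D = C\cup D$, so $f(D) = f(C)\cup f(D)$, whence $f(C)\subseteq f(D)$. Now let $\{A_{i}\}_{i\in I}$ be declared closed and put $A = \bigcap_{i} A_{i}$. The intersections $A\cap X = \bigcap_{i}(A_{i}\cap X)$ and $A\cap Y = \bigcap_{i}(A_{i}\cap Y)$ are closed in $X$ and $Y$ respectively, since closed sets are stable under arbitrary intersection. For the $f$-condition I would fix an arbitrary index $i$ and note $A\cap X\subseteq A_{i}\cap X$; monotonicity then yields $f(A\cap X)\subseteq f(A_{i}\cap X)\subseteq A_{i}$. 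As this inclusion holds for every $i$, we obtain $f(A\cap X)\subseteq\bigcap_{i} A_{i} = A$, which completes the verification and shows $\tau_{f}$ is a topology.
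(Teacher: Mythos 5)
Your proof is correct and follows essentially the same route as the paper: checking the two extreme sets, finite unions via admissibility, and arbitrary intersections via the inclusion $f(\bigcap_i A_i \cap X) \subseteq f(A_i\cap X) \subseteq A_i$ for each $i$. The only difference is that you explicitly isolate and prove the monotonicity of $f$ (from preservation of finite unions), a step the paper uses implicitly; this is a small gain in rigor, not a different argument.
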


\begin{proof}We have that $(X\cup Y)\cap X = X \in Closed(X), \ (X\cup Y)\cap Y = Y \in Closed(Y)$ and $f((X\cup Y)\cap X) = f(X) \subseteq X\cup Y$. So $X\cup Y \in Closed(X+_{f}Y)$.

We have also that $\emptyset \cap X = \emptyset \in Closed(X), \ \emptyset\cap Y = \emptyset \in Closed(Y)$ and $f(\emptyset \cap X) = f(\emptyset) = \emptyset$. So $\emptyset \in Closed(X+_{f}Y)$.

If $A,B \in Closed(X+_{f}Y)$, then $(A\cup B)\cap X = (A\cap X)\cup (B \cap X) \in Closed(X)$, $(A\cup B)\cap Y = (A\cap Y)\cup (B \cap Y) \in Closed(Y)$ and $f((A\cup B)\cap X) = f((A\cap X)\cup (B \cap X)) = f(A\cap X)\cup f(B \cap X) \subseteq A \cup B$ (because $f(A\cap X)\subseteq A$ and $f(B \cap X) \subseteq B$). So $A\cup B \in Closed(X+_{f}Y)$.

Finally, let $\{A_{i}\}_{i\in \Gamma}$ be a family of closed sets. Then $(\bigcap\limits_{i\in \Gamma}A_{i}) \cap X = \bigcap\limits_{i\in \Gamma}(A_{i}\cap X) \in Closed(X)$, because each $A_{i}\cap X \in Closed(X)$. Analogously, $(\bigcap\limits_{i\in \Gamma}A_{i}) \cap Y \in Closed(Y)$. And $\forall i \in \Gamma, \ f((\bigcap\limits_{i\in \Gamma}A_{i}) \cap X) \subseteq f(A_{i}\cap X) \subseteq A_{i}$, which implies that $f((\bigcap\limits_{i\in \Gamma}A_{i}) \cap X)\subseteq \bigcap\limits_{i\in \Gamma}A_{i}$. So $\bigcap\limits_{i\in \Gamma}A_{i} \in Closed(X+_{f}Y)$. \end{proof}

For our purposes it is only necessary this definition of sum of spaces. However, for the sake of completeness, we present the more general definition and also develop the theory about it.

\begin{defi}Let $X$ and $Y$ be topological spaces. We say that two maps $f: Closed(X) \rightarrow Closed (Y)$ and $g: Closed(Y) \rightarrow Closed (X)$ are an  admissible pair if $f$ and $g$ are admissible maps, $\forall A \in Closed(X)$, $g \circ f(A) \subseteq A$ and $\forall B \in Closed(Y)$, $f \circ g(B) \subseteq B$. We denote the topological space $(X\dot{\cup} Y,\tau_{f}\cap \tau_{g})$ by $X+_{f,g}Y$.
\end{defi}

This is an Artin-Wraith glueing of these two spaces. We call it sum of spaces, for short. When $\emptyset: Closed(Y) \rightarrow Closed(X)$ is the constant map to the empty set, we have that $X+_{f,\emptyset}Y = X+_{f}Y$. Note that for every admissible map $f$, the pair $f,\emptyset$ is always admissible.

We have that $D \in Closed(X+_{f,g}Y)$ if and only if $D \cap X \in Closed(X)$, $D \cap Y \in Closed(Y)$, $f(D\cap X) \subseteq D$ and $g(D\cap Y) \subseteq D$.

\begin{prop}Let $A \in Closed (X)$ and $B \in Closed(Y)$. Then $Cl_{X+_{f,g}Y}A = A\cup f(A)$ and $Cl_{X+_{f,g}Y}B = B\cup g(B)$.
\end{prop}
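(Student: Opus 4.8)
The plan is to compute the closure by exhibiting $A \cup f(A)$ as the smallest closed set of $X+_{f,g}Y$ containing $A$, and then to obtain the formula for $B$ by the evident symmetry between the two summands. Throughout I will use the characterization of closed sets recorded just before the statement: $D \in Closed(X+_{f,g}Y)$ precisely when $D\cap X \in Closed(X)$, $D\cap Y \in Closed(Y)$, $f(D\cap X)\subseteq D$ and $g(D\cap Y)\subseteq D$. A preliminary observation I would make first is that $f$ is \emph{monotone}: if $A\subseteq C$ with $A,C\in Closed(X)$, then $C = A\cup C$, so $f(C) = f(A\cup C) = f(A)\cup f(C)$, whence $f(A)\subseteq f(C)$. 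The same holds for $g$.

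First I would check that $D := A\cup f(A)$ is closed. Since $A\subseteq X$ and $f(A)\subseteq Y$ live in the two disjoint halves, we have $D\cap X = A \in Closed(X)$ and $D\cap Y = f(A)\in Closed(Y)$, the latter because $f$ takes values in $Closed(Y)$. The condition $f(D\cap X) = f(A)\subseteq D$ is immediate. The one genuinely substantive verification is $g(D\cap Y) = g(f(A)) \subseteq D$: this is exactly where the \emph{admissible pair} hypothesis enters, giving $g\circ f(A)\subseteq A \subseteq D$. So all four conditions hold and $D$ is a closed set containing $A$.

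Next I would show minimality. Let $D'$ be any closed set with $A\subseteq D'$. Since $A\subseteq X$, we have $A\subseteq D'\cap X$, and both lie in $Closed(X)$, so monotonicity of $f$ gives $f(A)\subseteq f(D'\cap X)$. But closedness of $D'$ forces $f(D'\cap X)\subseteq D'$, hence $f(A)\subseteq D'$ and therefore $A\cup f(A)\subseteq D'$. Combining the two steps, $A\cup f(A)$ is contained in every closed set containing $A$ and is itself such a set, so $Cl_{X+_{f,g}Y}A = A\cup f(A)$.

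Finally, the identity $Cl_{X+_{f,g}Y}B = B\cup g(B)$ for $B\in Closed(Y)$ follows by interchanging the roles of $X$ and $Y$ and of $f$ and $g$: the definition of $X+_{f,g}Y$ and the admissibility conditions are symmetric under this swap, so the identical argument applies verbatim with $g$ in place of $f$. I do not expect any real obstacle here; the only point requiring care is recognizing that the closure formula hinges on the two compatibility inequalities $g\circ f(A)\subseteq A$ and (symmetrically) $f\circ g(B)\subseteq B$ of the admissible pair, which are precisely what guarantee that the proposed closures are closed rather than merely closed "on one side."
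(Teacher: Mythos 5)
Your proof is correct and follows essentially the same route as the paper's: verify that $A\cup f(A)$ satisfies the four closedness conditions (with $g\circ f(A)\subseteq A$ doing the real work) and then show it is contained in every closed set containing $A$. The only cosmetic difference is that you isolate monotonicity of $f$ as a preliminary lemma, whereas the paper derives the same fact inline via $f(D\cap X) = f(A)\cup f(D\cap X)$.
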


\begin{proof}We have that $(A \cup f(A)) \cap X = A \in Closed (X)$, $(A \cup f(A)) \cap Y = f(A) \in Closed (Y)$, $f((A\cup f(A))\cap X) = f(A) \subseteq A \cup f(A)$ and $g((A\cup f(A))\cap Y) = g(f(A)) \subseteq A \subseteq A \cup f(A)$. So $A\cup f(A) \in Closed(X+_{f,g}Y)$.

Let $D\in Closed(X+_{f}Y)$ such that $A \subseteq D$. We have that $f(D\cap X)\subseteq D$. But $f(D\cap X) = f((A\cup D)\cap X) = f(A\cap X) \cup f(D\cap X) = f(A) \cup f(D\cap X)$, which implies that $f(A) \subseteq D$. So $A \cup f(A) \subseteq D$.

Thus, $Cl_{X+_{f,g}Y}A = A\cup f(A)$. Analogously $Cl_{X+_{f,g}Y}B = B\cup g(B)$
\end{proof}

\begin{cor}$X$ is dense in $X+_{f,g}Y$ if and only if $f(X) = Y$.
\end{cor}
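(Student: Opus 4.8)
The plan is to read this off from the preceding proposition by instantiating it at the largest possible closed set, namely $A = X$ itself. Since $X \in Closed(X)$ (the whole space is always closed in itself), the proposition applies and yields $Cl_{X+_{f,g}Y}X = X \cup f(X)$. This is the only computation needed; everything else is bookkeeping about the disjoint union.

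From here I would unwind the definition of density. The underlying set of $X+_{f,g}Y$ is $X\dot{\cup} Y$, so $X$ is dense exactly when $Cl_{X+_{f,g}Y}X = X\dot{\cup} Y$. Substituting the closure just computed, this amounts to $X \cup f(X) = X\dot{\cup} Y$. Because $f: Closed(X) \rightarrow Closed(Y)$ has codomain contained in $Y$, we always have $f(X) \subseteq Y$, and the union $X\dot{\cup} Y$ is disjoint; hence $X \cup f(X)$ fills up $X\dot{\cup} Y$ if and only if the $Y$-part is exhausted, i.e. if and only if $f(X) = Y$.

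I do not anticipate any genuine obstacle here: the statement is an immediate corollary once the previous proposition is in hand, the only subtlety being the harmless observation that $X$ qualifies as a closed subset of itself so that the proposition is applicable, together with the automatic inclusion $f(X)\subseteq Y$ coming from the type of $f$. For this reason the argument can legitimately be closed with the $\Box$ convention introduced in the Preliminaries, indicating that it follows at once from the foregoing considerations.
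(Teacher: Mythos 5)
Your proposal is correct and is essentially the paper's own proof: both instantiate the preceding proposition at $A = X$ to get $Cl_{X+_{f,g}Y}(X) = X \cup f(X)$, and then use the disjointness of $X$ and $Y$ together with $f(X) \subseteq Y$ to conclude that this closure is all of $X\dot{\cup} Y$ exactly when $f(X) = Y$. No gaps; the argument is complete as written.
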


\begin{proof}If $f(X) = Y$, then $Cl_{X+_{f,g}Y}(X) = X \cup f(X) = X \cup Y$, which implies that $X$ is dense in $X+_{f,g}Y$. If $f(X) = Y_{1} \subsetneq Y$, then $Cl_{X+_{f,g}Y}(X) = X \cup f(X) = X \cup Y_{1} \subsetneq X \cup Y$, which implies that $X$ is not dense in $X+_{f,g}Y$.
\end{proof}

Analogously, $Y$ is dense in $X+_{f,g}Y$ if and only if $g(Y) = X$.

\begin{prop}$Y$ is closed in $X+_{f}Y$.
\end{prop}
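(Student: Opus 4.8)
The plan is to verify directly that the set $Y$ meets the defining conditions of a closed subset of $X+_{f}Y$. Recall that a subset $A \subseteq X\dot{\cup}Y$ was declared closed in $X+_{f}Y$ exactly when $A\cap X\in Closed(X)$, $A\cap Y\in Closed(Y)$, and $f(A\cap X)\subseteq A$. I would simply instantiate this criterion at $A=Y$, so the whole argument reduces to checking these three clauses for the particular set $Y$.

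Checking them in order: first, $Y\cap X=\emptyset$, which is closed in $X$; second, $Y\cap Y=Y$, which is trivially closed in $Y$; and third, $f(Y\cap X)=f(\emptyset)=\emptyset\subseteq Y$, where the middle equality is precisely the admissibility requirement that $f$ preserve the empty set. Since all three conditions hold, $Y\in Closed(X+_{f}Y)$, which is what we want.

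There is essentially no obstacle here: the statement follows immediately from the definition of $\tau_{f}$ together with the admissibility clause $f(\emptyset)=\emptyset$, which is exactly the hypothesis that makes the third condition go through. The only point worth flagging is that this uses the one-sided glueing $X+_{f}Y$ (equivalently the pair $f,\emptyset$); it records that in the one-sided construction $Y$ is always closed, the topological counterpart of the earlier remark that taking $g=\emptyset$ forces $X$ to be open in $X+_{f,g}Y$.
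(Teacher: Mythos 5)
Your proof is correct and is essentially identical to the paper's: both verify the three defining clauses of closedness for $A=Y$, namely $Y\cap X=\emptyset\in Closed(X)$, $Y\cap Y=Y\in Closed(Y)$, and $f(Y\cap X)=f(\emptyset)=\emptyset\subseteq Y$ via admissibility. Nothing is missing.
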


\begin{proof}We have that $Y\cap X = \emptyset \in Closed(X), \ Y\cap Y = Y \in Closed(Y)$ and $f(Y\cap X) = f(\emptyset) = \emptyset \subseteq Y$. Thus, $Y \in Closed(X+_{f}Y)$.
\end{proof}

\begin{prop}The maps $id_{X}:X \rightarrow X+_{f,g}Y$ and $id_{Y}:Y \rightarrow X+_{f,g}Y$ are embeddings.
\end{prop}

\begin{proof} Let $F \in Closed(X+_{f,g}Y)$. Then $F\cap X \in Closed(X)$. However, $F \cap X = id_{X}^{-1}(F)$. So $id_{X}$ is continuous. Let $F \in Closed(X)$. We have that $Cl_{X+_{f,g}Y}(F) = F \cup f(F)$ and $(F \cup f(F))\cap X = F$, which implies that $F$ is closed in $X$ as a subspace of $X+_{f,g}Y$. Thus, $id_{X}$ is an embedding.

Analogously $id_{Y}:Y \rightarrow X+_{f,g}Y$ is an embedding.
\end{proof}

\begin{prop}\label{bomcomp}Let $Z$ be a topological space such that $Z = X\dot{\cup} Y$ and $X$ is open. We define $f: Closed(X) \rightarrow Closed(Y)$ as $f(A) = Cl_{Z}(A) \cap Y$ and $g: Closed(Y) \rightarrow Closed(X)$ as $g(B) = Cl_{Z}(B) \cap X$. So $Z$ and $X+_{f,g}Y$ have the same topology.
\end{prop}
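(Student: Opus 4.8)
The plan is to show that the two topologies on the common underlying set $Z = X \dot{\cup} Y$ have exactly the same closed sets. Before that, I would record that $(f,g)$ is an admissible pair, so that $X+_{f,g}Y$ is actually defined: both maps clearly send $\emptyset$ to $\emptyset$ and respect finite unions (since $Cl_{Z}$ does and intersecting with a fixed set distributes over unions), and for $A \in Closed(X)$ one has $g(f(A)) = Cl_{Z}(Cl_{Z}(A)\cap Y)\cap X \subseteq Cl_{Z}(A)\cap X = Cl_{X}(A) = A$, with the symmetric computation giving $f(g(B)) \subseteq B$. Here I use the standard subspace identity $Cl_{Z}(S)\cap X = Cl_{X}(S)$ for $S \subseteq X$ (and likewise for $Y$), which will be the workhorse of the whole argument. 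I would then invoke the characterization already recorded in the excerpt: $D \in Closed(X+_{f,g}Y)$ if and only if $D\cap X \in Closed(X)$, $D\cap Y \in Closed(Y)$, $f(D\cap X)\subseteq D$ and $g(D\cap Y)\subseteq D$.

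For the inclusion $Closed(Z) \subseteq Closed(X+_{f,g}Y)$, take $D$ closed in $Z$. Then $D\cap X$ and $D\cap Y$ are closed in the respective subspaces, giving the first two conditions for free. For the remaining two I would compute, using $Cl_{Z}(D) = D$, that $f(D\cap X) = Cl_{Z}(D\cap X)\cap Y \subseteq Cl_{Z}(D)\cap Y = D\cap Y \subseteq D$, and symmetrically $g(D\cap Y) \subseteq Cl_{Z}(D)\cap X = D\cap X \subseteq D$. Hence $D$ is closed in $X+_{f,g}Y$.

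For the reverse inclusion, suppose $D$ satisfies the four conditions; the goal is $Cl_{Z}(D)\subseteq D$. I would write $Cl_{Z}(D) = Cl_{Z}(D\cap X)\cup Cl_{Z}(D\cap Y)$ and then split each piece along $Z = X \dot{\cup} Y$. For the first piece, $Cl_{Z}(D\cap X)\cap X = Cl_{X}(D\cap X) = D\cap X$ since $D\cap X$ is closed in $X$, while $Cl_{Z}(D\cap X)\cap Y = f(D\cap X) \subseteq D$ by hypothesis; hence $Cl_{Z}(D\cap X)\subseteq D$. The same computation with the roles of $X,Y,f,g$ interchanged gives $Cl_{Z}(D\cap Y)\subseteq D$. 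Therefore $Cl_{Z}(D)\subseteq D$ and $D$ is closed in $Z$, completing the proof.

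The argument is essentially a bookkeeping exercise, and the only real subtlety is to apply the subspace-closure identity $Cl_{Z}(S)\cap X = Cl_{X}(S)$ to the correct factor each time and to remember that $f$ and $g$ are by definition precisely the traces on the opposite factor of the ambient closure. It is worth noting that the hypothesis that $X$ is open is not strictly needed for the equality of topologies; its genuine effect is that $Y$ becomes closed in $Z$, which forces $g(B) = Cl_{Z}(B)\cap X = B\cap X = \emptyset$ for every $B \in Closed(Y)$, so that $g$ is in fact the constant empty map and $X+_{f,g}Y = X+_{f}Y$. I would add this as a remark, since it reconciles the construction with the statement in the introduction that openness of $X$ corresponds to $g$ being constantly $\emptyset$.
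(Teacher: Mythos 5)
Your proof is correct and takes essentially the same route as the paper's: verify that $(f,g)$ is an admissible pair via the subspace-closure identity $Cl_{Z}(S)\cap X = Cl_{X}(S)$, then establish both inclusions of closed-set families by splitting $Cl_{Z}(D) = Cl_{Z}(D\cap X)\cup Cl_{Z}(D\cap Y)$ and handling each piece exactly as the paper does. Your closing remark is also accurate: the paper's own proof never invokes the openness of $X$ either, and its real role is precisely the one you identify --- it makes $Y$ closed in $Z$, hence $g \equiv \emptyset$, so that the proposition specializes to the one-map glueing $X+_{f}Y$ as claimed in the introduction.
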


\begin{proof}Let $A,A' \in Closed(X)$. So $f(A\cup A') = Cl_{Z}(A\cup A') \cap Y =$ $\\ (Cl_{Z}(A)\cup Cl_{Z}(A')) \cap Y = (Cl_{Z}(A)\cap Y)\cup (Cl_{Z}(A') \cap Y) = f(A)\cup f(A')$ and $f(\emptyset) = Cl_{Z}(\emptyset) \cap Y = \emptyset \cap Y = \emptyset$. So $f$ is admissible. Analogously $g$ is admissible. We have also that $g(f(A)) = Cl_{Z}(Cl_{Z}(A)\cap Y)\cap X \subseteq  Cl_{Z}(Cl_{Z}(A))\cap X  = Cl_{Z}(A)\cap X = Cl_{X}(A) = A$. Analogously, for $B\in Closed(Y)$, $f(g(B)) \subseteq B$. So $f$ and $g$ are an admissible pair.

Let $A \in Closed (Z)$. We have that $A\cap X \in Closed (X), \ A\cap Y \in Closed (Y)$, $f(A \cap X) = Cl_{Z}(A\cap X) \cap Y \subseteq Cl_{Z}(A\cap X) \subseteq Cl_{Z}(A) = A$ and $g(A \cap Y) = Cl_{Z}(A\cap Y) \cap X \subseteq Cl_{Z}(A\cap Y) \subseteq Cl_{Z}(A) = A$. So $A \in Closed(X+_{f,g}Y)$. Let $A \in Closed(X+_{f,g}Y)$. We have that $A\cap X \in Closed(X)$, which implies that $Cl_{X}(A\cap X) = A\cap X \subseteq A$. But $Cl_{X}(A\cap X) = Cl_{Z}(A\cap X)\cap X$, which implies that $Cl_{Z}(A\cap X)\cap X \subseteq A$. For the other hand, we have that $f(A\cap X) \subseteq A$. But $f(A \cap X) = Cl_{Z}(A\cap X) \cap Y$, which implies that $Cl_{Z}(A\cap X) \cap Y \subseteq A$. So $Cl_{Z}(A\cap X) \subseteq A$. We also have that $A\cap Y \in Closed(Y)$ and $g(A\cap Y) \subseteq A$, which implies that $Cl_{Z}(A\cap Y) \subseteq A$. But $A = (A\cap X)\cup(A \cap Y)$, which implies that $Cl_{Z}(A) = Cl_{Z}(A\cap X) \cup Cl_{Z}(A\cap Y) \subseteq A$. So $A \in Closed(Z)$.

Thus, $Closed (Z) = Closed(X+_{f,g}Y)$.
\end{proof}

As a simple example, we have:

\begin{prop}Let $X,Y$ be topological spaces. Then $X+_{\emptyset}Y$ is the coproduct of $X$ and $Y$.
\end{prop}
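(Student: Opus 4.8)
The plan is to show that $X +_{\emptyset} Y$ is the coproduct (disjoint union) of $X$ and $Y$ by identifying exactly which subsets of $X \dot{\cup} Y$ are closed in the topology $\tau_{\emptyset}$, and checking that these coincide with the closed sets of the coproduct topology. Recall that the coproduct topology on $X \dot{\cup} Y$ is the one in which a set is closed if and only if its intersection with $X$ is closed in $X$ and its intersection with $Y$ is closed in $Y$.

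First I would unwind the definition of $\tau_{\emptyset}$. Here the admissible map is the constant map $\emptyset \colon Closed(X) \to Closed(Y)$ sending every closed set to the empty set (this is admissible since $\emptyset \cup \emptyset = \emptyset$). By the defining description of closed sets in $X +_{f} Y$, a set $A \subseteq X \dot{\cup} Y$ is closed precisely when $A \cap X \in Closed(X)$, $A \cap Y \in Closed(Y)$, and $f(A \cap X) \subseteq A$. With $f = \emptyset$, the last condition becomes $\emptyset = \emptyset(A \cap X) \subseteq A$, which holds trivially for every $A$.

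Consequently the closed sets of $X +_{\emptyset} Y$ are exactly those $A$ with $A \cap X$ closed in $X$ and $A \cap Y$ closed in $Y$, and this is precisely the family of closed sets of the coproduct topology. I would then note that the embeddings $id_X$ and $id_Y$ (shown earlier to be embeddings) are the canonical coproduct inclusions, so the universal property follows: a map out of $X +_{\emptyset} Y$ is continuous if and only if its restrictions to $X$ and to $Y$ are continuous. Thus $X +_{\emptyset} Y$ is the coproduct.

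There is no real obstacle here; the statement is essentially a direct unwinding of the definitions, and the only point requiring a moment's care is verifying that the constraint $f(A \cap X) \subseteq A$ becomes vacuous when $f$ is the constant empty map. Once that is observed, the closed sets match those of the coproduct on the nose, and the conclusion is immediate (which is presumably why the paper marks it with $\Box$).
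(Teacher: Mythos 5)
Your proof is correct, but it takes a different route from the paper's. You identify the topology on the nose: with $f=\emptyset$ the constraint $f(A\cap X)\subseteq A$ is vacuous, so the closed sets of $\tau_{\emptyset}$ are exactly the sets $A$ with $A\cap X$ closed in $X$ and $A\cap Y$ closed in $Y$, i.e.\ the closed sets of the coproduct topology, and the universal property follows at once. The paper never examines general closed sets; it only observes that $Cl_{X+_{\emptyset}Y}(X)=X\cup f(X)=X$, so $X$ is closed, and since $Y$ is closed (by the earlier proposition that $Y$ is always closed in $X+_{f}Y$) and both are embedded subspaces, the space is partitioned into two disjoint clopen embedded copies of $X$ and $Y$, hence is their coproduct. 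Your version is self-contained and yields the universal property directly, with no reliance on prior results; the paper's is shorter because it leans on the machinery already established (the closure formula $Cl(A)=A\cup f(A)$, closedness of $Y$, and the embedding proposition). One small factual slip: the paper does not mark this proposition with $\square$; it writes out the short closedness argument just described.
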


\begin{proof}We have that $X\cup f(X) = X \in Closed(X+_{\emptyset}Y)$. So $X$ and $Y$ are closed, disjoint and $X\cup Y = X+_{\emptyset}Y$, which implies that $X+_{\emptyset}Y$ is the coproduct of $X$ and $Y$.
\end{proof}

\subsection{Topological Properties}

\subsubsection{Separation}

\begin{prop}Let $X,Y$ be topological spaces and $X+_{f,g}Y$ Hausdorff. So $\forall K \subseteq X$ compact, $f(K) = \emptyset$ and $\forall K \subseteq Y$ compact, $g(K) = \emptyset$.
\end{prop}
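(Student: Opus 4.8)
The plan is to argue by contradiction, exploiting the closure formula already established together with the separation of a point from a disjoint compact set in a Hausdorff space. First I would observe that since $id_{X}: X \rightarrow X+_{f,g}Y$ is an embedding and $X+_{f,g}Y$ is Hausdorff, the subspace $X$ is itself Hausdorff; consequently any compact $K \subseteq X$ is closed in $X$, so that $f(K)$ is defined. Moreover, because $id_{X}$ is continuous (indeed an embedding), $K$ remains compact when viewed inside $X+_{f,g}Y$.

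Next, suppose toward a contradiction that $f(K) \neq \emptyset$, and fix a point $y \in f(K) \subseteq Y$. Since $K$ is closed in $X$, the closure formula gives $Cl_{X+_{f,g}Y}K = K \cup f(K)$, whence $y \in Cl_{X+_{f,g}Y}K$; that is, every open neighbourhood of $y$ in $X+_{f,g}Y$ meets $K$.

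On the other hand, $y \in Y$ and $K \subseteq X$ are disjoint, so $y \notin K$, while $K$ is a compact subset of the Hausdorff space $X+_{f,g}Y$. By the standard fact that in a Hausdorff space a point and a disjoint compact set can be separated by disjoint open sets, there exist open sets $U, V$ of $X+_{f,g}Y$ with $y \in U$, $K \subseteq V$ and $U \cap V = \emptyset$. Then $U$ is an open neighbourhood of $y$ with $U \cap K = \emptyset$, contradicting $y \in Cl_{X+_{f,g}Y}K$. Hence $f(K) = \emptyset$. The assertion for compact $K \subseteq Y$ follows by the symmetric argument, using instead the closure formula $Cl_{X+_{f,g}Y}B = B \cup g(B)$ and the map $g$ in place of $f$.

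I would expect no serious obstacle here: the whole argument rests on the already-proven closure formula and on elementary Hausdorff separation. The only point meriting care is that $f(K)$ be well defined in the first place, which is precisely why I would establish that $X$ inherits the Hausdorff property (so that compact implies closed) as the opening step.
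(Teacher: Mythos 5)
Your proposal is correct and is essentially the paper's own argument: both rest on the closure formula $Cl_{X+_{f,g}Y}(K) = K \cup f(K)$ together with the fact that a compact subset of the Hausdorff space $X+_{f,g}Y$ is closed, forcing $f(K) \subseteq Y$ to be empty (the paper cites this fact directly, while you inline its standard proof via point--compact separation and run it as a contradiction). Your reading of the second claim as $g(K) = \emptyset$ for compact $K \subseteq Y$ is also the intended one; the paper's statement contains a typo there.
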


\begin{proof}Let $K \subseteq X$ be a compact. We have that $Cl_{X+_{f,g}Y}(K) = K \cup f(K)$. Since $X+_{f,g}Y$ is Hausdorff, it follows that $K$ is closed, which means that $K \cup f(K) = K$, which implies that $f(K) = \emptyset$. Analogously, $\forall K \subseteq Y$ compact, $g(K) = \emptyset$.
\end{proof}

\begin{prop}Let $X,Y$ be Hausdorff spaces, with $X$ locally compact. Then $X+_{f}Y$ is Hausdorff if and only if $\forall K \subseteq X$ compact, $f(K) = \emptyset$ and $\forall a,b \in Y, \ \exists A,B \in Closed(X): \ A \cup B = X, \ b \notin f(A)$ and $a \notin f(B)$.
\end{prop}

\begin{proof}($\Rightarrow$) Let $a,b \in Y$. Since $X+_{f}Y$ is a Hausdorff space, $\exists U,V \in Closed(X+_{f}Y): \ U \cup V = X+_{f}Y, \ a \notin V$ and $b \notin U$. Take $A = U \cap X$ and $B = V \cap X$. We have that $A,B \in Closed(X)$ and $A \cup B = (U\cup V) \cap X = X$. Since $U$ and $V$ are  closed, $f(A) = f(U \cap X) \subseteq U$ and $f(B) = f(V \cap X) \subseteq V$. Thus, $a \notin f(B)$ and $b \notin f(A)$. We already saw in this case that $f(K) = \emptyset$ for every compact $K \subseteq X$.

($\Leftarrow$) Let $a,b \in X$. Since $X$ is Hausdorff, there exists $U,V$ open and disjoint neighborhoods of $a$ and $b$. But $X$ is open in $X+_{f}Y$, which implies that $U$ and $V$ are open and disjoint sets of $X+_{f}Y$ that separate $a\in U$ and $b\in V$.

Let $a\in X$ and $b \in Y$. Since $X$ is locally compact, there exists an open neighborhood $U$ of $a$ in $X$ such that $Cl_{X}(U)$ is compact. Since $X$ is open in $X+_{f}Y$, we have that $U$ is an open neighborhood of $a$ in $X+_{f}Y$ and, since $Cl_{X}(U)$ is compact, we have $f(Cl_{X}(U)) = \emptyset$, which implies that $Cl_{X}(U)$ is closed in $X+_{f}Y$. It follows that $U$ and $(X+_{f}Y) - Cl_{X}(U)$ separate $a$ and $b$.

Let $a,b \in Y$. So there exists $A,B \in Closed(X)$ such that  $A \cup B = X$, $a \notin f(B)$ and $b \notin f(A)$. Since, $Y$ is Hausdorff, there exists $C,D \in Closed(Y)$ such that $C\cup D = Y, \ a \notin D$ and $b \notin C$. We have that $A\cup f(A) \cup C$ and $B\cup f(B) \cup D$ are closed sets in $X+_{f}Y$ such that $A\cup f(A)\cup C\cup B\cup f(B) \cup D = (A\cup B) \cup (f(A) \cup f(B) \cup C \cup D) = X \cup Y = X+_{f}Y, \ a \notin B \cup f(B) \cup D$ and $b \notin A \cup f(A) \cup C$. So $(X+_{f}Y) - (B \cup f(B)\cup D)$ and $(X+_{f}Y) - (A \cup f(A)\cup C)$ are open sets that separate $a$ and $b$.

Thus, $X+_{f}Y$ is Hausdorff.
\end{proof}

\subsubsection{Compactness}

\begin{prop}Let $X,Y$ be topological spaces with $Y$ compact and $f$ an admissible map. Then $X+_{f}Y$ is compact if and only if $\forall A \in Closed(X)$ non compact, $f(A) \neq \emptyset$.
\end{prop}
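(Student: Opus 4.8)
The plan is to prove both directions by contraposition, exploiting the explicit description of closed sets in $X+_{f}Y$, namely that $D$ is closed exactly when $D\cap X\in Closed(X)$, $D\cap Y\in Closed(Y)$ and $f(D\cap X)\subseteq D$. The key structural observation I would use is that since $Y$ is compact and closed in $X+_{f}Y$ (by the proposition that $Y$ is closed in $X+_{f}Y$), any open cover can be handled by first covering the compact set $Y$ with finitely many basic opens and then analyzing what remains over $X$.

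First I would prove the contrapositive of ($\Rightarrow$): suppose there exists a non-compact $A\in Closed(X)$ with $f(A)=\emptyset$. Then $A$ is closed in $X+_{f}Y$ (since $A\cap X=A\in Closed(X)$, $A\cap Y=\emptyset\in Closed(Y)$, and $f(A)=\emptyset\subseteq A$). A closed subspace of a compact space is compact, and the subspace topology $A$ inherits from $X+_{f}Y$ agrees with its topology as a subspace of $X$ (because $id_X$ is an embedding). Hence $A$ would be compact in $X$, contradicting non-compactness. Therefore if $X+_{f}Y$ is compact, every non-compact closed $A\subseteq X$ must have $f(A)\neq\emptyset$.

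For ($\Leftarrow$), assume every non-compact $A\in Closed(X)$ has $f(A)\neq\emptyset$, and take an open cover $\{U_i\}_{i\in I}$ of $X+_{f}Y$; I want a finite subcover. Since $Y$ is compact, finitely many of the $U_i$, say $U_1,\dots,U_n$, cover $Y$. Let $U=U_1\cup\cdots\cup U_n$ and set $F=(X+_{f}Y)-U$, a closed set disjoint from $Y$, so $F\subseteq X$ and $A:=F=F\cap X\in Closed(X)$ with $A\cap Y=\emptyset$. The crucial point is to show $A$ is compact. Since $A=F$ is closed in $X+_{f}Y$, we have $f(A)\subseteq F$; but $f(A)\subseteq Y$ and $F\cap Y=\emptyset$, forcing $f(A)=\emptyset$. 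By the hypothesis (contrapositive), $A$ must then be compact. Finitely many of the remaining $U_i$ cover the compact set $A$, and together with $U_1,\dots,U_n$ these give a finite subcover of $X+_{f}Y$.

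The main obstacle I anticipate is the compactness identification in the $(\Leftarrow)$ direction: I must be careful that $A$, defined as a subset of $X+_{f}Y$, really is a closed subset of $X$ in its own right and that its compactness as a subspace of $X$ transfers correctly to the cover of $X+_{f}Y$. The embedding property of $id_X$ guarantees the two subspace topologies on $A$ coincide, so compactness is unambiguous; and since the cover $\{U_i\}$ restricts to an open cover of $A$ in either topology, extracting a finite subcover of $A$ and combining it with $U_1,\dots,U_n$ is routine. The elegance of the argument rests entirely on recognizing that the condition $f(A)=\emptyset$ is exactly what makes a closed subset of $X$ also closed (hence compact) in $X+_{f}Y$.
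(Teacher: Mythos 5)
Your proof is correct, and while the forward direction matches the paper's argument, your backward direction takes a genuinely different route. For ($\Rightarrow$), the paper argues directly that a non-compact $A \in Closed(X)$ cannot be closed in $X+_{f}Y$; you phrase this as a contrapositive, but the content is identical: $f(A)=\emptyset$ makes $A$ closed in $X+_{f}Y$, hence compact via the embedding $id_{X}$, a contradiction. For ($\Leftarrow$), the paper runs a filter argument: given a filter $\mathcal{F}$ with no compact member, any $S \in \mathcal{F}$ whose closure stayed inside $X$ would satisfy $f(Cl_{X}(S))=\emptyset$ and so be compact (contradiction), hence every member's closure meets $Y$, and the finite intersection property of the traces $\{Cl_{X+_{f}Y}(A)\cap Y\}_{A\in\mathcal{F}}$ on the compact $Y$ produces a cluster point. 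You instead use the open-cover definition: cover $Y$ (compact as a subspace of $X+_{f}Y$ because $id_{Y}$ is an embedding, a point worth stating explicitly) by finitely many members of the cover, note that the complement $F$ of their union is closed in $X+_{f}Y$ and disjoint from $Y$, hence is a closed subset of $X$ with $f(F)\subseteq F\cap Y=\emptyset$, so the hypothesis forces $F$ to be compact, and finitely many more members finish the cover. Your route is shorter and more elementary, requiring only one application of the hypothesis plus the two embedding propositions, whereas the paper's Bourbaki-style convergence proof needs the auxiliary dichotomy on filters; both hinge on the same key observation, namely that $f(A)=\emptyset$ is exactly the condition making a closed $A\subseteq X$ closed in $X+_{f}Y$.
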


\begin{proof}($\Rightarrow$) Let $A \in Closed(X)$ be non  compact. Since $X+_{f}Y$ is compact, we have that $A$ is not closed, which implies that $f(A)\neq \emptyset$.

($\Leftarrow$) Let $\mathcal{F}$ be a filter in $X+_{f}Y$. If $\exists K \in \mathcal{F}$ compact, then $\mathcal{F}\cap K = \{A\cap K: A \in \mathcal{F}\}$ is a filter in $K$ which has a cluster point $x$ (because $K$ is compact). Since $K \in \mathcal{F}$, we have that $\mathcal{F}\cap K$ is a basis for $\mathcal{F}$, which implies that $x$ is a cluster point of $\mathcal{F}$.

Let's suppose that $\nexists K \in \mathcal{F}: K$ is compact. Let $S \in \mathcal{F}: Cl_{X+_{f}Y}(S) \subseteq X$. We have that $Cl_{X+_{f}Y}(S) = Cl_{X}(S)$. So $Cl_{X}(S) \in \mathcal{F}$ and  $Cl_{X+_{f}Y}(Cl_{X}(S)) \subseteq X$. Since $Cl_{X+_{f}Y}(Cl_{X}(S)) = Cl_{X}(S) \cup f(Cl_{X}(S))$, we have that $f(Cl_{X}(S)) = \emptyset$, which implies that $Cl_{X}(S)$ is compact, a contradiction. So $\forall A \in \mathcal{F}$, $Cl_{X+_{f}Y}(A) \cap Y \neq \emptyset$.

So we have that the set $\{Cl_{X+_{f}Y}(A)\cap Y\}_{A \in \mathcal{F}}$ has the finite intersection property (if $A_{1},...,A_{n} \in \mathcal{F}$, then $A_{1}\cap...\cap A_{n} \in \mathcal{F}$, which implies that $Cl_{X+_{f}Y}(A_{1} \cap ...\cap A_{n}) \cap Y \neq \emptyset$ and then $Cl_{X+_{f}Y}(A_{1}) \cap ...\cap Cl_{X+_{f}Y}(A_{n}) \cap Y \neq \emptyset$ because $Cl_{X+_{f}Y}(A_{1} \cap ...\cap A_{n}) \subseteq Cl_{X+_{f}Y}(A_{1}) \cap ...\cap Cl_{X+_{f}Y}(A_{n})$). Since $Y$ is compact, $\exists x \in \bigcap_{A \in \mathcal{F}}Cl_{X+_{f}Y}(A)\cap Y$, which implies that $\forall A \in \mathcal{F}$, $x \in Cl_{X+_{f}Y}(A)$. So $x$ is a cluster point of $\mathcal{F}$.

Thus, every filter has a cluster point, which implies that  $X+_{f}Y$ is compact.
\end{proof}

\subsection{Continuous maps between Artin-Wraith glueings}

\begin{defi} Let $X+_{f,g}Y$ and $Z+_{h,j}W$ be spaces and $\psi: X \rightarrow Z$ and $\phi: Y \rightarrow W$ continuous maps. So we define $\psi + \phi: X+_{f,g}Y \rightarrow Z+_{h,j}W$ by $(\psi + \phi)(x) = \psi(x)$ if $x \in X$ and $\phi(x)$ if $x \in Y$. If $G$ is a group, $\psi: G \curvearrowright X$ and $\phi: G \curvearrowright Y$, then we define $\psi+\phi: G \curvearrowright X+_{f,g}Y$ by $(\psi+\phi)(g,x) = \psi(g,x)$ if $x \in X$ and $\phi(g,x)$ if $x \in Y$.
\end{defi}

Our next concern is to decide when those maps are continuous.

\begin{prop}\label{mapacontinuo}Let $X+_{f,g}Y$ and $Z+_{h,j}W$ be topological spaces and $\psi: X \rightarrow Z$ and $\phi: Y \rightarrow W$ continuous maps. Then, the application $\psi + \phi: X+_{f,g}Y \rightarrow Z+_{h,j}W$ is continuous if and only if $\forall A \in Closed(Z)$,  $f(\psi^{-1}(A)) \subseteq \phi^{-1}(h(A))$ and $\forall B \in Closed(W)$,  $g(\phi^{-1}(B)) \subseteq \psi^{-1}(j(B))$.
\end{prop}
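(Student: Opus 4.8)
The plan is to use the closed-set characterization of continuity: $\psi+\phi$ is continuous if and only if $(\psi+\phi)^{-1}(E)$ is closed in $X+_{f,g}Y$ for every $E \in Closed(Z+_{h,j}W)$. The first observation I would make is that, since $\psi+\phi$ carries $X$ into $Z$ and $Y$ into $W$, the preimage splits cleanly along the decomposition: writing $D = (\psi+\phi)^{-1}(E)$, we have $D \cap X = \psi^{-1}(E \cap Z)$ and $D \cap Y = \phi^{-1}(E \cap W)$. Because $E \cap Z \in Closed(Z)$, $E \cap W \in Closed(W)$ and both $\psi,\phi$ are continuous, the conditions $D \cap X \in Closed(X)$ and $D \cap Y \in Closed(Y)$ from the closed-set description of $X+_{f,g}Y$ hold automatically. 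Thus closedness of $D$ reduces to the two remaining conditions $f(D\cap X) \subseteq D$ and $g(D \cap Y) \subseteq D$. Since $f(D \cap X) \subseteq Y$ and $D \cap Y = \phi^{-1}(E \cap W)$, the first is equivalent to $f(\psi^{-1}(E \cap Z)) \subseteq \phi^{-1}(E \cap W)$, and symmetrically the second to $g(\phi^{-1}(E \cap W)) \subseteq \psi^{-1}(E \cap Z)$. So continuity is equivalent to these two inclusions holding for every closed $E$.

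For the ($\Rightarrow$) direction, I would extract the stated conditions by testing against well-chosen closed sets. Given $A \in Closed(Z)$, the earlier closure formula gives $E := Cl_{Z+_{h,j}W}(A) = A \cup h(A)$, which is closed with $E \cap Z = A$ and $E \cap W = h(A)$. Feeding this $E$ into the first equivalent inclusion yields exactly $f(\psi^{-1}(A)) \subseteq \phi^{-1}(h(A))$. Symmetrically, given $B \in Closed(W)$, taking $E = B \cup j(B)$ and using the second inclusion gives $g(\phi^{-1}(B)) \subseteq \psi^{-1}(j(B))$.

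For the ($\Leftarrow$) direction, I would run the reduction in reverse. Let $E$ be any closed set, and set $A = E \cap Z$, $B = E \cap W$. Closedness of $E$ means $h(A) \subseteq E$ and $j(B) \subseteq E$; since $h(A) \subseteq W$ and $j(B) \subseteq Z$, this sharpens to $h(A) \subseteq B$ and $j(B) \subseteq A$. Now the hypothesis applied to $A$ gives $f(\psi^{-1}(A)) \subseteq \phi^{-1}(h(A)) \subseteq \phi^{-1}(B)$, which is the first inclusion for $E$; the hypothesis applied to $B$ gives $g(\phi^{-1}(B)) \subseteq \psi^{-1}(j(B)) \subseteq \psi^{-1}(A)$, the second. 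Hence $(\psi+\phi)^{-1}(E)$ is closed, and $\psi+\phi$ is continuous.

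The argument is essentially a chain of equivalences, so there is no single computational obstacle; the one point requiring care is the passage between ``the inclusions hold for every closed $E$'' and ``the inclusions hold for every closed $A$ and every closed $B$'' separately. This is precisely where the closure formula $Cl_{Z+_{h,j}W}(A) = A \cup h(A)$ does the work: it realizes an arbitrary $A \in Closed(Z)$ as the $Z$-part of a closed set whose $W$-part is exactly $h(A)$, and, conversely, the monotonicity relations $h(A) \subseteq B$ and $j(B) \subseteq A$ extracted from a general closed $E$ let the two per-set hypotheses recombine into the conditions on $E$.
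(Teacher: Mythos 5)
Your proof is correct and takes essentially the same route as the paper's: both arguments rest on the characterization of closed sets in the glueing ($D$ closed iff $D\cap X$, $D\cap Y$ are closed and $f(D\cap X)\subseteq D$, $g(D\cap Y)\subseteq D$) and both extract the necessity of the conditions by testing against the same closed sets $A\cup h(A)$ and $B\cup j(B)$. The only difference is presentational: the paper runs the verification through the closure formula $Cl_{X+_{f,g}Y}(C)=C\cup f(C)$ and a disjointness remark, while you use the four-condition description directly, which is slightly cleaner bookkeeping but not a different argument.
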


\begin{obs}In another words, $\psi+\phi$ is continuous  if and only if we have the diagrams:

$$ \xymatrix{ Closed(X) \ar[r]^{f} & Closed(Y) & & Closed(X)  & Closed(Y) \ar[l]_{g} \\
            Closed(Z) \ar[r]^{h} \ar[u]^{\psi^{-1}} \ar@{}[ur]|{\subseteq} & Closed(W) \ar[u]^{\phi^{-1}} & & Closed(Z)  \ar[u]^{\psi^{-1}} \ar@{}[ur]|{\supseteq} & Closed(W) \ar[l]_{j} \ar[u]^{\phi^{-1}} } $$
\end{obs}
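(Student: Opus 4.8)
The plan is to unwind continuity into the statement that preimages of closed sets are closed, and then to exploit the explicit description of closed sets in an Artin--Wraith glueing together with the closure formula proved earlier.

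First I would fix $C \in Closed(Z+_{h,j}W)$ and set $P = (\psi+\phi)^{-1}(C)$. Since every point of $X$ is sent into $Z$ by $\psi$ and every point of $Y$ into $W$ by $\phi$, a direct computation gives $P \cap X = \psi^{-1}(C \cap Z)$ and $P \cap Y = \phi^{-1}(C \cap W)$. Because $\psi$ and $\phi$ are continuous while $C\cap Z \in Closed(Z)$ and $C \cap W \in Closed(W)$, the sets $P\cap X$ and $P\cap Y$ are automatically closed in $X$ and in $Y$. Hence, by the characterization $D \in Closed(X+_{f,g}Y) \Leftrightarrow D\cap X \in Closed(X), \ D\cap Y \in Closed(Y), \ f(D\cap X)\subseteq D, \ g(D\cap Y)\subseteq D$, we have $P \in Closed(X+_{f,g}Y)$ if and only if $f(P\cap X)\subseteq P$ and $g(P\cap Y)\subseteq P$. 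Since $f(\psi^{-1}(C\cap Z))$ is a subset of $Y$ and $g(\phi^{-1}(C\cap W))$ is a subset of $X$, these two inclusions are equivalent to $f(\psi^{-1}(C\cap Z)) \subseteq \phi^{-1}(C\cap W)$ and $g(\phi^{-1}(C\cap W)) \subseteq \psi^{-1}(C\cap Z)$. Thus continuity of $\psi+\phi$ amounts to these two inclusions holding for every closed $C$.

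For the ($\Leftarrow$) direction I would assume the two diagram inclusions, take an arbitrary closed $C$, and set $A = C\cap Z$, $B = C\cap W$. The hypothesis gives $f(\psi^{-1}(A)) \subseteq \phi^{-1}(h(A))$; since $h$ takes values in $W$ we have $h(A) = h(C\cap Z) \subseteq C\cap W = B$, so $\phi^{-1}(h(A)) \subseteq \phi^{-1}(B)$, and combining yields $f(\psi^{-1}(A)) \subseteq \phi^{-1}(B)$. The symmetric argument using $j(C\cap W)\subseteq C\cap Z$ gives $g(\phi^{-1}(B)) \subseteq \psi^{-1}(A)$. By the previous paragraph $P$ is closed, and as $C$ was arbitrary, $\psi+\phi$ is continuous.

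For the ($\Rightarrow$) direction the real work is producing the right test closed sets, and this is the step I expect to be the crux. Given $A \in Closed(Z)$, I would apply continuity to $C = Cl_{Z+_{h,j}W}(A) = A\cup h(A)$, which is closed by the earlier closure proposition; note this is exactly where the admissible-pair condition $j\circ h(A)\subseteq A$ is used to guarantee that $A\cup h(A)$ really is closed. For this $C$ one has $C\cap Z = A$ and $C\cap W = h(A)$, so the inclusion extracted in the first paragraph reads precisely $f(\psi^{-1}(A)) \subseteq \phi^{-1}(h(A))$. Taking instead $C = B\cup j(B)$ for $B \in Closed(W)$ gives $g(\phi^{-1}(B)) \subseteq \psi^{-1}(j(B))$. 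The only subtleties to watch are this choice of closures as test sets and the bookkeeping of which of $f,g,h,j$ lands in which factor, so that inclusions into $P$ can be replaced by inclusions into $P\cap Y$ or $P\cap X$.
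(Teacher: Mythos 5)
Your proposal is correct and takes essentially the same route as the paper: the same decomposition $(\psi+\phi)^{-1}(C) = \psi^{-1}(C\cap Z)\cup\phi^{-1}(C\cap W)$, and for the harder direction the same test sets $A\cup h(A)$ and $B\cup j(B)$ (the closures of $A$ and $B$ in $Z+_{h,j}W$). The only cosmetic difference is that you work directly with the characterization of $Closed(X+_{f,g}Y)$, whereas the paper runs the equivalent computation through the closure formula $Cl_{X+_{f,g}Y}(E) = E\cup f(E)$.
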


\begin{proof}$(\Leftarrow)$ Let $A$ be a closed set in $Z+_{h,j}W$. We have that $(\psi + \phi)^{-1}(A) =$ $\psi^{-1}(A \cap Z) \cup \phi^{-1}(A \cap W)$. Let's prove that this set is closed, showing that it is equal to its closure. We have that $Cl_{X+_{f,g}Y}(\psi^{-1}(A \cap Z) \cup \phi^{-1}(A \cap W)) = Cl_{X+_{f,g}Y}(\psi^{-1}(A \cap Z)) \cup Cl_{X+_{f,g}Y}(\phi^{-1}(A \cap W))$. But $Cl_{X+_{f,g}Y}(\psi^{-1}(A \cap Z))= \psi^{-1}(A \cap Z) \cup f(\psi^{-1}(A \cap Z))$. We have that $f(\psi^{-1}(A \cap Z)) \subseteq \phi^{-1}(h(A \cap Z))$ by hypothesis and $\phi^{-1}(h(A \cap Z)) \subseteq \phi^{-1}(A \cap W)$, because $A$ is closed in $Z+_{h,j}W$. So $Cl_{X+_{f,g}Y}(\psi^{-1}(A \cap Z)) \subseteq \psi^{-1}(A \cap Z) \cup \phi^{-1}(A \cap W)$.
Using the second diagram we have analogously that $Cl_{X+_{f,g}Y}(\phi^{-1}(A \cap W)) \subseteq \psi^{-1}(A \cap Z) \cup \phi^{-1}(A \cap W)$. So $Cl_{X+_{f,g}Y}(\psi^{-1}(A \cap Z) \cup \phi^{-1}(A \cap W)) \subseteq \psi^{-1}(A \cap Z) \cup \phi^{-1}(A \cap W)$ and follows the equality. Thus, $\psi + \phi$ is continuous.

$(\Rightarrow)$ Let's suppose that $\psi + \phi$ is continuous. Let $A$ be a closed set in $Z$. We have that $A \cup h(A)$ is closed in $Z+_{h,j}W$. By continuity of the map $\psi + \phi$, we have that $(\psi + \phi)^{-1}(A\cup h(A))\in Closed(X+_{f,g}Y)$. But $(\psi + \phi)^{-1}(A\cup h(A)) = \psi^{-1}(A) \cup \phi^{-1}(h(A)) = Cl_{X+_{f,g}Y}(\psi^{-1}(A) \cup \phi^{-1}(h(A))) =$ $Cl_{X+_{f,g}Y}(\psi^{-1}(A)) \cup Cl_{X+_{f,g}Y}(\phi^{-1}(h(A)))$. So we have that $Cl_{X+_{f,g}Y}(\psi^{-1}(A)) \subseteq$ $\psi^{-1}(A) \cup \phi^{-1}(h(A))$. But $Cl_{X+_{f,g}Y}(\psi^{-1}(A)) = \psi^{-1}(A) \cup f(\psi^{-1}(A))$ and $f(\psi^{-1}(A)) \cap \psi^{-1}(A) = \emptyset$, because $\psi^{-1}(A) \subseteq X$. Thus, $f(\psi^{-1}(A)) \subseteq \phi^{-1}(h(A))$, as we wish to proof. Analogously, $\forall B \in Closed(W)$,  $g(\phi^{-1}(B)) \subseteq \psi^{-1}(j(B))$.
\end{proof}

\begin{obs}Something similar appears in \cite{Ne} in the context of locales but with one glueing map instead of two.
\end{obs}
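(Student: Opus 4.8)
The final statement is a \emph{remark} of a bibliographic and comparative nature rather than a mathematical assertion, so strictly speaking there is nothing to derive: it records that the continuity criterion of Proposition~\ref{mapacontinuo} has a close cousin in the locale-theoretic literature, namely in \cite{Ne}, where the glueing datum is a single map rather than the pair $(f,g)$. What one would do to \emph{substantiate} it is therefore not a proof in the usual sense but a translation between two frameworks, and my plan is to carry this out in three steps.

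First, I would locate in \cite{Ne} the precise construction and the morphism criterion being alluded to. In the localic setting a space is replaced by its frame of opens, and Artin glueing is performed along a single finite-meet-preserving map between frames; a morphism of glued locales is then recognised by a commuting (or laxly commuting) square analogous to the two diagrams in the Remark following Proposition~\ref{mapacontinuo}. The work here is to match the objects: our $Closed(X)$, $Closed(Y)$ and the admissible maps $f,g$ must be identified with their frame-theoretic counterparts under the open/closed duality, so that an inclusion such as $f(\psi^{-1}(A)) \subseteq \phi^{-1}(h(A))$ is seen to become exactly the localic commutativity condition of \cite{Ne}.

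Second, I would explain why a single glueing map suffices in the localic context while our topological construction uses two. In the open case $g = \emptyset$ the pair collapses to the single map $f$, and this is precisely the situation that locale-theoretic Artin glueing models, since a locale morphism is already an open-set datum in one direction and the closed-set information is recovered by complementation inside the frame. The second map $g$ of our Definition is the extra freedom that surfaces only when one glues two subspaces neither of which is required to be open; it is this genuinely topological, non-localic phenomenon that the reference does not capture, which is exactly what the phrase \emph{with one glueing map instead of two} is meant to signal.

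The main obstacle is not mathematical depth but fidelity of translation: the delicate part will be reconciling conventions, open versus closed sets, co- versus contravariance of the glueing map, and lax versus strict commutativity of the comparison square, so that the correspondence is literal rather than merely morally true. Once this dictionary is fixed, the agreement of the two continuity criteria follows formally from the open/closed duality, and the remark stands as worded.
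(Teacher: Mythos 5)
Your reading is correct: the statement is an unproved bibliographic remark, and the paper itself offers no argument for it, so there is nothing to compare beyond noting that your treatment agrees with the paper's stance. Your gloss is moreover consistent with the paper's own introduction, which observes that the case $g = \emptyset$ (the single-map, localic situation) is exactly the case where $X$ is open in $X+_{f,g}Y$, so your proposed dictionary with \cite{Ne} is faithful as sketched.
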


\begin{cor}Let $X+_{f,g}Y$, $X+_{f',g'}Y$ be topological spaces. Then, the map $id: X+_{f,g}Y \rightarrow X+_{f',g'}Y$ is continuous if and only if $\forall A \in Closed (X)$, $f(A) \subseteq f'(A)$ and $\forall B \in Closed(Y)$,  $g(B) \subseteq g'(B)$.
\eod\end{cor}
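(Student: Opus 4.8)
The plan is to derive this corollary as a direct specialization of Proposition \ref{mapacontinuo}, which is precisely why it is marked with \eod. First I would instantiate the proposition by taking $Z = X$ and $W = Y$, setting the target glueing maps to be $h = f'$ and $j = g'$, and choosing $\psi = id_X : X \rightarrow X$ and $\phi = id_Y : Y \rightarrow Y$. With these choices the combined map $\psi + \phi$ is exactly $id : X+_{f,g}Y \rightarrow X+_{f',g'}Y$. Both $id_X$ and $id_Y$ are continuous, so the standing hypothesis of Proposition \ref{mapacontinuo} (that $\psi$ and $\phi$ be continuous) is automatically satisfied, and the proposition applies without any further verification.

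Next I would unwind the two continuity criteria provided by the proposition under this substitution. The key observation is that the preimage operators associated to the identity maps act trivially on closed sets: for every $A \in Closed(X)$ one has $\psi^{-1}(A) = A$, and for every $B \in Closed(Y)$ one has $\phi^{-1}(B) = B$. Substituting into the first condition $f(\psi^{-1}(A)) \subseteq \phi^{-1}(h(A))$ turns it into $f(A) \subseteq f'(A)$ for all $A \in Closed(X)$, while the second condition $g(\phi^{-1}(B)) \subseteq \psi^{-1}(j(B))$ becomes $g(B) \subseteq g'(B)$ for all $B \in Closed(Y)$. These are exactly the two inclusions appearing in the statement of the corollary.

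I do not expect any genuine obstacle, since the corollary is simply the degenerate case of Proposition \ref{mapacontinuo} in which source and target share the same underlying sets and the comparison map is the identity. The only point deserving a moment's care is confirming that $(id_X)^{-1}$ and $(id_Y)^{-1}$ really coincide with the identity on $Closed(X)$ and $Closed(Y)$ respectively, which is immediate. Consequently the biconditional of the corollary follows at once from the biconditional already proved in the proposition, with no additional argument required.
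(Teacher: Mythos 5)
Your proposal is correct and is exactly the argument the paper intends: the corollary is marked $\square$ precisely because it is the specialization of Proposition \ref{mapacontinuo} to $Z=X$, $W=Y$, $h=f'$, $j=g'$, $\psi=id_{X}$, $\phi=id_{Y}$, under which the two diagram conditions reduce to $f(A)\subseteq f'(A)$ and $g(B)\subseteq g'(B)$. Nothing further is needed.
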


\subsection{Composition of Artin-Wraith glueings}

\begin{prop}Let $X+_{f}W, \ Y$ and $Z$ be topological spaces and let $\Pi: Closed(Y) \rightarrow Closed (X)$ and $\Sigma: Closed(W) \rightarrow Closed(Z)$ be admissible maps. We define $f_{\Sigma\Pi}: Closed(Y) \rightarrow Closed(Z)$ as $f_{\Sigma\Pi} = \Sigma \circ f \circ \Pi$. Then, $f_{\Sigma\Pi}$ is admissible.
\eod\end{prop}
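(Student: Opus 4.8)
The plan is to verify directly the two defining clauses of admissibility for the composite $f_{\Sigma\Pi} = \Sigma \circ f \circ \Pi$, namely that it sends $\emptyset$ to $\emptyset$ and that it commutes with finite unions of closed sets. Since each of the three maps $\Pi\colon Closed(Y)\to Closed(X)$, $f\colon Closed(X)\to Closed(W)$ and $\Sigma\colon Closed(W)\to Closed(Z)$ is itself admissible by hypothesis, I expect both properties to propagate through the composition purely formally, with no topological input required.

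First I would handle the empty set. Applying the three maps in turn and using that each preserves $\emptyset$, one gets $f_{\Sigma\Pi}(\emptyset) = \Sigma(f(\Pi(\emptyset))) = \Sigma(f(\emptyset)) = \Sigma(\emptyset) = \emptyset$. Next I would treat finite unions; by induction it suffices to check binary unions. For $A,B \in Closed(Y)$, I would chain the three union-preservation identities: $\Pi(A\cup B) = \Pi(A)\cup\Pi(B)$, then $f$ applied to this union splits as $f(\Pi(A))\cup f(\Pi(B))$, and finally $\Sigma$ splits this again, yielding $f_{\Sigma\Pi}(A\cup B) = f_{\Sigma\Pi}(A) \cup f_{\Sigma\Pi}(B)$. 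Both computations are single lines once the defining equations of admissibility are substituted one after another.

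I do not anticipate any genuine obstacle here: the content of the statement is exactly the observation that admissibility, being defined by preservation of $\emptyset$ and of binary unions, is stable under composition, and the two checks above are immediate. This is why the statement is marked with $\Box$ rather than given a separate argument. If anything, the only point worth stating explicitly is that no compatibility condition between $f$ and the additional maps $\Pi,\Sigma$ is needed for \emph{admissibility} alone; such conditions would only enter if one additionally wanted $f_{\Sigma\Pi}$ to be part of an admissible \emph{pair}, which is not claimed.
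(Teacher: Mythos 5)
Your proposal is correct and matches exactly what the paper intends: the $\Box$ marks the proof as immediate, and your two one-line verifications (chaining preservation of $\emptyset$ and of binary unions through $\Sigma \circ f \circ \Pi$) are precisely that immediate argument. Your closing remark that no compatibility between $f$, $\Pi$, $\Sigma$ is needed for admissibility alone is also accurate and consistent with how the paper uses the construction.
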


\begin{prop}Let $X+_{f}W, \ Y$ and $Z$ be topological spaces and consider the admissible maps:
\begin{enumerate}
    \item $\Pi: Closed(Y) \rightarrow Closed (X)$,
    \item $\Sigma: Closed(W) \rightarrow Closed(Z)$,
    \item $\Lambda: Closed(X) \rightarrow Closed(Y)$,
    \item $\Omega: Closed(Z) \rightarrow Closed(W)$.
\end{enumerate}

If $\Omega \circ \Sigma \subseteq id_{Closed(W)}$ (respec. $\supseteq id_{Closed(W)}$ or $=  id_{Closed(W)}$) and $\Pi \circ \Lambda \subseteq id_{Closed (X)}$ (respec. $\supseteq id_{Closed(X)}$ or $=  id_{Closed(X)}$) then $(f_{\Sigma \Pi})_{\Omega \Lambda} \subseteq f$ (respec. $\supseteq f$ or $= f$).
\end{prop}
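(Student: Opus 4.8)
The plan is to unwind the iterated construction and reduce everything to the monotonicity of admissible maps. Applying the subscript operation twice, $(f_{\Sigma\Pi})_{\Omega\Lambda}$ is by definition the composite $\Omega \circ f_{\Sigma\Pi} \circ \Lambda = \Omega \circ \Sigma \circ f \circ \Pi \circ \Lambda$, a map $Closed(X) \rightarrow Closed(W)$ of the same type as $f$. Thus the assertion $(f_{\Sigma\Pi})_{\Omega\Lambda} \subseteq f$ unwinds to the pointwise statement that $\Omega(\Sigma(f(\Pi(\Lambda(A))))) \subseteq f(A)$ for every $A \in Closed(X)$, and similarly for $\supseteq$ and $=$. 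So it suffices to verify these pointwise inclusions.

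The key observation, which I would record first, is that every admissible map is monotone with respect to inclusion: if $h$ is admissible and $A \subseteq B$ are closed, then writing $B = A \cup B$ gives $h(B) = h(A) \cup h(B)$, hence $h(A) \subseteq h(B)$. In particular $f$, $\Sigma$ and $\Omega$ are all monotone.

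For the inclusion case, I would fix $A \in Closed(X)$. The hypothesis $\Pi \circ \Lambda \subseteq id_{Closed(X)}$ gives $\Pi(\Lambda(A)) \subseteq A$. Applying $f$, then $\Sigma$, then $\Omega$ and using monotonicity three times yields $\Omega(\Sigma(f(\Pi(\Lambda(A))))) \subseteq \Omega(\Sigma(f(A)))$. Now $f(A) \in Closed(W)$, so the hypothesis $\Omega \circ \Sigma \subseteq id_{Closed(W)}$ gives $\Omega(\Sigma(f(A))) \subseteq f(A)$. Chaining the two inclusions gives $(f_{\Sigma\Pi})_{\Omega\Lambda}(A) \subseteq f(A)$, as desired. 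The $\supseteq$ case is obtained by reversing every inclusion above (using $\Pi \circ \Lambda \supseteq id_{Closed(X)}$, monotonicity again, and $\Omega \circ \Sigma \supseteq id_{Closed(W)}$), and the equality case follows by combining the two.

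There is really no serious obstacle here: the only point requiring care is recording monotonicity of admissible maps before chaining, since the argument invokes it at each of the three composition steps. One should also double-check the bookkeeping of which auxiliary map plays the role of $\Pi$ and which plays the role of $\Sigma$ in the second application of the construction, so that $(f_{\Sigma\Pi})_{\Omega\Lambda}$ is correctly identified with $\Omega \circ \Sigma \circ f \circ \Pi \circ \Lambda$.
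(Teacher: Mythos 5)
Your proposal is correct and follows essentially the same route as the paper: unwind $(f_{\Sigma\Pi})_{\Omega\Lambda} = \Omega \circ \Sigma \circ f \circ \Pi \circ \Lambda$, use $\Pi \circ \Lambda \subseteq id_{Closed(X)}$ together with monotonicity to drop $\Pi \circ \Lambda$, then use $\Omega \circ \Sigma \subseteq id_{Closed(W)}$ to drop $\Omega \circ \Sigma$. The only difference is that you state explicitly the monotonicity of admissible maps (via $h(A\cup B) = h(A)\cup h(B)$), which the paper uses silently; that is a minor improvement in rigor, not a different argument.
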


\begin{proof}We have that $(f_{\Sigma \Pi})_{\Omega \Lambda}(A) =  \Omega \circ f_{\Sigma \Pi}\circ \Lambda(A) = \Omega \circ \Sigma \circ f \circ \Pi \circ \Lambda(A)$. If $\Omega \circ \Sigma \subseteq id_{Closed(W)}$ and $\Pi \circ \Lambda \subseteq id_{Closed (X)}$, then $\Omega \circ \Sigma \circ f \circ \Pi \circ \Lambda(A) \subseteq \Omega \circ \Sigma \circ f(A) \subseteq f(A)$. The other cases are analogous.
\end{proof}

\begin{prop}(Cube Lemma) Let $X_{i}+_{f_{i}}W_{i}$, $Y_{i}$ and $Z_{i}$ be topological spaces, $\Pi_{i}: Closed(Y_{i}) \rightarrow Closed(X_{i})$ and $\Sigma_{i}: Closed(W_{i}) \rightarrow Closed(Z_{i})$ admissible maps, with $i \in \{1,2\}$. Take $f_{i\Sigma_{i}\Pi_{i}}: Closed(Y_{i}) \rightarrow Closed(Z_{i})$ the respective induced maps. If $\mu+\nu: X_{1}+_{f_{1}}W_{1} \rightarrow X_{2}+_{f_{2}}W_{2}$, $\psi: Y_{1} \rightarrow Y_{2}$ and $\phi: Z_{1} \rightarrow Z_{2}$ are continuous maps that form the diagrams:

$$ \xymatrix{   Closed(X_{2}) \ar[r]^{\mu^{-1}} \ar@{}[dr]|{\supseteq} & Closed(X_{1}) & & Closed(W_{2}) \ar[r]^{\nu^{-1}} \ar[d]^{\Sigma_{2}} \ar@{}[dr]|{\supseteq} & Closed(W_{1}) \ar[d]_{\Sigma_{1}} \\
                Closed(Y_{2}) \ar[r]^{\psi^{-1}} \ar[u]^{\Pi_{2}} & Closed(Y_{1}) \ar[u]_{\Pi_{1}} & & Closed(Z_{2}) \ar[r]^{\phi^{-1}} & Closed(Z_{1}) } $$

Then, $\psi+\phi: Y_{1}+_{f_{1\Sigma_{1}\Pi_{1}}}Z_{1}\rightarrow Y_{2}+_{f_{2\Sigma_{2}\Pi_{2}}}Z_{2}$ is continuous.
\end{prop}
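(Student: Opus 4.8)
The plan is to reduce the whole statement to the continuity criterion of Proposition \ref{mapacontinuo}. Both $Y_1+_{f_{1\Sigma_1\Pi_1}}Z_1$ and $Y_2+_{f_{2\Sigma_2\Pi_2}}Z_2$ are single-map glueings, so applying Proposition \ref{mapacontinuo} to $\psi+\phi$ (with the second glueing map the constant $\emptyset$ on both sides) shows that continuity of $\psi+\phi$ is equivalent to the single inclusion, for every $A\in Closed(Y_2)$,
$$f_{1\Sigma_1\Pi_1}(\psi^{-1}(A))\subseteq \phi^{-1}(f_{2\Sigma_2\Pi_2}(A));$$
the companion condition coming from the $\emptyset$ maps reads $\emptyset\subseteq\psi^{-1}(\emptyset)$ and is vacuous. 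Unwinding $f_{i\Sigma_i\Pi_i}=\Sigma_i\circ f_i\circ\Pi_i$, the goal becomes
$$\Sigma_1(f_1(\Pi_1(\psi^{-1}(A))))\subseteq \phi^{-1}(\Sigma_2(f_2(\Pi_2(A)))).$$

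Before chasing the diagrams I would record one elementary fact: every admissible map is monotone for inclusion, since $C\subseteq D$ forces $f(D)=f(C\cup D)=f(C)\cup f(D)$, whence $f(C)\subseteq f(D)$. This is exactly what lets me transport inclusions through $f_1$, $\Sigma_1$ and the other admissible maps without any further argument.

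With monotonicity available, the proof is a chain of three inclusions assembled from the two hypothesis squares together with the continuity of $\mu+\nu$. First, the left-hand square gives $\Pi_1(\psi^{-1}(A))\subseteq \mu^{-1}(\Pi_2(A))$; applying the monotone composite $\Sigma_1\circ f_1$ yields
$$\Sigma_1(f_1(\Pi_1(\psi^{-1}(A))))\subseteq \Sigma_1(f_1(\mu^{-1}(\Pi_2(A)))).$$
Next, since $\mu+\nu$ is continuous, Proposition \ref{mapacontinuo} applied to it gives $f_1(\mu^{-1}(B))\subseteq \nu^{-1}(f_2(B))$ for all $B\in Closed(X_2)$; taking $B=\Pi_2(A)$ and applying the monotone $\Sigma_1$ gives
$$\Sigma_1(f_1(\mu^{-1}(\Pi_2(A))))\subseteq \Sigma_1(\nu^{-1}(f_2(\Pi_2(A)))).$$
Finally the right-hand square gives $\Sigma_1(\nu^{-1}(C))\subseteq \phi^{-1}(\Sigma_2(C))$ for all $C\in Closed(W_2)$; taking $C=f_2(\Pi_2(A))$ closes the chain at $\phi^{-1}(\Sigma_2(f_2(\Pi_2(A))))$. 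Concatenating the three inclusions is precisely the required inclusion, so Proposition \ref{mapacontinuo} returns the continuity of $\psi+\phi$.

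The step that needs the most care is bookkeeping the orientation of the two squares: the inclusions must be read so that the left square feeds $\Pi_1\circ\psi^{-1}$ \emph{into} $\mu^{-1}\circ\Pi_2$ and the right square feeds $\Sigma_1\circ\nu^{-1}$ \emph{into} $\phi^{-1}\circ\Sigma_2$, in the same direction as the inclusion $f_1\circ\mu^{-1}\subseteq\nu^{-1}\circ f_2$ coming from the continuity of $\mu+\nu$. Once all three inclusions point consistently, monotonicity of the admissible maps does all the remaining work, and no genuine point-set topology is left beyond the two applications of Proposition \ref{mapacontinuo} at the ends of the argument.
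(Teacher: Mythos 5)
Your proof is correct and is essentially the paper's own argument: the paper's proof is exactly the same chain $\Sigma_{1}\circ f_{1}\circ \Pi_{1}\circ\psi^{-1} \subseteq \Sigma_{1}\circ f_{1}\circ \mu^{-1}\circ \Pi_{2} \subseteq \Sigma_{1}\circ \nu^{-1}\circ f_{2}\circ \Pi_{2} \subseteq \phi^{-1}\circ \Sigma_{2}\circ f_{2}\circ \Pi_{2}$, bookended by the two applications of Proposition \ref{mapacontinuo}. The only difference is presentational: you spell out the monotonicity of admissible maps and the vacuousness of the second condition for single-map glueings, both of which the paper leaves implicit.
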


\begin{proof}Consider the diagram:

$$ \xymatrix{Closed(X_{1}) \ar[rr]^{f_{1}} & & Closed(W_{1}) \ar[dd]_<<{\Sigma_{1}} \\
            & Closed(X_{2}) \ar[rr]^<<{ \ \ \ \ \ \ \ \ \ \ \ \ \ f_{2}} \ar[lu]^{\mu^{-1}} & & Closed(W_{2}) \ar[lu]_{\nu^{-1}} \ar[dd]_{\Sigma_{2}} \\
             Closed(Y_{1}) \ar[rr]_<<{ \ \ \ \ \ \ \ \ \ \ \ f_{1\Sigma_{1}\Pi_{1}}} \ar[uu]^{\Pi_{1}} & & Closed(Z_{1}) & \\
            & Closed(Y_{2}) \ar[rr]_{f_{2\Sigma_{2}\Pi_{2}}} \ar[uu]^>>{\Pi_{2}} \ar[lu]^{\psi^{-1}} & & Closed(Z_{2}) \ar[lu]_{\phi^{-1}} &} $$

We have that $f_{1\Sigma_{1}\Pi_{1}}\circ\psi^{-1} = \Sigma_{1}\circ f_{1} \circ \Pi_{1}\circ\psi^{-1} \subseteq \Sigma_{1}\circ f_{1}\circ \mu^{-1} \circ \Pi_{2} \subseteq \Sigma_{1}\circ \nu^{-1} \circ f_{2} \circ \Pi_{2} \subseteq \phi^{-1} \circ \Sigma_{2} \circ f_{2} \circ \Pi_{2} = \phi^{-1} \circ f_{2\Sigma_{2}\Pi_{2}}$. Thus, $\psi+\phi$ is continuous.
\end{proof}

\begin{cor}\label{action} Let $G_{1},G_{2}$ be groups, $X+_{f}W$, $Y$ and $Z$ topological spaces, $\alpha: G_{1} \rightarrow G_{2}$ a group homomorphism and $\Pi: Closed(Y) \rightarrow Closed(X)$ and $\Sigma: Closed(W) \rightarrow Closed(Z)$ admissible maps. Take the induced map $f_{\Sigma\Pi}: Closed(Y) \rightarrow Closed(Z)$. If $\mu+\nu: G_{2} \curvearrowright X+_{f}W$, $\psi: G_{1} \curvearrowright Y$ and $\phi: G_{1} \curvearrowright Z$ are actions by homeomorphisms such that form the following diagrams for each $g \in G_{1}$:

$$ \xymatrix{   Closed(X) \ar[r]^*+{\labelstyle \ \ \mu(\alpha(g),\_)^{-1}} \ar@{}[dr]|{\supseteq} & Closed(X) & & Closed(W) \ar[r]^*+{\labelstyle \ \ \nu(\alpha(g),\_)^{-1}} \ar[d]^{\Sigma} \ar@{}[dr]|{\supseteq} & Closed(W) \ar[d]_{\Sigma} \\
                Closed(Y) \ar[r]^{ \ \psi(g,\_)^{-1}} \ar[u]^{\Pi} & Closed(Y) \ar[u]_{\Pi} & & Closed(Z) \ar[r]^{ \ \phi(g,\_)^{-1}} & Closed(Z) } $$

Then, $\psi+\phi: G_{1} \curvearrowright Y+_{f_{\Sigma \Pi}}Z$ is an action by homeomorphisms.
\end{cor}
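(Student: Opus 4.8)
The plan is to derive this directly from the Cube Lemma by specializing it pointwise over the group $G_{1}$. For each fixed $g \in G_{1}$ I would instantiate the Cube Lemma with all of the ``first'' and ``second'' data taken to be equal, namely $X_{1} = X_{2} = X$, $W_{1} = W_{2} = W$, $Y_{1} = Y_{2} = Y$, $Z_{1} = Z_{2} = Z$, $f_{1} = f_{2} = f$, $\Pi_{1} = \Pi_{2} = \Pi$ and $\Sigma_{1} = \Sigma_{2} = \Sigma$, so that both induced maps $f_{i\Sigma_{i}\Pi_{i}}$ collapse to the single map $f_{\Sigma\Pi}$. The three continuous maps fed into the Cube Lemma would be $(\mu+\nu)(\alpha(g),\_): X+_{f}W \rightarrow X+_{f}W$, $\psi(g,\_): Y \rightarrow Y$ and $\phi(g,\_): Z \rightarrow Z$, which are continuous because the given data are actions by homeomorphisms. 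With these identifications the two squares demanded by the Cube Lemma are precisely the two diagrams assumed in the hypothesis for the element $g$, so the Cube Lemma yields that $(\psi+\phi)(g,\_): Y+_{f_{\Sigma\Pi}}Z \rightarrow Y+_{f_{\Sigma\Pi}}Z$ is continuous.

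Before assembling this, I would first dispatch the purely algebraic part. Because $\psi+\phi$ is defined piecewise, agreeing with $\psi$ on $Y$ and with $\phi$ on $Z$, the identity law $(\psi+\phi)(e,\_) = id$ and the composition law $(\psi+\phi)(gh,\_) = (\psi+\phi)(g,\_)\circ(\psi+\phi)(h,\_)$ both hold, since one checks them separately on points of $Y$ and on points of $Z$, where they reduce to the corresponding laws for the actions $\psi$ and $\phi$. Thus $\psi+\phi$ is already a set-theoretic action of $G_{1}$ on $Y+_{f_{\Sigma\Pi}}Z$, and the only remaining content is that each group element acts by a homeomorphism rather than merely a bijection.

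To upgrade continuity to ``homeomorphism'', I would exploit that the hypothesis supplies the required diagrams for \emph{every} $g \in G_{1}$, in particular for $g^{-1}$. Applying the first paragraph to $g^{-1}$ shows that $(\psi+\phi)(g^{-1},\_)$ is continuous as well; but by the composition law just established this map is the set-theoretic inverse of $(\psi+\phi)(g,\_)$, so each $(\psi+\phi)(g,\_)$ is a continuous bijection with continuous inverse, hence a homeomorphism. Therefore $\psi+\phi$ is an action by homeomorphisms. I do not anticipate a genuine obstacle here: the whole argument is a matter of correctly matching the corollary's data to the hypotheses of the Cube Lemma, and the only point that needs care is tracking the homomorphism $\alpha$, which is exactly what converts the $G_{1}$-indexed diagrams on the $Y,Z$ side into the $G_{2}$-action $\mu+\nu$ on the $X,W$ side.
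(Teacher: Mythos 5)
Your proposal is correct and follows essentially the same route as the paper: the paper also applies the Cube Lemma pointwise to each $g \in G_{1}$ (with all the glueing data identified as you describe) to get continuity of $(\psi+\phi)(g,\_)$, and then concludes it is a homeomorphism because its inverse is $(\psi+\phi)(g^{-1},\_)$, which is continuous by the same argument. Your extra verification of the set-theoretic action laws is a harmless elaboration of a step the paper leaves implicit.
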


\begin{proof}It follows by the last proposition that $\forall g \in G_{1}, \ (\psi+\phi)(g,\_)$ is continuous (and then a homeomorphism, since its inverse is $(\psi+\phi)(g^{-1},\_)$, which is also continuous). Thus, $\psi+\phi: G_{1} \curvearrowright Y+_{f_{\Sigma \Pi}}Z$ is an action by homeomorphisms.
\end{proof}

And a useful proposition about separation:

\begin{prop}Let $X+_{f}W, \ Y$ and $Z$ be Hausdorff topological spaces such that $Y$ and $X$ are locally compact and $\Pi: Closed(Y) \rightarrow Closed (X)$, $\Sigma: \ Closed(W) \ \rightarrow \ Closed(Z)$, $\ \Lambda: \ Closed(X) \ \rightarrow \ Closed(Y)$ and $\Omega: Closed(Z) \rightarrow Closed(W)$ are admissible maps. If $\forall C \subseteq Y$ compact, $\Pi(C)$ is compact, $\{\{z\}: z \in Z\} \subseteq Im \ \Sigma$, $\forall w \in W$, $w \in \Omega \circ \Sigma(\{w\})$, $\Lambda(X) = Y$ and $(f_{\Sigma\Pi})_{\Omega\Lambda} = f$, then $Y+_{f_{\Sigma\Pi}}Z$ is Hausdorff.
\end{prop}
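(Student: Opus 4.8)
The plan is to apply the criterion for Hausdorffness of a glueing $X+_f Y$ with $X$ locally compact (proved earlier in the separation subsection) to the space $Y+_{f_{\Sigma\Pi}}Z$, where now $Y$ plays the role of the locally compact factor, $Z$ plays the role of the other factor, and $f_{\Sigma\Pi}$ (which is admissible by the composition proposition) plays the role of the glueing map. By that criterion it suffices to establish two things: first, that $f_{\Sigma\Pi}(C)=\emptyset$ for every compact $C\subseteq Y$; and second, that for every $a,b\in Z$ there exist $A,B\in Closed(Y)$ with $A\cup B=Y$, $b\notin f_{\Sigma\Pi}(A)$ and $a\notin f_{\Sigma\Pi}(B)$. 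Throughout I will use that admissible maps are monotone with respect to inclusion, which is immediate from their preservation of finite unions.

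For the first point, let $C\subseteq Y$ be compact. By hypothesis $\Pi(C)$ is compact, and since $X+_f W$ is Hausdorff, the proposition relating Hausdorffness to the vanishing of the glueing map on compacta gives $f(\Pi(C))=\emptyset$. Hence $f_{\Sigma\Pi}(C)=\Sigma(f(\Pi(C)))=\Sigma(\emptyset)=\emptyset$, as required.

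The second point is the heart of the argument and where the hypotheses on $\Sigma$ and $\Omega$ are used. Given $a,b\in Z$, I would first produce points $c,d\in W$ with $c\in\Omega(\{a\})$ and $d\in\Omega(\{b\})$. To obtain $d$: since the condition $\{\{z\}:z\in Z\}\subseteq Im\,\Sigma$ holds, write $\{b\}=\Sigma(P)$ with $P\in Closed(W)$, necessarily nonempty, and pick $d\in P$; then $\Sigma(\{d\})\subseteq\Sigma(P)=\{b\}$, while the hypothesis $d\in\Omega(\Sigma(\{d\}))$ forces $\Sigma(\{d\})\neq\emptyset$, so $\Sigma(\{d\})=\{b\}$ and therefore $d\in\Omega(\Sigma(\{d\}))=\Omega(\{b\})$; construct $c$ symmetrically from $a$. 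Next, apply the Hausdorff criterion for $X+_f W$ to the points $c,d\in W$ to obtain $A_X,B_X\in Closed(X)$ with $A_X\cup B_X=X$, $d\notin f(A_X)$ and $c\notin f(B_X)$. Set $A=\Lambda(A_X)$ and $B=\Lambda(B_X)$; then $A\cup B=\Lambda(A_X\cup B_X)=\Lambda(X)=Y$ by admissibility of $\Lambda$ and the hypothesis $\Lambda(X)=Y$.

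It then remains to check $b\notin f_{\Sigma\Pi}(A)$, the claim $a\notin f_{\Sigma\Pi}(B)$ being symmetric. Suppose instead $b\in f_{\Sigma\Pi}(A)$. Then $\{b\}\subseteq f_{\Sigma\Pi}(\Lambda(A_X))$, so by monotonicity $\Omega(\{b\})\subseteq\Omega(f_{\Sigma\Pi}(\Lambda(A_X)))=(f_{\Sigma\Pi})_{\Omega\Lambda}(A_X)=f(A_X)$, using the hypothesis $(f_{\Sigma\Pi})_{\Omega\Lambda}=f$. Since $d\in\Omega(\{b\})$, this yields $d\in f(A_X)$, contradicting the choice of $A_X$. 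Hence $b\notin f_{\Sigma\Pi}(A)$, and the two verifications together show $Y+_{f_{\Sigma\Pi}}Z$ is Hausdorff. I expect the only delicate step to be the selection of the auxiliary points $c,d$: the conditions $\{\{z\}:z\in Z\}\subseteq Im\,\Sigma$ and $w\in\Omega\circ\Sigma(\{w\})$ are precisely what let one lift a point of $Z$ to a point of $W$ that $\Omega$ detects, so that the separation available in the Hausdorff space $X+_f W$ can be transported through $\Omega$ and $\Lambda$ back to $Z$ via the identity $(f_{\Sigma\Pi})_{\Omega\Lambda}=f$.
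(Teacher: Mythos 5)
Your proof is correct and follows essentially the same route as the paper's: verify the two conditions of the Hausdorff criterion for $Y+_{f_{\Sigma\Pi}}Z$, handling compacta via $\Pi(C)$ and Hausdorffness of $X+_{f}W$, and separating points of $Z$ by lifting them to $W$ through $Im\,\Sigma$, separating there, and pushing the closed sets back through $\Lambda$ using $(f_{\Sigma\Pi})_{\Omega\Lambda}=f$ and $w\in\Omega\circ\Sigma(\{w\})$. The only (immaterial) difference is bookkeeping: the paper takes any $c\in C$ with $\Sigma(C)=\{a\}$ and concludes $c\in\Omega(\{a\})$ from $\Omega(\Sigma(C))\supseteq\Omega(\Sigma(\{c\}))\supseteq\{c\}$, whereas you first show $\Sigma(\{d\})=\{b\}$ and then deduce $d\in\Omega(\{b\})$.
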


\begin{proof}Let $C \subseteq Y$ be a compact. We have that $\Pi(C)$ is compact, which implies that $\Sigma \circ f \circ \Pi(C) = \Sigma (\emptyset) = \emptyset$, because $X+_{f}W$ is Hausdorff.

Let $a,b \in Z$. There exists $C,D \in Closed(W): \ \Sigma (C) = \{a\}$ and $\Sigma(D) = \{b\}$. Take $c \in C$ and $d \in D$. Since $X+_{f}W$ is Hausdorff, there exists $A,B \in Closed(X): \ A \cup B = X, \ d \notin f(A)$ and $c \notin f(B)$. We have that $\Lambda(A), \Lambda(B) \in Closed(Y)$ and $\Lambda(A)\cup \Lambda(B) = \Lambda(A\cup B) = \Lambda(X) = Y$. If $a \in f_{\Sigma\Pi}(\Lambda(B))$, then $\Omega(\{a\}) \subseteq \Omega \circ f_{\Sigma\Pi}(\Lambda(B)) = f(B)$. But $\Omega(\{a\}) = \Omega \circ \Sigma(C) \supseteq \Omega \circ \Sigma(\{c\}) \supseteq \{c\}$, which implies that $c \in f(B)$, a contradiction. So $a \notin f_{\Sigma\Pi}(\Lambda(B))$ and, analogously, $b \notin f_{\Sigma\Pi}(\Lambda(A))$.

Thus, $Y+_{f_{\Sigma\Pi}}Z$ is Hausdorff.
\end{proof}

\begin{cor}\label{functorausdorff}Let $X+_{f}W$ and $Y$ be Hausdorff topological spaces such that $Y$ and $X$ are locally compact and $\Pi: Closed(Y) \rightarrow Closed (X)$ and  $\Lambda: Closed(X) \rightarrow Closed(Y)$ are admissible maps. If $\forall C \subseteq Y$ compact, $\Pi(C)$ is compact, $\Lambda(X) = Y$ and $(f_{id_{W}\Pi})_{id_{W}\Lambda} = f$, then $Y+_{f_{id_{W}\Pi}}W$ is Hausdorff.
\eod\end{cor}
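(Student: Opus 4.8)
The plan is to recognise this corollary as the special case of the preceding proposition in which $Z := W$ and both $\Sigma$ and $\Omega$ are taken to be the identity map $id_{Closed(W)}$. Under these choices the induced map $f_{\Sigma\Pi} = \Sigma \circ f \circ \Pi$ becomes $f_{id_{W}\Pi} = id_{Closed(W)} \circ f \circ \Pi$, which is precisely the glueing map named in the statement, so that $Y +_{f_{\Sigma\Pi}} Z$ is literally $Y +_{f_{id_{W}\Pi}} W$. Thus the whole task reduces to checking that the specialised hypotheses coincide with those of the proposition.

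I would then observe that most of the hypotheses are inherited verbatim: $X +_{f} W$, $Y$ and $Z = W$ are Hausdorff, $Y$ and $X$ are locally compact, $\Pi$ carries compact sets to compact sets, $\Lambda(X) = Y$, and the compatibility condition $(f_{\Sigma\Pi})_{\Omega\Lambda} = f$ is exactly the assumed equation $(f_{id_{W}\Pi})_{id_{W}\Lambda} = f$. The maps $\Sigma = \Omega = id_{Closed(W)}$ are admissible for free, since the identity preserves the empty set and finite unions.

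The only hypotheses requiring a short justification are the two singleton conditions on $\Sigma$ and $\Omega$, and both follow from $W$ being Hausdorff, hence $T_{1}$. For the first, $\{\{z\} : z \in Z\} \subseteq Im \ \Sigma$ asks that every singleton $\{w\}$ belong to $Closed(W)$; this holds because points are closed in a $T_{1}$ space, and since $\Sigma$ is the identity each such singleton lies in its image. For the second, $\forall w \in W, \ w \in \Omega \circ \Sigma(\{w\})$ collapses to the trivial statement $w \in \{w\}$. With every hypothesis verified, the proposition yields directly that $Y +_{f_{id_{W}\Pi}} W$ is Hausdorff. I do not expect any genuine obstacle here: the content is entirely subsumed by the proposition, and the only point to confirm is that the identity map meets the two singleton requirements, which is guaranteed by the $T_{1}$ separation of $W$.
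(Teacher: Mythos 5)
Your proposal is correct and is exactly the argument the paper intends: the corollary is marked as an immediate consequence of the preceding proposition, obtained by specializing $Z = W$ and $\Sigma = \Omega = id_{Closed(W)}$. Your verification of the two singleton hypotheses via the $T_{1}$ property of $W$ (which is Hausdorff as a subspace of $X+_{f}W$) is precisely the routine check the paper leaves to the reader.
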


Let's consider two special cases of composition of sums of spaces that shall be useful:

\subsubsection{Pullbacks}

\begin{defi}Let $X+_{f}W$, $Y$ and $Z$ be topological spaces, $\pi: Y \rightarrow X$ and $\varpi: Z \rightarrow W$ be two applications. Consider the admissible applications $\Pi: Closed(Y) \rightarrow Closed(X)$ and $\Sigma: Closed(W) \rightarrow Closed(Z)$ defined as $\Pi(A) = Cl_{X}(\pi(A))$ and $\Sigma(A) = Cl_{Z}(\varpi^{-1}(A))$.  We define the pullback of $f$ with respect to the maps $\pi$ and $\varpi$ by $f^{\ast}(A) = f_{\Sigma \Pi}(A) = Cl_{Z}(\varpi^{-1}(f(Cl_{X}(\pi(A)))))$.
\end{defi}

\begin{prop}If the maps $\pi$ and $\varpi$ are continuous, then the map $\pi+\varpi: Y+_{f^{\ast}}Z \rightarrow X+_{f}W$ is continuous.
\end{prop}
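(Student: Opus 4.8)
The plan is to invoke Proposition \ref{mapacontinuo}, treating the single glueing maps as the pair where one member is the constant empty map. Writing the source $Y+_{f^{\ast}}Z$ in the form $X'+_{f',g'}Y'$ I take $X' = Y$, $Y' = Z$, $f' = f^{\ast}$ and $g' = \emptyset$; writing the target $X+_{f}W$ in the form $Z'+_{h',j'}W'$ I take $Z' = X$, $W' = W$, $h' = f$ and $j' = \emptyset$. The two component maps are then $\psi = \pi: Y \rightarrow X$ and $\phi = \varpi: Z \rightarrow W$, both continuous by hypothesis, so the hypotheses of Proposition \ref{mapacontinuo} are satisfied and continuity becomes a pair of diagram inclusions.

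By that proposition, $\pi + \varpi$ is continuous if and only if $f^{\ast}(\pi^{-1}(A)) \subseteq \varpi^{-1}(f(A))$ for all $A \in Closed(X)$, together with $g'(\varpi^{-1}(B)) \subseteq \pi^{-1}(j'(B))$ for all $B \in Closed(W)$. I would first dispose of the second inclusion: since $g' = \emptyset$ and $j' = \emptyset$ are constant to the empty set, it reads $\emptyset \subseteq \pi^{-1}(\emptyset) = \emptyset$ and holds trivially. Everything therefore reduces to the first inclusion.

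To verify it I would fix $A \in Closed(X)$; continuity of $\pi$ gives $\pi^{-1}(A) \in Closed(Y)$, so the left-hand side is defined and, unwinding $f^{\ast} = f_{\Sigma\Pi}$, equals $Cl_{Z}(\varpi^{-1}(f(Cl_{X}(\pi(\pi^{-1}(A))))))$. Using the containment $\pi(\pi^{-1}(A)) \subseteq A$ and the closedness of $A$ gives $Cl_{X}(\pi(\pi^{-1}(A))) \subseteq A$; applying the monotonicity of the admissible map $f$ (immediate from $f(C\cup D) = f(C)\cup f(D)$) yields $f(Cl_{X}(\pi(\pi^{-1}(A)))) \subseteq f(A)$, and taking $\varpi$-preimages gives $\varpi^{-1}(f(Cl_{X}(\pi(\pi^{-1}(A))))) \subseteq \varpi^{-1}(f(A))$.

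The decisive final step will be the collapse of the outer closure coming from $\Sigma$: since $f(A) \in Closed(W)$ and $\varpi$ is continuous, $\varpi^{-1}(f(A))$ is already closed in $Z$, so $Cl_{Z}(\varpi^{-1}(f(A))) = \varpi^{-1}(f(A))$. Chaining this with the previous inclusion produces $f^{\ast}(\pi^{-1}(A)) \subseteq Cl_{Z}(\varpi^{-1}(f(A))) = \varpi^{-1}(f(A))$, exactly the required inclusion. I expect this to be the only step that genuinely uses a hypothesis—continuity of $\varpi$ is what makes the target preimage closed so that the defining closure adds nothing—while the remainder is just monotonicity of $f$ and the elementary containment $\pi(\pi^{-1}(A)) \subseteq A$; so there is no serious obstacle, only the bookkeeping of the two closures.
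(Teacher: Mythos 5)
Your proposal is correct and takes essentially the same route as the paper: reduce to the diagram condition of Proposition \ref{mapacontinuo} (with both second glueing maps equal to $\emptyset$, so that inclusion is trivial) and verify $f^{\ast}(\pi^{-1}(A)) \subseteq \varpi^{-1}(f(A))$ for $A \in Closed(X)$ using $\pi(\pi^{-1}(A)) \subseteq A$ and monotonicity of $f$. If anything, you are more careful than the paper, which silently omits the outer $Cl_{Z}$ from the definition of $f^{\ast}$ in its computation; the collapse of that closure is exactly the step you justify explicitly via continuity of $\varpi$.
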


\begin{proof}If $A \in Closed(X)$, then $f^{\ast}(\pi^{-1}(A)) = \varpi^{-1}(f(Cl_{X}(\pi(\pi^{-1}(A))))) \subseteq \varpi^{-1}(f(Cl_{X}(A))) = \varpi^{-1}(f(A))$. In another words, we have the diagram:

$$ \xymatrix{   Closed(Y) \ar[r]^{f^{\ast}} \ar@{}[rd]|{\subseteq} & Closed(Z) \\
                Closed(X) \ar[r]_{f} \ar[u]^{\pi^{-1}} & Closed(W) \ar[u]_{\varpi^{-1}} } $$

Thus, $(\pi + \varpi): Y+_{f^{\ast}}Z \rightarrow X+_{f}W$ is continuous.
\end{proof}

\begin{prop}Suppose that $\pi$ and $\varpi$ are continuous and let $Y+_{f'}Z$, for some admissible map $f'$, such that $\pi+\varpi: Y+_{f'}Z \rightarrow X+_{f}W$ is continuous. Then $id_{Y}+id_{Z}: Y+_{f'}Z \rightarrow Y+_{f^{\ast}}Z$ is continuous.
\end{prop}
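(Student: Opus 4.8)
The plan is to reduce the claim to a single set-theoretic inclusion between the glueing maps $f'$ and $f^{\ast}$, and then to extract that inclusion from the continuity of $\pi+\varpi$ via Proposition \ref{mapacontinuo}. Both $Y+_{f'}Z$ and $Y+_{f^{\ast}}Z$ are glueings of the form $\cdot+_{\cdot}\cdot$, i.e. with the second glueing map equal to the constant map $\emptyset$. Hence the Corollary characterising continuity of the identity between two glueings on the same underlying set (the one immediately following Proposition \ref{mapacontinuo}) tells us that $id_{Y}+id_{Z}: Y+_{f'}Z \rightarrow Y+_{f^{\ast}}Z$ is continuous if and only if $f'(C) \subseteq f^{\ast}(C)$ for every $C \in Closed(Y)$; the condition on the second maps reduces to the trivial inclusion $\emptyset \subseteq \emptyset$. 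So the whole problem collapses to proving $f'(C) \subseteq f^{\ast}(C)$ for all closed $C \subseteq Y$.

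To obtain this, I would first unpack the hypothesis. Applying Proposition \ref{mapacontinuo} to the continuous map $\pi+\varpi: Y+_{f'}Z \rightarrow X+_{f}W$ yields, for every $A \in Closed(X)$, the inclusion $f'(\pi^{-1}(A)) \subseteq \varpi^{-1}(f(A))$ (once again the second condition, coming from the constant maps $\emptyset$, is vacuous). This is the only consequence of continuity I will need.

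Now fix $C \in Closed(Y)$ and set $A := Cl_{X}(\pi(C)) \in Closed(X)$. Since $\pi(C) \subseteq A$ we have $C \subseteq \pi^{-1}(A)$, and $\pi^{-1}(A)$ is closed in $Y$ by continuity of $\pi$. Because $f'$ is admissible it is monotone (from $f'(D \cup D') = f'(D) \cup f'(D')$ one reads off that $D \subseteq D'$ forces $f'(D) \subseteq f'(D')$), so $f'(C) \subseteq f'(\pi^{-1}(A))$. Chaining this with the inclusion from the previous paragraph and the definition $f^{\ast}(C) = Cl_{Z}(\varpi^{-1}(f(Cl_{X}(\pi(C))))) = Cl_{Z}(\varpi^{-1}(f(A)))$ gives
$$ f'(C) \subseteq f'(\pi^{-1}(A)) \subseteq \varpi^{-1}(f(A)) \subseteq Cl_{Z}(\varpi^{-1}(f(A))) = f^{\ast}(C), $$
which is exactly the required inclusion.

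I do not expect a genuine obstacle: the argument is a short diagram chase, and the proposition is really the assertion that $f^{\ast}$ is the largest admissible glueing map making $\pi+\varpi$ continuous, so that $Y+_{f^{\ast}}Z$ carries the coarsest among all the topologies $\tau_{f'}$ compatible with $\pi+\varpi$, and every such $Y+_{f'}Z$ maps onto it by the identity. The only points needing care are bookkeeping: correctly specialising Proposition \ref{mapacontinuo} and its Corollary to the case where the second glueing maps are $\emptyset$, and invoking monotonicity of $f'$, which is a free consequence of admissibility rather than an additional hypothesis.
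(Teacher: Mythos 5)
Your proof is correct and follows essentially the same route as the paper: both reduce the claim to the pointwise inclusion $f'(C) \subseteq f^{\ast}(C)$, extract $f'(\pi^{-1}(A)) \subseteq \varpi^{-1}(f(A))$ from Proposition \ref{mapacontinuo}, and then specialise to $A = Cl_{X}(\pi(C))$ using monotonicity of $f'$ and the containment $C \subseteq \pi^{-1}(Cl_{X}(\pi(C)))$. If anything, your chain is slightly cleaner, since the paper detours through the claimed identity $\varpi^{-1}(f(B)) = f^{\ast}(\pi^{-1}(B))$ (which in general is only an inclusion), whereas you use only the harmless inclusion $\varpi^{-1}(f(A)) \subseteq Cl_{Z}(\varpi^{-1}(f(A))) = f^{\ast}(C)$.
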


\begin{proof}Since the map $\pi+\varpi$ is continuous, it follows that $\forall B \in Closed(X)$, $f'(\pi^{-1}(B)) \subseteq \varpi^{-1}(f(B))$. We have that $\forall A \in Closed(Y)$, $A \subseteq \pi^{-1}(Cl_{X}(\pi(A)))$, which implies that $f'(A) \subseteq f'(\pi^{-1}(Cl_{X}(\pi(A)))) \subseteq \varpi^{-1}(f(Cl_{X}(\pi(A)))) = f^{\ast}(A)$. Thus $id_{Y}+id_{Z}$ is continuous.
\end{proof}

In another words, if $\pi$ and $\varpi$ are continuous, then $f^{\ast}$ induces the coarsest topology (between the topologies that extend the topologies of $Y$ and $Z$) such that the map $\pi+\varpi$ is continuous.

\begin{prop}(Cube Lemma for Pullbacks) Let $X_{i}+_{f_{i}}W_{i},  Y_{i}, Z_{i}$ be topological spaces, $\pi_{i}: Y_{i} \rightarrow X_{i}$ and $\varpi_{i}: Z_{i} \rightarrow W_{i}$ be two maps and  $f_{i}^{\ast}: Closed(Y_{i}) \rightarrow Closed(Z_{i})$ the respective pullbacks. Let's suppose that $\mu+\nu: X_{1}+_{f_{1}}W_{1} \rightarrow X_{2}+_{f_{2}}W_{2}$, $\psi: Y_{1} \rightarrow Y_{2}$ and $\phi: Z_{1} \rightarrow Z_{2}$ are continuous maps that commute the diagrams:

$$ \xymatrix{   Y_{1} \ar[r]^{\psi} \ar[d]^{\pi_{1}} & Y_{2} \ar[d]_{\pi_{2}} & & Z_{1} \ar[r]^{\phi} \ar[d]^{\varpi_{1}} & Z_{2} \ar[d]_{\varpi_{2}} \\
                X_{1} \ar[r]^{\mu} & X_{2} & & W_{1} \ar[r]^{\nu} & W_{2} } $$

Then $\psi+\phi: Y_{1}+_{f_{1}^{\ast}}Z_{1}\rightarrow Y_{2}+_{f_{2}^{\ast}}Z_{2}$ is continuous.
\end{prop}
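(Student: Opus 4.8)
The plan is to recognise this statement as a direct instance of the general Cube Lemma proved above, applied to the admissible maps that define the pullbacks. Recall that for each $i \in \{1,2\}$ the pullback $f_i^{\ast}$ is by definition the induced map $f_{i\Sigma_i\Pi_i}$ associated with the admissible maps $\Pi_i(A) = Cl_{X_i}(\pi_i(A))$ and $\Sigma_i(A) = Cl_{Z_i}(\varpi_i^{-1}(A))$. Since $\mu+\nu$, $\psi$ and $\phi$ are already assumed continuous, in order to invoke the Cube Lemma it suffices to check that these particular $\Pi_i$ and $\Sigma_i$ fit into the two commutative-up-to-inclusion squares required there, namely $\Pi_1 \circ \psi^{-1} \subseteq \mu^{-1} \circ \Pi_2$ and $\Sigma_1 \circ \nu^{-1} \subseteq \phi^{-1} \circ \Sigma_2$; everything else then follows formally.

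First I would verify the $\Pi$-square. Fix $A \in Closed(Y_2)$. From $\pi_2 \circ \psi = \mu \circ \pi_1$ one gets $\mu(\pi_1(\psi^{-1}(A))) = \pi_2(\psi(\psi^{-1}(A))) \subseteq \pi_2(A)$, hence $\pi_1(\psi^{-1}(A)) \subseteq \mu^{-1}(\pi_2(A)) \subseteq \mu^{-1}(Cl_{X_2}(\pi_2(A)))$. Since $\mu: X_1 \to X_2$ is continuous --- being the restriction of $\mu+\nu$ to the subspace $X_1$, corestricted to the subspace $X_2$ --- the right-hand set is closed in $X_1$, so taking closures absorbs the closure on the left and yields $\Pi_1(\psi^{-1}(A)) = Cl_{X_1}(\pi_1(\psi^{-1}(A))) \subseteq \mu^{-1}(Cl_{X_2}(\pi_2(A))) = \mu^{-1}(\Pi_2(A))$, which is the required inclusion.

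The $\Sigma$-square is cleaner, as it only involves preimages. Fix $B \in Closed(W_2)$. The identity $\varpi_2 \circ \phi = \nu \circ \varpi_1$ gives $\varpi_1^{-1}(\nu^{-1}(B)) = (\nu \circ \varpi_1)^{-1}(B) = (\varpi_2 \circ \phi)^{-1}(B) = \phi^{-1}(\varpi_2^{-1}(B))$, so that $\Sigma_1(\nu^{-1}(B)) = Cl_{Z_1}(\phi^{-1}(\varpi_2^{-1}(B)))$. By continuity of $\phi$ the set $\phi^{-1}(Cl_{Z_2}(\varpi_2^{-1}(B)))$ is closed in $Z_1$ and contains $\phi^{-1}(\varpi_2^{-1}(B))$, whence $Cl_{Z_1}(\phi^{-1}(\varpi_2^{-1}(B))) \subseteq \phi^{-1}(Cl_{Z_2}(\varpi_2^{-1}(B))) = \phi^{-1}(\Sigma_2(B))$, the desired inclusion. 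With both squares established, the Cube Lemma immediately yields the continuity of $\psi+\phi: Y_1+_{f_1^{\ast}}Z_1 \to Y_2+_{f_2^{\ast}}Z_2$.

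The computations are short, so the only genuine subtlety --- the step I would watch most carefully --- is the interplay between the set-theoretic inclusions and the closure operators: at each square one must use the continuity of the relevant horizontal map ($\mu$ for the $\Pi$-square, $\phi$ for the $\Sigma$-square) to make the appropriate preimage closed, so that the outer closure on the left can be swallowed. Continuity of $\phi$ is a hypothesis, while continuity of $\mu$ is the routine subspace observation recorded above; apart from this, the argument is purely a matter of specialising the general lemma.
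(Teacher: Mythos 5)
Your proposal is correct and follows essentially the same route as the paper: both verify the two inclusion squares $\Pi_1 \circ \psi^{-1} \subseteq \mu^{-1}\circ \Pi_2$ and $\Sigma_1 \circ \nu^{-1} \subseteq \phi^{-1}\circ \Sigma_2$ for the pullback data $\Pi_i = Cl_{X_i}\circ \pi_i$, $\Sigma_i = Cl_{Z_i}\circ \varpi_i^{-1}$, and then invoke the general Cube Lemma. Your handling of the $\Pi$-square (pushing forward with $\mu$ via the commuting square, then absorbing the closure using continuity of $\mu$) is a slightly more streamlined bookkeeping of the paper's chain of inclusions, but it is the same argument in substance.
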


\begin{proof}We are showing that we have these diagrams:

$$ \xymatrix{   Closed(X_{2}) \ar[r]^{\mu^{-1}} \ar@{}[dr]|{\supseteq} & Closed(X_{1}) & & Closed(W_{2}) \ar[r]^{\nu^{-1}} \ar[d]_{Cl_{Z_{2}}\circ \varpi^{-1}_{2}} \ar@{}[dr]|{\supseteq} & Closed(W_{1}) \ar[d]^{Cl_{Z_{1}}\circ \varpi^{-1}_{1}} \\
                Closed(Y_{2}) \ar[r]^{\psi^{-1}} \ar[u]^{Cl_{X_{2}} \circ \pi_{2}} & Closed(Y_{1}) \ar[u]_{Cl_{X_{1}} \circ \pi_{1}} & & Closed(Z_{2}) \ar[r]^{\phi^{-1}} & Closed(Z_{1}) } $$

Let $A \subseteq X_{2}$. So $A \subseteq Cl_{X_{2}}(A)$, which implies that $\mu^{-1}(A) \subseteq \mu^{-1}(Cl_{X_{2}}(A))$. Since $\mu^{-1}(Cl_{X_{2}}(A))$ is a closed subset of $X_{1}$, it follows that $Cl_{X_{1}}(\mu^{-1}(A)) \subseteq \mu^{-1}(Cl_{X_{2}}(A))$. Let $B \subseteq Y_{2}$ and $x \in \psi^{-1}(B)$. We have that $\pi_{2}\circ \psi(x) \in \pi_{2}(B)$. But $\pi_{2}\circ \psi(x) = \mu \circ \pi_{1}(x)$, which implies that $x \in \pi_{1}^{-1} \circ \mu^{-1} \circ \pi_{2}(B)$. So $\psi^{-1}(B) \subseteq \pi_{1}^{-1}\circ \mu^{-1} \circ \pi_{2}(B)$. Let now $C \in Closed(Y_{2})$. We have that $Cl_{X_{1}}(\pi_{1}(\psi^{-1}(C))) \subseteq$ $Cl_{X_{1}}(\pi_{1}(\pi_{1}^{-1}(\mu^{-1}(\pi_{2}(C))))) \subseteq$ $Cl_{X_{1}}(\mu^{-1}(\pi_{2}(C))) \subseteq$ $\mu^{-1}(Cl_{X_{2}}(\pi_{2}(C)))$. So we have the first diagram.

Let $A \in Closed(W_{2})$. We have that $Cl_{Z_{1}} \circ \varpi_{1}^{-1} \circ \nu^{-1}(A) = Cl_{Z_{1}} \circ \phi^{-1} \circ \varpi_{2}^{-1}(A) \subseteq \phi^{-1} \circ Cl_{Z_{2}} \circ  \varpi_{2}^{-1}(A)$, since $\phi$ is continuous. So we have the second diagram.

By the Cube Lemma, it follows that $\psi+\phi: Y_{1}+_{f_{1}^{\ast}}Z_{1}\rightarrow Y_{2}+_{f_{2}^{\ast}}Z_{2}$ is continuous.

\end{proof}

\begin{cor}\label{pullbackaction}Let $G_{1}$ and $G_{2}$ be groups, $X+_{f}W, Y$ and $Z$ topological spaces, $\alpha: G_{1}\rightarrow G_{2}$ a homomorphism, $\pi: Y \rightarrow X$ and $\varpi: Z \rightarrow W$ two maps and $f^{\ast}: Closed(Y) \rightarrow Closed(Z)$ the pullback of $f$. Let $\mu+\nu: G_{2} \curvearrowright X+_{f}W$, $\psi: G_{1} \curvearrowright Y$ and $\phi: G_{1} \curvearrowright Z$ be actions by homeomorphisms such that $\pi$ and $\varpi$ are $\alpha$-equivariant. Then, the action $\psi+\phi: G_{1} \curvearrowright Y+_{f^{\ast}}Z$ is by homeomorphisms.
\end{cor}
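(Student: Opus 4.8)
The plan is to reduce the statement to the Cube Lemma for Pullbacks, applied separately to each group element, exactly in the spirit of the proof of Corollary \ref{action}. Fix $g \in G_{1}$. I take both copies of every ingredient to be identical: set $X_{1} = X_{2} = X$, $W_{1} = W_{2} = W$, $Y_{1} = Y_{2} = Y$, $Z_{1} = Z_{2} = Z$, $f_{1} = f_{2} = f$, $\pi_{1} = \pi_{2} = \pi$ and $\varpi_{1} = \varpi_{2} = \varpi$, so that $f_{1}^{\ast} = f_{2}^{\ast} = f^{\ast}$ is the single pullback supplied in the hypothesis. For the horizontal maps I use the homeomorphisms induced by $g$: on the top level the map is $(\mu+\nu)(\alpha(g),\_) = \mu(\alpha(g),\_) + \nu(\alpha(g),\_) : X +_{f} W \rightarrow X +_{f} W$, which is continuous because $G_{2}$ acts on $X +_{f} W$ by homeomorphisms; on $Y$ and $Z$ the maps are $\psi(g,\_)$ and $\phi(g,\_)$, continuous because $G_{1}$ acts by homeomorphisms.

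Next I verify the two commuting squares demanded by the Cube Lemma for Pullbacks. The first square requires $\pi \circ \psi(g,\_) = \mu(\alpha(g),\_) \circ \pi$, and the second requires $\varpi \circ \phi(g,\_) = \nu(\alpha(g),\_) \circ \varpi$. These are precisely the assertions that $\pi$ and $\varpi$ are $\alpha$-equivariant, which is among the hypotheses. Hence both squares commute, and the Cube Lemma for Pullbacks yields that $\psi(g,\_) + \phi(g,\_) = (\psi+\phi)(g,\_): Y +_{f^{\ast}} Z \rightarrow Y +_{f^{\ast}} Z$ is continuous.

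Finally I promote continuity to being a homeomorphism. Because $\psi$ and $\phi$ are group actions and $\alpha$ is a homomorphism, the disjoint-union map $\psi+\phi$ is itself a group action of $G_{1}$ on $Y +_{f^{\ast}} Z$ (the identity and composition laws hold coordinatewise on $Y$ and $Z$); in particular the inverse of $(\psi+\phi)(g,\_)$ is $(\psi+\phi)(g^{-1},\_)$, which is continuous by the argument above applied to $g^{-1}$. Therefore $(\psi+\phi)(g,\_)$ is a homeomorphism for every $g \in G_{1}$, so $\psi+\phi: G_{1} \curvearrowright Y +_{f^{\ast}} Z$ is an action by homeomorphisms. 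The only genuine content here is the dictionary of the second paragraph --- recognizing that $\alpha$-equivariance of $\pi$ and $\varpi$ is exactly what the two squares of the Cube Lemma require --- after which everything is mechanical; there is no real obstacle once that identification is made.
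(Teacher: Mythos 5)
Your proposal is correct and follows essentially the same route as the paper: the paper also fixes $g \in G_{1}$, observes that $\alpha$-equivariance of $\pi$ and $\varpi$ is exactly the commutativity of the two squares required by the Cube Lemma for Pullbacks, and concludes that each $(\psi+\phi)(g,\_) = \psi(g,\_)+\phi(g,\_)$ is continuous, hence a homeomorphism with inverse $(\psi+\phi)(g^{-1},\_)$. The only difference is that you spell out the inverse-map argument explicitly, which the paper leaves implicit here (having already recorded it in the proof of Corollary \ref{action}).
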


\begin{proof}Since $\pi$ and $\varpi$ are $\alpha$-equivariant, we have that $\forall g \in G$ the diagrams commute:

$$ \xymatrix{   Y \ar[r]^{\psi(g,\_)} \ar[d]_{\pi} & Y \ar[d]^{\pi} & & Z \ar[r]^{\phi(g,\_)} \ar[d]_{\varpi} & Z \ar[d]^{\varpi} \\
                X \ar[r]_{\mu(\alpha(g),\_)} & X & & W \ar[r]_{\nu(\alpha(g),\_)} & W } $$

By the last proposition it follows that $(\psi+\phi)(g,\_) = \psi(g,\_)+\phi(g,\_)$ is continuous. Thus, $\psi+\phi$ is an action by homeomorphisms.

\end{proof}

There are some properties of the pullback:

\begin{prop}\label{hausdorffpullback}If $X+_{f}W,Y,Z$ are Hausdorff, $\varpi$ is injective and $\pi$ and $\varpi$ are continuous, then $Y+_{f^{\ast}}Z$ is Hausdorff.
\end{prop}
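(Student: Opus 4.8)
The plan is to exploit the continuous map $\pi+\varpi\colon Y+_{f^{\ast}}Z \to X+_{f}W$, whose continuity was established in the preceding proposition, together with the hypothesis that $X+_{f}W$ is Hausdorff. The guiding principle is elementary: if $h\colon A \to B$ is continuous, $B$ is Hausdorff, and $p,q \in A$ satisfy $h(p)\neq h(q)$, then disjoint open sets $O_{1}\ni h(p)$ and $O_{2}\ni h(q)$ in $B$ pull back to disjoint open sets $h^{-1}(O_{1})\ni p$ and $h^{-1}(O_{2})\ni q$ in $A$. So it suffices to separate by this pullback trick every pair of points of $Y+_{f^{\ast}}Z$ that $\pi+\varpi$ sends to distinct points, and to handle separately the pairs that collapse under $\pi+\varpi$. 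First I would record that, since $f^{\ast}$ is a single glueing map, $Y+_{f^{\ast}}Z = Y+_{f^{\ast},\emptyset}Z$, so $Y$ is open and $Z$ is closed in $Y+_{f^{\ast}}Z$.

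Next I would split the distinct pairs $p\neq q$ into three types. If $p\in Y$ and $q\in Z$, then $(\pi+\varpi)(p)=\pi(p)\in X$ while $(\pi+\varpi)(q)=\varpi(q)\in W$; as $X$ and $W$ are disjoint components of $X+_{f}W$, these images are automatically distinct, so the pullback trick applies. If $p,q\in Z$, then $(\pi+\varpi)(p)=\varpi(p)$ and $(\pi+\varpi)(q)=\varpi(q)$ are distinct precisely because $\varpi$ is injective, and again the pullback trick applies. Note that neither of these two cases uses any injectivity of $\pi$.

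The remaining case is $p,q\in Y$, which is exactly where $\pi+\varpi$ may fail to separate, since $\pi$ need not be injective and so $\pi(p)=\pi(q)$ is possible. Here I would instead use that $Y$ is Hausdorff and open in $Y+_{f^{\ast}}Z$: choose disjoint open sets $U,V\subseteq Y$ with $p\in U$ and $q\in V$; since $Y$ is open, $U$ and $V$ are open in $Y+_{f^{\ast}}Z$ as well, and they separate $p$ and $q$. Combining the three cases shows that $Y+_{f^{\ast}}Z$ is Hausdorff.

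The one conceptual point to get right, and the reason the hypotheses are asymmetric in demanding injectivity of $\varpi$ but not of $\pi$, is the bookkeeping of which pairs survive as distinct under $\pi+\varpi$: the cross pairs are safe because $X$ and $W$ are disjoint, the $Z$-pairs are safe by injectivity of $\varpi$, and the only genuinely dangerous pairs live inside $Y$, where openness of $Y$ rescues us without any reference to $\pi$. I would therefore expect no serious technical obstacle; in particular, no local compactness is required, and the argument reduces to assembling the continuity of $\pi+\varpi$, the Hausdorffness of $X+_{f}W$, and the openness of the subspace $Y$.
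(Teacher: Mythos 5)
Your proof is correct and follows essentially the same route as the paper's: both split into the three cases (two points of $Y$, handled by Hausdorffness and openness of $Y$; a point of $Y$ and a point of $Z$, handled by pulling back separating open sets along $\pi+\varpi$ since the images lie in the disjoint pieces $X$ and $W$; two points of $Z$, handled by injectivity of $\varpi$ and the same pullback trick). No meaningful differences to report.
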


\begin{proof}Let $x,y \in Y$. Since $Y$ is Hausdorff, there exists $U,V$ open sets that separate $x$ and $y$. But $Y$ is open in $Y+_{f^{\ast}}Z$, which implies that $U,V$ are open sets in $Y+_{f^{\ast}}Z$ that separate $x$ and $y$. Let $x \in Y$ and $y \in Z$. Take $U,V$ open sets in $X+_{f}W$ that separate $\pi(x)$ and $\varpi(y)$ (which are different points since $\pi(x) \in X$ and $\varpi(y) \in W$). So $(\pi+\varpi)^{-1}(U)$ and $(\pi+\varpi)^{-1}(V)$ separate $x$ and $y$. Now, let $x,y \in Z$. Since $\varpi$ is injective, we have that $\varpi(x) \neq \varpi(y)$ and, since $X+_{f}W$ is Hausdorff, there exists $U$ and $V$ disjoint open sets in $X+_{f}W$ that separate $\varpi(x)$ and $\varpi(y)$. Hence, $(\pi+\varpi)^{-1}(U)$ and $(\pi+\varpi)^{-1}(V)$ are disjoint open sets in $Y+_{f^{\ast}}Z$ that separate $x$ and $y$. Thus, $Y+_{f^{\ast}}Z$ is Hausdorff.
\end{proof}

\begin{prop}If $\pi$ and $\varpi$ are closed and $\varpi$ is surjective, then the map $\pi+\varpi: Y+_{f^{\ast}}Z \rightarrow X+_{f}W$ is closed.
\end{prop}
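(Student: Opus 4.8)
The plan is to show directly that the image of every closed set is closed, using the explicit description of closed sets in an Artin--Wraith glueing. Let $D \in Closed(Y+_{f^{\ast}}Z)$ and set $A = D \cap Y$ and $B = D \cap Z$. By the definition of the single-map glueing, $A \in Closed(Y)$, $B \in Closed(Z)$ and $f^{\ast}(A) \subseteq D$; since $f^{\ast}(A) \subseteq Z$, this last condition reads $f^{\ast}(A) \subseteq B$. Because $\pi(A) \subseteq X$ and $\varpi(B) \subseteq W$, the image is $(\pi+\varpi)(D) = \pi(A) \cup \varpi(B)$, with $(\pi(A)\cup\varpi(B))\cap X = \pi(A)$ and $(\pi(A)\cup\varpi(B))\cap W = \varpi(B)$. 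So, by the characterization of closed sets in $X+_{f}W$, it suffices to check three things: that $\pi(A)$ is closed in $X$, that $\varpi(B)$ is closed in $W$, and that $f(\pi(A)) \subseteq \pi(A)\cup\varpi(B)$.

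First I would dispatch the two easy conditions. Since $A$ is closed and $\pi$ is a closed map, $\pi(A)$ is closed in $X$; likewise $\varpi(B)$ is closed in $W$ because $\varpi$ is closed and $B$ is closed. In particular $Cl_{X}(\pi(A)) = \pi(A)$, which simplifies the pullback to $f^{\ast}(A) = Cl_{Z}(\varpi^{-1}(f(Cl_{X}(\pi(A))))) = Cl_{Z}(\varpi^{-1}(f(\pi(A))))$.

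The heart of the argument is the third condition. Since $f(\pi(A)) \subseteq W$ while $\pi(A) \subseteq X$, these two sets are disjoint, so $f(\pi(A)) \subseteq \pi(A)\cup\varpi(B)$ is equivalent to $f(\pi(A)) \subseteq \varpi(B)$. From $f^{\ast}(A)\subseteq B$ and the simplified formula above, $Cl_{Z}(\varpi^{-1}(f(\pi(A)))) \subseteq B$, hence a fortiori $\varpi^{-1}(f(\pi(A))) \subseteq B$. Applying $\varpi$ gives $\varpi(\varpi^{-1}(f(\pi(A)))) \subseteq \varpi(B)$, and here is exactly where surjectivity of $\varpi$ enters: for any $S \subseteq W$ one has $\varpi(\varpi^{-1}(S)) = S \cap \varpi(Z)$, so surjectivity forces $\varpi(\varpi^{-1}(f(\pi(A)))) = f(\pi(A))$. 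Thus $f(\pi(A)) \subseteq \varpi(B)$, completing the verification and showing that $(\pi+\varpi)(D)$ is closed in $X+_{f}W$.

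I expect the only subtle point, and the main obstacle, to be the bookkeeping in this third step: recognizing that the defining closedness relation $f^{\ast}(A)\subseteq D$ is precisely what feeds the pullback formula, and that surjectivity is exactly what is needed to cancel $\varpi\varpi^{-1}$ back to $f(\pi(A))$. Everything else is a routine application of the fact that $\pi$ and $\varpi$ are closed maps, together with the closed-set description of the two glueing topologies.
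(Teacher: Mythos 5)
Your proof is correct and follows essentially the same route as the paper's: both verify the three defining conditions for closedness in $X+_{f}W$, using closedness of $\pi$ and $\varpi$ for the first two, and the identity $\varpi(\varpi^{-1}(S)) = S$ (from surjectivity) applied to $f^{\ast}(A) \subseteq B$ for the third. The only cosmetic difference is the order of operations in the key step — you drop the closure before applying $\varpi$, while the paper applies $\varpi$ to the containment $f^{\ast}(A\cap Y)\subseteq A$ and then bounds from below — but the computation is the same.
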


\begin{proof}Let $A\in Closed(Y+_{f^{\ast}}Z)$. So $A\cap Y \in Closed(Y)$, $A\cap Z \in Closed(Z)$ and $f^{\ast}(A\cap Y) \subseteq A$. Since $\pi$ and $\varpi$ are closed, we have that $\pi(A \cap Y) = (\pi+\varpi)(A)\cap X \in Closed(X)$, $\varpi(A \cap Z) = (\pi+\varpi)(A)\cap W \in$ $Closed(W)$ and $\varpi(f^{\ast}(A\cap Y)) = (\pi+\varpi)(f^{\ast}(A\cap Y)) \subseteq (\pi+\varpi)(A)$. But we have that $f^{\ast}(A\cap Y) = Cl_{Z}(\varpi^{-1}(f(\pi(A\cap Y))))$, which implies that $\varpi(f^{\ast}(A\cap Y)) \supseteq \varpi(\varpi^{-1}(f(\pi(A\cap Y)))) = f(\pi(A\cap Y)) = f((\pi+\varpi)(A)\cap X)$. Therefore $f((\pi+\varpi)(A)\cap X) \subseteq (\pi+\varpi)(A)$. Thus,  $(\pi+\varpi)(A) \in Closed(X+_{f}W)$ and then $\pi+\varpi$ is closed.
\end{proof}

\begin{prop}\label{compact}If $\pi$ is proper and the spaces $Z$ and $X+_{f}W$ are compact, then $Y+_{f^{\ast}}Z$ is compact.
\end{prop}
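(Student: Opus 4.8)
The plan is to avoid working with the topology of $Y+_{f^{\ast}}Z$ directly and instead invoke the compactness criterion already established for a glueing with compact second factor: since $Z$ is compact and $f^{\ast}$ is admissible (being $f_{\Sigma\Pi}$, a composite of admissible maps), that proposition tells us $Y+_{f^{\ast}}Z$ is compact if and only if $f^{\ast}(A)\neq\emptyset$ for every non-compact $A\in Closed(Y)$. So the whole proof reduces to verifying this one implication, and the hypotheses ``$\pi$ proper'' and ``$X+_{f}W$ compact'' are exactly what should feed into it.

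So fix $A\in Closed(Y)$ non-compact and set $K=Cl_{X}(\pi(A))\in Closed(X)$. First I would argue that $K$ is \emph{not} compact: if it were, properness of $\pi$ would make $\pi^{-1}(K)$ compact, and then $A$, being a closed subset of $Y$ contained in $\pi^{-1}(\pi(A))\subseteq\pi^{-1}(K)$, would be a closed subset of a compact set, hence compact --- contradicting the choice of $A$. Next, note that $W$ is closed in $X+_{f}W$, hence compact as a closed subspace of the compact space $X+_{f}W$; therefore the same compactness criterion applies to $X+_{f}W$ and yields its reverse direction: every non-compact $B\in Closed(X)$ satisfies $f(B)\neq\emptyset$. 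Applying this to $B=K$ gives $f(Cl_{X}(\pi(A)))\neq\emptyset$.

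It remains to pass from $f(Cl_{X}(\pi(A)))\neq\emptyset$ to $f^{\ast}(A)=Cl_{Z}(\varpi^{-1}(f(Cl_{X}(\pi(A)))))\neq\emptyset$, and this is the delicate point I expect to be the crux. Since $Cl_{Z}$ of a nonempty set is nonempty, the claim is equivalent to $\varpi^{-1}(f(Cl_{X}(\pi(A))))\neq\emptyset$, i.e. to $f(Cl_{X}(\pi(A)))$ meeting the image of $\varpi$. This holds immediately when $\varpi$ is surjective, and surjectivity of $\varpi$ seems genuinely needed here: if $\varpi$ misses the part of $W$ that $f(K)$ lands in, then $f^{\ast}(A)$ can be empty for some non-compact closed $A$ and $Y+_{f^{\ast}}Z$ fails to be compact (for instance $X=\R$ with its two-point compactification and $Z$ a single point mapped to only one of the two ends). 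Thus I would run the argument assuming $\varpi$ surjective --- the standing situation in the compactification applications, matching the surjectivity hypothesis of the companion closed-map proposition --- conclude $f^{\ast}(A)\neq\emptyset$ for all non-compact closed $A\subseteq Y$, and read off compactness of $Y+_{f^{\ast}}Z$ from the criterion.
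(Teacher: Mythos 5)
Your proof takes exactly the paper's route: reduce via the criterion that a glueing with compact second factor is compact if and only if every non-compact closed set has non-empty image, use properness of $\pi$ to see that $Cl_{X}(\pi(A))$ is not compact, and feed that into the compactness of $X+_{f}W$ to get $f(Cl_{X}(\pi(A)))\neq\emptyset$. (At this step the paper's parenthetical justification ``since $Y+_{f^{\ast}}Z$ is compact'' is evidently a typo for ``since $X+_{f}W$ is compact''.)

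The one place you diverge is the ``delicate point'' you isolated, and there you are right and the paper is not: the paper's proof passes from $f(Cl_{X}(\pi(A)))\neq\emptyset$ to $f^{\ast}(A)=Cl_{Z}(\varpi^{-1}(f(Cl_{X}(\pi(A)))))\neq\emptyset$ with no justification, and this inference is exactly what fails when the image of $\varpi$ misses $f(Cl_{X}(\pi(A)))$. Your counterexample is valid: with $X=Y=\R$, $\pi=id$, $X+_{f}W$ the two-point compactification, $Z$ a single point and $\varpi$ sending it to $+\infty$, the set $A=(-\infty,0]$ is closed and non-compact in $Y$, yet $f^{\ast}(A)=Cl_{Z}(\varpi^{-1}(\{-\infty\}))=\emptyset$; hence $A$ is closed in $Y+_{f^{\ast}}Z$ and that space is not compact. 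So the proposition as printed is false, and surjectivity of $\varpi$ (or at least that $Im \ \varpi$ meet each such set $f(Cl_{X}(\pi(A)))$) must be added, as you do. This does no damage downstream: in Proposition \ref{pullbackdecompactificacoes}, the only place the result is invoked, the pullback is taken along $\pi$ and $\varpi=id_{Z}$, which is surjective, so your corrected statement is exactly what that application needs. In short, your proposal is not merely correct; it repairs a genuine gap in the paper's own statement and proof.
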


\begin{proof}Let $A \in Closed(Y)$ be non compact. Then, $Cl_{X}(\pi(A))$ is not compact (otherwise $\pi^{-1}(Cl_{X}(\pi(A))$ would be compact, since $\pi$ is proper, that contain a closed non compact subspace $A$). So $f(Cl_{X}(\pi(A))) \neq \emptyset$ (since $X+_{f}W$ is compact), which implies that $f^{\ast}(A) = Cl_{Z}(\varpi^{-1}(f(Cl_{X}(\pi(A))))) \neq \emptyset$. Thus, $Y+_{f^{\ast}}Z$ is compact.
\end{proof}

\begin{prop}Let $X+_{f}Y$ be a topological space, $X_{1} \subseteq X$ and $Y_{1} \subseteq Y$. So the subspace topology of $Z = X_{1} \cup Y_{1}$ coincides with the topology of the space $X_{1}+_{f_{1}}Y_{1}$, with $f_{1}: Closed(X_{1}) \rightarrow Closed (Y_{1})$ such that $f_{1}(A) = f(Cl_{X}(A))\cap Y_{1}$ is the pullback of $f$ by the inclusion maps.
\end{prop}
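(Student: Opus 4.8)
The plan is to show that the subspace topology on $Z = X_1 \cup Y_1$ and the topology of $X_1+_{f_1}Y_1$ have exactly the same closed sets. First I would dispatch the routine preliminaries. Since $f$ preserves $\emptyset$ and finite unions and $Cl_X(A\cup B)=Cl_X(A)\cup Cl_X(B)$, the map $f_1(A)=f(Cl_X(A))\cap Y_1$ sends $\emptyset$ to $\emptyset$ and finite unions to finite unions, hence is admissible, so $X_1+_{f_1}Y_1$ is well defined. I would also observe that $f_1$ is exactly the pullback $f^{\ast}$ of $f$ along the inclusions $\iota_{X_1}\colon X_1\hookrightarrow X$ and $\iota_{Y_1}\colon Y_1\hookrightarrow Y$: the pullback formula reads $Cl_{Y_1}(\iota_{Y_1}^{-1}(f(Cl_X(\iota_{X_1}(A)))))$, and here the outer closure is redundant because $f(Cl_X(A))$ is closed in $Y$, so $f(Cl_X(A))\cap Y_1$ is already closed in $Y_1$. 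Throughout I write $D_X=D\cap X_1$ and $D_Y=D\cap Y_1$.

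For the containment that every subspace-closed set is closed in $X_1+_{f_1}Y_1$, I would take $D=E\cap Z$ with $E\in Closed(X+_{f}Y)$ and verify the three defining conditions. The conditions $D_X\in Closed(X_1)$ and $D_Y\in Closed(Y_1)$ are immediate, since $D_X=(E\cap X)\cap X_1$ with $E\cap X\in Closed(X)$, and symmetrically for $D_Y$. The only substantive point is $f_1(D_X)\subseteq D$: from $D_X\subseteq E\cap X$ and $E\cap X\in Closed(X)$ I get $Cl_X(D_X)\subseteq E\cap X$, and then monotonicity of $f$ (a consequence of preservation of finite unions) together with $f(E\cap X)\subseteq E$ gives $f(Cl_X(D_X))\subseteq E$; intersecting with $Y_1\subseteq Z$ yields $f_1(D_X)\subseteq E\cap Z=D$.

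For the reverse inclusion I would exhibit a closed set of $X+_{f}Y$ that cuts $D$ out of $Z$, namely the closure of $D$ there. Writing $Cl_{X+_{f}Y}(D)=Cl_{X+_{f}Y}(D_X)\cup Cl_{X+_{f}Y}(D_Y)$ and using the earlier formula $Cl_{X+_{f}Y}(A)=A\cup f(A)$ for $A$ closed in $X$ (applied to $A=Cl_X(D_X)$, noting $Cl_{X+_{f}Y}(D_X)=Cl_{X+_{f}Y}(Cl_X(D_X))$), together with the fact that $Y$ is closed in $X+_{f}Y$ so that $Cl_{X+_{f}Y}(D_Y)=Cl_Y(D_Y)$, I obtain $Cl_{X+_{f}Y}(D)=Cl_X(D_X)\cup f(Cl_X(D_X))\cup Cl_Y(D_Y)$. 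Intersecting with $Z$, the identities $Cl_X(D_X)\cap X_1=Cl_{X_1}(D_X)=D_X$ and $Cl_Y(D_Y)\cap Y_1=Cl_{Y_1}(D_Y)=D_Y$ hold because $D_X,D_Y$ are already closed in $X_1,Y_1$, the cross terms vanish since $X$ and $Y$ are disjoint, and the surviving term $f(Cl_X(D_X))\cap Y_1=f_1(D_X)$ lies in $D$ by the assumption that $D$ is closed in $X_1+_{f_1}Y_1$. Hence $Cl_{X+_{f}Y}(D)\cap Z=D$, so $D$ is subspace-closed.

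The main obstacle is the bookkeeping around the pullback term $f_1(D_X)=f(Cl_X(D_X))\cap Y_1$: in both directions one must move carefully between the closure computed in $X$ and the closure computed in $X+_{f}Y$ and track where $f$ sends each set; everything else is formal. As an alternative I could instead quote the pullback propositions: since the inclusions are continuous, $\iota_{X_1}+\iota_{Y_1}\colon X_1+_{f_1}Y_1\to X+_{f}Y$ is continuous, so every subspace-open set of $Z$ is open in $X_1+_{f_1}Y_1$; and since $f_1=f^{\ast}$ induces the coarsest topology extending the topologies of $X_1$ and $Y_1$ that makes this inclusion continuous, while the subspace topology (which also extends those topologies and makes the inclusion continuous) is another such topology, the reverse containment follows at once. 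I would nonetheless present the self-contained closure computation above.
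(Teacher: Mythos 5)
Your proof is correct, but its main route differs from the paper's. The paper dispatches the proposition in three lines by purely categorical means: the map $\iota_{X_{1}}+\iota_{Y_{1}}\colon X_{1}+_{f_{1}}Y_{1}\to X+_{f}Y$ is continuous (first pullback proposition), so the universal property of the subspace topology makes $id\colon X_{1}+_{f_{1}}Y_{1}\to Z$ continuous; and since the subspace topology on $Z$ also makes the inclusion into $X+_{f}Y$ continuous, the universal property of the pullback ($f^{\ast}$ induces the coarsest such topology) gives continuity of the identity in the other direction. That is precisely the alternative you sketch in your final paragraph. Your primary argument instead verifies by hand that the two families of closed sets coincide, using the closure formula $Cl_{X+_{f}Y}(A)=A\cup f(A)$, the fact that $X$ and $Y$ are embedded, and closedness of $Y$ in $X+_{f}Y$; all the steps check out, including the key monotonicity argument $Cl_{X}(D_{X})\subseteq E\cap X\Rightarrow f(Cl_{X}(D_{X}))\subseteq E$ in one direction and the identity $Cl_{X+_{f}Y}(D)\cap Z=D_{X}\cup f_{1}(D_{X})\cup D_{Y}$ in the other. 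What each approach buys: the paper's (and your alternative's) argument is short but rests on a point left implicit — the universal-property proposition for pullbacks is stated only for topologies of the glueing form $X_{1}+_{f'}Y_{1}$, so to apply it to the subspace topology on $Z$ one must first observe that this topology \emph{is} a glueing topology, which follows from Proposition \ref{bomcomp} because $X_{1}=Z\cap X$ is open and $Y_{1}=Z\cap Y$ is closed in $Z$ (as $Y$ is closed in $X+_{f}Y$). Your self-contained closure computation avoids that detour entirely, at the cost of more bookkeeping, and so is arguably the more airtight of the two.
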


\begin{proof}Let $\iota_{X_{1}}: X_{1} \rightarrow X, \ \iota_{Y_{1}}: Y_{1} \rightarrow Y$ and $\iota_{Z}: Z \rightarrow X+_{f}Y$ be the inclusion maps. We have that the maps $\iota_{X_{1}}+\iota_{Y_{1}}: X_{1}+_{f_{1}}Y_{1} \rightarrow X+_{f}Y$ and $\iota_{Z}: Z \rightarrow X+_{f}Y$ are both continuous. By the universal property of the subspace topology we have that $id_{X_{1}\cup Y_{1}}: X_{1}+_{f_{1}}Y_{1} \rightarrow Z$ is continuous and, by the universal property of the pullback, we have the continuity of the inverse. Thus, both topologies coincide.
\end{proof}

\begin{prop}Let $X+_{f}W$, $Y$, $Z$, $U$ and $V$ be topological spaces, $\pi: Y \rightarrow X$, $\varpi: Z \rightarrow W$, $\rho: U \rightarrow Y$ and $\varrho: V \rightarrow Z$ be four maps. Then $f^{\ast\ast} \subseteq (f^{\ast})^{\ast}$, where $f^{\ast\ast}$ is the pullback $f^{\ast\ast}$ of $f$ by the maps $\pi \circ \rho$ and $\varpi \circ \varrho$ and  $(f^{\ast})^{\ast}$ is the pullback of $f^{\ast}$ by the maps $\rho$ and $\varrho$.
\end{prop}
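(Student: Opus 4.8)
The plan is to fix an arbitrary $A \in Closed(U)$, expand both $f^{\ast\ast}(A)$ and $(f^{\ast})^{\ast}(A)$ directly from the definition of the pullback, and then compare the two through a single chain of inclusions. Writing out the composites, the pullback of $f$ by $\pi \circ \rho$ and $\varpi \circ \varrho$ is
$$f^{\ast\ast}(A) = Cl_V\big(\varrho^{-1}(\varpi^{-1}(f(Cl_X(\pi(\rho(A))))))\big),$$
while applying the definition of the pullback twice gives
$$(f^{\ast})^{\ast}(A) = Cl_V\big(\varrho^{-1}(f^{\ast}(Cl_U(\rho(A))))\big) = Cl_V\big(\varrho^{-1}(Cl_Z(\varpi^{-1}(f(Cl_X(\pi(Cl_U(\rho(A))))))))\big).$$
The two expressions differ only in that $(f^{\ast})^{\ast}(A)$ carries an extra interior closure $Cl_U$ around $\rho(A)$ and an extra closure $Cl_Z$ inserted before $\varrho^{-1}$. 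The strategy is to argue that each of these two extra closures only enlarges the set, so that the inclusion goes in exactly the claimed direction.

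The ingredient I would record first is that every admissible map is monotone: since $f(S\cup T) = f(S)\cup f(T)$, whenever $S \subseteq T$ we get $f(T) = f(S)\cup f(T) \supseteq f(S)$. The same monotonicity is immediate for the closure operators $Cl_X, Cl_Z, Cl_V$ and for the preimage operators $\varpi^{-1}$ and $\varrho^{-1}$. With these observations in hand, I would begin from the trivial inclusion $\rho(A) \subseteq Cl_U(\rho(A))$ and push it forward successively through $\pi$, then $Cl_X$, then $f$ (each monotone), obtaining
$$f(Cl_X(\pi(\rho(A)))) \subseteq f(Cl_X(\pi(Cl_U(\rho(A))))).$$
This is the step that absorbs the extra $Cl_U$.

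To finish, I would apply $\varpi^{-1}$, which preserves the inclusion, and then use $\varpi^{-1}(\,\cdot\,) \subseteq Cl_Z(\varpi^{-1}(\,\cdot\,))$ to absorb the remaining $Cl_Z$ present in $(f^{\ast})^{\ast}(A)$; applying $\varrho^{-1}$ and finally $Cl_V$, both monotone, then yields $f^{\ast\ast}(A) \subseteq (f^{\ast})^{\ast}(A)$. Since $A$ was arbitrary this gives $f^{\ast\ast} \subseteq (f^{\ast})^{\ast}$. There is no genuine obstacle here: the whole argument is one monotone chain, and the only point deserving a moment's attention is precisely that the two extra closures $Cl_U$ and $Cl_Z$ both enlarge the relevant sets rather than shrink them, which explains why only an inclusion, and not an equality, can be asserted in general. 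Note that no continuity hypotheses on $\pi, \varpi, \rho, \varrho$ are required for this direction.
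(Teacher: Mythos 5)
Your proposal is correct and takes essentially the same route as the paper: expand both pullbacks from the definition and absorb the two extra closures via monotonicity of the admissible map, the closure operators, and the preimages. One notational slip to fix: since $\rho(A) \subseteq Y$, the inner closure appearing in $(f^{\ast})^{\ast}(A)$ is $Cl_{Y}(\rho(A))$, not $Cl_{U}(\rho(A))$ (the pullback definition takes the closure in the target of the map); with that correction your chain of inclusions goes through verbatim.
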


\begin{proof}Let $A \in Closed (U)$. Then $(f^{\ast})^{\ast}(A) = Cl_{V}(\varrho^{-1}(f^{\ast}(Cl_{Y}(\rho(A))))) = Cl_{V}(\varrho^{-1}(Cl_{Z}(\varpi^{-1}(f(Cl_{X}(\pi(Cl_{Y}(\rho(A))))))))) \supseteq$ $\\ Cl_{V}(\varrho^{-1}(\varpi^{-1}(f(Cl_{X}(\pi(\rho(A)))))))  = Cl_{Z}((\varpi \circ \varrho)^{-1}(f(Cl_{X}(\pi\circ\rho(A)))))$.
\end{proof}

\begin{obs}If $U+_{f^{\ast\ast}}V$ is compact and $U+_{(f^{\ast})^{\ast}}V$ is Hausdorff, then $f^{\ast\ast} = (f^{\ast})^{\ast}$. If $\pi$ is closed, then we also have that $f^{\ast\ast} = (f^{\ast})^{\ast}$.
\end{obs}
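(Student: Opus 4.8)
The plan is to exploit the inclusion $f^{\ast\ast}\subseteq(f^{\ast})^{\ast}$ proved in the previous proposition, combined with the classical fact that a continuous bijection from a compact space onto a Hausdorff space is automatically a homeomorphism. The whole argument reduces to recognizing that the two hypotheses (compactness on one side, Hausdorffness on the other) pin the two topologies together, and that a glueing map is recoverable from its topology via closures.

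First I would observe that, since $f^{\ast\ast}\subseteq(f^{\ast})^{\ast}$ and both sums use the empty map in the reverse direction, the Corollary at the end of Section 4 (the $id$-continuity criterion) applies and shows that the identity map $id: U+_{f^{\ast\ast}}V \rightarrow U+_{(f^{\ast})^{\ast}}V$ is continuous. This is the decisive step: the containment of glueing maps translates precisely into continuity of the identity in this particular direction, namely from the space built with the smaller map toward the space built with the larger one.

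Next I would note that this identity is a bijection whose domain $U+_{f^{\ast\ast}}V$ is compact by hypothesis and whose codomain $U+_{(f^{\ast})^{\ast}}V$ is Hausdorff by hypothesis. A continuous bijection from a compact space to a Hausdorff space is closed, hence a homeomorphism, so the topologies $\tau_{f^{\ast\ast}}$ and $\tau_{(f^{\ast})^{\ast}}$ on $U\dot{\cup}V$ coincide. Finally I would recover each map from its topology using the closure formula $Cl_{U+_{f}V}(A)=A\cup f(A)$ established earlier: since $A\subseteq U$ and the image lies in $V$, intersecting with $V$ gives $f(A)=Cl_{U+_{f}V}(A)\cap V$ for every $A\in Closed(U)$. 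As the two topologies agree, the corresponding closures agree, whence $f^{\ast\ast}(A)=(f^{\ast})^{\ast}(A)$ for all $A$, i.e. $f^{\ast\ast}=(f^{\ast})^{\ast}$.

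I do not anticipate a genuine obstacle here; the only point demanding care is the orientation of the identity map, so that it runs from the compact space into the Hausdorff one, which is exactly what the inclusion $f^{\ast\ast}\subseteq(f^{\ast})^{\ast}$ secures. Reversing this orientation would make the compact-to-Hausdorff argument inapplicable, so verifying the direction against the $id$-continuity criterion is the sole step where a sign error could creep in.
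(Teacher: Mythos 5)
Your proof is correct and is exactly the argument the paper leaves implicit in this remark: the inclusion $f^{\ast\ast}\subseteq(f^{\ast})^{\ast}$ from the preceding proposition makes $id: U+_{f^{\ast\ast}}V \rightarrow U+_{(f^{\ast})^{\ast}}V$ continuous by the identity-continuity corollary of Section 4, the compact-to-Hausdorff bijection is then a homeomorphism, and the glueing maps are recovered from the common topology via $f(A)=Cl(A)\cap V$. You also correctly identified the one delicate point, namely the orientation of the identity map, so there is nothing to add.
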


\subsubsection{Pushforwards}
\begin{defi}Let $X+_{f}W,Y,Z$ be topological spaces and $\pi: X \rightarrow Y$ and $\varpi: W \rightarrow Z$ continuous maps. Let's consider the admissible maps $\Pi: Closed(Y) \rightarrow Closed(X)$ and $\Sigma: Closed(W) \rightarrow Closed(Z)$ as $\Pi(A) = \pi^{-1}(A)$ and $\Sigma(A) = Cl_{Z}(\varpi(A))$. We define the pushforward of $f$ by the maps $\pi$ and $\varpi$ by $f_{\ast}(A) = f_{\Sigma \Pi}(A) = Cl_{Z}(\varpi(f(\pi^{-1}(A))))$.
\end{defi}

\begin{prop}$\pi+\varpi: X+_{f}W \rightarrow Y+_{f_{\ast}}Z$ is continuous.
\end{prop}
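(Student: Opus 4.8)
The plan is to reduce the claim to the already-proven continuity criterion for maps between Artin-Wraith glueings, namely Proposition \ref{mapacontinuo}. That proposition tells us that the map $\pi+\varpi: X+_{f}W \rightarrow Y+_{f_{\ast}}Z$ is continuous if and only if two containment conditions hold: for every $A \in Closed(Y)$ we need $f(\pi^{-1}(A)) \subseteq \varpi^{-1}(f_{\ast}(A))$, and for every $B \in Closed(Z)$ we need the trivial (left-hand) condition $\emptyset(\varpi^{-1}(B)) \subseteq \pi^{-1}(\emptyset(B))$ coming from the fact that we are gluing with a single admissible map $f$ on the domain (so the second glueing map is the constant-empty map). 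The second condition is automatic since the constant-empty map lands in $\emptyset$, so the whole problem collapses to verifying the single diagram on the left.

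First I would fix $A \in Closed(Y)$ and unwind the definition of the pushforward: $f_{\ast}(A) = Cl_{Z}(\varpi(f(\pi^{-1}(A))))$. The goal is then to show $f(\pi^{-1}(A)) \subseteq \varpi^{-1}(f_{\ast}(A))$. Applying $\varpi^{-1}$ to both sides of the evident inclusion $\varpi(f(\pi^{-1}(A))) \subseteq Cl_{Z}(\varpi(f(\pi^{-1}(A)))) = f_{\ast}(A)$, we obtain $\varpi^{-1}(\varpi(f(\pi^{-1}(A)))) \subseteq \varpi^{-1}(f_{\ast}(A))$. Since for any set $S$ one always has $S \subseteq \varpi^{-1}(\varpi(S))$, chaining these two inclusions with $S = f(\pi^{-1}(A))$ gives exactly $f(\pi^{-1}(A)) \subseteq \varpi^{-1}(f_{\ast}(A))$, which is the desired diagram.

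With both diagrams verified, Proposition \ref{mapacontinuo} applies directly and yields the continuity of $\pi+\varpi$. I should note that this is really just the pushforward instance of the general continuity principle established earlier for composed glueings: the pushforward $f_{\ast}$ is by construction $f_{\Sigma\Pi}$ with $\Pi = \pi^{-1}$ and $\Sigma = Cl_{Z}\circ\varpi$, and the continuity of $\pi+\varpi$ is the statement that this particular choice of $\Sigma$ and $\Pi$ makes the relevant square commute up to the required containment. There is no genuine obstacle here; the only subtlety worth stating explicitly is the set-theoretic fact $S \subseteq \varpi^{-1}(\varpi(S))$ (which does not require surjectivity of $\varpi$) together with the monotonicity of $\varpi^{-1}$ under the closure inclusion, both of which are elementary. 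The proof is therefore short and essentially a direct computation feeding into Proposition \ref{mapacontinuo}.
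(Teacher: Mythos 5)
Your proposal is correct and follows essentially the same route as the paper: both invoke Proposition \ref{mapacontinuo}, note that only the diagram for $f$ matters (the second glueing map being the constant-empty map), and verify $f(\pi^{-1}(A)) \subseteq \varpi^{-1}(f_{\ast}(A))$ via the same chain $f(\pi^{-1}(A)) \subseteq \varpi^{-1}(\varpi(f(\pi^{-1}(A)))) \subseteq \varpi^{-1}(Cl_{Z}(\varpi(f(\pi^{-1}(A))))) = \varpi^{-1}(f_{\ast}(A))$. Your explicit remark that the empty-map condition is automatic is a slight clarification the paper leaves implicit, but the argument is the same.
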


\begin{proof}If $A \in Closed(Y)$, then $\varpi^{-1}(f_{\ast}(A)) = \varpi^{-1}(Cl_{Z}(\varpi(f(\pi^{-1}(A))))) \supseteq \varpi^{-1}(\varpi(f(\pi^{-1}(A)))) \supseteq f(\pi^{-1}(A))$, that is, we have the diagram:

$$ \xymatrix{   Closed(X) \ar[r]^{f} \ar@{}[dr]|{\subseteq} & Closed(W) \\
                Closed(Y) \ar[r]_{f_{\ast}} \ar[u]^{\pi^{-1}} & Closed(Z) \ar[u]_{\varpi^{-1}} } $$

Thus, $\pi + \varpi: X+_{f}W \rightarrow Y+_{f_{\ast}}Z$ is continuous.
\end{proof}

\begin{prop}If $\varpi$ is closed and injective, then the diagram commutes:

$$ \xymatrix{   Closed(X) \ar[r]^{f} & Closed(W) \\
                Closed(Y) \ar[r]_{f_{\ast}} \ar[u]^{\pi^{-1}} & Closed(Z) \ar[u]_{\varpi^{-1}} } $$ 
\eod \end{prop}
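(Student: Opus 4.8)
The plan is to verify directly that for every $A \in Closed(Y)$ one has $\varpi^{-1}(f_{\ast}(A)) = f(\pi^{-1}(A))$, which is exactly the commutativity asserted by the diagram. One inclusion is already in hand: the previous proposition shows $\pi+\varpi: X+_{f}W \rightarrow Y+_{f_{\ast}}Z$ is continuous, and its proof yields $f(\pi^{-1}(A)) \subseteq \varpi^{-1}(f_{\ast}(A))$. So the only real content is the reverse inclusion, and this is precisely where the two hypotheses on $\varpi$ will be used.

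First I would record that since $\pi$ is continuous and $A \in Closed(Y)$, we have $\pi^{-1}(A) \in Closed(X)$, and hence $f(\pi^{-1}(A)) \in Closed(W)$ because $f$ takes values in $Closed(W)$. Because $\varpi$ is a closed map, the image $\varpi(f(\pi^{-1}(A)))$ is then already closed in $Z$, so the closure operator in the definition of the pushforward is redundant on this set: $f_{\ast}(A) = Cl_{Z}(\varpi(f(\pi^{-1}(A)))) = \varpi(f(\pi^{-1}(A)))$.

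Next I would apply $\varpi^{-1}$ to both sides and use injectivity of $\varpi$, which gives $\varpi^{-1}(\varpi(S)) = S$ for every $S \subseteq W$. Taking $S = f(\pi^{-1}(A))$, this yields $\varpi^{-1}(f_{\ast}(A)) = \varpi^{-1}(\varpi(f(\pi^{-1}(A)))) = f(\pi^{-1}(A))$, which is the desired equality and hence the commutativity of the square.

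I do not anticipate a genuine obstacle: the two hypotheses are tailored to neutralise exactly the two operators ($Cl_{Z}$ and $\varpi^{-1}\circ\varpi$) that distinguish $f_{\ast}$ from the naive pushforward $\varpi\circ f\circ\pi^{-1}$. The only points deserving a word of care are that closedness of $\varpi$ is invoked on $f(\pi^{-1}(A))$, which is closed in $W$ precisely because $\pi^{-1}(A)$ is closed and $f$ lands in $Closed(W)$; and that the cancellation $\varpi^{-1}(\varpi(S))=S$ requires $\varpi$ to be injective on all of $W$, which is exactly the standing assumption.
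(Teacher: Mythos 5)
Your proof is correct and is exactly the argument the paper has in mind: the paper marks this proposition with the "follows immediately" box, and the immediate argument is precisely yours — closedness of $\varpi$ makes the closure $Cl_{Z}$ redundant on the closed set $f(\pi^{-1}(A))$, and injectivity gives the cancellation $\varpi^{-1}(\varpi(S)) = S$, yielding $\varpi^{-1}(f_{\ast}(A)) = f(\pi^{-1}(A))$. Nothing is missing; your care in noting that $f(\pi^{-1}(A)) \in Closed(W)$ (so that closedness of $\varpi$ applies) is exactly the right point to check.
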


\begin{prop}\label{propriedadeuniversalpushforward} Let $Y+_{f'}Z$ for some choice of $f'$ such that the map $\pi+\varpi: X+_{f}W \rightarrow Y+_{f'}Z$ is continuous. Then the map $id_{Y}+id_{Z}: Y+_{f_{\ast}}Z \rightarrow Y+_{f'}Z$ is continuous.
\end{prop}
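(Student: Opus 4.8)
The plan is to turn the statement into a pure set-theoretic inequality between glueing maps by invoking the criterion for continuity of an identity map, i.e.\ the corollary to Proposition~\ref{mapacontinuo}. That corollary tells me that $id_{Y}+id_{Z}\colon Y+_{f_{\ast}}Z \to Y+_{f'}Z$ is continuous precisely when $f_{\ast}(A)\subseteq f'(A)$ for every $A\in Closed(Y)$ (the condition on the second glueing maps is vacuous, since both are the constant map $\emptyset$). So the whole problem collapses to verifying the single inclusion $f_{\ast}(A)\subseteq f'(A)$. Here I read the statement with the space $Y+_{f_{\ast}}Z$, the pushforward $f_{\ast}\colon Closed(Y)\to Closed(Z)$ being the map actually defined in the subsection.

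Next I would extract the needed inclusion from the hypothesis that $\pi+\varpi\colon X+_{f}W \to Y+_{f'}Z$ is continuous, again through Proposition~\ref{mapacontinuo}. Reading that proposition with $\psi=\pi$, $\phi=\varpi$, source glueing $f$ and target glueing $f'$ (the second glueing maps on both sides being $\emptyset$, hence automatically handled), continuity is equivalent to
$$ f(\pi^{-1}(A)) \subseteq \varpi^{-1}(f'(A)) \qquad \text{for all } A\in Closed(Y). $$
From this the computation is short: applying $\varpi$ and using $\varpi(\varpi^{-1}(T))\subseteq T$ gives $\varpi(f(\pi^{-1}(A)))\subseteq f'(A)$, and since $f'(A)$ is closed in $Z$, passing to closures yields
$$ f_{\ast}(A) = Cl_{Z}(\varpi(f(\pi^{-1}(A)))) \subseteq Cl_{Z}(f'(A)) = f'(A), $$
which is exactly the inclusion the corollary asks for.

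I expect the only genuinely subtle point to be the bookkeeping of variance when reading off the continuity criterion (being careful that $\pi$ plays the role of $\psi$ into the first target factor $Y$ and $\varpi$ the role of $\phi$ into $Z$), rather than any hard topological content. It is worth remarking that the direct argument above uses only the standing continuity of $\pi$ and $\varpi$, and not that $\varpi$ is injective and closed; the inclusion $f_{\ast}(A)\subseteq f'(A)$ holds for an arbitrary continuous $\varpi$. An alternative route, which is presumably where the injective/closed hypotheses are intended to enter, is to first use the preceding commuting-square proposition to replace $f(\pi^{-1}(A))$ by $\varpi^{-1}(f_{\ast}(A))$ exactly (valid when $\varpi$ is injective and closed), obtaining $\varpi^{-1}(f_{\ast}(A))\subseteq\varpi^{-1}(f'(A))$; to upgrade this to $f_{\ast}(A)\subseteq f'(A)$ one then needs $f_{\ast}(A)\subseteq \mathrm{Im}\,\varpi$, which is guaranteed because $\varpi$ closed makes $\mathrm{Im}\,\varpi=\varpi(W)$ closed, so the closure defining $f_{\ast}(A)$ cannot escape it. I would nonetheless present the direct argument as the main line and note the hypotheses merely make the sharper equality (rather than the mere inclusion) available.
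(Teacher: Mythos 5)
Your proposal is correct, and its main line is genuinely different from (and sharper than) the paper's own proof. The paper argues by substitution: it invokes the preceding commuting-square proposition to replace $f(\pi^{-1}(B))$ by the exact equality $\varpi^{-1}(f_{\ast}(B))$ (this is where injectivity and closedness of $\varpi$ enter), combines this with the continuity inclusion to get $\varpi^{-1}(f_{\ast}(B)) \subseteq \varpi^{-1}(f'(B))$, and then cancels $\varpi^{-1}$ to conclude $f_{\ast}(B) \subseteq f'(B)$. As you observe in your closing remark, that cancellation is legitimate only because $f_{\ast}(B) \subseteq \varpi(W)$, which itself uses closedness of $\varpi$ (so that $\varpi(W)$ is closed and the closure defining $f_{\ast}(B)$ cannot escape it) — a step the paper leaves implicit. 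Your main argument bypasses all of this: applying $\varpi$ to the inclusion $f(\pi^{-1}(A)) \subseteq \varpi^{-1}(f'(A))$ furnished by Proposition~\ref{mapacontinuo} and passing to closures gives $f_{\ast}(A) = Cl_{Z}(\varpi(f(\pi^{-1}(A)))) \subseteq Cl_{Z}(f'(A)) = f'(A)$ directly, and the corollary on identity maps then finishes the proof. What this buys is a strictly stronger result: the inclusion $f_{\ast} \subseteq f'$ (hence the minimality property of the pushforward topology) holds for arbitrary continuous $\pi$ and $\varpi$, with the injective/closed hypotheses playing no role in this direction; the paper's route buys only the reuse of the proposition it had just established, at the cost of extra hypotheses and a small implicit gap. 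Your variance bookkeeping in reading Proposition~\ref{mapacontinuo} (taking $\psi = \pi$, $\phi = \varpi$, with both second glueing maps equal to $\emptyset$ so that their condition is vacuous) is exactly right, as is your silent correction of the statement's typo ($Z$ in place of $W$ in the conclusion).
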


\begin{proof}Since the map $\pi+\varpi$ is continuous, we have that $\forall B \in Closed(Y)$, $f(\pi^{-1}(B)) \subseteq \varpi^{-1}(f'(B))$. We have that $f_{\ast}(B) = Cl_{Z}(\varpi(f(\pi^{-1}(B)))) \subseteq Cl_{Z}(\varpi(\varpi^{-1}(f'(B)))) \subseteq Cl_{Z}(f'(B)) = f'(B)$. Thus we have that the map $id_{Y}+id_{Z}: Y+_{f_{\ast}}Z \rightarrow Y+_{f'}Z$ is continuous.
\end{proof}

In other words, $f_{\ast}$ induces the finer topology (between the topologies that extend the topologies of $Y$ and $Z$) such that the map $\pi+\varpi$ is continuous.

\begin{prop}(Cube Lemma for Pushforwards) Let $X_{i}+_{f_{i}}W_{i}$, $Y_{i}$ and $Z_{i}$ be topological spaces, $\pi_{i}: X_{i} \rightarrow Y_{i}$ and $\varpi_{i}: W_{i} \rightarrow Z_{i}$ continuous maps and $f_{i \ast}: Closed(Y_{i}) \rightarrow Closed(Z_{i})$ the respective pushforwards. If the maps $\mu+\nu: X_{1}+_{f_{1}}W_{1} \rightarrow X_{2}+_{f_{2}}W_{2}$, $\psi: Y_{1} \rightarrow Y_{2}$ and $\phi: Z_{1} \rightarrow Z_{2}$ are continuous and commute the diagrams:

$$ \xymatrix{   X_{1} \ar[r]^{\mu} \ar[d]^{\pi_{1}} & X_{2} \ar[d]_{\pi_{2}} & & W_{1} \ar[r]^{\nu} \ar[d]^{\varpi_{1}} & W_{2} \ar[d]_{\varpi_{2}} \\
                Y_{1} \ar[r]^{\psi} & Y_{2} & & Z_{1} \ar[r]^{\phi} & Z_{2} } $$

Then, $\psi+\phi: Y_{1}+_{f_{1\ast}}Z_{1}\rightarrow Y_{2}+_{f_{2\ast}}Z_{2}$ is continuous.
\end{prop}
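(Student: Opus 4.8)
The plan is to reduce the statement entirely to the general Cube Lemma proved above. Recall that the pushforward is by definition $f_{i\ast} = f_{i\Sigma_i\Pi_i}$, where the admissible maps are $\Pi_i(A) = \pi_i^{-1}(A)$ (for $A \in Closed(Y_i)$) and $\Sigma_i(A) = Cl_{Z_i}(\varpi_i(A))$ (for $A \in Closed(W_i)$). Since $\mu+\nu$ is continuous by hypothesis, it therefore suffices to check that $\mu+\nu$, $\psi$ and $\phi$ fit into the two diagrams demanded by the Cube Lemma, i.e. that $\Pi_1 \circ \psi^{-1} \subseteq \mu^{-1}\circ \Pi_2$ and $\Sigma_1 \circ \nu^{-1} \subseteq \phi^{-1}\circ \Sigma_2$; the middle inclusion $f_1\circ\mu^{-1}\subseteq \nu^{-1}\circ f_2$ used inside the Cube Lemma is exactly the continuity of $\mu+\nu$.

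First I would dispatch the left diagram, which is purely set-theoretic. For $A \in Closed(Y_2)$, the commutativity of the square $\pi_2\circ\mu = \psi\circ\pi_1$ gives $\Pi_1(\psi^{-1}(A)) = \pi_1^{-1}(\psi^{-1}(A)) = (\psi\circ\pi_1)^{-1}(A) = (\pi_2\circ\mu)^{-1}(A) = \mu^{-1}(\pi_2^{-1}(A)) = \mu^{-1}(\Pi_2(A))$, so in fact equality holds, which is stronger than the required inclusion.

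Then I would treat the right diagram, which is where the only genuine work lies. Fix $A \in Closed(W_2)$. For any $w \in \nu^{-1}(A)$, commutativity of $\phi\circ\varpi_1 = \varpi_2\circ\nu$ yields $\phi(\varpi_1(w)) = \varpi_2(\nu(w)) \in \varpi_2(A)$, so $\varpi_1(\nu^{-1}(A)) \subseteq \phi^{-1}(\varpi_2(A)) \subseteq \phi^{-1}(Cl_{Z_2}(\varpi_2(A)))$. Because $\phi$ is continuous and $Cl_{Z_2}(\varpi_2(A))$ is closed, the set $\phi^{-1}(Cl_{Z_2}(\varpi_2(A)))$ is closed in $Z_1$; passing to closures then gives $\Sigma_1(\nu^{-1}(A)) = Cl_{Z_1}(\varpi_1(\nu^{-1}(A))) \subseteq \phi^{-1}(Cl_{Z_2}(\varpi_2(A))) = \phi^{-1}(\Sigma_2(A))$, as required.

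With both diagrams verified, the Cube Lemma applies directly and yields that $\psi+\phi: Y_1+_{f_{1\ast}}Z_1 \rightarrow Y_2+_{f_{2\ast}}Z_2$ is continuous. I expect the main (and essentially only) obstacle to be the closure bookkeeping in the second diagram: one must exploit the continuity of $\phi$ to absorb the outer closure $Cl_{Z_1}$ into $\phi^{-1}$, in complete analogy with the corresponding step in the Cube Lemma for Pullbacks, where continuity of $\phi$ was used in the same way.
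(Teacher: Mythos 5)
Your proposal is correct and follows essentially the same route as the paper: verify the two hypotheses of the Cube Lemma, where the left square commutes exactly (by $\pi_2\circ\mu = \psi\circ\pi_1$) and the right square's inclusion $\Sigma_1\circ\nu^{-1} \subseteq \phi^{-1}\circ\Sigma_2$ follows from the commuting square $\phi\circ\varpi_1 = \varpi_2\circ\nu$ plus continuity of $\phi$ to handle the closures. The only cosmetic difference is that you absorb $Cl_{Z_1}$ by noting $\phi^{-1}(Cl_{Z_2}(\varpi_2(A)))$ is closed, while the paper uses the equivalent formulation $Cl_{Z_1}(\phi^{-1}(S)) \subseteq \phi^{-1}(Cl_{Z_2}(S))$; these are the same fact about continuous maps.
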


\begin{proof}Let $A \subseteq Z_{2}$. Since $\phi$ is continuous, $Cl_{Z_{1}}\phi^{-1}(A) \subseteq \phi^{-1}(Cl_{Z_{2}}(A))$. Let $B \subseteq W_{2}$ and $x \in \varpi_{1}(\nu^{-1}(B))$. So $\phi(x) \in \phi(\varpi_{1}(\nu^{-1}(B))) = \varpi_{2}(\nu(\nu^{-1}(B))) \subseteq \varpi_{2}(B)$, which implies that $x \in \phi^{-1}(\varpi_{2}(B))$. Hence $\varpi_{1}(\nu^{-1}(B)) \subseteq \phi^{-1}(\varpi_{2}(B))$. Let $C \in Closed(W_{2})$. We have that $Cl_{Z_{1}}(\varpi_{1}(\nu^{-1}(C))) \subseteq Cl_{Z_{1}}(\phi^{-1}(\varpi_{2}(C))) \subseteq \phi^{-1}(Cl_{Z_{2}}(\varpi_{2}(C)))$. So we have the diagrams (the first one is immediate from the hypothesis):

$$ \xymatrix{   Closed(X_{2}) \ar[r]^{\mu^{-1}} \ar@{}[dr]|{\circlearrowleft} & Closed(X_{1}) & & Closed(W_{2}) \ar[r]^{\nu^{-1}} \ar[d]_{Cl_{Z_{2}} \circ \varpi_{2}} \ar@{}[dr]|{\supseteq} & Closed(W_{1}) \ar[d]^{Cl_{Z_{1}} \circ \varpi_{1}} \\
                Closed(Y_{2}) \ar[r]^{\psi^{-1}} \ar[u]_{\pi^{-1}_{2}} & Closed(Y_{1}) \ar[u]^{\pi^{-1}_{1}} & & Closed(Z_{2}) \ar[r]^{\phi^{-1}} & Closed(Z_{1}) } $$

By the Cube Lemma, it follows that $\psi+\phi: Y_{1}+_{f_{1\ast}}Z_{1} \rightarrow Y_{2}+_{f_{2\ast}}Z_{2}$ is continuous.
\end{proof}

And a property of the pushforward:

\begin{prop}\label{pushforwardcompact}If $X+_{f}W$ and $Z$ are compact and $\pi$ is surjective, then $Y+_{f_{\ast}}Z$ is compact.
\end{prop}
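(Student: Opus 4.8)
The plan is to reduce everything to the compactness criterion for Artin--Wraith sums proved earlier (the proposition characterising when $X+_{f}Y$ is compact). Since $Z$ is compact, that criterion says $Y+_{f_{\ast}}Z$ is compact if and only if $f_{\ast}(A) \neq \emptyset$ for every non-compact $A \in Closed(Y)$. So I would fix such an $A$ and aim to show $f_{\ast}(A) = Cl_{Z}(\varpi(f(\pi^{-1}(A)))) \neq \emptyset$; by the definition of $f_{\ast}$ this amounts to showing $f(\pi^{-1}(A)) \neq \emptyset$, because $\varpi$ applied to a non-empty set is non-empty and so is its closure.

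First I would record that $W$ is compact. Indeed $W$ is closed in $X+_{f}W$ (the second summand is always closed, by the corresponding proposition), and a closed subspace of the compact space $X+_{f}W$ is compact. This lets me apply the compactness criterion to $X+_{f}W$ itself: since that space is compact, $f(B) \neq \emptyset$ for every non-compact $B \in Closed(X)$. (In fact only the forward direction is used here, which does not even require $W$ compact, but noting compactness of $W$ keeps the application of the cited proposition literal.)

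The key step is then to verify that $\pi^{-1}(A)$ is a non-compact closed subset of $X$. It is closed because $\pi$ is continuous and $A$ is closed. For non-compactness I would use surjectivity of $\pi$: if $\pi^{-1}(A)$ were compact, then its continuous image $\pi(\pi^{-1}(A)) = A$ -- where the equality holds precisely because $\pi$ is surjective -- would be compact, contradicting the choice of $A$. Hence $\pi^{-1}(A)$ is non-compact, so $f(\pi^{-1}(A)) \neq \emptyset$, and therefore $\varpi(f(\pi^{-1}(A)))$ and its closure $f_{\ast}(A)$ are non-empty. Applying the compactness criterion in the reverse direction (with $Z$ compact) then yields that $Y+_{f_{\ast}}Z$ is compact.

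The only place requiring genuine care is the non-compactness of $\pi^{-1}(A)$: surjectivity of $\pi$ is exactly what guarantees $\pi(\pi^{-1}(A)) = A$ rather than a possibly compact proper subset, so this hypothesis is doing the real work. Everything else is a routine translation through the definition of the pushforward and two applications of the previously established compactness criterion.
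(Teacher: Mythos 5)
Your proof is correct and follows essentially the same route as the paper's: both reduce to the compactness criterion for Artin--Wraith sums and hinge on the observation that $\pi^{-1}(A)$ is non-compact because surjectivity gives $\pi(\pi^{-1}(A)) = A$. You simply spell out details the paper leaves implicit (closedness of $\pi^{-1}(A)$, compactness of $W$), which is fine.
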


\begin{proof}Let $A \in Closed(Y)$ be non compact. Then, $\pi^{-1}(A)$ is not compact (otherwise $A = \pi(\pi^{-1}(A))$ would be compact). So $f(\pi^{-1}(A)) \neq \emptyset$, which implies that $f_{\ast}(A) = Cl_{Z}(\varpi(f(\pi^{-1}(A)))) \neq \emptyset$. Thus, $Y+_{f_{\ast}}Z$ is compact.
\end{proof}

The next two propositions relate the notions of pullback and pushforward.

\begin{prop}Let $X+_{f}W$, $Y$ and $Z$ be topological spaces and two continuous maps $\pi: Y \rightarrow X$ and $\varpi: Z \rightarrow W$. If $\pi$ and $\varpi$ are surjective, then $(f^{\ast})_{\ast} = f$. Otherwise $(f^{\ast})_{\ast} \subseteq f$.
\end{prop}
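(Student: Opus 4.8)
The plan is to unwind both constructions into a single explicit formula and then sandwich it between two copies of $f$. First I would record that, by the pushforward definition applied to $f^{\ast}: Closed(Y) \rightarrow Closed(Z)$ along the maps $\pi: Y \rightarrow X$ and $\varpi: Z \rightarrow W$, one has for every $A \in Closed(X)$
$$(f^{\ast})_{\ast}(A) = Cl_{W}\big(\varpi\big(f^{\ast}(\pi^{-1}(A))\big)\big) = Cl_{W}\big(\varpi\big(Cl_{Z}\big(\varpi^{-1}\big(f\big(Cl_{X}(\pi(\pi^{-1}(A)))\big)\big)\big)\big)\big).$$
I would also note at the outset that admissible maps are monotone: if $A \subseteq B$ then $f(B) = f(A \cup B) = f(A) \cup f(B) \supseteq f(A)$; this fact is used repeatedly.

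For the inclusion $(f^{\ast})_{\ast} \subseteq f$, which holds in full generality, I would work from the inside out. Since $\pi(\pi^{-1}(A)) \subseteq A$ and $A$ is closed, $Cl_{X}(\pi(\pi^{-1}(A))) \subseteq A$, so by monotonicity $B := f(Cl_{X}(\pi(\pi^{-1}(A)))) \subseteq f(A)$, and $B$ is closed in $W$. The decisive step is that for the continuous map $\varpi$ and the closed set $B$,
$$\varpi\big(Cl_{Z}(\varpi^{-1}(B))\big) \subseteq Cl_{W}\big(\varpi(\varpi^{-1}(B))\big) \subseteq Cl_{W}(B) = B,$$
where the first inclusion is continuity of $\varpi$ and the second is $\varpi(\varpi^{-1}(B)) \subseteq B$. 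Taking $Cl_{W}$ of both sides and using that $B$ is closed gives $(f^{\ast})_{\ast}(A) \subseteq Cl_{W}(B) = B \subseteq f(A)$.

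For the reverse inclusion under the extra hypothesis that $\pi$ and $\varpi$ are surjective, I would observe that surjectivity of $\pi$ upgrades $\pi(\pi^{-1}(A)) \subseteq A$ to an equality, whence $Cl_{X}(\pi(\pi^{-1}(A))) = A$ and $B = f(A)$ exactly. Then $\varpi^{-1}(f(A)) \subseteq Cl_{Z}(\varpi^{-1}(f(A)))$ gives, after applying $\varpi$ and using surjectivity in the form $\varpi(\varpi^{-1}(f(A))) = f(A)$,
$$f(A) = \varpi(\varpi^{-1}(f(A))) \subseteq \varpi\big(Cl_{Z}(\varpi^{-1}(f(A)))\big) \subseteq Cl_{W}\big(\varpi(Cl_{Z}(\varpi^{-1}(f(A))))\big) = (f^{\ast})_{\ast}(A).$$
Combined with the general inclusion, this yields $(f^{\ast})_{\ast} = f$.

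The computation is essentially a bookkeeping of how $\varpi$, $\varpi^{-1}$ and the closure operators interact, so I do not expect a serious obstacle; the one point deserving care is that the composite $\varpi \circ Cl_{Z} \circ \varpi^{-1}$ must be controlled from both sides — bounded above by the identity on closed sets using only continuity, and recovering the identity only when $\varpi$ is surjective — and symmetrically that the factor $Cl_{X} \circ \pi \circ \pi^{-1}$ collapses to the identity on closed sets precisely when $\pi$ is surjective. Keeping straight which inclusions are automatic and which genuinely require surjectivity is the only real subtlety.
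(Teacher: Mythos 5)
Your proof is correct and takes essentially the same route as the paper: unwind the composite into the explicit formula $(f^{\ast})_{\ast}(A) = Cl_{W}(\varpi(Cl_{Z}(\varpi^{-1}(f(Cl_{X}(\pi(\pi^{-1}(A))))))))$ and collapse $\pi\circ\pi^{-1}$ and $\varpi\circ Cl_{Z}\circ\varpi^{-1}$ using surjectivity together with continuity. If anything, you are more careful than the paper's one-line chain of equalities, which silently drops the intermediate closure $Cl_{Z}$ (justified because $\varpi^{-1}$ of a closed set is closed, by continuity of $\varpi$) and dismisses the general inclusion $(f^{\ast})_{\ast} \subseteq f$ as ``analogous,'' whereas you prove it explicitly via the sandwich $\varpi(Cl_{Z}(\varpi^{-1}(B))) \subseteq Cl_{W}(\varpi(\varpi^{-1}(B))) \subseteq B$.
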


\begin{proof}Suppose that $\pi$ and $\varpi$ are surjective. Let $A \in Closed(X)$. Then $(f^{\ast})_{\ast}(A) = Cl_{W}(\varpi \circ \varpi^{-1}(f(Cl_{X}(\pi \circ \pi^{-1}(A))))) = Cl_{W}(f(Cl_{X}(A))) =$  $\\ Cl_{W}(f(A)) = f(A)$. The other statement is analogous.
\end{proof}

\begin{prop}Let $X+_{f}W$, $Y$ and $Z$ be topological spaces and two continuous maps $\pi: X \rightarrow Y$ and $\varpi: W \rightarrow Z$. Then $(f_{\ast})^{\ast} \supseteq f$. If $\pi$ and $\varpi$ are closed and injective then $(f_{\ast})^{\ast} = f$.
\end{prop}

\begin{proof}Let $A$ be a closed subset of $X$. We have that $(f_{\ast})^{\ast}(A) =$ $\\ \varpi^{-1}(Cl_{Z}(\varpi(f(\pi^{-1} (Cl_{Y}(\pi(A))))))) \supseteq \varpi^{-1}(\varpi(f(\pi^{-1} (\pi(A))))) \supseteq  f(A)$. The other statement is analogous.
\end{proof}

\subsection{Limits}

\begin{defi}Let $X$ be a locally compact space. We define $Sum(X)$ as the category whose objects are Hausdorff spaces of the form $X+_{f}Y$ and morphisms are continuous maps of the form $id+\phi: X+_{f_{1}}Y_{1} \rightarrow X+_{f_{2}}Y_{2}$. Let $Sum_{0}$ be the full subcategory of $Sum(X)$ whose objects are the spaces where $X$ is dense.
\end{defi}

In this section we are going to construct the limits of both categories. The arguments that are used in this section are straightforward but we put them here for the sake of completeness.

\begin{prop}The one point compactification $X+_{f_{\infty}}\{\infty\}$ is the terminal object in $Sum(X)$.
\end{prop}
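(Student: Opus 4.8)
The plan is to first make the glueing map $f_{\infty}$ explicit and then verify the two defining features of a terminal object: that from each object there is exactly one morphism into $X+_{f_{\infty}}\{\infty\}$, and that this morphism actually exists (i.e. is continuous). Since $Closed(\{\infty\}) = \{\emptyset, \{\infty\}\}$, admissibility forces $f_{\infty}(A) \in \{\emptyset, \{\infty\}\}$, and to obtain the Alexandroff compactification one takes $f_{\infty}(A) = \emptyset$ when $A$ is compact and $f_{\infty}(A) = \{\infty\}$ when $A$ is closed but non-compact; this is admissible because a finite union of closed sets is compact precisely when each summand is. Before anything else I would record that $X+_{f_{\infty}}\{\infty\}$ is Hausdorff, hence a genuine object of $SUM(X)$: by the earlier separation criterion for $X+_{f}Y$ with $Y$ a single point, the only nontrivial hypothesis is that $f_{\infty}$ annihilate every compact closed set, which holds by construction.

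For uniqueness I would observe that every morphism of $SUM(X)$ has the shape $id_{X}+\phi$ with $\phi \colon Y \to \{\infty\}$; as $\{\infty\}$ is a singleton, the only such $\phi$ is the constant map, so there is at most one candidate morphism $X+_{f}Y \to X+_{f_{\infty}}\{\infty\}$.

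It remains to check that this candidate is continuous, and here I would invoke Proposition \ref{mapacontinuo}. Writing the objects as $X+_{f,\emptyset}Y$ and $X+_{f_{\infty},\emptyset}\{\infty\}$, the second diagram of that proposition is automatic since the left-hand glueing map of the source is the empty map; continuity thus reduces to the single condition $f(A) \subseteq \phi^{-1}(f_{\infty}(A))$ for all $A \in Closed(X)$. I would split into two cases. If $A$ is non-compact then $f_{\infty}(A) = \{\infty\}$, so $\phi^{-1}(f_{\infty}(A)) = Y$ and the inclusion holds trivially because $f(A) \in Closed(Y)$. If $A$ is compact then $f_{\infty}(A) = \emptyset$, so $\phi^{-1}(f_{\infty}(A)) = \emptyset$ and I must show $f(A) = \emptyset$.

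The compact case is the only real obstacle, and it is resolved exactly by the standing Hausdorff hypothesis on objects of $SUM(X)$: the earlier proposition asserting that $X+_{f,g}Y$ Hausdorff implies $f(K) = \emptyset$ for every compact $K \subseteq X$ gives $f(A) = \emptyset$ immediately. With continuity established, the constant map into $\{\infty\}$ is the unique morphism, and therefore $X+_{f_{\infty}}\{\infty\}$ is the terminal object of $SUM(X)$.
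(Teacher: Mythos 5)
Your proof is correct and follows essentially the same route as the paper: uniqueness because $\phi$ must be the constant map, then continuity via Proposition \ref{mapacontinuo} by splitting into the compact case (where the Hausdorff hypothesis on objects of $SUM(X)$ forces $f(A)=\emptyset$) and the non-compact case (where the inclusion is trivial). The only difference is that you additionally spell out the definition and admissibility of $f_{\infty}$ and verify that $X+_{f_{\infty}}\{\infty\}$ is itself Hausdorff, details the paper leaves implicit.
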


\begin{proof}Let $X+_{f}Y \in Sum(X)$. It is clear that if there exists a morphism $id+\phi: X+_{f}Y \rightarrow X+_{f_{\infty}}\{\infty\}$, it must be unique ($\phi$ must be the constant map). Let's check that such map is continuous (and then a morphism). Let $F \in Closed(X)$. If $F$ is compact, then $f(F) = f_{\infty}(F) = \emptyset$, which implies that $f \circ id^{-1}(F) = \emptyset = \phi^{-1} \circ f_{\infty}(F)$. If $F$ is not compact, then $f_{\infty}(F) = \{\infty\}$, which implies that $f \circ id^{-1}(F) = f(F) \subseteq Y = \phi^{-1}(\infty) = \phi^{-1} \circ f_{\infty}(F)$. Thus, $id+\phi$ is continuous and then $X+_{f_{\infty}}\{\infty\}$ is the terminal object in $Sum(X)$.
\end{proof}

\begin{prop}Let $\mathcal{C}$ be a category. We define a new category $\hat{\mathcal{C}}$ whose objects are the same as $\mathcal{C}$ and one new object $\infty$ and the morphisms are the same as $\mathcal{C}$ and for each $c \in \hat{\mathcal{C}}$, a new morphism $e_{c}: c \rightarrow \infty$. For morphisms in $\mathcal{C}$, the new composition is the same. For a morphism $\alpha: c_{1} \rightarrow c_{2}$ of $\mathcal{C}$, we define $e_{c_{2}} \circ \alpha = e_{c_{1}}$. And, finally, we define, for $c \in \mathcal{C}, \ e_{\infty} \circ e_{c} = e_{c}$. This becomes actually a category.
\end{prop}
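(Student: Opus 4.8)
The plan is to verify the three axioms of a category directly: associativity of composition, existence of identity morphisms, and that composition is well-defined (i.e., the claimed composites actually land in the right hom-sets). Since $\hat{\mathcal{C}}$ inherits all the data of $\mathcal{C}$ together with the new object $\infty$ and the new morphisms $e_c : c \to \infty$, the bulk of the axioms that involve only old objects and old morphisms hold automatically because they already hold in $\mathcal{C}$. So the real content is checking the axioms in every case that involves at least one of the new morphisms $e_c$.

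First I would fix the identity morphisms. For each old object $c$ the identity is just $id_c$ from $\mathcal{C}$, and for $\infty$ the identity is $e_\infty$. I would check that $e_\infty$ behaves as an identity: on the one hand $e_\infty \circ e_c = e_c$ by the final defining relation, and on the other hand for any $\alpha : c_1 \to c_2$ in $\mathcal{C}$ we have $e_{c_2}\circ \alpha = e_{c_1}$, so in particular taking $\alpha = e_c$ (viewed as a morphism into $\infty$) the left-identity law for $e_\infty$ is exactly the stated relation. I would also confirm $id_c$ remains a two-sided identity with respect to the new morphisms, i.e. $e_c \circ id_c = e_c$, which follows from the relation $e_{c_2}\circ \alpha = e_{c_1}$ applied to $\alpha = id_c$.

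The main work is associativity, which I would verify by a case analysis according to how many of the three composable morphisms are new. The purely-old case is inherited from $\mathcal{C}$. In every remaining case exactly one composite of interest terminates at $\infty$, and the defining relations collapse everything to a single $e_{(-)}$ with the correct source. Concretely, for composable morphisms $c_0 \xrightarrow{\beta} c_1 \xrightarrow{\alpha} c_2 \xrightarrow{e_{c_2}} \infty$ both bracketings reduce via $e_{c_2}\circ\alpha = e_{c_1}$ and then $e_{c_1}\circ\beta = e_{c_0}$ to the single morphism $e_{c_0}$; and when $\infty$ appears as an intermediate target one uses $e_\infty \circ e_c = e_c$ to absorb it. I expect this to be the main obstacle only in the bookkeeping sense: there is no conceptual difficulty, but one must be careful to enumerate all the ways $\infty$ can occur as a source or target in a triple composite and confirm the relations are consistent (in particular that no composite is forced to equal two different things). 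Once every such triple is checked to give a uniquely determined result agreeing under both bracketings, the three axioms are established and $\hat{\mathcal{C}}$ is a category.
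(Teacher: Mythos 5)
Your proposal is correct and takes essentially the same route as the paper's own proof: declare $id_\infty = e_\infty$, keep the identities of $\mathcal{C}$, and verify associativity by case analysis on how the new morphisms $e_c$ and the object $\infty$ enter a triple composite. The cases you write out (the chain $c_0 \xrightarrow{\beta} c_1 \xrightarrow{\alpha} c_2 \xrightarrow{e_{c_2}} \infty$ collapsing to $e_{c_0}$ under both bracketings, and absorbing $e_\infty$ when $\infty$ is an intermediate target) are exactly the cases the paper checks.
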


\begin{proof}We have that the identity of an object in $\mathcal{C}$ continuous to be its identity on the new category and $id_{\infty} = e_{\infty}$. Let $\alpha_{i}: c_{i} \rightarrow c_{i+1}$ be morphisms in $\mathcal{C}$. We have that $(\alpha_{3} \circ \alpha_{2}) \circ \alpha_{1} = \alpha_{3} \circ (\alpha_{2} \circ \alpha_{1})$, since this compositions are just the same as in $\mathcal{C}, \ (e_{c_{3}}\circ \alpha_{2}) \circ \alpha_{1} = e_{c_{2}} \circ \alpha_{1} = e_{c_{1}} = e_{c_{3}} \circ (\alpha_{2} \circ \alpha_{1}), \ (e_{\infty} \circ e_{c_{2}}) \circ \alpha_{1} = e_{c_{2}} \circ \alpha_{1} = e_{c_{1}} = e _{\infty} \circ e_{c_{1}} = e_{\infty} \circ (e_{c_{2}} \circ \alpha_{1}), \ (e_{\infty} \circ e_{\infty}) \circ e_{c_{1}} = e_{\infty} \circ e_{c_{1}} = e_{\infty} \circ (e_{\infty} \circ e_{c_{1}})$ and $(e_{\infty} \circ e_{\infty}) \circ e_{\infty} = e_{\infty} = e_{\infty} \circ (e_{\infty} \circ e_{\infty})$. Thus, the composition is associative, which implies that $\hat{\mathcal{C}}$ is a category.
\end{proof}

Let $F: \mathcal{C} \rightarrow Sum(X)$ be a covariant functor, where $\mathcal{C}$ is a small category. We denote for every $c \in \mathcal{C}$, $F(c) = X+_{f_{c}}Y_{c}$.

\begin{prop}There exists only one extension $\hat{F}: \hat{\mathcal{C}} \rightarrow Sum(X)$ of $F$ such that $\hat{F}(\infty) = X+_{f_{\infty}}\{\infty\}$.
\end{prop}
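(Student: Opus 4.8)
The plan is to lean entirely on the fact, established in the preceding proposition, that $X+_{f_\infty}\{\infty\}$ is the terminal object of $SUM(X)$; this makes the whole construction essentially forced and settles both halves of the statement at once. First I would observe that because $\hat{F}$ is required to extend $F$, its values on the objects and morphisms of $\mathcal{C}$ are already pinned down, namely $\hat{F}(c) = F(c)$ and $\hat{F}(\alpha) = F(\alpha)$; combined with the prescribed value $\hat{F}(\infty) = X+_{f_\infty}\{\infty\}$, the only remaining freedom is in the images $\hat{F}(e_c)$ of the new arrows $e_c: c \rightarrow \infty$. Each such image must be a morphism $F(c) \rightarrow X+_{f_\infty}\{\infty\}$ in $SUM(X)$, and since the target is terminal there is exactly one such morphism. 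Hence $\hat{F}(e_c)$ is forced to be this unique terminal arrow, which already delivers the uniqueness half of the statement.

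For existence I would define $\hat{F}$ exactly as dictated above and then check that it is a functor. Preservation of identities on objects of $\mathcal{C}$ is inherited from $F$; for the new object, recall that $id_\infty = e_\infty$, and the unique self-morphism of the terminal object is its identity, so $\hat{F}(e_\infty) = id_{X+_{f_\infty}\{\infty\}}$. For composition, the composites occurring entirely inside $\mathcal{C}$ are handled by functoriality of $F$. The only remaining composites are exactly those with codomain $\infty$: the relation $e_{c_2} \circ \alpha = e_{c_1}$ for $\alpha: c_1 \rightarrow c_2$ in $\mathcal{C}$, and the relation $e_\infty \circ e_c = e_c$. In each of these cases both sides are morphisms from some $F(c_i)$ into the terminal object $X+_{f_\infty}\{\infty\}$, so they agree by uniqueness of the terminal arrow, with no computation required.

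I expect no genuine obstacle: the argument reduces completely to the observation that every object of $SUM(X)$ admits a single morphism into $X+_{f_\infty}\{\infty\}$, and both the existence of $\hat{F}$ and its uniqueness are immediate consequences of this. The only point deserving a moment of care is the treatment of $id_\infty = e_\infty$, where one must confirm that $\hat{F}$ sends it to an identity rather than merely to some terminal arrow; this is automatic, since the identity of the terminal object is its unique self-map in $SUM(X)$.
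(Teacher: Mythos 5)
Your proposal is correct and follows essentially the same route as the paper: uniqueness is forced because $\hat{F}$ is prescribed on $\mathcal{C}$ and on $\infty$, and each $\hat{F}(e_{c})$ must be the unique morphism into the terminal object $X+_{f_{\infty}}\{\infty\}$; existence then reduces to checking functoriality, where all composites involving the new arrows agree automatically by terminality. The paper's proof is exactly this argument, including your observation that $id_{\infty}=e_{\infty}$ must be sent to the identity of the terminal object.
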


\begin{proof}It is clear that, if such extension exists, it must be unique, since it is already defined on the objects, on the morphisms in $\mathcal{C}$ and for the morphisms $e_{c}: c \rightarrow \infty$, it must be the unique morphism of the form $\hat{F}(c) \rightarrow \infty$. Let's check that $\hat{F}$ is actually a functor. If $c\in \mathcal{C}$, then $\hat{F}(id_{c}) = F(id_{c}) = id_{F(c)}$ and $\hat{F}(id_{\infty}) = id_{X+_{f_{\infty}}\{\infty\}}$, by definition of $\hat{F}$. Let $\alpha_{i}: \ c_{i} \ \rightarrow \ c_{i+1}$ and $e_{c_{i}}: c_{i} \rightarrow \infty$ be morphisms in $\hat{\mathcal{C}}$, with $c_{i} \in \mathcal{C}$ (there is no morphism of the form $\gamma: \infty \rightarrow c_{i}$ and the only morphism $\gamma: \infty \rightarrow \infty$ is the identity). We have that $\hat{F}(\alpha_{2}\circ \alpha_{1}) = F(\alpha_{2}\circ \alpha_{1}) = F(\alpha_{2}) \circ F(\alpha_{1}) = \hat{F}(\alpha_{2}) \circ \hat{F}(\alpha_{1})$, and $\hat{F}(e_{c_{2}} \circ \alpha_{1}) = \hat{F}(e_{c_{1}}) = \hat{F}(e_{c_{2}}) \circ \hat{F}(\alpha_{1})$, since both are the unique morphism $F(c_{1}) \rightarrow X+_{f_{\infty}}\{\infty\}$. Thus, $\hat{F}$ is a functor.
\end{proof}

Let $\iota_{c}: X \rightarrow X+_{f_{c}}Y_{c}$ be the inclusion map. It is clear that $(X,\{\iota_{c}\}_{c\in \hat{\mathcal{C}}})$ is a cone of the functor $\tilde{F}: \hat{\mathcal{C}} \rightarrow Top$  that does the same as $\hat{F}$. So it induces the diagonal map $\Delta: X \rightarrow \lim\limits_{\longleftarrow} \tilde{F}$.

\begin{prop}$\Delta$ is an open embedding.
\end{prop}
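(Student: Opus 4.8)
The plan is to split the statement into two parts: that $\Delta$ is an embedding, and that its image is open in $\lim\limits_{\longleftarrow}\tilde{F}$. The embedding part is the routine one. By the universal property of the limit, $\Delta$ is the unique continuous map satisfying $\pi_{c} \circ \Delta = \iota_{c}$ for every object $c$ of $\tilde{\mathcal{C}}$, where the $\pi_{c}: \lim\limits_{\longleftarrow}\tilde{F} \rightarrow \tilde{F}(c)$ are the projections. Since we already proved that each inclusion $\iota_{c}: X \rightarrow X+_{f_{c}}Y_{c}$ is an embedding, and since $\iota_{c} = \pi_{c} \circ \Delta$ factors through $\Delta$, I would invoke the standard fact that whenever a composite $g \circ f$ is an embedding the first factor $f$ is itself an embedding. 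This gives at once that $\Delta$ is injective and that the topology of $X$ is the initial topology induced by $\Delta$, so $\Delta$ is an embedding onto its image.

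The heart of the matter is to show that $\Delta(X)$ is open, and here the terminal object $X+_{f_{\infty}}\{\infty\}$ that was attached to the diagram does the work. Let $\pi_{\infty}: \lim\limits_{\longleftarrow}\tilde{F} \rightarrow X+_{f_{\infty}}\{\infty\}$ be the projection associated with the object $\infty$. Since $Y$ is closed in any $X+_{f}Y$, the set $X$ is open in the one-point compactification $X+_{f_{\infty}}\{\infty\}$, and therefore $\pi_{\infty}^{-1}(X)$ is open in the limit. The key claim I would prove is the exact equality $\Delta(X) = \pi_{\infty}^{-1}(X)$, which then finishes the argument.

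The inclusion $\Delta(X) \subseteq \pi_{\infty}^{-1}(X)$ is immediate, since for $x \in X$ the $\infty$-coordinate of $\Delta(x)$ is $\iota_{\infty}(x) = x \in X$. The reverse inclusion is the step I expect to be the main (though still mild) obstacle, and it is exactly where compatibility with the new morphisms $e_{c}: c \rightarrow \infty$ is used. Take a point $p = (p_{c})_{c} \in \lim\limits_{\longleftarrow}\tilde{F}$ with $p_{\infty} = \pi_{\infty}(p) \in X$. For each object $c$ of $\mathcal{C}$ the morphism $\tilde{F}(e_{c}): X+_{f_{c}}Y_{c} \rightarrow X+_{f_{\infty}}\{\infty\}$ is the identity on $X$ and collapses $Y_{c}$ to $\infty$, and compatibility of the tuple $p$ forces $\tilde{F}(e_{c})(p_{c}) = p_{\infty} \in X$. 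Because $\infty \notin X$, the coordinate $p_{c}$ cannot lie in $Y_{c}$; hence $p_{c} \in X$ and then $\tilde{F}(e_{c})(p_{c}) = p_{c} = p_{\infty}$. Thus every coordinate of $p$ equals the single point $x := p_{\infty} \in X$, that is, $p = \Delta(x) \in \Delta(X)$. This establishes $\Delta(X) = \pi_{\infty}^{-1}(X)$, so $\Delta(X)$ is open and $\Delta$ is an open embedding.
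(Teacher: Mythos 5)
Your proof is correct and follows essentially the same route as the paper: both hinge on the projection $\pi_{\infty}$ to the terminal object $X+_{f_{\infty}}\{\infty\}$ and on using compatibility with the collapsing morphisms $\tilde{F}(e_{c})$ to show that any point of the limit whose $\infty$-coordinate lies in $X$ has all coordinates equal to it, which is exactly your identity $\Delta(X) = \pi_{\infty}^{-1}(X)$ and the paper's injectivity of $\pi_{\infty}|_{\pi_{\infty}^{-1}(X)}$. The differences are only in packaging: you argue with explicit compatible tuples and get the embedding property from the factorization $\iota_{\infty} = \pi_{\infty} \circ \Delta$, while the paper establishes the injectivity of $\pi_{\infty}|_{\pi_{\infty}^{-1}(X)}$ through a quotient-space argument invoking the universal property of the limit and then shows directly that $\Delta(U) = \pi_{\infty}^{-1}(U)$ for every open $U \subseteq X$.
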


\begin{proof}Let $\pi_{c}: \lim\limits_{\longleftarrow} \tilde{F} \rightarrow X+_{f_{c}}Y_{c}$ be the projection maps and let the equivalence relation $x\sim y$, for $x,y \in \lim\limits_{\longleftarrow} \tilde{F}$ if $x = y$ or $\pi_{\infty}(x) = \pi_{\infty}(y) \in X$. In the case that $x\sim y$, we have that $\forall c \in \hat{\mathcal{C}}, \ \hat{F}(e_{c}) \circ \pi_{c}(x) = \pi_{\infty}(x) = \pi_{\infty}(y) = \hat{F}(e_{c}) \circ \pi_{c}(y)$, which implies that $\pi_{c}(x) = \pi_{c}(y)$, since $\hat{F}(e_{c})|_{X}$ is injective (in a fact it is the identity in $X$). So the outer triangle of the diagram commutes $\forall c,c' \in \hat{\mathcal{C}}$ and $\alpha: c \rightarrow c'$ morphism:

$$ \xymatrix{   & \lim\limits_{\longleftarrow} \tilde{F} \ar[ldd]_{\pi_{c}} \ar[rdd]^{\pi_{c'}} \ar[d]^{\omega} &  \\
                  & (\lim\limits_{\longleftarrow} \tilde{F})/ \!\! \sim \ar[ld]^{\omega_{c}} \ar[rd]_{\omega_{c'}} & \\ \tilde{F}(c) \ar[rr]^{\alpha} & & \tilde{F}(c') } $$

Where $\omega: \lim\limits_{\longleftarrow} \tilde{F} \rightarrow (\lim\limits_{\longleftarrow} \tilde{F})/\!\sim$ is the quotient map and $\omega_{c}$ and $\omega_{c'}$ are the maps that commutes the upper triangles (they are continuous because of the quotient topology). So the whole diagram commutes, which implies, by the universal property of the limit, that $\omega$ is an homeomorphism, which implies that $\sim$ is trivial and then $\pi_{\infty}|_{\pi_{\infty}^{-1}(X)}$ is injective.

Since $\pi_{\infty} \circ \Delta = \iota_{\infty}$ and $\iota_{\infty}$ is injective, it follows that $\Delta$ is injective. Let $U \subseteq X$ be an open set. We have that $\pi_{\infty}^{-1}(U)$ is open in $\lim\limits_{\longleftarrow} \tilde{F}$. But $\pi_{\infty}^{-1}(U) = \Delta(U)$, since $\pi_{\infty} \circ \Delta(U) = U$ and $\pi_{\infty}|_{\pi_{\infty}^{-1}(X)}$ is injective. So $\Delta(U)$ is open. Thus, $\Delta$ is open.
\end{proof}

So $\lim\limits_{\longleftarrow} \tilde{F} \cong X+_{f}Y$ for some topological space $Y$ and an admissible map $f$. Thus, $\lim\limits_{\longleftarrow} \tilde{F}$ can be seen as an object of $Sum(X)$.

\begin{prop}\label{limitesegundotermo}Let $\acute{F}: \hat{C} \rightarrow Top$ defined by $\acute{F}(c) = Y_{c}$ and, for $\alpha: c \rightarrow d$ a morphism, $\acute{F}(\alpha) = \tilde{F}(\alpha)|_{Y_{c}}: Y_{c} \rightarrow Y_{d}$. Then, $Y \cong \lim\limits_{\longleftarrow} \acute{F}$.
\end{prop}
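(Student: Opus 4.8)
The plan is to realize the claimed homeomorphism by sandwiching the natural set-theoretic bijection between two continuous maps obtained from the universal properties of the two limits, after first pinning down $Y$ as a set. Throughout, write $\pi_{c}\colon \lim\limits_{\longleftarrow}\tilde F\to X+_{f_{c}}Y_{c}$ for the projections already in play and $\rho_{c}\colon \lim\limits_{\longleftarrow}\acute F\to Y_{c}$ for the projections of the new limit.

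First I would identify the underlying set of $Y$. From the proof that $\Delta$ is an open embedding one has $\pi_{\infty}^{-1}(X)=\Delta(X)$, so $Y$, being the complement of $\Delta(X)$ in $\lim\limits_{\longleftarrow}\tilde F$, equals $\pi_{\infty}^{-1}(\{\infty\})$. For a point $(x_{c})_{c\in\hat{\mathcal C}}$ of the limit one has $\hat F(e_{c})(x_{c})=x_{\infty}$, and since $\hat F(e_{c})\colon X+_{f_{c}}Y_{c}\to X+_{f_{\infty}}\{\infty\}$ restricts to the identity on $X$ and to the constant map $\infty$ on $Y_{c}$, the condition $x_{\infty}=\infty$ is equivalent to $x_{c}\in Y_{c}$ for every $c$. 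Hence, as a set, $Y=\{(x_{c}): x_{c}\in Y_{c}\ \forall c\}$ subject to $\tilde F(\alpha)(x_{c})=x_{d}$ for each morphism $\alpha\colon c\to d$; as $\tilde F(\alpha)|_{Y_{c}}=\acute F(\alpha)$, this is exactly the underlying set of $\lim\limits_{\longleftarrow}\acute F$, and it is this natural bijection that must be upgraded to a homeomorphism.

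Next I would build the forward map $\theta\colon Y\to\lim\limits_{\longleftarrow}\acute F$. Each $\pi_{c}$ restricts to a continuous map $Y\to X+_{f_{c}}Y_{c}$ whose image lies in $Y_{c}$; since the inclusion $Y_{c}\hookrightarrow X+_{f_{c}}Y_{c}$ is an embedding, corestriction yields a continuous $\pi_{c}|_{Y}\colon Y\to Y_{c}$. These are compatible with the $\acute F(\alpha)$, so the universal property of $\lim\limits_{\longleftarrow}\acute F$ gives a continuous $\theta$ with $\rho_{c}\circ\theta=\pi_{c}|_{Y}$. For the inverse, I would compose each $\rho_{c}$ with the continuous inclusion $\iota_{c}\colon Y_{c}\hookrightarrow X+_{f_{c}}Y_{c}$. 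Using $\tilde F(\alpha)\circ\iota_{c}=\iota_{d}\circ\acute F(\alpha)$, and, for the morphisms to $\infty$, the fact that $\tilde F(e_{c})\circ\iota_{c}$ is the constant map $\infty$, one checks that $\{\iota_{c}\circ\rho_{c}\}$ is a cone over $\tilde F$. The universal property of $\lim\limits_{\longleftarrow}\tilde F$ then produces a continuous map whose image lies in $Y$, i.e.\ a continuous $\sigma\colon\lim\limits_{\longleftarrow}\acute F\to Y$.

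Finally I would check that $\theta$ and $\sigma$ are mutually inverse, which reduces to comparing components: $\rho_{c}\circ\theta\circ\sigma$ and $\rho_{c}$ agree, while $\pi_{c}\circ(\iota_{Y}\circ\sigma\circ\theta)$ and $\pi_{c}\circ\iota_{Y}$ agree, so the uniqueness clauses of the two universal properties (plus injectivity of $\iota_{Y}\colon Y\hookrightarrow\lim\limits_{\longleftarrow}\tilde F$) force both composites to be identities. I expect the only genuine care to lie in the cone-compatibility checks, and in particular in handling the extra morphisms $e_{c}\colon c\to\infty$ so that $\sigma$ is well defined; the set-level identification and the final inverse verification are then bookkeeping. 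I would also remark that, alternatively, the statement is immediate from \textbf{RAPL}: the ``remainder'' functor $X+_{f}Y\mapsto Y$ on $SUM(X)$ is right adjoint to $Z\mapsto X+_{\emptyset}Z$ (a morphism $X+_{\emptyset}Z\to X+_{f}Y$ is, by Proposition~\ref{mapacontinuo}, nothing but a continuous map $Z\to Y$), so it preserves the limit computed in the previous propositions and sends $X+_{f}Y$ to $Y\cong\lim\limits_{\longleftarrow}\acute F$.
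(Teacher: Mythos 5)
Your proof is correct, but it reaches the conclusion by a genuinely different route than the paper. The paper constructs only one map: the inclusions $\nu_{c}\colon Y_{c}\hookrightarrow X+_{f_{c}}Y_{c}$ form a natural transformation from $\acute{F}$ to $\tilde{F}$, hence induce a continuous $\nu\colon \lim\limits_{\longleftarrow}\acute{F}\to\lim\limits_{\longleftarrow}\tilde{F}$; it then proves $\nu$ is injective, identifies its image with $(\lim\limits_{\longleftarrow}\tilde{F})-Im\ \Delta\cong Y$, and finishes with the ``continuous injection from a compact space into a Hausdorff space is an embedding'' argument. You instead produce continuous maps in both directions from the two universal properties (your $\iota_{Y}\circ\sigma$ is exactly the paper's $\nu$) and verify they are mutually inverse by comparing components and invoking uniqueness of mediating maps; that verification, including your use of injectivity of $\iota_{Y}$, is sound. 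The difference matters: the paper's last step rests on the claim that $\lim\limits_{\longleftarrow}\acute{F}$ is compact, which is not justified at the stated level of generality, since objects of $SUM(X)$ are only required to be Hausdorff, so the $Y_{c}$, and hence $\lim\limits_{\longleftarrow}\acute{F}$, need not be compact (compactness is automatic in the intended applications, where the $Y_{c}$ are compact remainders and $\lim\limits_{\longleftarrow}\acute{F}$ is closed in $\prod_{c}Y_{c}$). Your two-sided argument needs no compactness at all, so it proves the proposition in full generality and in effect repairs this gap; your set-theoretic identification $Y=\pi_{\infty}^{-1}(\{\infty\})$ is likewise a legitimate use of what the paper establishes while showing $\Delta$ is an open embedding, namely $\pi_{\infty}^{-1}(X)=\Delta(X)$. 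Your closing RAPL remark is an attractive conceptual alternative, consistent with the paper's own later use of adjointness for $Sum(X)\subseteq SUM(X)$, but as stated it needs two adjustments: the left adjoint $Z\mapsto X+_{\emptyset}Z$ lands in $SUM(X)$ only for Hausdorff $Z$, so the adjunction should be set up with the category of Hausdorff spaces; and RAPL requires knowing that $\lim\limits_{\longleftarrow}\hat{F}$ exists in $SUM(X)$, which the paper proves only in the proposition following this one (that proof is independent of this statement, so there is no circularity, but the order of presentation would have to be rearranged).
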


\begin{proof}Let, for $c \in \hat{\mathcal{C}}, \ \nu_{c}: \acute{F}(c) \rightarrow \tilde{F}(c)$ be the inclusion map. By the definition of $\acute{F}$, we have that $\{v_{c}\}_{c\in \hat{\mathcal{C}}}$ is a natural transformation, which implies that it induces a continuous map $\nu: \lim\limits_{\longleftarrow} \acute{F} \rightarrow \lim\limits_{\longleftarrow} \tilde{F}$.

Let $x,y \in \lim\limits_{\longleftarrow} \acute{F}$ such that $\nu(x) = \nu(y)$. Then, $\forall c \in \hat{\mathcal{C}}, \ \pi_{c} \circ \nu(x) = \pi_{c} \circ \nu(y)$. But $\pi_{c} \circ \nu = \nu_{c} \circ \varpi_{c}$, where $\varpi_{c}: \lim\limits_{\longleftarrow} \acute{F} \rightarrow \acute{F}(c)$ is the projection map. So $\forall c \in \hat{\mathcal{C}}, \ \nu_{c} \circ \varpi_{c}(x) =  \nu_{c} \circ \varpi_{c}(y)$, which implies that $\forall c \in \hat{\mathcal{C}}, \ \varpi_{c}(x) = \varpi_{c}(y)$, since $\nu_{c}$ is injective. So $x = y$ and then $\nu$ is injective.

Let $x \notin Im \ \Delta$. If there exists $c_{0} \in \hat{\mathcal{C}}: \pi_{c_{0}}(x) \in X$, then $\pi_{\infty}(x) =  \pi_{c_{0}}(x) \in X$, which implies that $\forall c \in \hat{\mathcal{C}}, \ \pi_{c}(x) \in X$, contradicting the fact that $x \notin Im \ \Delta$. So $\forall c \in \hat{\mathcal{C}}, \ \pi_{c}(x) \in Y_{c}$. Since $\forall \alpha: c \rightarrow d, \ \acute{F}(\alpha)(\pi_{c}(x)) = \tilde{F}(\alpha)(\pi_{c}(x)) = \pi_{d}(x)$, there exists $y \in \lim\limits_{\longleftarrow} \acute{F}$, such that $\forall c \in \hat{\mathcal{C}}, \ \varpi_{c}(y) = \pi_{c}(x)$. So $\forall c \in \hat{\mathcal{C}}, \ \pi_{c} \circ \nu(y) = \nu_{c} \circ \varpi_{c}(y) = \varpi_{c}(y) = \pi_{c}(x)$, which implies that $\nu(y) = x$ and then $(\lim\limits_{\longleftarrow} \tilde{F}) - Im \ \Delta \subseteq Im \ \nu$. Let $x \in Im \ \nu$ and $y \in \lim\limits_{\longleftarrow} \acute{F}: \nu(y) = x$. Then, $\forall c \in \hat{\mathcal{C}}, \ \pi_{c}(x) = \pi_{c} \circ \nu(y) = \nu_{c} \circ \varpi_{c}(y) = \varpi_{c}(y) \in Y_{c}$, which implies that $x \notin Im \ \Delta$. So $Im \ \nu \subseteq (\lim\limits_{\longleftarrow} \tilde{F}) - Im \ \Delta$ and then $Im \ \nu = (\lim\limits_{\longleftarrow} \tilde{F}) - Im \ \Delta$.

Let $W$ be a topological space and $g: W \rightarrow \lim\limits_{\longleftarrow} \acute{F}$ a map such that $\nu \circ g$ is continuous. Consider the following commutative diagram (for every object $c$ in $\hat{C}$):

$$ \xymatrix{ W \ar[r]^{g} & \lim\limits_{\longleftarrow} \acute{F}  \ar[r]^{\nu} \ar[d]^{\varpi_{c}} & \lim\limits_{\longleftarrow} \tilde{F} \ar[d]^{\pi_{c}} \\
                  & Y_{c} \ar[r]_{\nu_{c}} & X\!+_{f_{c}}\!Y_{c}\\ } $$
                  
We have that for every object $c \in \hat{C}$, $\pi_{c}\circ \nu \circ g = \nu_{c} \circ \varpi_{c} \circ g$ is continuous, which implies that $\varpi_{c} \circ g$ is continuous (because $\nu_{c}$ is an embedding). Then $g$ is continuous, by the universal property of $\lim\limits_{\longleftarrow} \acute{F}$, which implies that $\nu$ is an embedding.

Thus, $\lim\limits_{\longleftarrow} \acute{F} \cong Im \ \nu = (\lim\limits_{\longleftarrow} \tilde{F}) - Im \ \Delta \cong Y$.
\end{proof}

\begin{prop}$\lim\limits_{\longleftarrow} \hat{F}$ exists and $\lim\limits_{\longleftarrow} \tilde{F} \cong \lim\limits_{\longleftarrow} \hat{F}$.
\end{prop}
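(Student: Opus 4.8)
The plan is to equip the $Top$-limit $\lim\limits_{\longleftarrow}\tilde F$, which has already been identified with an object $X+_{f}Y$ of $SUM(X)$, with the projection maps $\pi_{c}$, and to verify directly that this constitutes a limit cone for $\hat F$ inside $SUM(X)$. Since limits in $Top$ exist and are unique up to homeomorphism, this will simultaneously yield the existence of $\lim\limits_{\longleftarrow}\hat F$ and the homeomorphism $\lim\limits_{\longleftarrow}\tilde F \cong \lim\limits_{\longleftarrow}\hat F$. The two facts I would lean on repeatedly are that the copy of $X$ inside $\lim\limits_{\longleftarrow}\tilde F$ is exactly $Im\,\Delta = \pi_{\infty}^{-1}(X)$ (extracted from the proof that $\Delta$ is an open embedding, together with the injectivity of $\pi_{\infty}$ on that set), and hence that the remainder $Y$ is exactly $\pi_{\infty}^{-1}(\infty)$.

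First I would show that each projection $\pi_{c}\colon \lim\limits_{\longleftarrow}\tilde F \to X+_{f_{c}}Y_{c}$ is a morphism of $SUM(X)$, i.e. of the form $id_{X}+\phi_{c}$. The relation $\pi_{c}\circ\Delta=\iota_{c}$ forces $\pi_{c}$ to restrict to the identity on the copy of $X$. For the remainder, the cone relation $\hat F(e_{c})\circ\pi_{c}=\pi_{\infty}$ holds in $Top$; since $\hat F(e_{c})\colon X+_{f_{c}}Y_{c} \to X+_{f_{\infty}}\{\infty\}$ is the unique morphism to the terminal object, it carries $X$ to $X$ and $Y_{c}$ to $\{\infty\}$, so $\pi_{c}$ sends $\pi_{\infty}^{-1}(\infty)=Y$ into $Y_{c}$. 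Thus $\pi_{c}=id_{X}+\phi_{c}$ with $\phi_{c}=\pi_{c}|_{Y}$, and the cone identities $\hat F(\alpha)\circ\pi_{c}=\pi_{c'}$ are inherited verbatim from the $Top$-cone.

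The core of the argument is the universal property. Given any cone $(W,\{g_{c}\})$ in $SUM(X)$, say $W=X+_{f_{W}}Y_{W}$ and $g_{c}=id_{X}+h_{c}$, the universal property of $\lim\limits_{\longleftarrow}\tilde F$ in $Top$ produces a unique continuous $u\colon W\to\lim\limits_{\longleftarrow}\tilde F$ with $\pi_{c}\circ u=g_{c}$. The hard part will be checking that $u$ itself lies in $SUM(X)$, that is $u=id_{X}+\phi$. For this I would use the component at $\infty$: the map $\pi_{\infty}\circ u=g_{\infty}$ is the unique $SUM(X)$-morphism $W\to X+_{f_{\infty}}\{\infty\}$, hence identity on $X$ and constant on $Y_{W}$. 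Consequently $u$ maps the copy of $X$ in $W$ into $\pi_{\infty}^{-1}(X)=Im\,\Delta$, and there it must be the identity, because $\pi_{\infty}$ is injective on $Im\,\Delta$ and $\pi_{\infty}\circ u=id_{X}$ on $X$; likewise $u$ maps $Y_{W}$ into $\pi_{\infty}^{-1}(\infty)=Y$. Therefore $u=id_{X}+\phi$ with $\phi=u|_{Y_{W}}$, a genuine morphism of $SUM(X)$. Uniqueness is free: any $SUM(X)$-morphism $u'$ with $\pi_{c}\circ u'=g_{c}$ is in particular a continuous map with that property, so $u'=u$ by the $Top$ universal property.

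Putting these together, $\lim\limits_{\longleftarrow}\tilde F$ with the projections $\{\pi_{c}\}$ is a limit of $\hat F$ in $SUM(X)$; hence $\lim\limits_{\longleftarrow}\hat F$ exists and coincides, as a space, with $\lim\limits_{\longleftarrow}\tilde F$. The only real obstacle is the verification that the mediating map $u$ respects the $X+_{f}Y$ decomposition, and the device that makes this routine is the terminal object $\infty$, whose projection $\pi_{\infty}$ separates the copy of $X$ from the remainder in every object of the diagram.
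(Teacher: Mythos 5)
Your proposal is correct and takes essentially the same route as the paper, whose entire proof is the one-line observation that $\lim\limits_{\longleftarrow} \tilde{F}$ satisfies the limit conditions for $\hat{F}$ because every cone in $SUM(X)$ is in particular a cone in $Top$. The work you do beyond that --- checking that the projections $\pi_{c}$ and the mediating map $u$ are genuinely morphisms of $SUM(X)$, using $\pi_{\infty}^{-1}(X) = Im \, \Delta$ and the injectivity of $\pi_{\infty}$ on that set --- is precisely the verification the paper leaves implicit, so your argument is a more complete rendering of the same idea.
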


\begin{proof}We have that $\lim\limits_{\longleftarrow} \hat{F}$ satisfies the limit conditions since it satisfies the conditions in $Top$ with more morphisms.
\end{proof}

\begin{prop}$\lim\limits_{\longleftarrow} F$ exists and $\lim\limits_{\longleftarrow} \hat{F} \cong \lim\limits_{\longleftarrow} F$.
\end{prop}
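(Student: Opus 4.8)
The plan is to exploit the fact that $\infty$ is a terminal object of $\hat{\mathcal{C}}$ and that $\hat{F}(\infty) = X+_{f_{\infty}}\{\infty\}$ is the terminal object of $SUM(X)$, so that passing from $F$ to $\hat{F}$ adds no genuine information at the level of cones. Concretely, I would establish a natural bijection between the cones over $F$ and the cones over $\hat{F}$, both taken in $SUM(X)$, and then read off the isomorphism of limits from the universal property, invoking the previous proposition to know that $\lim\limits_{\longleftarrow}\hat{F}$ already exists and equals $\lim\limits_{\longleftarrow}\tilde{F}\cong X+_{f}Y$.

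First I would describe the bijection. Let $(A,\{p_{c}\}_{c\in\mathcal{C}})$ be a cone over $F$ with $A = X+_{g}Z$ an object of $SUM(X)$; by the very definition of the morphisms of $SUM(X)$ each $p_{c}$ has the form $id_{X}+\alpha_{c}$ for some continuous $\alpha_{c}: Z \to Y_{c}$. I extend this family to one indexed by $\hat{\mathcal{C}}$ by declaring $p_{\infty}: A \to X+_{f_{\infty}}\{\infty\}$ to be the unique morphism from $A$ to the terminal object. Conversely, any cone over $\hat{F}$ restricts to a cone over $F$ by forgetting $p_{\infty}$. These two operations are clearly mutually inverse and natural in $A$, provided that the extended family really is a cone over $\hat{F}$.

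The only point that needs checking, and the crux of the argument, is that the extended family satisfies the cone condition for the new morphisms $e_{c}: c \to \infty$, i.e. that $\hat{F}(e_{c})\circ p_{c} = p_{\infty}$ for every $c$. This is immediate, because both $\hat{F}(e_{c})\circ p_{c}$ and $p_{\infty}$ are morphisms from $A$ into the terminal object $X+_{f_{\infty}}\{\infty\}$ of $SUM(X)$, and such a morphism is unique; hence they coincide. Unwinding the definitions, $\hat{F}(e_{c})\circ p_{c}$ is the identity on $X$ and the constant map with value $\infty$ on $Z$, which is manifestly independent of $c$. The remaining cone conditions, for the morphisms of $\mathcal{C}$, hold precisely because $\{p_{c}\}_{c\in\mathcal{C}}$ was already a cone over $F$, and the conditions arising from relations $e_{d}\circ\alpha = e_{c}$ follow from functoriality of $\hat{F}$. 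I want to stress that this is exactly where the structure of $SUM(X)$ is used: in a general target category the composites $\hat{F}(e_{c})\circ p_{c}$ need not agree for different $c$ (as already happens in $Top$ when $\mathcal{C}$ is disconnected), but here every morphism is forced to be the identity on $X$, so each such composite is the unique map to the terminal object and the extra compatibility is automatic.

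With the natural bijection of cone functors in hand, I conclude as follows. By the previous proposition $\lim\limits_{\longleftarrow}\hat{F}$ exists and is isomorphic to $\lim\limits_{\longleftarrow}\tilde{F}\cong X+_{f}Y$, with projections $\{\pi_{c}\}_{c\in\hat{\mathcal{C}}}$. Discarding $\pi_{\infty}$, the family $\{\pi_{c}\}_{c\in\mathcal{C}}$ is a cone over $F$, and the bijection above transports the universal property of $\lim\limits_{\longleftarrow}\hat{F}$ to this cone: any cone over $F$ extends uniquely to a cone over $\hat{F}$, factors uniquely through $\lim\limits_{\longleftarrow}\hat{F}$, and the factoring map is automatically compatible with the $\mathcal{C}$-projections, while uniqueness as an $F$-cone morphism follows from uniqueness as an $\hat{F}$-cone morphism since the composite with $\pi_{\infty}$ is forced. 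Hence $\lim\limits_{\longleftarrow}F$ exists and $\lim\limits_{\longleftarrow}\hat{F}\cong\lim\limits_{\longleftarrow}F$.
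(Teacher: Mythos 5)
Your proposal is correct and is essentially the paper's own argument, just spelled out in detail: the paper's proof is the one-liner that $F$ and $\hat{F}$ have the same cones because they agree on $\mathcal{C}$ and $\hat{F}(\infty)$ is the terminal object of $SUM(X)$, which is precisely your bijection-of-cones argument with the compatibility for the morphisms $e_{c}$ forced by terminality. Nothing to correct.
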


\begin{proof}Both functors have the same cones because they agree in $\mathcal{C}$ and $\hat{F}(\infty)$ is the terminal object in $Sum(X)$. So they have the same limit.
\end{proof}

Observe that $X$ does not need to be dense in the limit, even when $X$ is dense in $F(c)$, $\forall c \in \mathcal{C}$:

\begin{ex}Consider two copies of the two point compactification space $\\ \R+_{f}\{-\infty,\infty\}$. The product is a Hausdorff compact space of the form $\R+_{g}Y$, with $\# Y = 4$, since $Y \cong \{-\infty,\infty\} \times \{-\infty,\infty\}$. But there is no Hausdorff compactification of $\R$ with four points. Thus, $\R$ is not dense in $\R+_{g}Y$.
\end{ex}

So now let's consider the category $Sum_{0}(X)$.

\begin{prop}Let $I: Sum_{0}(X) \rightarrow Sum(X)$ be the inclusion functor and $J: Sum(X) \rightarrow Sum_{0}(X)$ the functor that sends a space $X+_{f}Y$ to $Cl_{X+_{f}Y}(X)$ and a map $id+\phi: X+_{f_{1}}Y_{1} \rightarrow X+_{f_{2}}Y_{2}$ to its restriction $id+\phi|_{f_{1}(X_{1})}: Cl_{X+_{f_{1}}Y_{1}}(X) \rightarrow Cl_{X+_{f_{2}}Y_{2}}(X)$. Then, $J$ is right adjoint to $I$.
\end{prop}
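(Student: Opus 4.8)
The plan is to exhibit $Sum(X)$ as a coreflective subcategory of $SUM(X)$ with coreflector $J$, which is exactly the assertion that $J$ is right adjoint to the inclusion $I$. Concretely, for each $B = X+_{f}Y \in SUM(X)$ I would take the counit to be the inclusion $\epsilon_{B}\colon J(B) = Cl_{X+_{f}Y}(X) \hookrightarrow B$ and verify the universal property: for every $A \in Sum(X)$ and every morphism $h = id+\phi\colon I(A) \to B$ in $SUM(X)$, there is a unique morphism $\tilde{h}\colon A \to J(B)$ in $Sum(X)$ with $\epsilon_{B}\circ \tilde{h} = h$. Before doing so I must check that $J$ is well defined. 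By the proposition computing closures, $Cl_{X+_{f}Y}(X) = X \cup f(X)$; since $X$ is open in $X+_{f}Y$, the subspace proposition identifies this closure with the glueing $X +_{f'} f(X)$ (the pullback of $f$ by the inclusions), so $J(B)$ is again of the required form. It is Hausdorff as a subspace of a Hausdorff space, and since $f'(X) = f(X)$ the corollary on density shows $X$ is dense in it; hence $J(B)$ is genuinely an object of $Sum(X)$. On morphisms, $J(id+\phi)$ is the restriction to $Cl_{B_{1}}(X)$, and continuity gives $(id+\phi)(Cl_{B_{1}}(X)) \subseteq Cl_{B_{2}}((id+\phi)(X)) = Cl_{B_{2}}(X)$, so the restriction indeed lands in $J(B_{2})$; functoriality is then immediate because restriction commutes with composition.

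The heart of the argument is the factorization, and it rests on one elementary observation combining density with continuity. Given $h\colon I(A) \to B$ with $A \in Sum(X)$, the point is that $X$ is dense in $A$, so $Cl_{A}(X) = A$, and therefore $h(A) = h(Cl_{A}(X)) \subseteq Cl_{B}(h(X)) = Cl_{B}(X) = J(B)$. Thus $h$ corestricts to a continuous map $\tilde{h}\colon A \to J(B)$ (continuity of the corestriction being automatic for a map whose image lies in the subspace $J(B)$), and $\tilde{h}$ is of the form $id + \psi$, hence a morphism of $Sum(X)$. Uniqueness is forced by injectivity of the inclusion $\epsilon_{B}$. I would also record the clean structural fact that underlies the whole adjunction: for $A \in Sum(X)$ one has $J(I(A)) = Cl_{A}(X) = A$, so $J \circ I = id_{Sum(X)}$ and the unit of the adjunction is the identity natural transformation; this is precisely why the subcategory is coreflective rather than merely reflective.

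To finish, I would verify naturality of $\epsilon$ and the triangle identities, though both are routine once the factorization is in hand. Naturality of $\epsilon_{B}$ with respect to $id+\phi\colon B_{1}\to B_{2}$ holds because $\epsilon_{B_{2}}\circ I(J(id+\phi))$ and $(id+\phi)\circ \epsilon_{B_{1}}$ are both the map $Cl_{B_{1}}(X)\to B_{2}$ obtained by applying $id+\phi$ on the subspace; one triangle identity is trivial since the unit is the identity, and the other reduces to the fact that restricting the inclusion $Cl_{B}(X)\hookrightarrow B$ to $Cl_{Cl_{B}(X)}(X) = Cl_{B}(X)$ gives back the identity. The only genuine obstacle is the verification that $J$ is well defined on objects, i.e. that $Cl_{X+_{f}Y}(X)$ really is a glueing of $X$ with a dense copy of $X$; everything else is a direct consequence of the universal property of subspaces together with the density hypothesis defining $Sum(X)$.
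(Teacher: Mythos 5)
Your proof is correct, but it verifies the adjunction from the side dual to the one the paper uses. The paper fixes an object $A = X+_{f}Y$ of $Sum(X)$ and shows that the identity $A \rightarrow J(I(A))$ is a \emph{reflection} of $A$ along $J$ (a universal arrow from $A$ to $J$): given $id+\phi: A \rightarrow J(B)$, the composite $\iota \circ (id+\phi)$ with the inclusion $\iota: J(B) \rightarrow B$ is exhibited as the unique morphism $I(A) \rightarrow B$ that $J$ sends back to $id+\phi$, existence and uniqueness both resting on $Cl_{A}(X) = A$. You instead fix $B \in SUM(X)$ and show that the inclusion $\epsilon_{B}: I(J(B)) \rightarrow B$ is a \emph{coreflection} (a universal arrow from $I$ to $B$): any morphism $h: I(A) \rightarrow B$ satisfies $h(A) = h(Cl_{A}(X)) \subseteq Cl_{B}(h(X)) = Cl_{B}(X) = J(B)$, hence corestricts uniquely through $\epsilon_{B}$. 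The two verifications are dual and use exactly the same two ingredients (density of $X$ in objects of $Sum(X)$, and continuity carrying closures into closures), so neither is more general; but your route has the advantage of making explicit the image-containment lemma $h(A) \subseteq Cl_{B}(X)$, which is precisely what is needed for $J$ to be well defined on morphisms --- a point the paper's proposition and proof leave implicit --- and your treatment of well-definedness of $J$ on objects (via the subspace/pullback proposition and the density corollary), of naturality of $\epsilon$, and of the triangle identities is more complete than the paper's argument, which stops at the universal-property diagram.
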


\begin{proof}Let $X+_{f}Y \in Sum_{0}(X)$, $X+_{g}Z \in Sum(X)$ and an application $id+\phi: X+_{f}Y \rightarrow J(X+_{g}Z) = Cl_{X+_{g}Z}(X)$. If $\iota:  Cl_{X+_{g}Z}(X) \rightarrow  X+_{g}Z$ is the inclusion map, then $\iota \circ (id+\phi): I(X+_{f}Y) = X+_{f}Y \rightarrow X+_{g}Z$ is the only morphism that commutes the diagram:

$$ \xymatrix{ X+_{f}Y \ar[r]^-{id}  \ar[d]^{id+\phi} & J \circ I(X+_{f}Y)  \ar[ld]^{ \ J(\iota\circ(id+\phi))} \\
            J(X+_{g}Z) & } $$

So $X+_{f}Y$ and the map $id+id: X+_{f}Y \rightarrow X+_{f}Y$ form a reflection of $X+_{f}Y$ along $J$. Thus, $I$ is left adjoint to $J$.
 \end{proof}

\begin{prop}Let $F: \mathcal{C} \rightarrow Sum_{0}(X)$ be a functor, $\breve{F}: \mathcal{C} \rightarrow Sum(X)$ a functor that do the same thing as $F$ and $Z = \lim\limits_{\longleftarrow} \breve{F}$. Then,  $\lim\limits_{\longleftarrow} F$ exists and $Cl_{Z}X = \lim\limits_{\longleftarrow} F$.
\end{prop}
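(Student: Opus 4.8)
The plan is to reduce the statement to the Right-Adjoints-Preserve-Limits principle recorded as the very first proposition of the Preliminaries, applied to the adjunction $I \dashv J$ between $Sum(X)$ and $SUM(X)$ established immediately above this statement.

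First I would record the two elementary facts that drive the reduction. By hypothesis $\breve{F}$ does the same thing as $F$, so $\breve{F} = I \circ F$, where $I: Sum(X) \rightarrow SUM(X)$ is the inclusion functor. Next, since every object $A = X+_{f}Y$ of $Sum(X)$ has $X$ dense, we have $Cl_{A}(X) = A$, and therefore $J(I(A)) = A$; on a morphism the functor $J$ merely restricts $id+\phi$ to the closure of $X$ in its domain, which is the whole domain, so $J(I(id+\phi)) = id+\phi$. Hence $J \circ I = id_{Sum(X)}$, and consequently $J \circ \breve{F} = J \circ I \circ F = F$.

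I would then invoke RAPL directly. In that proposition, take the right adjoint to be $J: SUM(X) \rightarrow Sum(X)$ and the functor possessing a limit to be $\breve{F}: \mathcal{C} \rightarrow SUM(X)$, whose limit $Z = \lim\limits_{\longleftarrow} \breve{F}$ exists by the preceding propositions of this section. RAPL then gives that $\lim\limits_{\longleftarrow}(J \circ \breve{F})$ exists and equals $J(\lim\limits_{\longleftarrow} \breve{F}) = J(Z) = Cl_{Z}X$. Since $J \circ \breve{F} = F$ by the previous paragraph, this is exactly the assertion that $\lim\limits_{\longleftarrow} F$ exists and equals $Cl_{Z}X$.

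The only step that requires any care --- and the main obstacle, such as it is --- is the verification that $J$ restricts to the identity on $Sum(X)$, i.e. $J \circ I = id$; this is precisely where the density condition defining $Sum(X)$ enters, and it is what allows us to identify $J \circ \breve{F}$ with $F$ itself rather than with a proper reflection of it. Everything else is a direct transcription of RAPL.
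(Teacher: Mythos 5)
Your proof is correct and is exactly the paper's argument: the paper's entire proof is the single word ``RAPL,'' applied to the adjunction $I \dashv J$ established in the preceding proposition, and your write-up simply supplies the details (that $\breve{F} = I \circ F$, that density gives $J \circ I = id_{Sum(X)}$ and hence $J \circ \breve{F} = F$) which the paper leaves implicit.
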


\begin{proof}RAPL.
\end{proof}

\begin{prop}\label{densonolimite}Let $F: \mathcal{C} \rightarrow Sum_{0}(X)$ be a functor, where $\mathcal{C}$ is a codirected poset. Then, $X$ is dense in $X+_{f}Y = \lim\limits_{\longleftarrow}\breve{F}$ and $\lim\limits_{\longleftarrow}F \cong \lim\limits_{\longleftarrow}\breve{F}$.
\end{prop}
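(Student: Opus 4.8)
The plan is to reduce everything to the single claim that $X$ is dense in $Z = \lim\limits_{\longleftarrow}\breve{F}$. Once this is established, the previous proposition gives $\lim\limits_{\longleftarrow}F = Cl_{Z}X = Z = \lim\limits_{\longleftarrow}\breve{F}$, which is the second assertion; and writing $Z = X+_{f}Y$ as in the earlier discussion, density of $X$ is exactly the first statement to prove. Recall that $Z \cong \lim\limits_{\longleftarrow}\tilde{F}$, that $X$ sits inside $Z$ as $Im\,\Delta$, and that by Proposition \ref{limitesegundotermo} the remainder $Y$ is $(\lim\limits_{\longleftarrow}\tilde{F}) - Im\,\Delta$; in particular the argument there shows that for every $y \in Y$ and every $c \in \hat{\mathcal{C}}$ one has $\pi_{c}(y) \in Y_{c}$, where $\pi_{c}$ denotes the projection of the limit onto $\tilde{F}(c) = X+_{f_{c}}Y_{c}$.

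First I would exploit that $\mathcal{C}$, and hence $\hat{\mathcal{C}}$ (a common lower bound of $c$ and $\infty$ being $c$ itself), is a codirected poset. This is precisely the hypothesis that turns $\{\tilde{F}(c)\}$ into an inverse system over a directed set, so that Proposition 9 of \cite{Bou} applies and the sets $\pi_{c}^{-1}(U)$, with $c \in \hat{\mathcal{C}}$ and $U$ a basic open set of $\tilde{F}(c)$, form a \emph{basis} (not merely a subbasis) for the topology of $Z$. Now fix $y \in Y$ and a basic neighbourhood $\pi_{c}^{-1}(U)$ of it, so $\pi_{c}(y) \in U$; by the previous paragraph $\pi_{c}(y) \in Y_{c} \cap U$, so $U$ is a nonempty open subset of $F(c) = X+_{f_{c}}Y_{c}$. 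Since $F(c)$ lies in $Sum(X)$, the set $X$ is dense in it, whence there is a point $x \in X \cap U$. Because $\pi_{c} \circ \Delta = \iota_{c}$ is the inclusion of $X$, the point $\Delta(x) \in Im\,\Delta = X \subseteq Z$ satisfies $\pi_{c}(\Delta(x)) = x \in U$, i.e. $\Delta(x) \in \pi_{c}^{-1}(U)$. Thus every basic, hence every, neighbourhood of $y$ meets $X$, so $y \in Cl_{Z}X$; as $y \in Y$ was arbitrary we obtain $Y \subseteq Cl_{Z}X$ and therefore $X$ is dense in $Z$.

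The step I expect to carry the real weight is the reduction to single-preimage basic open sets, that is, the use of codirectedness through Proposition 9 of \cite{Bou}. Without it a basic neighbourhood of $y$ is only a finite intersection $\pi_{c_{1}}^{-1}(U_{1}) \cap \dots \cap \pi_{c_{n}}^{-1}(U_{n})$, and to conclude one would need a single point of $X$ landing in all the $U_{i}$ simultaneously; this can genuinely fail, as the earlier example of two copies of the two-point compactification of $\R$ shows, where $X$ is not dense in the product. Codirectedness is exactly what lets one replace the finitely many indices $c_{i}$ by a common lower bound $c$ and so reduce to one preimage, which is why the hypothesis is needed. With density in hand, the isomorphism $\lim\limits_{\longleftarrow}F \cong \lim\limits_{\longleftarrow}\breve{F}$ then follows immediately from the previous proposition as noted above.
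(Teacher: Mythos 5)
Your proof is correct and follows essentially the same route as the paper: codirectedness gives (via Bourbaki's Proposition 9) the basis of single-preimage sets $\pi_c^{-1}(U)$, density of $X$ in each $F(c)$ then forces every such basic neighbourhood of a boundary point to meet $X$, and the RAPL proposition $\lim\limits_{\longleftarrow}F \cong Cl_{Z}(X)$ yields the second claim. The only difference is presentational — the paper argues by contradiction from a point of $Y - f(X)$, while you argue directly that every basic neighbourhood of $y \in Y$ meets $X$ — which is the same argument in contrapositive form.
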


\begin{proof}Let $y \in Y - f(X)$. Since $Y - f(X)$ is open in $X+_{f}Y$ and the set $\{\pi_{c}^{-1}(U): c \in \mathcal{C}$ and $U$ is open in $F(c)\}$ is a basis for the topology of $X+_{f}Y$ (because $\mathcal{C}$ is codirected), we have that there exists $c \in \mathcal{C}$ and $U$ an open set of $F(c) = X+_{f_{c}}Y_{c}$ such that $y \in \pi^{-1}_{c}(U) \subseteq Y - f(X)$. So $\pi_{c}(y) \in U$. Let $x \in X \cap U$. We have that $x \in \pi_{c}^{-1}(U) \subseteq Y$, a contradiction. So $X \cap U = \emptyset$. But $X \subseteq (X+_{f_{c}}Y_{c}) - U$, a closed set, implies that $X \cup f_{c}(X) = Cl_{X+_{f_{c}}Y_{c}}(X) \subseteq (X+_{f_{c}}Y_{c}) - U$. Since $f_{c}(X) = Y_{c}$, it follows that $X+_{f_{c}}Y_{c} \subseteq (X+_{f_{c}}Y_{c}) - U$ and then $U = \emptyset$, a contradiction, since $\pi_{c}(y) \in U$. Thus, $f(X) = Y$ and then $X$ is dense in $X+_{f}Y$. Since $\lim\limits_{\longleftarrow}F \cong Cl_{\lim\limits_{\longleftarrow}\breve{F}}(X)$, it follows that $\lim\limits_{\longleftarrow}F \cong \lim\limits_{\longleftarrow}\breve{F}$
\end{proof}

\begin{cor}\label{limitesegundotermominimal}Let $F: \mathcal{C} \rightarrow Sum_{0}(X)$ be a functor, where $\mathcal{C}$ is a codirected poset and $X+_{f}Y = \lim\limits_{\longleftarrow}F$. Then, $Y \cong \lim\limits_{\longleftarrow} \acute{F}$.
\eod\end{cor}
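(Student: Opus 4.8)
The statement is immediate once the limit-preservation facts established above are assembled in the right order, so the plan is simply to chain them. Write $\breve{F}: \mathcal{C} \rightarrow SUM(X)$ for the companion of $F$ that does the same thing on objects and arrows but is regarded as landing in the larger category, and let $\hat{F}, \tilde{F}, \acute{F}$ on $\hat{\mathcal{C}}$ be the functors attached to it exactly as in the discussion preceding Proposition \ref{limitesegundotermo}. Note that this $\acute{F}$ is literally the $\acute{F}$ of the statement, since it only records the spaces $Y_{c}$, which are unaffected by enlarging the codomain from $Sum(X)$ to $SUM(X)$.

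First I would invoke Proposition \ref{densonolimite}: because $\mathcal{C}$ is a codirected poset, $X$ is dense in $\lim\limits_{\longleftarrow}\breve{F}$ and $\lim\limits_{\longleftarrow}F \cong \lim\limits_{\longleftarrow}\breve{F}$. Thus the object $X+_{f}Y = \lim\limits_{\longleftarrow}F$ may be identified with the limit $\lim\limits_{\longleftarrow}\breve{F}$ taken in $SUM(X)$.

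Next I would run through the identifications $\lim\limits_{\longleftarrow}\breve{F} \cong \lim\limits_{\longleftarrow}\hat{F} \cong \lim\limits_{\longleftarrow}\tilde{F}$ furnished by the two (unlabelled) propositions asserting that adjoining the terminal object $\infty$ and then regarding the diagram in $Top$ leave the limit unchanged. This rewrites $X+_{f}Y$ in the exact shape $\lim\limits_{\longleftarrow}\tilde{F}$ demanded by the hypothesis of Proposition \ref{limitesegundotermo}.

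Finally, Proposition \ref{limitesegundotermo} yields $Y \cong \lim\limits_{\longleftarrow}\acute{F}$, which is the desired conclusion. I expect no genuine obstacle here; the only thing to watch is the bookkeeping of the three avatars $\breve{F}, \hat{F}, \tilde{F}$ of the diagram, together with the single use of codirectedness, which enters only to license Proposition \ref{densonolimite}.
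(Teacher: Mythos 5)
Your proposal is correct and is precisely the assembly the paper intends: the corollary is marked as immediate, and its implicit proof is exactly your chain — Proposition \ref{densonolimite} to identify $\lim\limits_{\longleftarrow}F$ with $\lim\limits_{\longleftarrow}\breve{F}$ (the only place codirectedness is used), the two unlabelled propositions to identify this with the $Top$-limit $\lim\limits_{\longleftarrow}\tilde{F}$, and Proposition \ref{limitesegundotermo} to extract $Y \cong \lim\limits_{\longleftarrow}\acute{F}$. No gaps; your bookkeeping of $\breve{F}$, $\hat{F}$, $\tilde{F}$ matches the paper's setup.
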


\section{Compactifications}

\subsection{Freudenthal compactification}

Let $X$ be a Hausdorff connected and locally connected space. We  construct the Freudenthal compactification of $X$ \cite{Fr} (the universal compactification of $X$ with totally disconnected boundary) using inverse limits. This construction of the Freudenthal compactification is well known (it can be found in section 9.1 of \cite{DK}) but we present it here  using the methods of Artin-Wraith glueings that we presented before.

For $K \subseteq X$ let's define $\pi_{0}^{u}(X-K)$ as the set of unbounded connected components of $X - K$ with the discrete topology (boundedness here means that its closure in $X$ is compact).

\begin{prop}$\forall K \subseteq X$ compact, $\pi_{0}^{u}(X-K)$ is finite.
\end{prop}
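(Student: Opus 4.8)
The plan is to bound the number of unbounded components of $X-K$ by the number of components of $X-K$ that meet the frontier of a fixed relatively compact open neighbourhood of $K$, and then to use compactness of that frontier to force finiteness. I may assume $X-K\neq\emptyset$ and $K\neq\emptyset$, since otherwise $X-K$ has at most one component and there is nothing to prove. Using that $X$ is locally compact (each point of $K$ has a relatively compact open neighbourhood; this is implicit here, as one is forming a compactification) together with the compactness of $K$, I would first fix an open set $U$ with $K\subseteq U$ and $Cl_X(U)$ compact; then $P:=Cl_X(U)\setminus U$ is compact and disjoint from $K$.

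Since $X$ is locally connected and $X-K$ is open, every connected component $C$ of $X-K$ is open, hence clopen in $X-K$, so $Cl_X(C)\cap(X-K)=C$ and therefore the frontier $Cl_X(C)\setminus C$ is contained in $K$. As $K\neq\emptyset$ and $C\subsetneq X$ is open, connectedness of $X$ forbids $C$ from being clopen, so this frontier is non-empty; in particular $Cl_X(C)$ meets $K\subseteq U$. The crucial claim is then: \emph{every unbounded component $C$ of $X-K$ meets $P$}. To see this I would note that $Cl_X(C)$ is connected and, being non-compact, cannot be a closed subset of the compact set $Cl_X(U)$, so $Cl_X(C)$ meets $X-Cl_X(U)$; since $Cl_X(C)$ also meets $U$ and $X=U\sqcup P\sqcup(X-Cl_X(U))$ with $U$ and $X-Cl_X(U)$ open, connectedness forces $Cl_X(C)\cap P\neq\emptyset$. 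Finally, any $p\in P$ lies in $X-K$, so $p\in Cl_X(C)\cap(X-K)=C$; hence $C\cap P\neq\emptyset$.

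With the claim in hand the conclusion is immediate: the components of $X-K$ are pairwise disjoint open sets, so those meeting the compact set $P$ form a disjoint open cover of $P$, whence only finitely many of them exist. Since every unbounded component is among these, $\pi_{0}^{u}(X-K)$ is finite. The main obstacle is the final step of the claim, namely upgrading $Cl_X(C)\cap P\neq\emptyset$ to $C\cap P\neq\emptyset$; this is exactly where the two facts $P\subseteq X-K$ and $Cl_X(C)\cap(X-K)=C$ combine. It is also where local compactness enters essentially, since without a relatively compact $U$ the frontier $P$ is unavailable and the statement can genuinely fail, for instance for an infinite wedge of rays, where removing the wedge point leaves infinitely many unbounded components.
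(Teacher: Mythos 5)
Your proof is correct, and it takes a genuinely different route from the paper's, even though both start with the same move: fixing a relatively compact open neighbourhood of $K$ (your $U$, the paper's $V$) whose compact frontier controls everything. The paper then argues in two separate steps. First it shows that the set $S$ of unbounded components meeting $V$ is finite by a pigeonhole argument: it covers $\partial V$ with finitely many \emph{connected} open sets $V_{p_i}$ disjoint from $K$ (a second use of local connectedness) and observes that each member of $S$ must contain some $V_{p_i}$, distinct members getting distinct indices. Second, it shows $S$ is all of $\pi_0^u(X-K)$ by a global connectedness argument: an unbounded component $W$ avoiding $V$ would satisfy $W \subseteq X - Cl_X(V)$ and would then split $X$ into the two nonempty open sets $W$ and $X-W$. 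You instead prove in a single stroke that every unbounded component $C$ meets the frontier $P$; the key observations, which do not appear in the paper, are that $Cl_X(C)\cap(X-K)=C$ forces the frontier of $C$ to lie in $K$ (nonempty by connectedness of $X$), so the connected, non-compact set $Cl_X(C)$ must bridge $U$ and $X-Cl_X(U)$ and hence cross $P$, and that $P\subseteq X-K$ upgrades $Cl_X(C)\cap P\neq\emptyset$ to $C\cap P\neq\emptyset$. Finiteness then follows from the purely point-set fact that pairwise disjoint open sets covering a compact set are finitely many. What your route buys: local connectedness is used only once (openness of components), no connected neighbourhoods need to be constructed, the connectedness argument is localized to each $Cl_X(C)$ rather than applied to a global decomposition of $X$ (the paper's decomposition $X-W = V \cup \bigcup(\pi_0^u(X-K)-\{W\})$ in fact also needs the bounded components, a detail your argument never has to confront), and your closing counterexample (the infinite wedge of rays) documents that local compactness is essential, which the paper does not address.
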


\begin{proof}Let $V$ be an open set such that $K \subseteq V$ and $Cl_{X}(V)$ is compact (it exists since $X$ is locally compact). Let $S = \{U \in \pi_{0}^{u}(X-K): U \cap \partial V \neq \emptyset\}$. Since $X$ is connected, $\partial V \neq \emptyset$. Let, $\forall p \in  \partial V, \ V_{p}$ be an open and connected neighborhood of $p$ such that $V_{p} \cap K = \emptyset$. Let $\{V_{p_{1}},...,V_{p_{n}}\}$ be a finite subcover of $\partial V$ ($\partial V$ is compact, since $Cl_{X}V$ is compact). If $U \in S$, then there exists $i \in \{1,...,n\}: V_{p_{i}} \subseteq U$. However, for $U\neq U' \in S$ and $i,i'\in\{1,...,n\}: V_{p_{i}} \subseteq U$ and $V_{p_{i'}} \subseteq U'$, we have that $i \neq i'$ (since $U$ and $U'$ are disjoint). So $S$ is finite.

Let $W \in \pi_{0}^{u}(X-K)-S$. Then, $W \subseteq V \cup (X - Cl_{X}V)$. Since $W$ is connected, we have that $W \subseteq V$ or $W \subseteq X - Cl_{X}V$. Since $V$ is bounded and $W$ is not, it follows that $W \subseteq X - Cl_{X}V$. Since $X-K$ is locally connected, it follows that $W$ is open in $X-K$ and then open in $X$. But $X-W = V \cup \bigcup(\pi_{0}^{u}(X-K)-\{W\})$ is an open set as well, contradicting the fact that $X$ is connected. Thus, $\pi_{0}^{u}(X-K) = S$, which implies that $\pi_{0}^{u}(X-K)$ is finite.
\end{proof}

If $K_{1}, K_{2}$ are two compact subspaces of $X$ with $K_{1} \subseteq K_{2}$, take the map $\psi_{K_{1}K_{2}}: \pi_{0}^{u}(X-K_{2}) \rightarrow \pi_{0}^{u}(X-K_{1})$ defined by $\psi_{K_{1}K_{2}}(U)$ as the connected component of $X-K_{1}$ that contains $U \in \pi_{0}^{u}(X-K_{2})$. We have that, for $K_{1} \subseteq K_{2} \subseteq K_{3} \subseteq X, \ \psi_{K_{1}K_{2}} \circ \psi_{K_{2}K_{3}} = \psi_{K_{1}K_{3}}$. So we are able to define the end space of $X$ as $Ends(X) = \lim\limits_{\longleftarrow} \pi_{0}^{u}(X-K)$.

Let's consider, for $K \subseteq X$ a compact subspace, the space $X+_{f_{K}}\pi_{0}^{u}(X-K)$ with $f_{K}(F) = \{U \in \pi_{0}^{u}(X-K): U \cap F$ is unbounded$\}$.

\begin{prop}$f_{K}$ is admissible.
\end{prop}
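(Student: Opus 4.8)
The plan is to verify directly the two defining conditions of an admissible map, namely that $f_K$ preserves the empty set and finite unions. Since $\pi_{0}^{u}(X-K)$ carries the discrete topology, every subset of it is closed, so there is nothing to check regarding $f_K(F)$ landing in $Closed(\pi_{0}^{u}(X-K))$; the only content lies in the behaviour of $f_K$ on $\emptyset$ and on unions $A\cup B$.

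First I would dispose of the empty set. By definition $f_K(\emptyset) = \{U \in \pi_{0}^{u}(X-K) : U \cap \emptyset \text{ is unbounded}\}$, and since $\emptyset$ has empty, hence compact, closure it is bounded, so this set is empty and $f_K(\emptyset)=\emptyset$.

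For the union condition, the key elementary observations about boundedness (i.e.\ relative compactness) are: a finite union of bounded sets is bounded, because $Cl_{X}(S \cup T) = Cl_{X}(S) \cup Cl_{X}(T)$ is a union of two compact sets and hence compact; and a subset of a bounded set is bounded, because a closed subset of a compact set is compact. Given $A,B \in Closed(X)$ and $U \in \pi_{0}^{u}(X-K)$, I would write $U \cap (A\cup B) = (U\cap A)\cup (U\cap B)$ and argue that this set is unbounded if and only if at least one of $U\cap A$, $U\cap B$ is unbounded. The forward implication uses that a union of two bounded sets is bounded (so if both intersections were bounded, their union would be too); the reverse implication uses that a superset of an unbounded set is unbounded. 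Translating this equivalence into membership gives exactly $f_K(A\cup B) = f_K(A)\cup f_K(B)$.

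I expect no genuine obstacle: the statement is a routine verification, and the only point requiring care is recording the correct facts about relative compactness, namely that the union of two relatively compact sets is relatively compact and that a subset of a relatively compact set is relatively compact. These are precisely the two ingredients that make the two inclusions of the union equality hold, so the proof reduces to a clean bookkeeping argument with no delicate estimates.
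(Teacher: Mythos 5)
Your proof is correct and follows essentially the same route as the paper: both verify $f_K(\emptyset)=\emptyset$ directly and establish the union equality by the two inclusions, using that a superset of an unbounded set is unbounded and that a union of two bounded (relatively compact) sets is bounded. The observation that discreteness of $\pi_{0}^{u}(X-K)$ makes the closedness condition vacuous is a nice explicit touch the paper leaves implicit.
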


\begin{proof}We have that $f_{K}(\emptyset) = \{U \in \pi_{0}^{u}(X-K): U \cap \emptyset$ is unbounded$\} = \emptyset$. Let $F_{1},F_{2} \in Closed(X)$. If $U \in f_{K}(F_{1})$, then $U \cap F_{1}$ is unbounded, which implies that $U \cap (F_{1}\cup F_{2})$ is unbounded and then $U \in f_{K}(F_{1} \cup F_{2})$. Analogously, $f_{K}(F_{2}) \subseteq f_{K}(F_{1}\cup F_{2})$, which implies that $f_{K}(F_{1}) \cup f_{K}(F_{2}) \subseteq f_{K}(F_{1}\cup F_{2})$. If $U \in \pi_{0}^{u}(X-K) - (f_{K}(F_{1}) \cup f_{K}(F_{2}))$, then $U \cap F_{1}$ and $U \cap F_{2}$ are bounded. Hence $(U \cap F_{1}) \cup (U \cap F_{2}) = U \cap (F_{1} \cup F_{2})$ is bounded, which implies that $U \notin f_{K}(F_{1}\cup F_{2})$. So $f_{K}(F_{1}\cup F_{2}) \subseteq f_{K}(F_{1}) \cup f_{K}(F_{2})$, which implies that $f_{K}(F_{1}\cup F_{2}) = f_{K}(F_{1}) \cup f_{K}(F_{2})$. Thus, $f_{K}$ is admissible.
\end{proof}

\begin{prop}$X+_{f_{K}}\pi_{0}^{u}(X-K)$ is compact.
\end{prop}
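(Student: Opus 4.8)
The plan is to apply the compactness criterion from the Compactness subsection. Since $\pi_{0}^{u}(X-K)$ is finite by the previous proposition, it is a compact discrete space, so that criterion applies and tells us that $X+_{f_{K}}\pi_{0}^{u}(X-K)$ is compact if and only if $f_{K}(A)\neq\emptyset$ for every non-compact $A\in Closed(X)$. Thus I only need to produce, for each closed non-compact $A$, an unbounded component $U$ of $X-K$ with $U\cap A$ unbounded (i.e. with non-compact closure in $X$). I will assume $K\neq\emptyset$; the case $K=\emptyset$ (where either $X$ is compact, making $\pi_{0}^{u}(X-K)$ empty, or $X$ itself is the unique unbounded component) is immediate.

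The heart of the argument is the following lemma: the union $B$ of all \emph{bounded} components of $X-K$ is relatively compact. To prove it I would first fix an open $V$ with $K\subseteq V$ and $Cl_{X}(V)$ compact, which exists by local compactness. Writing $\partial V = Cl_{X}(V)-V\subseteq X-K$, note $\partial V$ is compact; covering it by finitely many connected open subsets of $X-K$ (possible by local connectedness), and observing each such set lies in a single component, I conclude that only finitely many components of $X-K$ meet $\partial V$. Next, since $X$ is connected and $K\neq\emptyset$, every component $W$ of $X-K$ is open with $\emptyset\neq\partial_{X}W\subseteq K\subseteq V$ (a component that were closed in $X$ would be a proper nonempty clopen set), and a point of $\partial_{X}W\cap V$ shows that $W$ meets $V$. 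Consequently, if a bounded component $W$ also met $X-Cl_{X}(V)$, then, being connected and meeting the disjoint open sets $V$ and $X-Cl_{X}(V)$, it would have to meet $\partial V$; hence all but finitely many bounded components are contained in $Cl_{X}(V)$. Therefore $Cl_{X}(B)$ is contained in the union of $Cl_{X}(V)$ with the finitely many compact closures of the bounded components that meet $\partial V$, a finite union of compact sets, so $Cl_{X}(B)$ is compact.

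With the lemma in hand I would finish as follows. Let $U_{1},\dots,U_{m}$ be the finitely many unbounded components, and let $A$ be closed and non-compact. Since $X = K\ \dot{\cup}\ U_{1}\ \dot{\cup}\ \cdots\ \dot{\cup}\ U_{m}\ \dot{\cup}\ B$, we have $A = (A\cap K)\cup\bigcup_{i}(A\cap U_{i})\cup(A\cap B)$. Here $A\cap K$ is compact as a closed subset of $K$, and $A\cap B$ is relatively compact by the lemma (its closure lies in $Cl_{X}(B)$). If every $A\cap U_{i}$ were also relatively compact, then $A$, being closed and contained in a finite union of compact sets, would itself be compact, contradicting the choice of $A$. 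Hence $A\cap U_{i}$ is unbounded for some $i$, that is $U_{i}\in f_{K}(A)$, so $f_{K}(A)\neq\emptyset$, and the criterion yields compactness.

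The main obstacle is the lemma, and specifically controlling the bounded components not contained in $Cl_{X}(V)$. The essential leverage comes from two facts: compactness of $K$ forces the frontier of each component of $X-K$ into $K\subseteq V$, and local connectedness together with compactness of $\partial V$ bounds the number of components that can escape $Cl_{X}(V)$. Once these are secured, the relative compactness of $B$ and hence the whole statement follow routinely from the decomposition of $A$.
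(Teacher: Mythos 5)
Your proposal is correct, and it is worth recording how it differs from the paper's own argument. Both proofs share the same skeleton: since $\pi_{0}^{u}(X-K)$ is finite (previous proposition) and hence compact, the criterion from the Compactness subsection reduces everything to showing that $f_{K}(F)\neq\emptyset$ for every closed non-compact $F\subseteq X$. At that point the paper argues by contradiction: if every $U\cap F$ with $U\in\pi_{0}^{u}(X-K)$ were bounded, then $K\cup\bigcup_{U\in\pi_{0}^{u}(X-K)}(U\cap F)$ would be bounded, and it then asserts the inclusion $F\subseteq K\cup\bigcup_{U\in\pi_{0}^{u}(X-K)}(U\cap F)$. That inclusion silently discards the portion of $F$ lying in the \emph{bounded} components of $X-K$, and it is not a set-theoretic triviality: in $X=\mathbb{R}^{2}$ with $K$ the unit circle, the closed non-compact set $F=\{(x,0):x\geqslant 0\}$ meets the open unit disc, so the analogous inclusion fails for it. What makes the argument close is precisely your lemma: the union $B$ of the bounded components of $X-K$ is relatively compact, so the corrected inclusion $F\subseteq K\cup B\cup\bigcup_{U\in\pi_{0}^{u}(X-K)}(U\cap F)$ still exhibits the closed set $F$ inside a finite union of relatively compact sets, forcing $F$ to be compact. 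Your proof of that lemma (the relatively compact open $V\supseteq K$, the finitely many components meeting $\partial V$ obtained from local connectedness, and the fact that each component of $X-K$ has non-empty frontier contained in $K\subseteq V$) is sound, as is the concluding decomposition of $A$ over $K$, the finitely many unbounded components, and $B$. In short, you follow the paper's overall strategy but supply the lemma on bounded components that the paper's proof omits; your version is the complete argument, the paper's the shorter but gappy one.
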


\begin{proof}Let $F \in Closed(X)$ be non compact. If $f_{K}(F) = \emptyset$, then $\forall U \in \pi_{0}^{u}(X-K), \ U\cap F$ is bounded, which implies that $K \cup \bigcup\limits_{U \in \pi_{0}^{u}(X-K)} (U\cap F)$ is bounded (since $\pi_{0}^{u}(X-K)$ is finite). But $F \subseteq K \cup \bigcup\limits_{U \in \pi_{0}^{u}(X-K)} (U\cap F)$, contradicting the fact that $F$ is not compact (and then unbounded). Thus, $f_{K}(F) \neq \emptyset$, which implies that $X+_{f_{K}}\pi_{0}^{u}(X-K)$ is compact.
\end{proof}

\begin{prop}$X+_{f_{K}}\pi_{0}^{u}(X-K)$ is Hausdorff.
\end{prop}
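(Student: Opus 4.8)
The plan is to reduce everything to the criterion established earlier for when a glueing $X+_f Y$ is Hausdorff (the proposition requiring $X$ locally compact Hausdorff and $Y$ Hausdorff, with the two conditions $f(C)=\emptyset$ for all compact $C\subseteq X$, and, for all $a,b\in Y$, the existence of $A,B\in Closed(X)$ with $A\cup B=X$, $b\notin f(A)$, $a\notin f(B)$). Here $Y=\pi_0^u(X-K)$ is finite by the finiteness proposition of this subsection and carries the discrete topology, so it is Hausdorff; and $X$ is Hausdorff and locally compact. Hence it suffices to check the two conditions for $f_K$.

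For the first condition, I would take a compact $C\subseteq X$ and an arbitrary $U\in\pi_0^u(X-K)$. Since $U\cap C\subseteq C$, its closure satisfies $Cl_X(U\cap C)\subseteq C$, a closed subset of a compact set and therefore compact; that is, $U\cap C$ is bounded. By the definition of $f_K$ this gives $U\notin f_K(C)$, and as $U$ was arbitrary, $f_K(C)=\emptyset$.

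For the second condition I would exploit local connectedness: because $X$ is locally connected and $X-K$ is open, every connected component of $X-K$ is open in $X$. Given distinct $a,b\in\pi_0^u(X-K)$, set $A=X-b$ and $B=X-a$; both are closed since $a$ and $b$ are open. Distinct components are disjoint, so $A\cup B=X-(a\cap b)=X$. Finally $b\cap A=\emptyset$ and $a\cap B=\emptyset$ are bounded, so $b\notin f_K(A)$ and $a\notin f_K(B)$, as required.

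The argument is short, and the only steps needing care are the use of local connectedness (to ensure the components are open, so that their complements are the closed sets $A,B$) and the scope of the separation condition. Note that for $a=b$ the condition would force the unbounded component $a=(a\cap A)\cup(a\cap B)$ to be a union of two bounded sets, hence bounded, which is impossible; so, consistently with the proof of the cited Hausdorffness criterion (which invokes the Hausdorffness of $Y$ exactly at the point of separating two points of $Y$), the condition is only needed for distinct $a,b$, which is precisely what I verify above.
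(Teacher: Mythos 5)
Your proof is correct and follows essentially the same route as the paper: both apply the Hausdorffness criterion for glueings $X+_{f}Y$ (with $X$ locally compact), check that $f_{K}(C)=\emptyset$ for every compact $C\subseteq X$, and separate two distinct components $a,b$ by the closed sets $X-b$ and $X-a$, whose closedness rests on local connectedness of $X$. Your closing remark on the $a=b$ case is a sound reading of the criterion: the paper states it for all $a,b\in Y$ but its proof (and its use here, where $X=(X-U)\cup(X-V)$ needs $U\cap V=\emptyset$) tacitly assumes $a\neq b$, exactly as you do.
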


\begin{proof}Let $F$ be a compact subset of $X$. We have that $\forall U \in \pi_{0}^{u}(X-K), \ U \cap F$ is bounded, which implies that $f_{K}(F) = \emptyset$. Let $U,V \in \pi_{0}^{u}(X-K)$. Since $U$ and $V$ are open in $X$, we have that $X-U, X-V \in Closed(X)$. But $X = (X-U) \cup (X-V)$ and $f_{K}(X-U) = \pi_{0}^{u}(X-K) - \{U\}, \ f_{K}(X-V) = \pi_{0}^{u}(X-K) - \{V\}$, which implies that $U \notin f_{K}(X-U)$ and $V \notin f_{K}(X-V)$. Thus, $X+_{f_{K}}\pi_{0}^{u}(X-K)$ is Hausdorff.
\end{proof}

If $K_{1} \subseteq K_{2} \subseteq X$ are two compact subspaces, we are able to consider the map $id+\psi_{K_{1}K_{2}}: X+_{f_{K_{1}}}\pi_{0}^{u}(X-K_{2}) \rightarrow X+_{f_{K_{1}}}\pi_{0}^{u}(X-K_{1})$.

\begin{prop}$id+\psi_{K_{1}K_{2}}$ is continuous.
\end{prop}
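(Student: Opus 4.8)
The plan is to invoke Proposition \ref{mapacontinuo} with $\psi = id_{X}$ and $\phi = \psi_{K_{1}K_{2}}$, reading the source of the map as $X+_{f_{K_{2}}}\pi_{0}^{u}(X-K_{2})$ and the target as $X+_{f_{K_{1}}}\pi_{0}^{u}(X-K_{1})$. Since both glueings are of the form $X+_{f}Y$ (that is, the second glueing map is the constant map to $\emptyset$), the second diagram condition of Proposition \ref{mapacontinuo} is vacuous, and it suffices to verify the first one: for every $A \in Closed(X)$ we must show $f_{K_{2}}(A) \subseteq \psi_{K_{1}K_{2}}^{-1}(f_{K_{1}}(A))$.

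Unwinding the definitions, $f_{K_{2}}(A)$ is the set of $U \in \pi_{0}^{u}(X-K_{2})$ for which $U \cap A$ is unbounded, while $\psi_{K_{1}K_{2}}^{-1}(f_{K_{1}}(A))$ is the set of $U \in \pi_{0}^{u}(X-K_{2})$ for which $\psi_{K_{1}K_{2}}(U) \cap A$ is unbounded. Thus the desired inclusion reduces to the implication: if $U \cap A$ is unbounded, then $\psi_{K_{1}K_{2}}(U) \cap A$ is unbounded.

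The key step is the containment $U \subseteq \psi_{K_{1}K_{2}}(U)$. Indeed, from $K_{1} \subseteq K_{2}$ we get $X - K_{2} \subseteq X - K_{1}$, so the connected set $U \subseteq X-K_{2}$ lies inside $X-K_{1}$ and is therefore contained in a single connected component of $X-K_{1}$, which is by definition $\psi_{K_{1}K_{2}}(U)$. From $U \subseteq \psi_{K_{1}K_{2}}(U)$ we obtain $U \cap A \subseteq \psi_{K_{1}K_{2}}(U) \cap A$, and any set containing an unbounded set is itself unbounded; this yields the implication above and hence the inclusion of admissible images.

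Having verified the diagram condition, Proposition \ref{mapacontinuo} gives at once that $id+\psi_{K_{1}K_{2}}$ is continuous. I do not anticipate a genuine obstacle here: the entire argument rests on the containment $U \subseteq \psi_{K_{1}K_{2}}(U)$, which is immediate from the definition of the bonding map $\psi_{K_{1}K_{2}}$ as ``the connected component of $U$ in $X-K_{1}$,'' together with the elementary monotonicity of unboundedness under enlargement of the set.
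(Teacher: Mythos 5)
Your proof is correct and follows essentially the same route as the paper: both reduce continuity to the single diagram condition $f_{K_{2}}(A) \subseteq \psi_{K_{1}K_{2}}^{-1}(f_{K_{1}}(A))$ of Proposition \ref{mapacontinuo} and verify it via the containment $U \subseteq \psi_{K_{1}K_{2}}(U)$, so that unboundedness of $U \cap A$ passes to $\psi_{K_{1}K_{2}}(U) \cap A$. You merely spell out two points the paper leaves implicit (why $U \subseteq \psi_{K_{1}K_{2}}(U)$, and why the second diagram is vacuous for glueings of the form $X+_{f}Y$), which is fine.
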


\begin{proof}Let $F \in Closed(X)$ and $U \in \pi_{0}^{u}(X-K_{2})$. If $U \cap F$ is unbounded, then $\psi_{K_{1}K_{2}}(U) \cap F$ is unbounded (since $U \subseteq \psi_{K_{1}K_{2}}(U)$). So if $U \in f_{K_{2}}(F)$, then $\psi_{K_{1}K_{2}}(U) \in f_{K_{1}}(F)$. Hence, $f_{K_{2}}(F) \subseteq \psi_{K_{1}K_{2}}^{-1} \circ f_{K_{1}}(F)$. In another words, we have the diagram:

$$ \xymatrix{ Closed(X) \ar[r]^<<{ \ \ \ \ f_{K_{2}}} & Closed(\pi_{0}^{u}(X-K_{2})) \\
            Closed(X) \ar[r]^<<{ \ \ \ \ f_{K_{1}}} \ar[u]^{id^{-1}} \ar@{}[ur]|{\subseteq \ \ \ \ } & Closed(\pi_{0}^{u}(X-K_{1})) \ar[u]^{\psi_{K_{1}K_{2}}^{-1}} } $$

Thus, $id+\psi_{K_{1}K_{2}}$ is continuous.
\end{proof}

Let $\K$ be the category defined by the poset of compact subspaces of $X$ with the partial order defined by inclusions. Let $\digamma: \K \rightarrow Sum_{0}(X)$ defined by $\digamma(K) = X+_{f_{K}}\pi_{0}^{u}(X-K)$ and, if $K_{1} \subseteq K_{2} \subseteq X$ are compact, $\digamma(K_{1} \subseteq K_{2}) = id+\psi_{K_{1}K_{2}}: X+_{f_{K_{2}}}\pi_{0}^{u}(X-K_{2}) \rightarrow X+_{f_{K_{1}}}\pi_{0}^{u}(X-K_{1})$.

\begin{prop}$\lim\limits_{\longleftarrow}\digamma \cong X+_{f}Ends(X)$, for some admissible map $f$.
\end{prop}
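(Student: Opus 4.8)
The plan is to obtain the statement as an immediate application of the general limit computation of Section 6 to the inverse system $\digamma$, the only substantial work being the identification of the second coordinate with $Ends(X)$.

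First I would verify the hypotheses of Proposition \ref{densonolimite} and Corollary \ref{limitesegundotermominimal}. Each object $\digamma(K)=X+_{f_{K}}\pi_{0}^{u}(X-K)$ lies in $Sum(X)$: by the density criterion it suffices that $f_{K}(X)=\pi_{0}^{u}(X-K)$, and this holds since $f_{K}(X)=\{U\in\pi_{0}^{u}(X-K): U\cap X$ is unbounded$\}=\pi_{0}^{u}(X-K)$, each $U$ being an unbounded component with $U\cap X=U$. The index poset $\K$ is directed under inclusion, since $K_{1},K_{2}\subseteq K_{1}\cup K_{2}$ with $K_{1}\cup K_{2}$ compact; thus $\digamma$ is an inverse system over a codirected poset, as the two results require.

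Now I would run the two general results in sequence. By Proposition \ref{densonolimite}, $X$ is dense in $\lim\limits_{\longleftarrow}\digamma$ and $\lim\limits_{\longleftarrow}\digamma\cong\lim\limits_{\longleftarrow}\breve{\digamma}$; as the latter lives in $SUM(X)$ with $X$ dense, it has the form $X+_{f}Y$ for some space $Y$ and some admissible $f$, which is already the desired shape. To pin down $Y$, I would apply Corollary \ref{limitesegundotermominimal} to obtain $Y\cong\lim\limits_{\longleftarrow}\acute{\digamma}$, where $\acute{\digamma}$ carries a compact $K$ to the second term $\pi_{0}^{u}(X-K)$ of $\digamma(K)$ and carries the inclusion $K_{1}\subseteq K_{2}$ to the restriction of $id+\psi_{K_{1}K_{2}}$, namely to $\psi_{K_{1}K_{2}}$. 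Two remarks then identify this limit with the end space. The subspace topology that $\pi_{0}^{u}(X-K)$ inherits as the second term agrees with the discrete topology used to define $Ends(X)$, because the set is finite and, $X+_{f_{K}}\pi_{0}^{u}(X-K)$ being Hausdorff, it is $T_{1}$ and hence discrete. Moreover $\acute{\digamma}$ is defined on $\hat{\K}$ and sends the adjoined terminal object $\infty$ to the one-point space $\{\infty\}$; since $\infty$ is terminal in the index category and maps to a single point, it imposes no condition on cones, so $\lim\limits_{\longleftarrow}\acute{\digamma}$ equals the limit of $K\mapsto\pi_{0}^{u}(X-K)$ along the bonding maps $\psi_{K_{1}K_{2}}$, which is exactly $Ends(X)$.

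Combining the two steps gives $\lim\limits_{\longleftarrow}\digamma\cong X+_{f}Ends(X)$. Once the machinery of Section 6 is in hand the argument is mostly bookkeeping; the step that deserves the most care is the identification $\lim\limits_{\longleftarrow}\acute{\digamma}\cong Ends(X)$, i.e.\ checking that the restricted functor with its subspace topologies reproduces exactly the inverse system defining the end space (finite discrete sets together with the component maps $\psi_{K_{1}K_{2}}$), the adjoined terminal point being harmless.
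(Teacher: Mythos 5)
Your proposal is correct and follows exactly the route the paper takes: its proof is simply the citation of Propositions \ref{limitesegundotermo} and \ref{densonolimite} (equivalently, Corollary \ref{limitesegundotermominimal}), which you apply after verifying the hypotheses. The details you supply --- density of $X$ in each $\digamma(K)$ via $f_{K}(X)=\pi_{0}^{u}(X-K)$, codirectedness of the index poset, discreteness of the finite remainders, and the harmlessness of the adjoined terminal object in identifying $\lim\limits_{\longleftarrow}\acute{\digamma}$ with $Ends(X)$ --- are precisely what the paper leaves implicit in calling the result ``immediate.''
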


\begin{proof}It is immediate from the \textbf{Propositions \ref{limitesegundotermo}} and \textbf{\ref{densonolimite}}.
\end{proof}

\begin{lema}Let $X+_{g}Z \in Sum_{0}(X)$ be a compact space with $Z$ totally disconnected. For $K \subseteq X$ a compact, we define, for $z_{1},z_{2} \in Z, \ z_{1}\sim_{K}z_{2}$ if $z_{1}$ and $z_{2}$ are in the same connected component in $(X+_{g}Z) - K$ and extend it trivially to $X$. Let's define $Z_{K} = Z/ \!\sim_{K}$, $g_{K}$ an admissible map such that $X+_{g_{K}} Z_{K} = X+_{g}Z/\!\sim_{K}$ (via the identification of $X$ with its classes) and the functor $\digamma_{g}: \K \rightarrow Sum_{0}(X)$ defined by $\digamma_{g}(K) = X+_{g_{K}}Z_{K}$ and $\digamma_{g}(K_{1} \subseteq K_{2}): X+_{g_{K_{2}}}Z_{K_{2}} \rightarrow X+_{g_{K_{1}}}Z_{K_{1}}$ the quotient map. Then, $\lim\limits_{\longleftarrow} \digamma_{g} = X+_{g}Z$.
\end{lema}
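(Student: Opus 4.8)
The plan is to exhibit the canonical map from $X+_{g}Z$ into the inverse limit and show it is a homeomorphism. Write $W = X+_{g}Z$ and let $q_{K}\colon W \to W/\!\sim_{K}\, = X+_{g_{K}}Z_{K}$ be the quotient maps. Because $\sim_{K_{1}}$ is coarser than $\sim_{K_{2}}$ whenever $K_{1}\subseteq K_{2}$ (removing a larger compact can only split components further), each connecting map $\digamma_{g}(K_{1}\subseteq K_{2})$ is exactly the further quotient, so $\digamma_{g}(K_{1}\subseteq K_{2})\circ q_{K_{2}} = q_{K_{1}}$ and the family $\{q_{K}\}$ is a cone over $\digamma_{g}$. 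This induces a continuous map $\Phi\colon W \to \lim\limits_{\longleftarrow}\digamma_{g}$ commuting with all projections. By Proposition \ref{densonolimite} and Corollary \ref{limitesegundotermominimal} the limit has the form $X+_{f}Y$ with $Y\cong\lim\limits_{\longleftarrow}Z_{K}$, and since each $q_{K}$ is the identity on $X$, the map $\Phi$ fixes $X$ pointwise; thus proving that $\Phi$ is a homeomorphism reduces to showing that the induced map $Z \to \lim\limits_{\longleftarrow}Z_{K}$ is a bijection, after which compactness will finish the job.

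For injectivity, suppose $z_{1},z_{2}\in Z$ satisfy $z_{1}\sim_{K}z_{2}$ for every compact $K\subseteq X$, and let $C_{K}$ be the connected component of $z_{1}$ in $W-K$, so that $z_{1},z_{2}\in C_{K}$. If $K_{1}\subseteq K_{2}$ then $W-K_{2}\subseteq W-K_{1}$, whence $C_{K_{2}}\subseteq C_{K_{1}}$; thus $\{Cl_{W}(C_{K})\}_{K}$ is a downward directed family of compact connected sets, and its intersection $C = \bigcap_{K}Cl_{W}(C_{K})$ is therefore nonempty, connected, and contains both $z_{1}$ and $z_{2}$. I claim $C\subseteq Z$: given $x\in X$, choose an open neighbourhood $U$ of $x$ in $X$ with $K := Cl_{X}(U)$ compact (possible since $X$ is locally compact); as $X$ is open in $W$, $U$ is open in $W$, and $U\subseteq K$ forces $U\cap(W-K)=\emptyset$, so $U\cap C_{K}=\emptyset$ and hence $x\notin Cl_{W}(C_{K})\supseteq C$. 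Therefore $C$ is a connected subset of the totally disconnected space $Z$ containing two points, which is impossible; so some compact $K$ separates $z_{1}$ from $z_{2}$, and $Z\to\lim\limits_{\longleftarrow}Z_{K}$ is injective. This separation argument is the heart of the lemma and the step I expect to be the main obstacle.

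For surjectivity, a point of $\lim\limits_{\longleftarrow}Z_{K}$ is a compatible family of classes $(c_{K})_{K}$; regarding each $c_{K}$ as the subset of $Z$ it represents, each is closed in $Z$ (a component $C$ of $W-K$ is closed in $W-K$, and $Z\subseteq W-K$ is closed, so $Z\cap C$ is closed in $Z$), and compatibility gives $c_{K_{2}}\subseteq c_{K_{1}}$ for $K_{1}\subseteq K_{2}$. Since $\K$ is directed, $\{c_{K}\}$ has the finite intersection property, and $Z$ is compact (closed in the compact $W$), so $\bigcap_{K}c_{K}\neq\emptyset$; any $z$ in this intersection maps to $(c_{K})_{K}$. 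Hence $Z\to\lim\limits_{\longleftarrow}Z_{K}$, and so $\Phi$, is a bijection. Finally, each $X+_{g_{K}}Z_{K}$ is Hausdorff — this is precisely the statement that $\digamma_{g}$ takes values in $Sum(X)$, and it follows from the Aleksandrov Theorem once one verifies that $\sim_{K}$ is closed in $W^{2}$ — so $\lim\limits_{\longleftarrow}\digamma_{g}$ is Hausdorff as a subspace of a product of Hausdorff spaces. A continuous bijection from the compact space $W$ onto a Hausdorff space is a homeomorphism, and since $\Phi$ fixes $X$ it is an isomorphism in $Sum(X)$, giving $\lim\limits_{\longleftarrow}\digamma_{g}=X+_{g}Z$.
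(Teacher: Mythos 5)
Your proof is correct, and its skeleton is the same as the paper's: form the cone of quotient maps $q_{K}$, obtain the induced continuous map $\Phi: X+_{g}Z \rightarrow \lim\limits_{\longleftarrow}\digamma_{g}$, show $\Phi$ is bijective, and finish with the fact that a continuous bijection from a compact space to a Hausdorff space is a homeomorphism. The genuine divergence is in how bijectivity is proved. For surjectivity the paper never examines points of $\lim\limits_{\longleftarrow} Z_{K}$: it cites Proposition \ref{densonolimite} to conclude that $X$ is dense in the limit, so the image of $\Phi$, being compact hence closed and containing $X$, is everything; you instead realize a compatible family of classes as a nested family of nonempty closed subsets of the compact space $Z$ and extract a preimage by the finite intersection property. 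For injectivity the paper is constructive: total disconnectedness gives a clopen $A \subseteq Z$ separating $z_{1}$ from $z_{2}$, normality of $W$ gives disjoint open sets $\tilde{A} \supseteq A$ and $\tilde{B} \supseteq Z-A$, and then $K = W - (\tilde{A}\cup\tilde{B})$ is an explicit compact subset of $X$ with $z_{1} \nsim_{K} z_{2}$; you argue by contradiction, invoking the classical theorem that a downward directed intersection of compact connected subsets of a Hausdorff space is a nonempty continuum, so that $\bigcap_{K} Cl_{W}(C_{K})$ would be a nondegenerate continuum inside the totally disconnected $Z$. Both routes are sound: the paper's separation argument is more elementary and exhibits the separating compact explicitly, while yours avoids normality at the price of a nontrivial classical fact that you use without proof or citation (it is standard, e.g.\ in Engelking, but you should say so), and your surjectivity argument is more self-contained than the density argument. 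One loose end on your side: you defer the verification that $\sim_{K}$ is closed in $W^{2}$, which is what makes each $X+_{g_{K}}Z_{K}$ Hausdorff and hence an object of $Sum(X)$ --- something both Proposition \ref{densonolimite} and your ``subspace of a product of Hausdorff spaces'' step rely on. The paper settles this in the remark preceding its proof (each class is a closed connected component and $Z_{K}$ is finite, so $\sim_{K}$ is a finite union of closed sets); since the lemma's statement already posits $\digamma_{g}: \K \rightarrow Sum(X)$, your deferral is defensible, but a complete write-up should include that check or point to it.
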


\begin{obs}We have that $Z = g(Cl_{X}(X-K)) = \bigcup\limits_{U\in \pi_{0}^{u}(X-K)}g(Cl_{X}(U))$, which implies that every element of $Z$ is in the closure of some element of $\pi_{0}^{u}(X-K)$ and then in a connected component of some element of $\pi_{0}^{u}(X-K)$. Since $\pi_{0}^{u}(X-K)$ is finite and each element of $Z$ must be in a component of an element of $\pi_{0}^{u}(X-K)$ , it follows that $Z_{K}$ is finite. We have also that every connected component is closed, which implies that every class of $Z$ is closed. So $\sim_{K} = \Delta^{2}(X+_{g}Z) \cup \bigcup_{z\in Z} [z]$ is closed, which implies that $X+_{g_{K}}Z_{K}$ is Hausdorff (by \textbf{Alexandrov Theorem}), and then, an element of $Sum_{0}(X)$.
\end{obs}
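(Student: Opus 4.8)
The plan is to prove the two assertions of the Remark separately: first that $Z_{K}$ is finite, and then that $\sim_{K}$ is a closed equivalence relation, so that Aleksandrov's Theorem forces the quotient $X+_{g_{K}}Z_{K}=(X+_{g}Z)/\!\sim_{K}$ to be Hausdorff. Throughout I would use that $X+_{g}Z$ is compact Hausdorff with $X$ open and dense, so by the density corollary $g(X)=Z$, by the proposition $Cl_{X+_{g}Z}(A)=A\cup g(A)$ one gets $g(A)=Cl_{X+_{g}Z}(A)\cap Z$ for every $A\in Closed(X)$, and by the separation proposition of Section 3.1 every compact $A\in Closed(X)$ satisfies $g(A)=\emptyset$.

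First I would reduce $Z$ to the unbounded components. Since $K$ is compact it is closed in $X+_{g}Z$, so $X=K\cup Cl_{X}(X-K)$ and hence $Z=g(X)=g(K)\cup g(Cl_{X}(X-K))=g(Cl_{X}(X-K))$. Now decompose $X-K$ into its connected components: the finitely many unbounded ones $U_{1},\dots,U_{n}$ (finite by the proposition that $\pi_{0}^{u}(X-K)$ is finite) and the union $B$ of all bounded components. The hard part is showing that $B$ contributes nothing, i.e. $g(Cl_{X}(B))=\emptyset$, for which it suffices to prove $Cl_{X}(B)$ is compact. I would argue exactly as in the finiteness proof: choosing an open $V\supseteq K$ with $Cl_{X}(V)$ compact and covering the compact set $\partial V$ by finitely many connected neighbourhoods disjoint from $K$, one sees that only finitely many components of $X-K$ meet $\partial V$. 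Every bounded component either lies in $Cl_{X}(V)$ or meets both $V$ and $X-Cl_{X}(V)$ and hence, being connected, meets $\partial V$; thus all but finitely many bounded components are contained in $Cl_{X}(V)$, and the remaining finitely many $W_{1},\dots,W_{k}$ have compact closure. Therefore $Cl_{X}(B)\subseteq Cl_{X}(V)\cup Cl_{X}(W_{1})\cup\cdots\cup Cl_{X}(W_{k})$ is compact, so $g(Cl_{X}(B))=\emptyset$.

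With this in hand, $Cl_{X}(X-K)=Cl_{X}(U_{1})\cup\cdots\cup Cl_{X}(U_{n})\cup Cl_{X}(B)$ is a finite union of closed sets, so finite additivity of $g$ gives $Z=\bigcup_{i=1}^{n} g(Cl_{X}(U_{i}))$. Since $g(Cl_{X}(U_{i}))=Cl_{X+_{g}Z}(U_{i})\cap Z$, each $z\in Z$ lies in $Cl_{X+_{g}Z}(U_{i})$ for some $i$; as $U_{i}\cup\{z\}\subseteq Cl_{X+_{g}Z}(U_{i})$ is connected and contained in the open set $(X+_{g}Z)-K$ (recall $K\subseteq X$), the point $z$ lies in the connected component $D_{i}$ of $(X+_{g}Z)-K$ containing $U_{i}$. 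There are at most $n$ such components, so $\sim_{K}$ partitions $Z$ into at most $n$ classes and $Z_{K}$ is finite.

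Finally, for Hausdorffness I would invoke Aleksandrov's Theorem: since $X+_{g}Z$ is compact Hausdorff, the quotient $X+_{g_{K}}Z_{K}$ is Hausdorff precisely when $\sim_{K}$ is closed in $(X+_{g}Z)^{2}$. Writing $C_{i}=Z\cap D_{i}$ for the finitely many classes, I note that $Cl_{X+_{g}Z}(D_{i})\cap Z=C_{i}$, because any limit point of $D_{i}$ outside $D_{i}$ lies in $K\subseteq X$, which is disjoint from $Z$; hence each $C_{i}$ is closed in $X+_{g}Z$. Consequently $\sim_{K}=\Delta(X+_{g}Z)\cup\bigcup_{i=1}^{n}(C_{i}\times C_{i})$ is a finite union of closed sets (the diagonal is closed by Hausdorffness), so it is closed, and Aleksandrov's Theorem yields that the quotient is Hausdorff. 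Since the quotient is also compact and still contains $X$ as a dense open subspace (the quotient map is a surjection fixing the dense set $X$), it is an object of $Sum(X)$. The main obstacle is the compactness of $Cl_{X}(B)$; once the bounded components are discarded, everything else is bookkeeping.
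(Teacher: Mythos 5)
Your proof is correct and follows essentially the same route as the paper's remark: decompose $X-K$ into components, use the finiteness of $\pi_{0}^{u}(X-K)$, closedness of components (hence of the classes in $Z$), and Aleksandrov's theorem. Where you genuinely add value is the bounded components: the paper simply asserts $Z = g(Cl_{X}(X-K)) = \bigcup_{U \in \pi_{0}^{u}(X-K)} g(Cl_{X}(U))$, which tacitly requires $g(Cl_{X}(B)) = \emptyset$ for $B$ the union of the bounded components, and your argument that all but finitely many bounded components lie in $Cl_{X}(V)$ — so that $Cl_{X}(B)$ is compact and is killed by $g$ since $X+_{g}Z$ is Hausdorff — is exactly the justification the paper leaves implicit. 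One small slip in your case analysis: the stated dichotomy for a bounded component $W$ (contained in $Cl_{X}(V)$, or meeting both $V$ and $X-Cl_{X}(V)$ and hence $\partial V$) omits the a priori possibility $W \subseteq X - Cl_{X}(V)$; this case is indeed impossible, but only via the connectedness-of-$X$ argument from the paper's finiteness proof (such a $W$ is open, and $V$ together with the remaining components is open, so $W$ would disconnect $X$), which you invoke only implicitly by saying you argue ``exactly as in the finiteness proof'' — spell it out to make the trichotomy airtight. The rest — the identity $g(A) = Cl_{X+_{g}Z}(A)\cap Z$, the connectedness of $U_{i}\cup\{z\}$ inside $(X+_{g}Z)-K$, closedness of the classes via $Cl_{X+_{g}Z}(D_{i}) \subseteq D_{i}\cup K$, and the finite closed union giving closedness of $\sim_{K}$ — is a faithful and correct elaboration of the paper's terse assertions.
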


\begin{proof}Let, for $K \subseteq X$ compact, $id+\eta_{K}: X+_{g}Z \rightarrow X+_{g_{K}}Z_{K}$ be the quotient map. We have that $X+_{g}Z$, together with the family $\{id+\eta_{K}\}_{K\in \K}$, is a cone of $\digamma$. So it induces a continuous map $id+\eta: X+_{g}Z \rightarrow X+_{\tilde{g}}\tilde{Z} = \lim\limits_{\longleftarrow}  \digamma$ (with $\tilde{Z} \cong \lim\limits_{\longleftarrow} Z_{K}$). Since $\K$ is codirected and $\forall K\in \K, \ X$ is dense in $X+_{g_{K}}Z_{K}$, it follows that $X$ is dense in $X+_{\tilde{g}}\tilde{Z}$ and then, the map $id+\eta$ is surjective. Let $x\neq y \in Z$. Since $Z$ is totally disconnected, there exists a clopen set $A$ of $Z$ such that $x \in A$ and $y \in Z-A$. Since $X+_{g}Z$ is normal, there exists $\tilde{A},\tilde{B}$, open sets of $X+_{g}Z$, such that $A \subseteq \tilde{A}, \ Z-A \subseteq \tilde{B}$ and $\tilde{A} \cap \tilde{B} = \emptyset$. Take $K = (X+_{g}Z) - (\tilde{A}\cup \tilde{B})$. We have that $K$ is compact and $K \subseteq X$. Since $(X+_{g}Z) - K = \tilde{A}\cup \tilde{B}$ and $\tilde{A}$ and $\tilde{B}$ are open in $X+_{g}Z$, we have that $\tilde{A}$ and $\tilde{B}$ are clopen in $(X+_{g}Z) - K$. So $x$ and $y$ are not in the same connected component of $(X+_{g}Z) - K$, which implies that $x\nsim_{K}y$ and then $\eta_{K}(x) \neq \eta_{K}(y)$. So $(id+\eta)(x) \neq (id+\eta)(y)$, which implies that $id+\eta$ is injective. Thus, $id+\eta$ is bijective and, since $X+_{g}Z$ is compact and $X+_{\tilde{g}}\tilde{Z}$ is Hausdorff, it is a homeomorphism.
\end{proof}

\begin{prop}\label{universalpropertyends}Let $X+_{g}Z \in Sum_{0}(X)$ be a compact space with $Z$ totally disconnected. Then, there exists only one continuous surjective map of the form $id:\phi: X+_{f}Ends(X) \rightarrow X+_{g}Z$.
\end{prop}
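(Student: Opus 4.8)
The plan is to show existence and uniqueness separately, using the Freudenthal compactification $X+_{f}Ends(X)$ as a universal object among compactifications with totally disconnected remainder. The key structural input is the preceding Lemma, which recovers any such $X+_{g}Z$ as an inverse limit $\lim_{\longleftarrow}\digamma_{g}$ indexed by the poset $\K$ of compact subspaces of $X$, where $\digamma_{g}(K) = X+_{g_{K}}Z_{K}$ with $Z_{K}$ finite. Since the Freudenthal remainder is itself built as $Ends(X) = \lim_{\longleftarrow}\pi_{0}^{u}(X-K)$ over the same poset, both spaces live over the same indexing category, and the natural approach is to construct a morphism of inverse systems from $\digamma$ to $\digamma_{g}$ and pass to the limit.

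For existence, I would first compare the two functors level by level. For each compact $K\subseteq X$, both $\pi_{0}^{u}(X-K)$ and $Z_{K}$ classify the unbounded connected components of $X-K$: the points of $Z_{K}$ are equivalence classes of $Z$ under "same connected component of $(X+_{g}Z)-K$'', and by the Remark following the Lemma every such component meets the closure of some element of $\pi_{0}^{u}(X-K)$. I would construct a map $\phi_{K}\colon \pi_{0}^{u}(X-K) \to Z_{K}$ sending a component $U$ to the class of the elements of $Z$ lying in $Cl_{X+_{g}Z}(U)$, and check it is well-defined (each $U$ determines a single connected component of $(X+_{g}Z)-K$, so a single class) and surjective (every class arises from some $U$). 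The commutativity of these $\phi_{K}$ with the bonding maps $\psi_{K_{1}K_{2}}$ and the quotient maps gives the diagram condition of Proposition \ref{mapacontinuo} at each stage, so each $id+\phi_{K}\colon X+_{f_{K}}\pi_{0}^{u}(X-K) \to X+_{g_{K}}Z_{K}$ is a continuous surjection. Taking the inverse limit and invoking that $\lim_{\longleftarrow}\digamma \cong X+_{f}Ends(X)$ and $\lim_{\longleftarrow}\digamma_{g} = X+_{g}Z$ yields the desired continuous $id+\phi$; surjectivity survives the limit because the source is compact, the target Hausdorff, and the image is closed and dense (it contains the dense subspace $X$).

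For uniqueness, the point is that any continuous $id+\phi$ is forced on $X$ (it is the identity there) and $X$ is dense in $X+_{f}Ends(X)$, so continuity into the Hausdorff space $X+_{g}Z$ pins down $\phi$ on the remainder as well: two such maps agreeing on a dense set and both continuous into a Hausdorff target must coincide. The main obstacle I anticipate is the existence half, specifically verifying that the level maps $\phi_{K}$ are genuinely well-defined and that they assemble compatibly — one must argue carefully that a single unbounded component $U$ of $X-K$ cannot split across distinct classes of $Z_{K}$, which relies on the fact that $Cl_{X+_{g}Z}(U)$ is connected and meets $Z$ in a set contained in one $\sim_{K}$-class. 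Once that naturality is in hand, the limit argument and the density/Hausdorff uniqueness are routine.
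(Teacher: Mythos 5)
Your proposal is correct and follows essentially the same route as the paper: the paper also defines level maps $\zeta_{K}\colon \pi_{0}^{u}(X-K)\to Z_{K}$ (sending $U$ to the class of the points of $Z$ lying in the same connected component of $(X+_{g}Z)-K$ as $U$), verifies continuity of each $id+\zeta_{K}$ via Proposition \ref{mapacontinuo}, checks compatibility with the bonding maps, passes to the inverse limit using the preceding Lemma, and obtains uniqueness and surjectivity from density of $X$. One small phrasing slip: the Proposition \ref{mapacontinuo} condition at each level, $f_{K}(F)\subseteq \zeta_{K}^{-1}(g_{K}(F))$, is a separate verification from commutativity with the bonding maps $\psi_{K_{1}K_{2}}$ (the latter is what lets you pass to the limit), and the paper proves it with exactly the closure-plus-connectedness argument you flag at the end of your proposal, so the ingredient is present and this is not a gap.
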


\begin{proof}Let $K \subseteq X$ be a compact. We define $\zeta_{K}: \pi_{0}^{u}(X-K) \rightarrow Z_{K}$ as $\zeta_{K}(U) = [z]$, where $z$ is a point of $Z$ that is in the same connected component of $U$ in $(X+_{g}Z)-K$. By the definition of $\sim_{K}$, the map $\zeta_{K}$ doesn't depend of the choice of $z$, it is continuous because the space $\pi_{0}^{u}(X-K)$ is discrete and it is surjective since every element of $Z$ is in some connected component of an element of $\pi_{0}^{u}(X-K)$.

Let $F\in Closed(X)$ and $U \in f_{K}(F)$. We have that $F\cap U$ is unbounded. Let $z \in Cl_{X+_{g}Z}(F\cap U)\cap Z$ (it exists since $F\cap U$ is unbounded). Since $U$ is connected, $z$ is in the connected component of $U$, which implies that $\zeta_{K}(U) = [z]$, and then, $U \in \zeta_{K}^{-1}([z])$. However, $[z] \in Cl_{X+_{g_{K}}Z_{K}}(F\cap U)\cap Z_{K} = g_{K}(F)$ (since $z \in Cl_{X+_{g}Z}(F\cap U)\cap Z$ and the quotient map is continuous). So $U \subseteq \zeta_{K}^{-1}(g_{K}(F))$ and then  $f_{K}(F) \subseteq \zeta_{K}^{-1}(g_{K}(F))$. In another words, we have the diagram:

$$ \xymatrix{ Closed(X) \ar[r]^<<{ \ \ \ \ f_{K}} & Closed(\pi_{0}^{u}(X-K)) \\
            Closed(X) \ar[r]^<<{ \ \ \ \ \ \ g_{K}} \ar[u]^{id^{-1}} \ar@{}[ur]|{\subseteq} & Closed(Z_{K}) \ar[u]^{\zeta_{K}^{-1}} } $$

So $id+\zeta_{K}: X+_{f_{K}}\pi_{0}^{u}(X-K) \rightarrow X+_{g_{K}}Z_{K}$ is continuous.

Let $K_{1} \subseteq K_{2} \subseteq X$ be compact subspaces. It is clear that the diagram commutes:

$$ \xymatrix{ X \!\! +_{f_{K_{2}}} \! \! \pi_{0}^{u}(X \! - \! K_{2}) \ar[r]_*+{\labelstyle \ \ id + \psi_{K_{1}K_{2}}} \ar[d]^{id+\zeta_{K_{2}}} & X \!\! +_{f_{K_{1}}} \! \! \pi_{0}^{u}(X \!- \! K_{1}) \ar[d]^{id+\zeta_{K_{1}}} \\
            X \!\! +_{g_{K_{2}}} \! \! Z_{K_{2}} \ar[r]^{\digamma_{g}(K_{1} \subseteq K_{2})} & X \!\! +_{g_{K_{1}}} \! \! Z_{K_{1}}  }$$

So it induces a continuous map $id+\zeta: X+_{f}Ends(X) \rightarrow X+_{g}Z$. Since $X$ is dense in $X+_{f}Ends(X)$, it is the only map that extends $id_{X}$ and since $X$ is dense in $X+_{g}Z$, it follows that the map must be surjective.
\end{proof}

\begin{cor}\label{universalends}Let $X+_{g}Z \in Sum(X)$ be a compact space with $Z$ totally disconnected. Then, there exists only one continuous surjective map of the form $id+\phi: X+_{f}Ends(X) \rightarrow X+_{g}Z$.
\end{cor}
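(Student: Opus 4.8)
The plan is to reduce the corollary to the previous Proposition by showing that the extra generality of $SUM(X)$ over $Sum(X)$ is illusory once surjectivity is demanded: the only objects of $SUM(X)$ that can receive a continuous surjection from $X+_{f}Ends(X)$ are in fact objects of $Sum(X)$. The entire task is therefore to recover the density hypothesis that was assumed outright in the Proposition, after which nothing new remains to prove.

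First I would record that $X$ is dense in $X+_{f}Ends(X)$. This was already obtained in the construction of the Freudenthal compactification, where $X+_{f}Ends(X) \cong \lim\limits_{\longleftarrow}\digamma$ is exhibited as an object of $Sum(X)$ by means of \textbf{Proposition \ref{densonolimite}}; equivalently, $f(X) = Ends(X)$. The key step is then the elementary observation that a continuous surjection carries a dense subset onto a dense subset. Concretely, suppose $id+\phi: X+_{f}Ends(X) \rightarrow X+_{g}Z$ is continuous and surjective. Since $(id+\phi)|_{X} = id_{X}$ we have $(id+\phi)(X) = X$, and combining the density of $X$ in the domain, the surjectivity, and the continuity of the map yields
$$X+_{g}Z \;=\; (id+\phi)\bigl(Cl_{X+_{f}Ends(X)}(X)\bigr) \;\subseteq\; Cl_{X+_{g}Z}\bigl((id+\phi)(X)\bigr) \;=\; Cl_{X+_{g}Z}(X),$$
where the first equality uses that $Cl_{X+_{f}Ends(X)}(X)$ is the whole space together with surjectivity, and the middle inclusion is continuity. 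Hence $X$ is dense in $X+_{g}Z$, that is $g(X) = Z$, so $X+_{g}Z$ is actually an object of $Sum(X)$.

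With density restored, the conclusion follows immediately from the previous Proposition applied to $X+_{g}Z \in Sum(X)$: it supplies the continuous surjection $id+\phi$, and uniqueness is automatic because any continuous map extending $id_{X}$ is determined by its values on the dense subset $X$. The only genuinely new content, and thus the main (if modest) obstacle, is the density-recovery argument above; the significance of the corollary is precisely that the surjectivity requirement forces an a priori arbitrary compact Hausdorff object $X+_{g}Z$ of $SUM(X)$ back into $Sum(X)$, so that the universal property of $X+_{f}Ends(X)$ among totally disconnected remainders holds over the larger category without any additional hypothesis.
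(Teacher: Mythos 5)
You have correctly identified that literal surjectivity onto $X+_{g}Z$ forces $g(X)=Z$, but your proof has a genuine gap: the existence half is circular. You derive density of $X$ in $X+_{g}Z$ \emph{from} the assumption that a continuous surjection $id+\phi$ exists, and then invoke the previous Proposition (which needs that density) to supply the very surjection you assumed. This loop never produces a map for an arbitrary compact object of $SUM(X)$ with totally disconnected remainder. Worse, for objects in which $X$ is not dense --- which is exactly the extra generality of $SUM(X)$ over $Sum(X)$; for instance $X=\R$ with $X+_{g}Z$ the two-point compactification of $\R$ together with one extra isolated point in the remainder, so that $g(X)=\{-\infty,+\infty\}\subsetneq Z$ --- your own first step shows that no continuous surjection onto $X+_{g}Z$ exists at all, so under your reading the existence claim simply fails there and the Corollary collapses into a restatement of the Proposition for $Sum(X)$. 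A reading under which the statement has no content on precisely the new cases cannot be the intended one.

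The paper's proof takes the route your proposal is missing: apply the Proposition not to $X+_{g}Z$ itself but to the subspace $X\cup g(X)=Cl_{X+_{g}Z}(X)$. That subspace is closed in the compact space $X+_{g}Z$, hence compact; by the subspace proposition for pullbacks it is of the form $X+_{g_{1}}g(X)$ with $X$ dense in it, so it is an object of $Sum(X)$; and its remainder $g(X)$, being a subspace of $Z$, is totally disconnected. The Proposition then yields a unique continuous surjection $id+\phi\colon X+_{f}Ends(X)\rightarrow X\cup g(X)$, and composing with the inclusion gives the canonical map into $X+_{g}Z$; this is the sense in which the Corollary's map is ``surjective'' (onto $Cl_{X+_{g}Z}(X)$, which is all of $X+_{g}Z$ exactly when $X$ is dense). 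Note that uniqueness among all continuous maps of the form $id+\phi$ into $X+_{g}Z$ still holds, by the part of your argument that is sound: any two such maps agree on the dense subset $X$ of the domain and the codomain is Hausdorff, and the image of any such map is compact, contains $X$, and lies in $Cl_{X+_{g}Z}(X)$, hence equals $X\cup g(X)$. So the correct repair is to reinterpret the image, not to shrink the class of objects; with that reading the Corollary retains exactly the content on the objects your approach discards.
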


\begin{proof}Just apply the last proposition to the subspace $X \cup g(X)$.
\end{proof}

So $X+_{f}Ends(X)$ is the Freudenthal compactification of $X$.

\begin{prop}\label{extensaoends}Let $X_{1},X_{2}$ be locally compact, connected and locally connected spaces and $j: X_{1} \rightarrow X_{2}$ be a proper continuous map. Then, there exists a unique continuous extension to the Freudenthal compactifications: $X_{1}+_{f_{1}}Ends(X_{1}) \rightarrow X_{2}+_{f_{2}} Ends(X_{2})$.
\end{prop}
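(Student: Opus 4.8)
The plan is to realise the desired extension as a composite of two maps, one supplied by the pullback construction and one by the universal property of the Freudenthal compactification (Corollary \ref{universalends}), and then to read off uniqueness from density. First I would record the two facts that make $Ends(X_2)$ a legitimate second term: it is compact and totally disconnected, being the inverse limit $\lim\limits_{\longleftarrow}\pi_{0}^{u}(X_2-K)$ of finite discrete spaces, while $X_2+_{f_2}Ends(X_2)$ is compact Hausdorff. I would then form the pullback of $f_2$ along the maps $\pi=j\colon X_1\to X_2$ and $\varpi=id\colon Ends(X_2)\to Ends(X_2)$, obtaining an admissible map $f^{\ast}$ on $Closed(X_1)$ together with the continuous map $j+id\colon X_1+_{f^{\ast}}Ends(X_2)\to X_2+_{f_2}Ends(X_2)$.

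The key claim is that $X_1+_{f^{\ast}}Ends(X_2)$ is a compact Hausdorff object of $SUM(X_1)$ whose second term is totally disconnected. Since $\varpi=id$ is injective and $X_2+_{f_2}Ends(X_2)$, $X_1$ and $Ends(X_2)$ are all Hausdorff, the proposition on Hausdorffness of pullbacks gives that $X_1+_{f^{\ast}}Ends(X_2)$ is Hausdorff; since $j$ is proper and both $Ends(X_2)$ and $X_2+_{f_2}Ends(X_2)$ are compact, Proposition \ref{compact} gives that it is compact. With this in hand, Corollary \ref{universalends} applies to $X_1+_{f^{\ast}}Ends(X_2)$ and furnishes a unique continuous surjection of the form $id+\phi\colon X_1+_{f_1}Ends(X_1)\to X_1+_{f^{\ast}}Ends(X_2)$ restricting to the identity on $X_1$. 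Composing, I would take
$$ (j+id)\circ(id+\phi)=j+\phi\colon X_1+_{f_1}Ends(X_1)\longrightarrow X_2+_{f_2}Ends(X_2), $$
which is continuous and restricts to $j$ on $X_1$; this is the required extension.

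For uniqueness I would invoke that $X_1$ is dense in its Freudenthal compactification and that the target is Hausdorff, so that any continuous map agreeing with $j$ on the dense subset $X_1$ is determined on all of $X_1+_{f_1}Ends(X_1)$. I expect the main obstacle to be the middle paragraph, namely checking that $X_1+_{f^{\ast}}Ends(X_2)$ is \emph{simultaneously} compact and Hausdorff with totally disconnected remainder, since this is exactly what licenses the application of Corollary \ref{universalends}; once the hypotheses of Proposition \ref{compact} (via properness of $j$) and of the Hausdorff-pullback proposition (via injectivity of $id$) are verified, the remaining assembly is immediate.
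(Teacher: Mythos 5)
Your proof is correct, but it takes a genuinely different route from the paper's. The paper works directly with the inverse system defining the end spaces: for each compact $K \subseteq X_{2}$ it defines $j_{K}\colon \pi_{0}^{u}(X_{1}-j^{-1}(K)) \rightarrow \pi_{0}^{u}(X_{2}-K)$ sending an unbounded component $U$ to the component of $X_{2}-K$ containing $j(U)$ (properness of $j$ is what makes $j(U)$ unbounded), checks continuity of each $j+j_{K}$ on the finite-stage compactifications via the diagram criterion of Proposition \ref{mapacontinuo}, checks compatibility with the bonding maps $\psi_{K_{1}K_{2}}$, and passes to the limit; uniqueness is by density, exactly as in your argument. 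You instead apply the pullback machinery (in effect, the functor of Proposition \ref{pullbackdecompactificacoes}) to the Freudenthal compactification of $X_{2}$ and then invoke the universal property of the Freudenthal compactification of $X_{1}$, Corollary \ref{universalends}; your verifications that $X_{1}+_{f^{\ast}}Ends(X_{2})$ is compact (Proposition \ref{compact}, using that $j$ is proper) and Hausdorff (the unlabelled proposition on Hausdorffness of pullbacks, using that $\varpi=id$ is injective) are precisely the hypotheses needed, and the composite $(j+id)\circ(id+\phi)$ is indeed a continuous extension of $j$. Your approach is shorter and modular; the paper's is more explicit, and it identifies the induced boundary map concretely as the inverse limit of the component maps $j_{K}$, which is additional information your argument does not produce. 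One caution: drop the word ``surjection''. The space $X_{1}$ need not be dense in $X_{1}+_{f^{\ast}}Ends(X_{2})$: for the inclusion $j\colon [0,\infty)\hookrightarrow \R$ one gets $f^{\ast}(X_{1})=f_{2}(Cl_{\R}([0,\infty)))=\{+\infty\}$, so the second end of $\R$ is a boundary point of the pullback lying outside $Cl(X_{1})$, and then no map of the form $id+\phi$ can be onto (its image, being compact, equals $Cl(X_{1})$). What Corollary \ref{universalends} --- whose proof restricts to the subspace $X_{1}\cup f^{\ast}(X_{1})$ --- actually provides is a unique continuous map $id+\phi$ whose image is that closure. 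Since your argument uses only the existence, continuity and uniqueness of $id+\phi$, and never its surjectivity, this does not affect the validity of your proof.
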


\begin{proof}Since $j$ is proper, $\forall K \subseteq X_{2}$ compact, $j^{-1}(K)$ is also compact. If $U \in \pi_{0}^{u}(X_{1} - j^{-1}(K))$, then $j(U)$ is connected, which implies that it is contained in a connected component of $X_{2}-K$. If $j(U)$ is bounded, then $Cl_{X_{2}}(j(U))$ is compact, which implies that $j^{-1}(Cl_{X_{2}}(j(U)))$ is compact (since $j$ is proper). But $j^{-1}(Cl_{X_{2}}(j(U))) \supseteq U$, contradicting the fact that $U \in \pi_{0}^{u}(X_{1}-j^{-1}(K))$. So $j(U)$ is contained in an (unique) element of $\pi_{0}^{u}(X_{2}-K)$. Consider the map $j_{K}: \pi_{0}^{u}(X_{1}-j^{-1}(K)) \rightarrow \pi_{0}^{u}(X_{2} - K)$ defined by $j_{K}(U)$ equal to the connected component of $j(U)$. Since $\pi_{0}^{u}(X_{1} - j^{-1}(K))$ is discrete, it follows that $j_{K}$ is continuous.

Let $F$ be a closed subset of $X_{2}$. Then, we have that $f_{j^{-1}(K)} \circ j^{-1}(F) = \{U \in \pi_{0}^{u}(X_{1}-j^{-1}(K)): U \cap j^{-1}(F)$ is unbounded$\}$. But $U \cap j^{-1}(F)$ unbounded implies that $j(U \cap j^{-1}(F)) \subseteq j(U) \cap F \subseteq j_{K}(U) \cap F$ is unbounded. Hence, $U \in f_{j^{-1}(K)} \circ j^{-1}(F)$ implies $j_{K}(U) \in f_{K}(F)$ and then $U \in j_{K}^{-1} \circ f_{K}(F)$. So $f_{j^{-1}(K)} \circ j^{-1}(F) \subseteq j_{K}^{-1} \circ f_{K}(F)$. In another words, we have the diagram:

$$ \xymatrix{ Closed(X_{1}) \ar[r]^<<{ \ \ \ \ f_{j^{-1}(K)}} & Closed(\pi_{0}^{u}(X_{1}-j^{-1}(K))) \\
            Closed(X_{2}) \ar[r]^<<{ \ \ \ \ \ \ f_{K}} \ar[u]^{j} \ar@{}[ur]|{\subseteq} & Closed(\pi_{0}^{u}(X_{2}-K)) \ar[u]^{j_{K}^{-1}} } $$

Then, the map $j+j_{K}: X_{1}+_{f_{j^{-1}(K)}} \pi_{0}^{u}(X_{1}-j^{-1}(K)) \rightarrow X_{2}+_{f_{K}} \pi_{0}^{u}(X_{2}-K)$ is continuous. It is clear that, $\forall K_{1} \subseteq K_{2} \subseteq X_{2}$ compact subspaces, the diagram commutes:

$$ \xymatrix{ X_{1} \!\! +_{f_{j^{-1}(K_{2})}} \!\! \pi_{0}^{u}(X_{1} \! - \! j^{-1}(K_{2})) \ar[r]_*++{\labelstyle \ id+ \psi_{j^{-1}(K_{1})j^{-1}(K_{2})} } \ar[d]^{j+j_{K_{2}}} & X_{1} \!\! +_{f_{j^{-1}(K_{1})}} \!\! \pi_{0}^{u}(X_{1} \! - \! j^{-1}(K_{1})) \ar[d]^{j+j_{K_{1}}}  \\
           X_{2} \!\! +_{f_{K_{2}}} \!\! \pi_{0}^{u}(X_{2} \! - \! K_{2}) \ar[r]^{id+ \psi_{K_{1}K_{2}}}  & X_{2} \!\! +_{f_{K_{1}}} \!\! \pi_{0}^{u}(X_{2} \! - \! K_{1}) } $$

So it induces a continuous map $\tilde{j}: X_{1}+_{f_{1}}Ends(X_{1}) \rightarrow X_{2}+_{f_{2}}Ends(X_{2})$ that extends $j$. The uniqueness comes from the fact that the space is dense on its compactification.
\end{proof}

The existence of limits described on the last section gives us an easier way to construct the Freudenthal compactification that works for every locally compact Hausdorff space:

\begin{prop}Let $X$ be a locally compact Hausdorff space. Then, there exists a universal compactification of $X$ such that the remainder is totally disconnected.
\end{prop}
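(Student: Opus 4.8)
The plan is to exhibit the sought space as the supremum, formed as a limit in $Sum(X)$, of the family of all compactifications of $X$ whose remainder is totally disconnected; this generalises Corollary \ref{universalends} from the connected, locally connected case, and for such $X$ it should reproduce the Freudenthal compactification.

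First I would assemble the index category. Let $\mathcal{D}$ be the collection of all $X+_{g}Z \in Sum(X)$ that are compact with $Z$ totally disconnected, regarded as a category whose morphisms are the maps of the form $id+\phi$ (between any two objects there is at most one such, since $X$ is dense). Here I use that a locally compact Hausdorff space is Tychonoff, so $\beta X$ exists and every Hausdorff compactification of $X$ is a quotient of $\beta X$; the isomorphism classes of such quotients form a set, so $\mathcal{D}$ is equivalent to a small category. I also record that, since $X$ is locally compact and dense in each such space, it is open there, so Proposition \ref{bomcomp} genuinely presents each object as $X+_{g}Z$ with $Z$ closed, and $Z$ is its remainder.

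Next I would take the inclusion functor $F\colon \mathcal{D}\to Sum(X)$ and set $U=\lim F$. By the proposition computing limits in $Sum(X)$ one has $U=Cl_{Z_{0}}X$, where $Z_{0}=\lim \breve{F}$ is the limit of the same diagram taken in $SUM(X)$; the latter exists precisely because $\mathcal{D}$ is small, by the constructions of Section 6. The object $U$ comes with projections $\pi_{c}\colon U\to c$ for every $c\in\mathcal{D}$, each a morphism $id+\phi$, and each surjective because its image is compact, hence closed, and contains the dense set $X$. Uniqueness of any morphism $U\to c$ again follows from density of $X$, so $U$ already admits a unique (surjective) morphism onto every totally-disconnected-remainder compactification. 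It remains only to check that $U$ itself lies in $\mathcal{D}$, which makes it the maximum and hence the universal object. By Proposition \ref{limitesegundotermo} the remainder of $Z_{0}$ is $\lim \acute{F}$, the inverse limit of the remainders of the objects of $\mathcal{D}$; each such remainder is totally disconnected by construction, and an inverse limit of totally disconnected compact Hausdorff spaces is a subspace of a product of totally disconnected spaces, hence totally disconnected. Since the remainder of $U=Cl_{Z_{0}}X$ is a subspace of the remainder of $Z_{0}$, and a subspace of a totally disconnected space is totally disconnected, $U$ is a compactification with totally disconnected remainder. Being in $\mathcal{D}$ and dominating every member of $\mathcal{D}$, it is the universal compactification with totally disconnected remainder, unique up to homeomorphism over $X$ as the maximum of $\mathcal{D}$.

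The main obstacle is set-theoretic: the whole argument only gets started once the family of totally-disconnected-remainder compactifications is known to be essentially small, so that the limit constructions of Section 6 (which need a small indexing category) apply — this is exactly what the passage through $\beta X$ secures. The only other delicate point is the stability of total disconnectedness under the limit, which I settle through the subspace-of-a-product description of the remainder supplied by Proposition \ref{limitesegundotermo} (and Corollary \ref{limitesegundotermominimal}), rather than by attempting to keep $\mathcal{D}$ closed under finite joins.
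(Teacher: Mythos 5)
Your proof is correct and follows essentially the same route as the paper's: both secure smallness via quotients of $\beta X$, form a limit in $Sum(X)$ of the family of all totally-disconnected-remainder compactifications using the Section 6 machinery, identify the remainder of that limit as a subspace of the product of the individual remainders (hence totally disconnected), and obtain uniqueness of the structure maps from density of $X$. The only cosmetic difference is that you take the limit over the full category of such compactifications with its (automatically unique) morphisms, while the paper takes the product over the discrete set of objects; by density of $X$ both constructions yield the same universal object.
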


\begin{proof}Let $\{X+_{f_{i}}Y_{i}\}_{i \in \Gamma}$ be the collection of all compactifications of $X$ such that $Y_{i}$ is totally disconnected. Since all of them are quotients of the Stone-$\check{C}$ech compactification, it follows that this collection is actually a set. So there exists a product $X+_{f}Y$ for those spaces in the category $Sum_{0}(X)$. Since this product is the closure of $X$ in the pullback in the category $Top$, we have that $X+_{f}Y$ is compact and $Y$ is a subspace of $\prod Y_{i}$, which implies that $Y$ is totally disconnected. The projection maps are the unique maps to the spaces $X+_{f_{i}}Y_{i}$ that are the identity on $X$, since $X$ is dense. Thus, $X+_{f}Y$ is the universal compactification of $X$ such that the remainder is totally disconnected.
\end{proof}

\subsection{General case functors}

\begin{defi}Let $X$ be a locally compact Hausdorff space. Let $Comp(X)$ be the category whose objects are compact spaces of the form $X+_{f}W$, where $W$ is a Hausdorff compact space and morphisms are continuous maps that are the identity on $X$. Let $T_{2}Comp(X)$ be the full subcategory of $Comp(X)$ whose objects are Hausdorff spaces.
\end{defi}

\begin{prop}\label{pullbackdecompactificacoes}Let $X$ and $Y$ be two locally compact Hausdorff spaces and $\pi: X \rightarrow Y$ a proper map. Then, the map $\Pi: Comp(Y) \rightarrow Comp(X)$ such that $\Pi(Y+_{f}Z) = X+_{f^{\ast}}Z$ and, for $id+\varpi: Y+_{f}Z \rightarrow Y+_{g}W$, $\Pi(id+\varpi) = id+\varpi: X+_{f^{\ast}}Z \rightarrow X+_{g^{\ast}}W$ is a functor.
\end{prop}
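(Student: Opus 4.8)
**

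The plan is to verify the three defining properties of a functor in turn: that $\Pi$ carries objects to objects, morphisms to morphisms, and respects identities and composition. First I would record what $f^{\ast}$ actually is. By the definition of pullback, $f^{\ast}$ is the pullback of $f$ along the proper map $\pi\colon X\to Y$ (on the first factors) and the identity $id_{Z}\colon Z\to Z$ (on the second factors), so explicitly $f^{\ast}(A)=Cl_{Z}(id_{Z}^{-1}(f(Cl_{Y}(\pi(A)))))=f(Cl_{Y}(\pi(A)))$, the last equality because $f$ already takes values in $Closed(Z)$. In particular $f^{\ast}=f_{\Sigma\Pi}$ is a composite of admissible maps and hence admissible, so $X+_{f^{\ast}}Z$ is a legitimately defined glueing.

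At the object level I would check that $X+_{f^{\ast}}Z$ lies in $Comp(X)$. Its second factor $Z$ is already a Hausdorff compact space, being the remainder of the object $Y+_{f}Z\in Comp(Y)$; since $Comp(X)$ imposes no separation hypothesis on the total space, only compactness remains to be seen. This is exactly Proposition \ref{compact}: its hypotheses require that $\pi$ be proper and that both the second factor and the ``old'' glued space be compact, which here are $Z$ and $Y+_{f}Z$ respectively, all three conditions being part of the data. Hence $X+_{f^{\ast}}Z$ is compact, and therefore an object of $Comp(X)$.

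At the morphism level, given a morphism $id+\varpi\colon Y+_{f}Z\to Y+_{g}W$ of $Comp(Y)$ (so $\varpi\colon Z\to W$ is continuous), I would deduce continuity of $id+\varpi\colon X+_{f^{\ast}}Z\to X+_{g^{\ast}}W$ from the Cube Lemma for Pullbacks. The instantiation takes $\pi_{1}=\pi_{2}=\pi$ with $\varpi_{1}=id_{Z}$ and $\varpi_{2}=id_{W}$ for the two pullback squares, and cube maps $\mu+\nu=id_{Y}+\varpi$, $\psi=id_{X}$, $\phi=\varpi$. The two commuting squares demanded by the lemma then reduce to $\pi\circ id_{X}=id_{Y}\circ\pi$ and $id_{W}\circ\varpi=\varpi\circ id_{Z}$, both trivially true since the only non-identity leg involved is $\varpi$; all four maps are continuous. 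The lemma yields continuity of $\psi+\phi=id_{X}+\varpi$, which is the identity on $X$ by construction, so it is indeed a morphism of $Comp(X)$.

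Finally, preservation of identities and composition is immediate at the level of underlying maps, since $\Pi$ leaves the pair $(id,\varpi)$ untouched: $\Pi(id+id_{Z})=id_{X+_{f^{\ast}}Z}$, and from $(id+\varpi')\circ(id+\varpi)=id+(\varpi'\circ\varpi)$ one gets $\Pi((id+\varpi')\circ(id+\varpi))=\Pi(id+\varpi')\circ\Pi(id+\varpi)$. I expect the only place requiring genuine care to be the morphism step, namely matching the indices of the Cube Lemma for Pullbacks to the present situation; but because $\mu$ and $\psi$ are identities, the coherence squares it demands collapse to trivialities, so this amounts to bookkeeping rather than a real obstacle.
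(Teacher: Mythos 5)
Your proposal is correct and follows essentially the same route as the paper: compactness of $X+_{f^{\ast}}Z$ via Proposition \ref{compact}, and continuity of $id+\varpi$ via the Cube Lemma for Pullbacks with exactly the instantiation $\psi=id_{X}$, $\phi=\varpi$, $\mu+\nu=id_{Y}+\varpi$, $\varpi_{1}=id_{Z}$, $\varpi_{2}=id_{W}$ that the paper's two commuting squares encode. Your additional remarks (admissibility of $f^{\ast}$, the explicit check of identities and composition) are points the paper leaves implicit, but they do not change the argument.
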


\begin{proof}Let $Y+_{f}Z \in Comp(Y)$. Since $\pi$ is a proper map and $Y+_{f}Z$ is compact, it follows by \textbf{Proposition \ref{compact}} that $X+_{f^{\ast}}Z$ is compact.

Let $id+\varpi: Y+_{f}Z \rightarrow Y+_{g}W$ be a continuous map. The following diagrams commute:

$$ \xymatrix{ X \ar[r]^{id} \ar[d]^{\pi} & X \ar[d]_{\pi}  & & Z \ar[r]^{\varpi} \ar[d]^{id} & W \ar[d]_{id} \\
           Y \ar[r]^{id} & Y & & Z \ar[r]^{\varpi} & W} $$

By the Cube Lemma we have that the map $id+\varpi: X+_{f^{\ast}}Z \rightarrow X+_{g^{\ast}}W$ is continuous. Thus, it follows that $\Pi$ is a functor.
\end{proof}

\begin{prop}If $\pi$ is continuous, then the functor $\Pi$ sends Hausdorff spaces to Hausdorff spaces.
\end{prop}

\begin{proof}Immediate from \textbf{Proposition \ref{hausdorffpullback}}.
\end{proof}

\begin{prop}If $Y$ is dense in $Y+_{f}Z$, then $X$ is dense in $X+_{f^{\ast}}Z$. In particular, if $\pi$ is continuous, then the functor $\Pi$ sends compactifications of $X$ to compactifications of $X$.
\end{prop}

\begin{proof}Immediate from the definition of pushforward.
\end{proof}

We will use $\Pi$ as any of its restrictions. It may not cause confusion.

\begin{prop}If $Y+_{f}W$ is metrizable, $X$ has $countable$ basis and $X+_{f^{\ast}}W$ is Hausdorff, then $X+_{f^{\ast}}W$ is metrizable.
\end{prop}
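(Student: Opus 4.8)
The plan is to invoke Corollary \ref{uniaometrizavel}: a Hausdorff compact space that is a countable union of subspaces, each with a countable basis, is metrizable. So I would realize $Y+_{f^{\ast}}W$ as the union of just two such subspaces, namely $Y$ and $W$, and then apply the corollary with a finite (hence countable) family.

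First I would secure compactness of $Y+_{f^{\ast}}W$. We are in the situation of the pullback with $\varpi = id_{W}$ and $\pi$ proper, and $X+_{f}W$ is a compactification, hence compact, with compact remainder $W$. Thus Proposition \ref{compact} applies and yields that $Y+_{f^{\ast}}W$ is compact; combined with the Hausdorff hypothesis, the ambient space is Hausdorff and compact, exactly as Corollary \ref{uniaometrizavel} requires.

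Next I would check that the two pieces have a countable basis as subspaces. Since $Y+_{f^{\ast}}W$ is a glueing of base plus remainder, $Y$ is open and $W$ is closed, and both inclusions are embeddings, so each subspace carries its intrinsic topology. For $Y$ the countable basis is a hypothesis. For $W$, observe that $W$ is a closed subspace of the metrizable space $X+_{f}W$, hence metrizable; being closed in a compact space it is itself compact, so $W$ is compact metrizable and therefore has a countable basis. Here I would note that the topology $W$ inherits from $Y+_{f^{\ast}}W$ is the same intrinsic topology it inherits from $X+_{f}W$ (in both glueings $id_{W}$ is an embedding restoring the original topology of $W$), so the countable basis transfers to the role of $W$ inside $Y+_{f^{\ast}}W$.

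Finally, $Y+_{f^{\ast}}W = Y \cup W$ is a finite---hence countable---union of subspaces each with a countable basis, and the total space is Hausdorff and compact, so Corollary \ref{uniaometrizavel} delivers metrizability. I expect the only delicate point to be the compactness step: one must verify that the hypotheses of Proposition \ref{compact} are genuinely available (properness of $\pi$ together with compactness of $X+_{f}W$ and of $W$), which is exactly where the standing assumption that we work with compactifications enters; everything else is routine bookkeeping about subspace topologies.
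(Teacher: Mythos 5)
Your proof is correct and is essentially the paper's own argument: the proposition is marked as following immediately from previous considerations, which are exactly the ones you assemble --- compactness of $Y+_{f^{\ast}}W$ via Proposition \ref{compact} (with $\varpi = id_{W}$ and $\pi$ proper, $W$ and $X+_{f}W$ compact), the fact that $Y$ and $W$ sit inside the glueing as embedded subspaces, and Corollary \ref{uniaometrizavel} applied to the decomposition $Y \cup W$, with $Y$ second countable by hypothesis and $W$ second countable because it is a compact metrizable (closed) subspace of $X+_{f}W$. Your bookkeeping about the subspace topologies and the finite-versus-countable family is precisely the routine verification the paper leaves implicit.
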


\begin{proof}Immediate from \textbf{Proposition \ref{uniaometrizavel}}.
\end{proof}

\subsection{Coarse spaces}

Here we see cases when we put enough structure on the topological spaces such that the functor $\Pi$ defined at the section above is restricted to an isomorphism of categories.

\subsubsection{Subsets that are coarsely close}

\begin{defi}Let $(X,\varepsilon)$ be a coarse space and $A,B \subseteq X$. We say that $A \preceq B$ if there exists $e \in \varepsilon$ such that $A \subseteq \B(B,e)$.
\end{defi}

\begin{prop}$\preceq$ is a preorder on the power set of $X$.
\end{prop}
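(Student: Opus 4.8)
The relation $\preceq$ defined by $A \preceq B$ iff there exists $e \in \varepsilon$ with $A \subseteq \B(B,e)$ is a preorder on the power set of $X$; that is, it is reflexive and transitive.

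The plan is to verify the two defining properties of a preorder directly from the coarse structure axioms, exploiting the definition $\B(B,e) = \{x \in X : \exists y \in B, (x,y) \in e\}$ given earlier. For reflexivity, I would take $e = \Delta X$, the diagonal, which belongs to $\varepsilon$ by the first axiom of a coarse structure. Then for any $y \in A$ we have $(y,y) \in \Delta X$, so $y \in \B(A,\Delta X)$, giving $A \subseteq \B(A, \Delta X)$ and hence $A \preceq A$. This step is entirely routine.

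For transitivity, suppose $A \preceq B$ and $B \preceq C$, so there exist $e, e' \in \varepsilon$ with $A \subseteq \B(B,e)$ and $B \subseteq \B(C,e')$. The natural thing to do is to chase an arbitrary point $x \in A$: there is $y \in B$ with $(x,y) \in e$, and since $y \in B \subseteq \B(C,e')$ there is $z \in C$ with $(y,z) \in e'$. I would like to conclude $(x,z) \in e' \circ e$, so I must check the composition convention. The paper defines $e' \circ e = \{(a,b) : \exists c,\ (a,c) \in e,\ (c,b) \in e'\}$; with $a = x$, $c = y$, $b = z$ this gives exactly $(x,z) \in e' \circ e$, and $e' \circ e \in \varepsilon$ by the fifth coarse structure axiom. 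Hence $x \in \B(C, e' \circ e)$, so $A \subseteq \B(C, e'\circ e)$ and $A \preceq C$.

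The only point requiring any care — and the one I would flag as the main (albeit minor) obstacle — is matching the order of composition to the paper's specific convention for $e' \circ e$, since an off-by-one in which factor is applied first would make the point-chase fail to land in the stated composite. Once the chase is aligned with the definition, closure of $\varepsilon$ under composition does all the work, and both properties follow immediately. Because every step is an immediate consequence of the coarse axioms, I would expect the whole argument to be short, and the statement could reasonably be closed with the \eod symbol after the reflexivity and transitivity verifications are written out.
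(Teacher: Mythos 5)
Your proposal is correct and follows essentially the same argument as the paper: reflexivity via $\Delta X \in \varepsilon$ and $A = \B(A,\Delta X)$, and transitivity by the same point-chase showing $A \subseteq \B(C, e' \circ e)$ using closure of $\varepsilon$ under composition. Your extra care in matching the paper's convention for $e' \circ e$ is well placed (the paper's own chase silently uses exactly that convention), but it introduces no difference in substance.
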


\begin{proof}Let $A \subseteq X$. We have that $\Delta X \in \varepsilon$. So $A = \B(A,\Delta X)$, which implies that $A \preceq A$.

Let $A,B,C \subseteq X$ such that $A \preceq B$ and $B \preceq C$. There exists $e,e' \in \varepsilon$ such that $A \in \B(B,e)$ and $B \in \B(C,e')$. Let $a \in A$. There exists $b \in B$ such that $(a,b) \in e$ and there exists $c \in C$ such that $(b,c) \in E'$. So $(a,c) \in e' \circ e$, which implies that $A \in \B(C, e'\circ e)$ and then $A \preceq C$.

Thus $\preceq$ is a preorder.
\end{proof}

We denote by $\sim$ the equivalence relation defined by $A \sim B$ if $A \preceq B$ and $B\preceq A$.

\begin{obs}If $X$ is a metric space and $\varepsilon$ is the bounded coarse structure associated with the metric, then this equivalence relation is the same as the finite Hausdorff distance.
\end{obs}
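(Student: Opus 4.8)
The plan is to translate each half of the preorder $\preceq$ into metric language and then observe that the two translated conditions together say exactly that the Hausdorff distance is finite. Throughout I would write $d(a,B) = \inf_{b \in B} d(a,b)$ for the distance from a point to a set, and recall that for $A,B \subseteq X$ the Hausdorff distance is $d_{H}(A,B) = \max\{\sup_{a \in A} d(a,B),\ \sup_{b \in B} d(b,A)\}$, finite Hausdorff distance meaning $d_{H}(A,B) < \infty$.

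The central claim I would establish is that $A \preceq B$ holds if and only if $\sup_{a \in A} d(a,B) < \infty$. For the forward implication, suppose $A \preceq B$, so there is $e \in \varepsilon_{d}$ with $A \subseteq \B(B,e)$; setting $R = \sup\{d(x,y): (x,y) \in e\} < \infty$, every $a \in A$ lies in $\B(B,e)$ and hence admits some $b \in B$ with $(a,b) \in e$, so that $d(a,b) \leqslant R$ and thus $d(a,B) \leqslant R$. Taking the supremum over $a \in A$ yields the desired bound. For the converse, suppose $\sup_{a \in A} d(a,B) = R < \infty$ and put $e = \{(a,b) \in A \times B : d(a,b) \leqslant R+1\}$. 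Then $e \in \varepsilon_{d}$, since $d$ is bounded by $R+1$ on $e$, and for each $a \in A$ the inequality $d(a,B) \leqslant R$ provides some $b \in B$ with $d(a,b) < R+1$, whence $(a,b) \in e$ and $a \in \B(B,e)$; therefore $A \subseteq \B(B,e)$, i.e. $A \preceq B$.

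With this claim in hand the conclusion is immediate: by definition $A \sim B$ means $A \preceq B$ and $B \preceq A$, which by the claim is equivalent to having $\sup_{a \in A} d(a,B) < \infty$ and $\sup_{b \in B} d(b,A) < \infty$ hold simultaneously, and this is precisely the statement that $d_{H}(A,B) = \max\{\sup_{a \in A} d(a,B),\ \sup_{b \in B} d(b,A)\} < \infty$.

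There is no serious obstacle here; the only point demanding a little care is that the infimum defining $d(a,B)$ need not be attained, which is exactly why in the converse direction I enlarge the controlling radius from $R$ to $R+1$, so that an honest witness $b \in B$ with $(a,b) \in e$ exists for every $a \in A$.
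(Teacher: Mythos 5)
Your proof is correct: the paper states this as a remark with no proof at all, and your direct unwinding of the definitions (translating $A \preceq B$ into $\sup_{a \in A} d(a,B) < \infty$ via the bound on $e$ in one direction and the explicit controlled set $e = \{(a,b) \in A \times B : d(a,b) \leqslant R+1\}$ in the other) is exactly the verification the author treats as immediate. The care you take with the non-attained infimum, enlarging $R$ to $R+1$, is the right touch and closes the only potential gap.
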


\begin{lema}If $A \subseteq B \subseteq X$, then $A \preceq B$.
\end{lema}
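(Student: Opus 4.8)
The plan is to exhibit an explicit controlled set $e \in \varepsilon$ witnessing $A \preceq B$, exactly as was done in the proof that $\preceq$ is reflexive. The natural candidate is the diagonal $e = \Delta X$, which belongs to $\varepsilon$ by the first axiom of a coarse structure.

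First I would compute $\B(B, \Delta X)$ directly from the definition of the $u$-neighbourhood: $\B(B,\Delta X) = \{x \in X : \exists y \in B,\ (x,y) \in \Delta X\}$. Since $(x,y) \in \Delta X$ forces $x = y$, this set is precisely $\{x \in X : x \in B\} = B$. Thus $\B(B, \Delta X) = B$.

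Now the hypothesis $A \subseteq B$ immediately gives $A \subseteq B = \B(B, \Delta X)$. Taking $e = \Delta X \in \varepsilon$ in the definition of $\preceq$, we conclude $A \preceq B$.

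There is essentially no obstacle here: the only facts needed are that $\Delta X \in \varepsilon$ (coarse structure axiom 1) and the trivial identity $\B(B,\Delta X) = B$. The one point worth stating carefully is this latter identity, since it is where the containment is actually realized; everything else is a direct substitution into the definitions, mirroring the reflexivity argument already established for $\preceq$.
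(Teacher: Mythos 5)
Your proof is correct and is exactly the paper's own argument: the paper also takes $e = \Delta X \in \varepsilon$ and observes $A \subseteq B = \B(B,\Delta X)$, so $A \preceq B$. You merely spell out the identity $\B(B,\Delta X) = B$ explicitly, which the paper leaves implicit.
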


\begin{proof}$A \subseteq B = \B(B,\Delta X)$.
\end{proof}

\begin{lema}Let $(X,\varepsilon)$ be a proper coarse Hausdorff space. If $A \subseteq X$, then $A \sim Cl_{X}(A)$.
\end{lema}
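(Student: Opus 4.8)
The plan is to establish the two relations $A \preceq Cl_{X}(A)$ and $Cl_{X}(A) \preceq A$ separately, since $A \sim Cl_{X}(A)$ means exactly the conjunction of these. The first is immediate: because $A \subseteq Cl_{X}(A)$, the preceding lemma gives $A \preceq Cl_{X}(A)$ at once. So the entire content of the statement lies in the reverse direction $Cl_{X}(A) \preceq A$, which by definition amounts to producing a single $e \in \varepsilon$ with $Cl_{X}(A) \subseteq \B(A,e)$.

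For this I would exploit the one defining feature of a proper coarse space not yet used, namely that $\varepsilon$ contains a neighbourhood $e_{0}$ of the diagonal $\Delta X$ in $X \times X$ (so $\Delta X \subseteq \mathrm{int}(e_{0})$). I claim this same $e_{0}$ witnesses $Cl_{X}(A) \preceq A$. Fix $x \in Cl_{X}(A)$. Then $(x,x) \in \Delta X \subseteq \mathrm{int}(e_{0})$, so by the definition of the product topology there are open sets $U,V$ with $x \in U \cap V$ and $U \times V \subseteq e_{0}$. Put $W = U \cap V$, an open neighbourhood of $x$ satisfying $W \times W \subseteq e_{0}$. Since $x \in Cl_{X}(A)$, the neighbourhood $W$ meets $A$, say $a \in A \cap W$; then $(x,a) \in W \times W \subseteq e_{0}$, so $x \in \B(A,e_{0})$. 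As $x$ was arbitrary, $Cl_{X}(A) \subseteq \B(A,e_{0})$, hence $Cl_{X}(A) \preceq A$, and combining the two directions yields $A \sim Cl_{X}(A)$.

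The one point deserving care — and the only conceivable obstacle — is the uniformity built into $\preceq$: its definition demands a \emph{single} entourage $e$ serving every point of $Cl_{X}(A)$ simultaneously, not a separate one per point. This is precisely why the properness hypothesis is the correct tool: because $e_{0}$ is a fixed neighbourhood of the \emph{whole} diagonal, the local argument above runs with the same $e_{0}$ at each $x$, and no patching over the points of $Cl_{X}(A)$ is required. I note that the Hausdorff assumption plays no role in this particular verification; it is simply part of the standing hypotheses under which properness is invoked.
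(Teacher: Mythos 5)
Your proof is correct and takes essentially the same approach as the paper's: both directions are handled identically, with $A \preceq Cl_{X}(A)$ coming from the inclusion lemma and $Cl_{X}(A) \preceq A$ witnessed by the single entourage $e_{0} \in \varepsilon$ that is a neighbourhood of $\Delta X$ (the properness hypothesis), finding for each $x \in Cl_{X}(A)$ an open $W \ni x$ with $W \times W \subseteq e_{0}$ and a point of $A$ inside $W$. The only cosmetic difference is that the paper phrases the density step via $(x,x) \in Cl_{X\times X}(\Delta A)$ instead of saying directly that $W$ meets $A$, and your observation that the Hausdorff hypothesis is not actually used is accurate, since only the easy inclusion $\Delta Cl_{X}(A) \subseteq Cl_{X\times X}(\Delta A)$ is ever needed.
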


\begin{proof}Since $A \subseteq Cl_{X}(A)$, it follows that $A \preceq Cl_{X}(A)$. Let $u \in \varepsilon$ be a neighborhood of $\Delta X$ and let $a \in Cl_{X}(A)$. We have that $(a,a) \in \Delta Cl_{X}(A) = Cl_{X\times X} (\Delta A)$. Since $u \in \varepsilon$ is a neighborhood of $(a,a)$, there exists $V$ an open neighborhood of a such that $V \times V \subseteq u$. Since $(a,a) \in Cl_{X\times X} (\Delta A)$, there exists $a' \in A$ such that $(a',a') \in V \times V$. So $(a,a') \in V \times V$, which implies that $a \in \B(A,u)$. Then $Cl_{X}(A) \preceq A$.

Thus $A \sim Cl_{X}(A)$.
\end{proof}

\begin{lema}Let $(X,\varepsilon)$, and $(Y,\zeta)$ be coarse spaces, $\pi: Y \rightarrow X$ a coarse map and $A,B \subseteq Y$. If $A \preceq B$ in $\zeta$, then $\pi(A) \preceq \pi(B)$ in $\varepsilon$.
\end{lema}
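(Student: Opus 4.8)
The plan is to produce an explicit controlled set in $\varepsilon$ witnessing $\pi(A) \preceq \pi(B)$ by pushing forward the one that witnesses $A \preceq B$. Concretely, since $A \preceq B$ in $\zeta$, there is some $e \in \zeta$ with $A \subseteq \B(B,e)$. The natural candidate for the required element of $\varepsilon$ is the image $\pi(e) = \{(\pi(y_{1}),\pi(y_{2})): (y_{1},y_{2}) \in e\} \subseteq X \times X$. The crucial point is that this set belongs to $\varepsilon$: this is exactly the first defining condition of a coarse map, namely that $\forall e \in \zeta$ we have $\pi(e) \in \varepsilon$. (The second condition on preimages of bounded sets is not needed here.)

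With $e' = \pi(e)$ fixed, I would then verify $\pi(A) \subseteq \B(\pi(B), e')$ by a direct element chase. Take any $x \in \pi(A)$, so $x = \pi(a)$ for some $a \in A$. Because $A \subseteq \B(B,e)$, there exists $b \in B$ with $(a,b) \in e$. Applying $\pi$ to both coordinates gives $(\pi(a),\pi(b)) = (x,\pi(b)) \in \pi(e) = e'$, and of course $\pi(b) \in \pi(B)$. By the definition of $\B$, this means precisely that $x \in \B(\pi(B), e')$. Since $x \in \pi(A)$ was arbitrary, we conclude $\pi(A) \subseteq \B(\pi(B), e')$, which is the definition of $\pi(A) \preceq \pi(B)$ in $\varepsilon$.

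There is essentially no real obstacle here; the statement is a compatibility between the preorder $\preceq$ and the pushforward of controlled sets along a coarse map. The only thing requiring care is bookkeeping with notation: one must read $\pi(e)$ as the image of $e$ under the product map $\pi \times \pi$ rather than the image of $e$ as a subset of $Y$, and then keep straight that membership in $\B(\pi(B),e')$ only requires the existence of a single $\pi(b) \in \pi(B)$ related to $x$ by $e'$. Once the correct witness $e' = \pi(e)$ is identified, the verification is a one-line diagram chase through the definitions of $\B$ and of $\preceq$.
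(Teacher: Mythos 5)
Your proof is correct and follows essentially the same route as the paper: take the witness $e \in \zeta$ with $A \subseteq \B(B,e)$, push it forward to $\pi(e) \in \varepsilon$ using the first condition in the definition of a coarse map, and observe that $\pi(A) \subseteq \B(\pi(B),\pi(e))$. The only difference is that you spell out the element chase that the paper leaves implicit (and your version avoids the paper's notational slip of writing $f$ for $\pi$).
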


\begin{proof}If $A \preceq B$, then there exists $e \in \zeta$ such that $A \subseteq \B(B,e)$, which implies that $\pi(A) \subseteq \B(\pi(B),\pi(e))$. Since $\pi$ is a coarse map, it follows that $\pi(e) \in \varepsilon$. Thus $\pi(A) \preceq \pi(B)$ in $\varepsilon$.
\end{proof}

\begin{cor}\label{preservarelacao}Let $(X,\varepsilon)$ and $(Y,\zeta)$ be proper coarse spaces and a coarse map $\pi: Y \rightarrow X$. If $A \subseteq X$, then $\pi(A) \sim \pi(Cl_{Y}(A))$ in $\varepsilon$. \eod
\end{cor}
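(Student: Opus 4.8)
The plan is to chain together the three lemmas that immediately precede this corollary. First a small reading note: although the statement writes $A \subseteq X$, for the expressions $\pi(A)$ and $Cl_{Y}(A)$ to be meaningful one must read $A \subseteq Y$; with that reading the assertion compares the two subsets $\pi(A)$ and $\pi(Cl_{Y}(A))$ of $X$ under the relation $\sim$ in $\varepsilon$.

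First I would apply the closure lemma (the one asserting $A \sim Cl_{X}(A)$ for a proper coarse Hausdorff space) to the space $(Y,\zeta)$ and the subset $A$. This gives $A \sim Cl_{Y}(A)$ in $\zeta$, which by the definition of $\sim$ unpacks into the two statements $A \preceq Cl_{Y}(A)$ and $Cl_{Y}(A) \preceq A$ in $\zeta$.

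Next I would push both of these comparisons forward through $\pi$ using the lemma that a coarse map preserves $\preceq$. From $A \preceq Cl_{Y}(A)$ that lemma yields $\pi(A) \preceq \pi(Cl_{Y}(A))$ in $\varepsilon$, and from $Cl_{Y}(A) \preceq A$ it yields $\pi(Cl_{Y}(A)) \preceq \pi(A)$ in $\varepsilon$. Having both directions, the definition of $\sim$ immediately gives $\pi(A) \sim \pi(Cl_{Y}(A))$ in $\varepsilon$, which is exactly the claim. This is why the statement is marked with $\square$: once the preceding lemmas are in hand it is a purely formal composition, with no computation to grind through.

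The one point I would double-check — and the only place where a hidden hypothesis hides — is the input to the closure lemma: that lemma was stated for a \emph{proper coarse Hausdorff} space, whereas the corollary only says ``proper coarse spaces.'' So I would confirm that $(Y,\zeta)$ is Hausdorff (presumably part of the standing assumptions on the spaces under consideration), since it is precisely the Hausdorff-plus-proper structure, through a controlled neighbourhood of the diagonal, that powers the nontrivial direction $Cl_{Y}(A) \preceq A$. Everything else reduces to the functoriality of $\pi$ with respect to $\preceq$ and the fact that $\sim$ is just $\preceq$ in both directions, so there is no genuine obstacle beyond bookkeeping.
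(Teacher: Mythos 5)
Your proof is correct and is precisely the argument the paper intends: the corollary is marked with $\square$ because it follows by chaining the closure lemma ($A \sim Cl_{Y}(A)$ in $\zeta$, applied to $(Y,\zeta)$) with the lemma that a coarse map preserves $\preceq$, exactly as you do. Your two side remarks are also accurate --- the statement's ``$A \subseteq X$'' is a typo for $A \subseteq Y$, and the Hausdorff hypothesis of the closure lemma is indeed implicitly supplied by the standing assumptions of the section, where all coarse spaces under consideration are locally compact paracompact Hausdorff.
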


\subsubsection{Subsets of compactifications that are coarsely close}

\begin{lema}\label{clusterponitsontheboundary}Let $X$ be a locally compact paracompact space, $X+_{f}W$ a Hausdorff compactification of $X$ and $e \subseteq X \times X$ proper. If $\{(x_{\gamma},y_{\gamma})\}_{\gamma \in \Gamma}$ is a net contained in $e$ such that $\{x_{\gamma}\}_{\gamma \in \Gamma}$ has cluster points only in $W$, then $\{y_{\gamma}\}_{\gamma \in \Gamma}$ has cluster points only in $W$.
\end{lema}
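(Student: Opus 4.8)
The plan is to argue by contraposition: the statement reads ``if $\{x_{\gamma}\}$ has cluster points only in $W$, then so does $\{y_{\gamma}\}$,'' so I would instead assume that $\{y_{\gamma}\}_{\gamma\in\Gamma}$ admits a cluster point $p$ lying in $X$ and manufacture from it a cluster point of $\{x_{\gamma}\}_{\gamma\in\Gamma}$ that also lies in $X$. (Both nets do have cluster points, since $X+_{f}W$ is compact, so the hypothesis genuinely asserts that all of them sit in $W$.)

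First I would exploit local compactness. Since $X+_{f}W$ is Hausdorff, $X$ is Hausdorff as a subspace, and being locally compact it has, around $p\in X$, an open neighbourhood $U$ with $B:=Cl_{X}(U)$ compact, hence topologically bounded. Because $X$ is open in $X+_{f}W$, the set $U$ is still a neighbourhood of $p$ there, so the fact that $p$ is a cluster point of $\{y_{\gamma}\}$ tells me that $\Gamma':=\{\gamma\in\Gamma : y_{\gamma}\in U\}$ is cofinal in $\Gamma$; being a cofinal subset of a directed set it is itself directed and defines a subnet. For every $\gamma\in\Gamma'$ I then have $y_{\gamma}\in B$ together with $(x_{\gamma},y_{\gamma})\in e$, which places $x_{\gamma}\in\B(B,e)$.

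Next I would invoke properness of $e$: since $B$ is topologically bounded, $\B(B,e)$ is topologically bounded as well, i.e.\ $K:=Cl_{X}(\B(B,e))$ is compact. Thus the subnet $\{x_{\gamma}\}_{\gamma\in\Gamma'}$ is contained in $K\subseteq X$. As $K$ is compact in $X$, it is compact in the Hausdorff compact space $X+_{f}W$, so this subnet has a cluster point $q\in K\subseteq X$; and since a cluster point of a subnet is a cluster point of the whole net, $q$ is a cluster point of $\{x_{\gamma}\}$ lying in $X$. This is exactly the contrapositive conclusion, completing the argument.

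Rather than a hard obstacle, the delicate points here are purely bookkeeping: one must apply properness in the correct direction (it is $(x_{\gamma},y_{\gamma})\in e$ with $y_{\gamma}\in B$ that yields $x_{\gamma}\in\B(B,e)$, so the relevant clause of the definition is the one on $\B(B,e)$ rather than on $\B(B,e^{-1})$), and one must check that the cofinal subnet indexed by $\Gamma'$ transfers its cluster point back to the original net. I expect paracompactness to be unnecessary for this lemma: local compactness of $X$ and Hausdorffness of the compactification suffice.
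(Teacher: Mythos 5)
Your proof is correct and follows essentially the same route as the paper: assume a cluster point $p\in X$ of $\{y_{\gamma}\}_{\gamma\in\Gamma}$, use properness of $e$ to trap a cofinal subnet of $\{x_{\gamma}\}_{\gamma\in\Gamma}$ inside a topologically bounded subset of $X$, and contradict the hypothesis that all cluster points of $\{x_{\gamma}\}$ lie in $W$. The only difference is cosmetic (and in fact an improvement): you dominate the relevant $y_{\gamma}$'s by a fixed compact neighbourhood of $p$, whereas the paper takes $A$ to be the set of all terms of a subnet converging to $p$, a set which is only topologically bounded after discarding an initial segment of the subnet; your variant sidesteps that small imprecision, and you are also right that paracompactness is not used in this argument.
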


\begin{obs}Since $e$ is proper if and only if $e^{-1}$ is proper, it follows that if $\{(x_{\gamma},y_{\gamma})\}_{\gamma \in \Gamma}$ is a net contained in $e$ such that $\{y_{\gamma}\}_{\gamma \in \Gamma}$ has cluster points only in $W$, then $\{x_{\gamma}\}_{\gamma \in \Gamma}$ has cluster points only in $W$.
\end{obs}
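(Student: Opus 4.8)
The plan is to argue by contraposition, using properness of $e$ as the sole structural input. I will assume that $\{y_{\gamma}\}_{\gamma \in \Gamma}$ has a cluster point $p$ lying in $X$ and deduce that $\{x_{\gamma}\}_{\gamma \in \Gamma}$ must then have a cluster point in $X$ too, contradicting the hypothesis that all cluster points of $\{x_{\gamma}\}$ lie in $W$. The mechanism is that properness transports topological boundedness from the $y$-coordinates back to the corresponding $x$-coordinates.

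First I would use that $X$ is locally compact and Hausdorff (the latter since $X+_{f}W$ is a Hausdorff compactification) to pick an open neighbourhood $V$ of $p$ in $X$ with $K = Cl_{X}(V)$ compact. As $p$ is a cluster point of $\{y_{\gamma}\}$, the net is frequently in $V$, so $\Gamma' = \{\gamma \in \Gamma : y_{\gamma} \in V\}$ is cofinal in $\Gamma$ and indexes a subnet. For each $\gamma \in \Gamma'$ we have $y_{\gamma} \in K$ and $(x_{\gamma}, y_{\gamma}) \in e$, whence $x_{\gamma} \in \B(K,e)$ directly from the definition of $\B$.

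Since $K$ is compact, it is topologically bounded, so properness of $e$ gives that $\B(K,e)$ is topologically bounded; put $L = Cl_{X}(\B(K,e))$, which is compact. The subnet $\{x_{\gamma}\}_{\gamma \in \Gamma'}$ then lies entirely in $L$, so it has a cluster point $q \in L \subseteq X$. Because $\Gamma'$ is cofinal, any cluster point of this subnet is a cluster point of the full net $\{x_{\gamma}\}_{\gamma \in \Gamma}$; thus $q \in X$ is a cluster point of $\{x_{\gamma}\}$, the contradiction sought. Hence $\{y_{\gamma}\}$ can have no cluster point in $X$, i.e. its cluster points lie only in $W$.

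I expect the main obstacle to be purely the net bookkeeping: extracting the cofinal subindexing $\Gamma'$ from the cluster point $p$ of $\{y_{\gamma}\}$, and then lifting the cluster point of the resulting $x$-subnet back to the original net. One must also keep the direction of properness straight --- the relevant set is $\B(K,e)$ rather than $\B(K,e^{-1})$, because in the pairs $(x_{\gamma},y_{\gamma})$ the coordinate known to be bounded, namely $y_{\gamma}$, sits in the second slot of $e$.
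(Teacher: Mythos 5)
There is a genuine gap, and it is a direction error rather than a technical one: you have proved the Lemma that precedes this Remark, not the Remark itself. The Remark's hypothesis is that $\{y_{\gamma}\}$ has cluster points only in $W$, and its conclusion is that $\{x_{\gamma}\}$ has cluster points only in $W$. A proof by contradiction must therefore assume a cluster point $p \in X$ of $\{x_{\gamma}\}$ (negating the \emph{conclusion}) and produce from it a cluster point of $\{y_{\gamma}\}$ in $X$ (contradicting the \emph{hypothesis}). You did the reverse: you assumed $\{y_{\gamma}\}$ has a cluster point in $X$ --- which negates the hypothesis, not the conclusion --- and derived a cluster point of $\{x_{\gamma}\}$ in $X$, then declared this to contradict ``the hypothesis that all cluster points of $\{x_{\gamma}\}$ lie in $W$,'' which is not a hypothesis of this statement. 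What you established is the implication ``$\{y_{\gamma}\}$ has a cluster point in $X$ $\Rightarrow$ $\{x_{\gamma}\}$ has a cluster point in $X$,'' whose contrapositive is exactly the Lemma; the Remark is the converse implication, and your argument does not yield it. Your closing comment compounds the error: for the Remark the bounded coordinate is $x_{\gamma}$, sitting in the \emph{first} slot of $e$, so the transport step reads $(y_{\gamma},x_{\gamma}) \in e^{-1}$ with $x_{\gamma} \in K$, giving $y_{\gamma} \in \B(K,e^{-1})$ --- precisely the set you ruled out.

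The repair is immediate, and your net bookkeeping (relatively compact neighbourhood of the cluster point, cofinal subindexing, lifting the subnet's cluster point to the full net) survives verbatim once the coordinates are swapped: assume $p \in X$ is a cluster point of $\{x_{\gamma}\}$, put the cofinal $\Gamma'$ where $x_{\gamma} \in V$, conclude $y_{\gamma} \in \B(K,e^{-1})$ for $\gamma \in \Gamma'$, and use that $\B(K,e^{-1})$ is topologically bounded (which is part of the paper's \emph{definition} of $e$ being proper --- this is also why ``$e$ proper iff $e^{-1}$ proper'' is trivial here). Note also that the paper's intended proof is shorter than either version: since the definition of properness is symmetric, $e^{-1}$ is proper, and applying the already-proven Lemma to the net $\{(y_{\gamma},x_{\gamma})\}_{\gamma \in \Gamma} \subseteq e^{-1}$ gives the Remark in one line, with no new net argument at all.
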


\begin{proof}Let $x \in X$ be a cluster point of $\{y_{\gamma}\}_{\gamma \in \Gamma}$  and let $\{y_{\gamma}\}_{\gamma\in \Gamma'}$ be a subnet of $\{y_{\gamma}\}_{\gamma \in \Gamma}$ that converges to $x$. We have that the set $A = \{y_{\gamma}\}_{\gamma\in \Gamma'}$ is topologically bounded in $X$, which implies that $\B(A,e)$ is also topologically bounded in $X$. However, $\forall \gamma \in \Gamma', x_{\gamma} \in \B(A,e)$, which implies that the set of cluster points of the net $\{x_{\gamma}\}_{\gamma \in \Gamma'}$ must be in $Cl_{X+_{f}W}(\B(A,e)) \subseteq X$. Since $\{x_{\gamma}\}_{\gamma \in \Gamma'}$ is a subnet of $\{x_{\gamma}\}_{\gamma \in \Gamma}$, it contradicts the hypothesis.
\end{proof}

Let $X$ be a locally compact paracompact Hausdorff space and let $W = X+_{f}Y$ be a Hausdorff compactification of $X$. We denote by $\varepsilon_{f}$ the coarse structure on $X$ induced by $W$ (instead of $\varepsilon_{W}$).

\begin{lema}Let $X$ be a locally compact paracompact space, $X+_{f}W$ a Hausdorff compactification of $X$, $A \subseteq X$ and $e \in \varepsilon_{f}$ surjective on the first coordinate. Then $Cl_{X+_{f}W}(A) - X = Cl_{X+_{f}W}(\B(A,e)) - X$.
\end{lema}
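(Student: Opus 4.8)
The plan is to describe points of the remainder $W=(X+_{f}W)-X$ by nets and to prove the two inclusions separately, using at each step that both $e$ and $e^{-1}$ belong to the coarse structure $\varepsilon_{f}$ (axiom~(4) of a coarse structure), so that each of them is perspective and in particular satisfies the net characterisation: a net contained in $e$ (respectively $e^{-1}$) whose first coordinate converges to a point $\xi\in W$ has its second coordinate converging to the same $\xi$. Throughout I use that $Cl_{X+_{f}W}(S)-X=Cl_{X+_{f}W}(S)\cap W$ and that, $X+_{f}W$ being compact Hausdorff, every net has a convergent subnet, so membership in a closure is detected by nets.

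First I would establish $Cl_{X+_{f}W}(\B(A,e))-X\subseteq Cl_{X+_{f}W}(A)-X$. Take $\xi\in Cl_{X+_{f}W}(\B(A,e))\cap W$ and a net $(x_{\gamma})$ in $\B(A,e)$ with $x_{\gamma}\to\xi$. By the definition of $\B(A,e)$, for each $\gamma$ I may choose $a_{\gamma}\in A$ with $(x_{\gamma},a_{\gamma})\in e$. Then $(x_{\gamma},a_{\gamma})$ is a net in $e$ whose first coordinate tends to $\xi\in W$, so perspectivity of $e$ forces $a_{\gamma}\to\xi$; since $a_{\gamma}\in A$ this gives $\xi\in Cl_{X+_{f}W}(A)$. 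This direction uses only that $e$ is perspective and needs no surjectivity.

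For the reverse inclusion $Cl_{X+_{f}W}(A)-X\subseteq Cl_{X+_{f}W}(\B(A,e))-X$, take $\xi\in Cl_{X+_{f}W}(A)\cap W$ and a net $(a_{\gamma})$ in $A$ with $a_{\gamma}\to\xi$. Here the surjectivity hypothesis enters: it guarantees that each $a_{\gamma}$ occurs as a coordinate of a pair of $e$, so I can choose $x_{\gamma}\in X$ with $(x_{\gamma},a_{\gamma})\in e$, and by definition this places $x_{\gamma}\in\B(A,e)$. Now $(a_{\gamma},x_{\gamma})$ is a net contained in $e^{-1}$ whose first coordinate $a_{\gamma}$ tends to $\xi\in W$; since $e^{-1}\in\varepsilon_{f}$ is perspective as well, its second coordinate satisfies $x_{\gamma}\to\xi$. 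Hence $\xi$ is the limit of a net in $\B(A,e)$, i.e.\ $\xi\in Cl_{X+_{f}W}(\B(A,e))$, and the two inclusions together give the claimed equality.

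I expect the reverse inclusion to be the main obstacle. The surjectivity hypothesis is exactly what allows me to realise each boundary-approaching point of $A$ as a coordinate of an element of $e$ and thereby enter $\B(A,e)$, while the \emph{symmetric} perspectivity of $e^{-1}$ (available because $e$ is proper if and only if $e^{-1}$ is, and the net condition is stable under inversion) is what prevents the manufactured net $(x_{\gamma})$ from escaping to a different boundary point. The one point requiring care is to match the coordinate controlled by the surjectivity to the coordinate used in the definition of $\B$: producing $x_{\gamma}$ with $(x_{\gamma},a_{\gamma})\in e$ so that $x_{\gamma}\in\B(A,e)$ is the condition that pins down which coordinate must be surjective, and replacing $e$ by $e^{-1}$ throughout yields the companion statement for $\B(A,e^{-1})$.
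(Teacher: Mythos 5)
Your proof is correct and follows essentially the same route as the paper's: both directions are handled by choosing, for each point of a net converging to a boundary point, a companion point paired with it in $e$, and then invoking the net characterisation of perspectivity to force the companion net to converge to the same boundary point. The only cosmetic difference is that you explicitly justify the ``reverse'' direction via $e^{-1}\in\varepsilon_{f}$ (coarse structure axiom), whereas the paper simply says ``since $e$ is perspective'' and leaves the symmetry implicit.
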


\begin{proof}Let $x \in Cl_{X+_{f}W}(A) - X$. There exists a net $\{x_{\gamma}\}_{\gamma \in \Gamma} \subseteq A$ that converges to $x$. For every $\gamma \in \Gamma$, choose $y_{\gamma} \in \B(A,e)$ such that $(y_{\gamma},x_{\gamma}) \in e$ (it is possible since $e$ is surjective on the first coordinate). Since $e$ is perspective, we have that the net $\{y_{\gamma}\}_{\gamma\in \Gamma} \subseteq \B(A,e)$ converges to $x$. So $x \in  Cl_{X+_{f}W}(\B(A,e)) - X$.

Let $x \in Cl_{X+_{f}W}(\B(A,e)) - X$.  There exists a net $\{x_{\gamma}\}_{\gamma \in \Gamma} \subseteq \B(A,e)$ that converges to $x$. For every $\gamma \in \Gamma$, choose $y_{\gamma} \in A$ such that $(x_{\gamma},y_{\gamma}) \in e$. Since $e$ is perspective, we have that the net $\{y_{\gamma}\}_{\gamma\in \Gamma} \subseteq A$ converges to $x$. So $x \in  Cl_{X+_{f}W}(A) - X$.

Thus $Cl_{X+_{f}W}(A) - X = Cl_{X+_{f}W}(\B(A,e)) - X$.
\end{proof}

\begin{lema}Let $X$ be a locally compact paracompact space, $X+_{f}W$ a Hausdorff compactification of $X$ and $A,B \subseteq X$. If $A \preceq B$ in $\varepsilon_{f}$, then $Cl_{X+_{f}W}(A) - X \subseteq Cl_{X+_{f}W}(B) - X$.
\end{lema}

\begin{proof}Since $A \preceq B$, there exists $e \in \varepsilon_{f}$ such that $A \subseteq \B(B,e) \subseteq \B(B,e')$, with $e' = e \cup \Delta X$ (which is surjective on the first coordinate). So we have that $Cl_{X+_{f}W}(A) - X \subseteq Cl_{X+_{f}W}(\B(B,e')) - X = Cl_{X+_{f}W}(B) - X$.
\end{proof}

\begin{defi}Let $X$ be a locally compact paracompact Hausdorff space, $\varepsilon$ be a coarsely connected proper coarse structure of $X$ and $X+_{f}W$ a Hausdorff compactification of $X$ . We say that $X+_{f}W$ is perspective if $\varepsilon_{f} \supseteq \varepsilon$. Let $Pers(\varepsilon)$ be the full subcategory of $Comp(X)$ whose objects are perspective compactifications of $(X,\varepsilon)$.
\end{defi}

\begin{obs}Such compactifications are called coarse in \cite{Ro}.
\end{obs}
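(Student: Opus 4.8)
The plan is to recognize that this statement is purely terminological: there is nothing to compute, only the definition in \cite{Ro} to be matched against the one just given. Concretely, I would verify that the author's notion of a \emph{perspective} compactification $X+_{f}W$ of $(X,\varepsilon)$ coincides with what Roe calls a \emph{coarse} compactification, and that the coarse structure $\varepsilon_{f}$ assembled from the perspective entourages is precisely Roe's continuously controlled coarse structure attached to the compactification.

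First I would recall Roe's construction. Given a locally compact paracompact Hausdorff space $X$ together with a Hausdorff compactification whose remainder is $\partial X$, Roe attaches the continuously controlled coarse structure, whose controlled sets are exactly the $e \subseteq X \times X$ satisfying the equivalent conditions of the already cited Theorem 2.27 of \cite{Ro}; these are precisely the sets the present paper calls \emph{perspective}. Thus the set $\varepsilon_{f}$ of perspective entourages of $X+_{f}W$ is, verbatim, Roe's continuously controlled coarse structure determined by this compactification, merely transcribed into the glueing notation $W=\partial X$.

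Next I would unwind Roe's criterion for compatibility with a prescribed coarse structure. In \cite{Ro} a compactification is called coarse for $(X,\varepsilon)$ exactly when every entourage of $\varepsilon$ is continuously controlled by it, i.e. $\varepsilon \subseteq \varepsilon_{f}$. Translating both sides into the paper's notation, this is literally the inclusion $\varepsilon_{f} \supseteq \varepsilon$, which is the defining condition of a perspective compactification in the Definition immediately preceding this Remark. Hence the two classes of objects coincide, and $Pers(\varepsilon)$ is the full subcategory of $Comp(X)$ consisting of the coarse compactifications of $(X,\varepsilon)$ in Roe's sense.

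There is no genuine obstacle, as the identification is definitional; the only point deserving a line of care is that Roe phrases compatibility as the requirement that \emph{each} controlled set of $\varepsilon$ be continuously controlled, so one confirms, trivially from the fact that both $\varepsilon$ and $\varepsilon_{f}$ are downward closed families of entourages, that this pointwise requirement is the same as the set inclusion $\varepsilon \subseteq \varepsilon_{f}$. With this matching in hand the Remark is justified, and it records the reason the subsequent development may be read as a theory, via Artin-Wraith glueing, of Roe's coarse compactifications.
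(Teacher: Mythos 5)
Your proposal is correct and matches the paper, which states this Remark without proof precisely because it is a purely terminological identification: the perspective condition $\varepsilon_{f} \supseteq \varepsilon$ is verbatim Roe's definition of a coarse compactification via the continuously controlled (perspective) entourages of Theorem 2.27 of \cite{Ro}. Your careful unwinding of the definitions is exactly the implicit justification the paper relies on.
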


\begin{prop}\label{closesameboundary} Let $X+_{f}W \in Pers(\varepsilon)$ and $A,B \subseteq X$. If $A \preceq B$ in $\varepsilon$, then $Cl_{X+_{f}W}(A) - X \subseteq Cl_{X+_{f}W}(B) - X$. \eod
\end{prop}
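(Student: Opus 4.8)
The plan is to obtain this as a formal consequence of the boundary-inclusion lemma proved immediately above, the only genuinely new ingredient being the perspectivity hypothesis, which serves to transport the coarse containment from $\varepsilon$ into $\varepsilon_{f}$. First I would unwind the definition of $Pers(\varepsilon)$: to say $X+_{f}W \in Pers(\varepsilon)$ is exactly to say $\varepsilon \subseteq \varepsilon_{f}$. Consequently, if $A \preceq B$ in $\varepsilon$, witnessed by some $e \in \varepsilon$ with $A \subseteq \B(B,e)$, then that very same $e$ already belongs to $\varepsilon_{f}$, so $A \preceq B$ holds in $\varepsilon_{f}$ as well. This is the single role played by the perspectivity assumption, and it is where the preceding lemma (stated for $\varepsilon_{f}$) becomes applicable.

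Next I would split the closure along the decomposition $Cl_{X+_{f}W}(A) = (Cl_{X+_{f}W}(A) \cap X) \cup (Cl_{X+_{f}W}(A) - X)$, and likewise for $B$. Since $X$ is open in the compactification $X+_{f}W$ and $A \subseteq X$, the first piece is simply $Cl_{X}(A)$, while the second is the trace of the closure on the remainder. The remainder piece is dispatched directly: from $A \preceq B$ in $\varepsilon_{f}$ the preceding lemma gives $Cl_{X+_{f}W}(A) - X \subseteq Cl_{X+_{f}W}(B) - X$, and the latter is contained in $Cl_{X+_{f}W}(B)$. Thus the part of $Cl_{X+_{f}W}(A)$ lying in the remainder causes no trouble and is purely formal, in keeping with the immediate-proof tag on the statement.

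The step I expect to be the main obstacle is the part lying inside $X$, namely showing $Cl_{X}(A) \subseteq Cl_{X+_{f}W}(B)$. Here I would first use the earlier lemma $A \sim Cl_{X}(A)$ (valid for proper coarse Hausdorff spaces) to replace $A$ by its $X$-closure, and combine this with $A \preceq B$ to produce a single coarse-containment statement $Cl_{X}(A) \subseteq \B(Cl_{X}(B), e')$ for a suitable $e' \in \varepsilon_{f}$. The delicate heart of the argument is then to promote this coarse containment into an honest containment of topological closures in $X+_{f}W$; I would attempt this by combining the properness of $e'$ with the net characterization of perspective entourages, arguing on limit points in the same spirit as the boundary lemma but now for nets whose cluster points remain in $X$. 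This interior estimate is precisely where I would concentrate the real effort, the remainder of the proof being the routine decomposition and citation above.
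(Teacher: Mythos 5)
Your first two steps are exactly the paper's proof, and they are the \emph{whole} of it: perspectivity means $\varepsilon \subseteq \varepsilon_{f}$, so the entourage witnessing $A \preceq B$ in $\varepsilon$ already lies in $\varepsilon_{f}$, and the lemma immediately preceding the proposition then gives $Cl_{X+_{f}W}(A) - X \subseteq Cl_{X+_{f}W}(B) - X$. That is why the statement carries the immediate-proof tag $\square$, and it is also all the paper ever extracts from it: the remark that follows only records $f(A) \subseteq f(B)$ for closed $A,B$, i.e.\ the inclusion of the traces on the remainder $W$.

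The third step, which you flag as the ``delicate heart'' --- promoting the coarse containment $Cl_{X}(A) \subseteq \mathfrak{B}(Cl_{X}(B),e')$ to an honest inclusion $Cl_{X}(A) \subseteq Cl_{X+_{f}W}(B)$ --- cannot be carried out by any argument, because the statement read literally with full closures is false; the conclusion should carry ``$-X$'' on both sides, as in the lemma (the printed version is best read as a typo). Concretely: since the definition of $Pers(\varepsilon)$ assumes $\varepsilon$ coarsely connected, for any two distinct points $a,b \in X$ the singleton $\{(a,b)\}$ lies in $\varepsilon$, so $\{a\} \preceq \{b\}$; but $\{a\}$ and $\{b\}$ are compact, hence closed in the Hausdorff space $X+_{f}W$ with empty trace on $W$, and $Cl_{X+_{f}W}(\{a\}) = \{a\} \not\subseteq \{b\} = Cl_{X+_{f}W}(\{b\})$. (The same failure occurs in the standard example $X = \R$ with the bounded coarse structure and the two-point compactification, $A=\{0\}$, $B=\{1\}$.) A coarse containment controls only cluster points at infinity, never closures inside $X$ --- which is precisely the content of the preceding lemma. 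So you should delete the interior estimate, restrict the conclusion to the remainder, and then your remaining two steps collapse to the paper's one-line argument.
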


\begin{obs}In other words, if  $A,B \in Closed(X)$ and $A \preceq B$ in $\varepsilon$, then $f(A) \subseteq f(B)$. In particular, if $A \sim B$, then $f(A) = f(B)$.
\end{obs}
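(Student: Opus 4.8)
The plan is to reduce everything to the lemma immediately preceding this proposition, which already records the inclusion of remainders for the coarse structure $\varepsilon_{f}$ intrinsically attached to the compactification. The only input that still has to be supplied is the hypothesis $X+_{f}W \in Pers(\varepsilon)$, whose sole role is to let us replace $\varepsilon$ by $\varepsilon_{f}$.

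First I would unwind the perspective hypothesis. By definition of $Pers(\varepsilon)$ we have $\varepsilon \subseteq \varepsilon_{f}$. Now suppose $A \preceq B$ in $\varepsilon$, so that there is some $e \in \varepsilon$ with $A \subseteq \B(B,e)$. Since $e \in \varepsilon \subseteq \varepsilon_{f}$, the very same $e$ witnesses $A \preceq B$ in $\varepsilon_{f}$. Thus the hypothesis $A \preceq B$ in $\varepsilon$ upgrades, for free, to $A \preceq B$ in $\varepsilon_{f}$, and no quantitative control beyond the inclusion of coarse structures is needed.

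With this in hand I would simply invoke the preceding lemma, applied with the coarse structure $\varepsilon_{f}$: from $A \preceq B$ in $\varepsilon_{f}$ it yields $Cl_{X+_{f}W}(A) - X \subseteq Cl_{X+_{f}W}(B) - X$. Recalling that for any $A \subseteq X$ one has $Cl_{X+_{f}W}(A) = Cl_{X}(A) \cup f(Cl_{X}(A))$ and that $X$ is open, so that $Cl_{X+_{f}W}(A) - X = f(Cl_{X}(A))$, this is exactly the asserted containment on the remainder side, which is the substantive content and is precisely what the following remark reads off as $f(A) \subseteq f(B)$ when $A,B$ are closed.

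The point to be careful about — and the reason the statement is only \emph{immediate} once the lemma is in place — is that the interesting half of the closure lives in the remainder $W$. On $X$ itself the relation $A \preceq B$ says nothing about $Cl_{X}(A)$ versus $Cl_{X}(B)$ (for bounded $A,B$ the relation $A \preceq B$ holds trivially in a coarsely connected space), so the containment is to be understood, and is used, at the level of the trace on $W$. Everything else is the bookkeeping of transporting the witness $e$ from $\varepsilon$ into $\varepsilon_{f}$, which the perspective condition makes automatic.
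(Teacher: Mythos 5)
Your proof is correct and follows essentially the same route the paper intends: perspectivity gives $\varepsilon \subseteq \varepsilon_{f}$, the preceding lemma then yields $Cl_{X+_{f}W}(A) - X \subseteq Cl_{X+_{f}W}(B) - X$, and for $A,B \in Closed(X)$ this trace on the remainder is exactly $f(A) \subseteq f(B)$, with $A \sim B$ giving both inclusions and hence equality. Your cautionary remark that the containment is only meaningful on $W$ is also well taken, since for instance two distinct singletons satisfy $A \preceq B$ in any coarsely connected structure while their closures in $X$ are incomparable, so the paper's preceding proposition should indeed be read at the level of $Cl_{X+_{f}W}(\cdot) - X$.
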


\subsubsection{Perspective compactifications}

Let's consider $(X,\varepsilon)$ and $(Y,\zeta)$ coarsely connected proper coarse spaces and $X+_{f}W\in Pers(\varepsilon)$.

\begin{prop}Let $\pi,\pi': Y \rightarrow X$ be coarse maps and $f^{\ast}$ and $f^{'\ast}$ their respective pullbacks. If $\pi$ is close to $\pi'$, then $f^{\ast} = f^{'\ast}$
\end{prop}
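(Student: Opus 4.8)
The claim is that if two coarse maps $\pi, \pi': Y \to X$ are close, then the pullbacks $f^{\ast}$ and $f^{'\ast}$ of a fixed perspective compactification $X +_f W \in Pers(\varepsilon)$ coincide. Recall that by definition $f^{\ast}(A) = Cl_Z(\varpi^{-1}(f(Cl_X(\pi(A)))))$ and similarly for $f^{'\ast}$, where $\varpi: Z \to W$ is a common map on the remainder side (the two pullbacks differ only in the first-coordinate map). Since $\varpi^{-1}$ and $Cl_Z$ are applied identically in both cases, it suffices to prove that $f(Cl_X(\pi(A))) = f(Cl_X(\pi'(A)))$ for every $A \in Closed(Y)$. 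In fact, because $\varpi^{-1} \circ Cl_Z$ is monotone, it is enough to show these two closed subsets of $W$ are equal, and by symmetry between $\pi$ and $\pi'$ it is enough to show one inclusion, say $f(Cl_X(\pi(A))) \subseteq f(Cl_X(\pi'(A)))$.

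\textbf{The main mechanism.} The plan is to exploit closeness through the preorder $\preceq$ and the Remark immediately preceding this proposition, which states that for $A', B' \in Closed(X)$ with $A' \sim B'$ in $\varepsilon$ one has $f(A') = f(B')$. So the strategy reduces to proving that $Cl_X(\pi(A)) \sim Cl_X(\pi'(A))$ in $\varepsilon$. First I would observe that closeness of $\pi$ and $\pi'$ means $e_0 := \{(\pi(y), \pi'(y)) : y \in Y\} \in \varepsilon$. From this I would derive $\pi(A) \preceq \pi'(A)$ directly: for any $y \in A$, the point $\pi(y)$ lies in $\B(\pi'(A), e_0)$ since $(\pi(y), \pi'(y)) \in e_0$ and $\pi'(y) \in \pi'(A)$; hence $\pi(A) \subseteq \B(\pi'(A), e_0)$, giving $\pi(A) \preceq \pi'(A)$. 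The symmetric argument using $e_0^{-1} \in \varepsilon$ gives $\pi'(A) \preceq \pi(A)$, so $\pi(A) \sim \pi'(A)$.

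\textbf{Passing to closures.} Next I would upgrade this equivalence from the images to their closures in $X$. The Lemma earlier in the excerpt shows that in a proper coarse Hausdorff space any set is equivalent to its closure: $\pi(A) \sim Cl_X(\pi(A))$ and $\pi'(A) \sim Cl_X(\pi'(A))$. Combining these with $\pi(A) \sim \pi'(A)$ and the fact that $\sim$ is an equivalence relation (transitivity of the preorder $\preceq$, already established), I conclude $Cl_X(\pi(A)) \sim Cl_X(\pi'(A))$ in $\varepsilon$. Since both of these are closed subsets of $X$, the Remark now yields $f(Cl_X(\pi(A))) = f(Cl_X(\pi'(A)))$.

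\textbf{Conclusion and the likely obstacle.} Applying $\varpi^{-1}$ and then $Cl_Z$ to this equality gives $f^{\ast}(A) = f^{'\ast}(A)$ for every $A \in Closed(Y)$, hence $f^{\ast} = f^{'\ast}$. The only genuinely delicate point I anticipate is the step $\pi(A) \sim \pi'(A)$: I am implicitly using that a coarse map carries $\varepsilon$-controlled sets appropriately, but here the key input is just that closeness supplies a single controlled entourage $e_0 \in \varepsilon$ relating the two maps pointwise, which is enough for the $\preceq$ estimates. One should double-check that $\varepsilon$ being a coarse structure guarantees $e_0^{-1} \in \varepsilon$ (axiom 4) and that the closure-equivalence Lemma applies, which requires $\varepsilon$ to be proper and $X$ Hausdorff — both of which hold under the standing hypotheses. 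No heavy computation is needed; the proof is essentially a chain of $\preceq$-manipulations feeding into the $f$-invariance Remark.
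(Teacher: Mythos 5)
Your proof is correct and follows essentially the same route as the paper's: closeness gives $\pi(A)\sim\pi'(A)$, the closure lemma upgrades this to $Cl_{X}(\pi(A))\sim Cl_{X}(\pi'(A))$, and the invariance remark ($A\sim B$ implies $f(A)=f(B)$ for closed sets) finishes the argument. The only difference is cosmetic: you spell out the $\mathfrak{B}(\cdot,e_{0})$ computation and the use of $e_{0}^{-1}\in\varepsilon$ that the paper leaves implicit, and you work with $\sim$ in $\varepsilon$ where the paper phrases the same chain in $\varepsilon_{f}$ (equivalent here, since perspectivity gives $\varepsilon\subseteq\varepsilon_{f}$).
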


\begin{proof}Let $F \in Closed(X)$. Since $\pi$ is close to $\pi'$, we have that $\pi(F)\sim \pi'(F)$ in $\varepsilon_{f}$. We have also that $\pi(F)\sim Cl_{X}(\pi(F))$ and $\pi'(F)\sim Cl_{X}(\pi'(F))$, which implies that $Cl_{X}(\pi(F)) \sim Cl_{X}(\pi'(F))$. So $f^{\ast}(F) = f(Cl_{X}(\pi(F))) = f(Cl_{X}(\pi'(F))) = f'^{\ast}(F) $. Thus $f^{\ast} = f^{'\ast}$.
\end{proof}

So let's consider the coarse equivalence $\pi: Y \rightarrow X$ and $f^{\ast}$ its pullback.

\begin{prop}$Y+_{f^{\ast}}W$ is Hausdorff.
\end{prop}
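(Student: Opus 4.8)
The plan is to reduce the claim to the characterization of Hausdorffness established earlier for a glueing with locally compact open part, namely the proposition asserting that for $P$ Hausdorff locally compact and $Q$ Hausdorff, $P+_{h}Q$ is Hausdorff if and only if $h(K)=\emptyset$ for every compact $K\subseteq P$ and, for all $a,b\in Q$, there exist $A,B\in Closed(P)$ with $A\cup B=P$, $b\notin h(A)$ and $a\notin h(B)$. Here I take the open part to be $Y$ (which is locally compact paracompact Hausdorff), the closed part to be the compact Hausdorff remainder $W$, and the glueing map to be $f^{\ast}$, noting that $f^{\ast}(S)=f(Cl_{X}(\pi(S)))$ since the map on the second coordinate is $id_{W}$. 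It therefore suffices to verify the two clauses of that criterion.

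First I would dispatch the vanishing on compacts. If $K\subseteq Y$ is compact then it is topologically bounded, hence bounded in $(Y,\zeta)$ by \textbf{Proposition \ref{propercontrolled}}; since $\pi$ is a coarse map it sends bounded sets to bounded sets, so $\pi(K)$ is bounded in $(X,\varepsilon)$ and thus relatively compact, again by \textbf{Proposition \ref{propercontrolled}}. Consequently $Cl_{X}(\pi(K))$ is compact and, because $X+_{f}W$ is Hausdorff, $f(Cl_{X}(\pi(K)))=\emptyset$, that is $f^{\ast}(K)=\emptyset$. This settles the first clause.

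The hard part is the separation of two points $a,b\in W$, since here the absence of continuity of $\pi$ blocks the naive pullback of separating sets. Applying the same Hausdorffness criterion to $X+_{f}W$ yields $A',B'\in Closed(X)$ with $A'\cup B'=X$, $b\notin f(A')$ and $a\notin f(B')$. I would then put $A=Cl_{Y}(\pi^{-1}(A'))$ and $B=Cl_{Y}(\pi^{-1}(B'))$; these are closed in $Y$ and satisfy $A\cup B\supseteq\pi^{-1}(A')\cup\pi^{-1}(B')=Y$. The delicate point is to control $f^{\ast}(A)=f(Cl_{X}(\pi(Cl_{Y}(\pi^{-1}(A')))))$ without continuity of $\pi$: by \textbf{Corollary \ref{preservarelacao}} one has $\pi(Cl_{Y}(\pi^{-1}(A')))\sim\pi(\pi^{-1}(A'))$, and since $\pi(\pi^{-1}(A'))\subseteq A'$ this gives $\pi(Cl_{Y}(\pi^{-1}(A')))\preceq A'$; applying the lemma $C\sim Cl_{X}(C)$ then yields $Cl_{X}(\pi(Cl_{Y}(\pi^{-1}(A'))))\preceq A'$. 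Because $X+_{f}W\in Pers(\varepsilon)$, the preorder $\preceq$ is respected by $f$ (the remark following the definition of $Pers(\varepsilon)$), so $f^{\ast}(A)\subseteq f(A')$ and hence $b\notin f^{\ast}(A)$. Symmetrically $a\notin f^{\ast}(B)$, which is exactly the second clause.

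With both clauses verified, the criterion gives that $Y+_{f^{\ast}}W$ is Hausdorff. The genuine obstacle, as indicated, lies entirely in the last paragraph: replacing the continuity of $\pi$ — unavailable for a mere coarse equivalence — by the coarse comparison machinery ($\preceq$ and $\sim$ together with the perspectivity of the compactification), so that the closed set pulled back from $A'$ still has its $f^{\ast}$-image dominated by $f(A')$.
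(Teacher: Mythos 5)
Your proof is correct and follows essentially the same route as the paper: it verifies the two clauses of the Hausdorffness criterion for $Y+_{f^{\ast}}W$, handling compacts via boundedness of $\pi(K)$ and Proposition \ref{propercontrolled}, and separating $a,b\in W$ by pulling back the separating closed sets $A',B'$ through $Cl_{Y}\circ\pi^{-1}$ and then dominating $f^{\ast}$ of these sets by $f(A'),f(B')$ using Corollary \ref{preservarelacao}, the lemma $C\sim Cl_{X}(C)$, and the fact that perspectivity makes $f$ monotone with respect to $\preceq$. This is exactly the paper's argument (your presentation even fixes a small typo in the paper, which asserts $A'\cup B'=X$ where $Y$ is meant).
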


\begin{proof}Let $K \subseteq Y$ be a compact space. Since $\pi$ is a coarse equivalence, $\pi(K)$ is bounded in $X$, which implies that $Cl_{X}(\pi(K))$ is compact. Since $X+_{f}W$ is Hausdorff, we have that $f^{\ast}(K) = f(Cl_{X}(\pi(K))) = \emptyset$.

Let $a,b \in W$ with $a \neq b$. Since $X+_{f}W$ is Hausdorff, there exists $A,B \in Closed(X)$ such that $A\cup B = X$, $b \notin f(A)$ and $a \notin f(B)$. Let $A' = Cl_{Y}(\pi^{-1}(A))$ and $B' = Cl_{Y}(\pi^{-1}(B))$. We have that $A' \cup B' = X$. By \textbf{Corollary \ref{preservarelacao}}, we have that $A \supseteq \pi(\pi^{-1}(A)) \sim \pi(Cl_{Y}(\pi^{-1}(A)))$, which implies that $A \succeq Cl_{X}(\pi(Cl_{Y}(\pi^{-1}(A))))$. Then $f(A) \supseteq f(Cl_{X}(\pi(Cl_{Y}(\pi^{-1}(A))))) = f^{\ast}(Cl_{Y}(\pi^{-1}(A))) = f^{\ast}(A')$, which implies that $b \notin f^{\ast}(A')$. Analogously, $a \notin f^{\ast}(B')$.

Thus $Y+_{f^{\ast}}W$ is Hausdorff.
\end{proof}

So the functor $\Pi$ has a restriction $\Pi: Pers(\varepsilon) \rightarrow T_{2}Comp(Y)$ (compactness is preserved since $\pi$ is a proper map).

Let $\varpi: X \rightarrow Y$ be a quasi-inverse of $\pi$.

\begin{prop}$(f^{\ast})^{\ast} = f$, where the second pullback is relative to $\varpi$.
\end{prop}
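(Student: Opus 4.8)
The plan is to reduce the claim to a single coarse-equivalence statement and then invoke the fact that a perspective compactification respects the relation $\sim$. First I would unwind the two pullbacks. Since $f^{\ast}$ is the pullback of $f$ along $\pi\colon Y\to X$ and $id_{W}$, for $B\in Closed(Y)$ it is $f^{\ast}(B)=f(Cl_{X}(\pi(B)))$ (the outer closure in $W$ is superfluous, as $f$ already takes values in $Closed(W)$). Taking the second pullback along $\varpi\colon X\to Y$ and $id_{W}$ then gives, for $A\in Closed(X)$,
$$(f^{\ast})^{\ast}(A)=f^{\ast}(Cl_{Y}(\varpi(A)))=f\bigl(Cl_{X}(\pi(Cl_{Y}(\varpi(A))))\bigr).$$
So it suffices to prove that $Cl_{X}(\pi(Cl_{Y}(\varpi(A))))\sim A$ in $\varepsilon$, because both of these sets are closed in $X$, and the Remark following the $Pers(\varepsilon)$ definition tells us that $\sim$-equivalent closed subsets have the same image under $f$ (using $X+_{f}W\in Pers(\varepsilon)$).

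To establish the equivalence I would build a short chain. Since $\pi$ and $\varpi$ are quasi-inverses, $\pi\circ\varpi$ is close to $id_{X}$, i.e.\ $e:=\{(\pi(\varpi(x)),x):x\in X\}\in\varepsilon$. For $A\subseteq X$ this gives $\pi(\varpi(A))\subseteq\B(A,e)$, so $\pi(\varpi(A))\preceq A$, and dually $A\subseteq\B(\pi(\varpi(A)),e^{-1})$ with $e^{-1}\in\varepsilon$, so $A\preceq\pi(\varpi(A))$; hence $A\sim\pi(\varpi(A))$. Next, applying Corollary \ref{preservarelacao} to the coarse map $\pi$ and the subset $\varpi(A)\subseteq Y$ yields $\pi(\varpi(A))\sim\pi(Cl_{Y}(\varpi(A)))$ in $\varepsilon$. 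Finally, the Lemma stating that $A'\sim Cl_{X}(A')$ in a proper coarse Hausdorff space gives $\pi(Cl_{Y}(\varpi(A)))\sim Cl_{X}(\pi(Cl_{Y}(\varpi(A))))$. Transitivity of $\sim$ then delivers exactly $A\sim Cl_{X}(\pi(Cl_{Y}(\varpi(A))))$.

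Combining the two paragraphs: both $A$ and $Cl_{X}(\pi(Cl_{Y}(\varpi(A))))$ are closed in $X$ and $\sim$-equivalent in $\varepsilon$, so by the Remark $f(A)=f(Cl_{X}(\pi(Cl_{Y}(\varpi(A)))))=(f^{\ast})^{\ast}(A)$, which is the desired equality. I expect the only delicate step to be the first equivalence $A\sim\pi(\varpi(A))$: one must translate ``$\pi\circ\varpi$ close to $id_{X}$'' precisely into the two $\B(\cdot,\cdot)$-inclusions and remember that closure under inverses (axiom (4) of a coarse structure) is what supplies the reverse direction. Everything after that is a formal transitivity argument, and the passage to $f$ is immediate once one checks that each set appearing is genuinely an element of $Closed(X)$ so that the perspective-compactification Remark applies.
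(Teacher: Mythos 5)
Your proof is correct. It differs from the paper's in organization rather than in substance: the paper identifies $(f^{\ast})^{\ast}$ with $f^{\ast\ast}$, the pullback of $f$ along the composite $\pi\circ\varpi\colon X\to X$, and then invokes the earlier proposition that pullbacks along close coarse maps coincide (since $\pi\circ\varpi$ is close to $id_{X}$ and the pullback along $id_{X}$ is $f$ itself); you never form $f^{\ast\ast}$, but instead unwind both pullbacks and prove directly that $A\sim Cl_{X}(\pi(Cl_{Y}(\varpi(A))))$ in $\varepsilon$, finishing with the Remark that $\sim$-equivalent closed sets have the same $f$-image. The two arguments rest on the same machinery, since the proposition the paper cites is itself proved by exactly your chain (closeness gives $\sim$ of images, Corollary \ref{preservarelacao}, the lemma $A'\sim Cl_{X}(A')$, then the Remark). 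What your inlined version buys is rigor at the one point the paper passes over silently: the identification $(f^{\ast})^{\ast}=f^{\ast\ast}$, i.e.\ that the intermediate closure $Cl_{Y}$ may be dropped, is not automatic --- the paper's general composition proposition gives only $f^{\ast\ast}\subseteq(f^{\ast})^{\ast}$ --- and it is precisely your appeal to Corollary \ref{preservarelacao} that justifies it. (Incidentally, the paper's proof has harmless typos here: the relevant composite is $\pi\circ\varpi$, close to $id_{X}$, not $\varpi\circ\pi$ close to $id_{Y}$; your reading is the correct one.)
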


\begin{proof}Let $A \in Closed(X)$. Then $(f^{\ast})^{\ast}(A) = f(Cl_{X}(\pi(Cl_{Y}(\varpi(A)))))$. But $Cl_{X}(\pi(Cl_{Y}(\varpi(A))))  \sim Cl_{X}(\pi(\varpi(A)))$. By \textbf{Proposition \ref{closesameboundary}}, we have that $f(Cl_{X}(\pi(Cl_{Y}(\varpi(A))))) = f(Cl_{X}(\pi(\varpi(A)))) = f^{\ast\ast}(A)$, where $f^{\ast\ast}$ is the pullback with respect to $\pi\circ \varpi$ and $id_{W}$. Since $\pi$ and $\varpi$ are quasi-inverses, we have that $f^{\ast\ast}(A) = f(A)$. Thus $(f^{\ast})^{\ast} = f$.
\end{proof}

\begin{prop}If $\pi$ is continuous, then $\zeta_{f^{\ast}} \supseteq \zeta$.
\end{prop}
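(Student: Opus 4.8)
The plan is to prove the inclusion $\zeta \subseteq \zeta_{f^{\ast}}$ directly, by showing that every $e \in \zeta$ is perspective with respect to the compactification $Y+_{f^{\ast}}W$, using the net characterization of perspectivity (the second condition of Theorem 2.27 of \cite{Ro}). Fix $e \in \zeta$. We already know that $Y+_{f^{\ast}}W$ is a Hausdorff compactification of $Y$, since it lies in $T_{2}Comp(Y)$, and that $e$ is proper, because by \textbf{Proposition \ref{propercontrolled}} every element of the coarsely connected proper coarse structure $\zeta$ is proper. Hence it remains only to verify the net condition: if $\{(y_{\gamma},z_{\gamma})\}_{\gamma \in \Gamma} \subseteq e$ is a net with $\lim y_{\gamma} = x \in W$ in $Y+_{f^{\ast}}W$, then $\lim z_{\gamma} = x$.

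First I would transport the hypothesis down to $X+_{f}W$. Since $\pi$ is continuous, the map $\pi+id_{W}: Y+_{f^{\ast}}W \rightarrow X+_{f}W$ is continuous (by the continuity of pullback maps established in the pullback subsection), and it fixes every point of $W$; therefore $\lim y_{\gamma} = x$ gives $\lim \pi(y_{\gamma}) = x$ in $X+_{f}W$. Because $\pi$ is a coarse map, $\pi(e) \in \varepsilon$, and because $X+_{f}W \in Pers(\varepsilon)$ we have $\varepsilon \subseteq \varepsilon_{f}$, so $\pi(e)$ is perspective with respect to $X+_{f}W$. The net $\{(\pi(y_{\gamma}),\pi(z_{\gamma}))\}_{\gamma}$ is contained in $\pi(e)$ and its first coordinate tends to $x \in W$, so perspectivity of $\pi(e)$ yields $\lim \pi(z_{\gamma}) = x$ in $X+_{f}W$.

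The delicate step, and the one I expect to be the main obstacle, is to push this conclusion back up to $Y+_{f^{\ast}}W$: continuity of $\pi+id_{W}$ transports limits only downward, so I cannot simply invert it. Instead I would argue through cluster points in the compact space $Y+_{f^{\ast}}W$. Let $w'$ be any cluster point of $\{z_{\gamma}\}_{\gamma}$, realized by a convergent subnet $z_{\gamma'} \rightarrow w'$. Applying $\pi+id_{W}$ gives $(\pi+id_{W})(z_{\gamma'}) \rightarrow (\pi+id_{W})(w')$; but $(\pi+id_{W})(z_{\gamma'}) = \pi(z_{\gamma'})$ is a subnet of $\pi(z_{\gamma}) \rightarrow x$, so by Hausdorffness of $X+_{f}W$ we get $(\pi+id_{W})(w') = x$. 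Since $\pi+id_{W}$ maps $Y$ into $X$ and restricts to the identity on $W$, while $x \in W$, this forces $w' \in W$ and $w' = x$. Thus $x$ is the unique cluster point of $\{z_{\gamma}\}_{\gamma}$ in the compact space $Y+_{f^{\ast}}W$, whence $\lim z_{\gamma} = x$. This verifies the net condition, so $e$ is perspective, i.e. $e \in \zeta_{f^{\ast}}$, and therefore $\zeta \subseteq \zeta_{f^{\ast}}$.
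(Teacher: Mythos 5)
Your proof is correct and follows essentially the same route as the paper's: both verify the net criterion for perspectivity by pushing the net into $X+_{f}W$ via the continuous map $\pi+id$, applying perspectivity of $\pi(e)\in\varepsilon\subseteq\varepsilon_{f}$ there, and concluding via uniqueness of cluster points in the compact Hausdorff space $Y+_{f^{\ast}}W$. The only cosmetic difference is how one sees that a cluster point of the second coordinates lies in $W$: the paper invokes its lemma on proper sets, while you deduce it from the fact that $\pi+id$ sends $Y$ into $X$ and fixes $W$ pointwise, which works just as well.
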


\begin{proof}Let $e \in \zeta$. By the \textbf{Proposition \ref{propercontrolled}} $e$ is proper. Let $\{(x_{\gamma},y_{\gamma})\}_{\gamma \in \Gamma}  \subseteq e$ be a net such that $\{x_{\gamma}\}_{\gamma \in \Gamma}$ converges to $x \in W$. Let $y$ be a cluster point of $\{y_{\gamma}\}_{\gamma \in \Gamma}$. Since $e$ is proper, it follows that $y \in W$. Since $\pi$ is continuous, we have that $\pi+id: Y+_{f^{\ast}}W \rightarrow X+_{f}W$ is continuous. So $\{(\pi(x_{\gamma}),\pi(y_{\gamma}))\}_{\gamma \in \Gamma}  \subseteq \pi(e) \in \varepsilon$, the net $\{\pi(x_{\gamma})\}_{\gamma \in \Gamma}$ converges to $x$ and $y$ is a cluster point of  $\{\pi(y_{\gamma})\}_{\gamma \in \Gamma}$. Since $\pi(e) \in \varepsilon$, $X+_{f}Y \in Pers(\varepsilon)$ and  $\{\pi(x_{\gamma})\}_{\gamma \in \Gamma}$ converges to $x$, it follows that $y = x$. So $x$ is the unique cluster point of $\{y_{\gamma}\}_{\gamma \in \Gamma}$, which implies that this net converges to $x$.

Thus $e \in \zeta_{f^{\ast}}$ and then $\zeta_{f^{\ast}} \supseteq \zeta$.
\end{proof}

Let $\Lambda: Comp(Y) \rightarrow Comp(X)$ be the functor given by the pullback relative to $\varpi$.

\begin{teo}\label{Teorema1}If $\pi$ and $\varpi$ are continuous, then the categories $Pers(\varepsilon)$ and $Pers(\zeta)$ are isomorphic.
\end{teo}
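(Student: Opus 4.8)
The plan is to show that the two pullback functors built above, $\Pi$ (the pullback along $\pi$) and $\Lambda$ (the pullback along $\varpi$), are mutually inverse; this is precisely the data of an isomorphism of categories $Pers(\varepsilon)\cong Pers(\zeta)$. First I would confirm that $\pi$ and $\varpi$ are proper as maps of topological spaces, so that \textbf{Proposition \ref{pullbackdecompactificacoes}} applies to each. Indeed, if $K$ is compact in the target it is topologically bounded, hence bounded by \textbf{Proposition \ref{propercontrolled}}; a continuous coarse map pulls a bounded set back to a bounded, hence topologically bounded, set, and since the map is continuous, the preimage of the closed set $K$ is closed and therefore compact. This legitimizes the functors $\Pi\colon Comp(X)\to Comp(Y)$ and $\Lambda\colon Comp(Y)\to Comp(X)$.

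Next I would check that these functors restrict to the perspective subcategories. For $\Pi$ this is already established above: $Y+_{f^{\ast}}W$ is Hausdorff and $\zeta_{f^{\ast}}\supseteq\zeta$, so $\Pi(X+_{f}W)=Y+_{f^{\ast}}W$ lies in $Pers(\zeta)$. Applying the same two statements to the continuous coarse equivalence $\varpi$, whose quasi-inverse is $\pi$, shows that $\Lambda$ restricts to a functor $Pers(\zeta)\to Pers(\varepsilon)$.

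The core of the argument is that the composites $\Lambda\circ\Pi$ and $\Pi\circ\Lambda$ are the identity functors, not merely naturally isomorphic to them. On objects, $\Lambda\circ\Pi(X+_{f}W)=X+_{(f^{\ast})^{\ast}}W$, and the proposition $(f^{\ast})^{\ast}=f$ (which reduces the iterated pullback to a single pullback along a composite close to the identity, and then invokes that pullbacks along close maps coincide) gives equality on the nose. The symmetric computation, using that the other composite of $\pi$ and $\varpi$ is close to the identity of the relevant space, yields $\Pi\circ\Lambda=id$ on objects. On morphisms, \textbf{Proposition \ref{pullbackdecompactificacoes}} shows that both $\Pi$ and $\Lambda$ leave the second-coordinate component $\phi$ of a morphism $id+\phi$ unchanged, so each composite fixes every morphism $id+\phi$. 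Hence $\Lambda\circ\Pi=id_{Pers(\varepsilon)}$ and $\Pi\circ\Lambda=id_{Pers(\zeta)}$, establishing the isomorphism.

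I expect the only delicate point to be the strict equality $(f^{\ast})^{\ast}=f$ rather than just a homeomorphism fixing $X$: everything hinges on the facts that pullbacks along close maps are literally equal and that $\varpi\circ\pi$ and $\pi\circ\varpi$ are close to the respective identities, both of which are available above. The remainder is the routine bookkeeping of composing two pullback functors and confirming that each sends perspective compactifications to perspective compactifications.
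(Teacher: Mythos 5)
Your proposal is correct and follows essentially the same route as the paper: both arguments show that the pullback functors $\Pi$ and $\Lambda$ preserve perspectivity (via Hausdorffness of the pullback and $\zeta_{f^{\ast}}\supseteq\zeta$ under continuity), then use $(f^{\ast})^{\ast}=f$ --- reduced to the pullback along $\varpi\circ\pi$, which is close to the identity --- to get strict equality of the composites on objects, with morphisms fixed trivially. Your explicit verification that a continuous coarse map between coarsely connected proper coarse spaces is topologically proper (so that Proposition \ref{pullbackdecompactificacoes} applies) is a point the paper leaves implicit, but it does not change the structure of the argument.
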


\begin{proof}We have that $\Pi$ and $\Lambda$ preserve the perspectivity property, $\forall X+_{f}W \in Pers(\varepsilon)$, $\Lambda\circ \Pi(X+_{f}W) = X+_{(f^{\ast})^{\ast}}W = X+_{f}W$ and $\forall Y+_{g}W \in Pers(\zeta)$, $\Pi \circ \Lambda(Y+_{g}W) = Y+_{(g^{\ast})^{\ast}}W = Y+_{g}W$. It is clear also that every morphism is preserved by $\Pi \circ \Lambda$ and $\Lambda\circ \Pi$. Thus, $\Pi$ and $\Lambda$ are inverses, which implies that $Pers(\varepsilon)$ and $Pers(\zeta)$ are isomorphic.
\end{proof}

\begin{obs}Our construction is the same as the one used on Theorem 7.1 of \cite{GM} for the case where the spaces $X$ and $Y$ are uniformly contractible ANR metric spaces. On the language that we are using,  Theorem 7.1 of \cite{GM} says that the functors $\Pi$ and $\Lambda$ sends controlled $\mathcal{Z}$-compactifications to controlled $\mathcal{Z}$-compactifications (controlled in their sense is equivalent to be, in our sense, perspective with respect to the bounded coarse structure).
\end{obs}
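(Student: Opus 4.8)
.

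The statement is a comparison with external work, so the plan is to justify it by establishing two dictionary entries and then reading off Theorem 7.1 of \cite{GM} from machinery already developed here. Concretely I would proceed in three stages: (i) show that, for a proper metric space, ``controlled'' in the sense of \cite{GM} is literally Roe's coarse condition for the bounded coarse structure $\varepsilon_{d}$, hence ``perspective'' in our sense; (ii) identify the boundary-swapping construction of \cite{GM} with the pullback functor $\Pi$ of \textbf{Proposition \ref{pullbackdecompactificacoes}}; and (iii) deduce that the preservation asserted by Theorem 7.1 of \cite{GM} is exactly \textbf{Theorem \ref{Teorema1}}, augmented by the $\mathcal{Z}$-set preservation that is the genuinely topological content imported from \cite{GM}.

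For (i) I would first note that a uniformly contractible ANR metric space $(X,d)$ is proper, so by \textbf{Proposition \ref{propercontrolled}} the bounded coarse structure $\varepsilon_{d}$ is coarsely connected and proper, bounded sets coincide with topologically bounded sets, and every $e\in\varepsilon_{d}$ is proper. The control condition of \cite{GM} on a compactification $X+_{f}W$ requires that metrically bounded ``jumps'' shrink toward the boundary: for each $w\in W$ and each neighbourhood $V$ of $w$ there is a smaller neighbourhood $U$ such that no uniformly bounded pair joins $U\cap X$ to $X-V$. Letting the bound range produces exactly condition $(3)$ of the cited characterization (Theorem 2.27 of \cite{Ro}) for every $e\in\varepsilon_{d}$; thus the compactification is controlled if and only if every element of $\varepsilon_{d}$ is perspective, i.e. $\varepsilon_{f}\supseteq\varepsilon_{d}$, which is the definition of $X+_{f}W\in Pers(\varepsilon_{d})$. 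This is the parenthetical equivalence in the remark.

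For (ii) I would take a continuous coarse equivalence $\pi\colon X\to Y$ with continuous quasi-inverse (the controlled proper homotopy equivalence of \cite{GM}; note that coarse properness together with continuity and properness of the metrics forces $\pi$ to be topologically proper, so \textbf{Proposition \ref{pullbackdecompactificacoes}} applies). Given a controlled $\mathcal{Z}$-compactification $Y+_{f}Z$ whose boundary is the $\mathcal{Z}$-set $Z$, the swap of \cite{GM} keeps $Z$ fixed and declares a sequence escaping to infinity in $X$ to converge to $z\in Z$ precisely when its $\pi$-image converges to $z$ in $Y+_{f}Z$. Taking the remainder map to be $id_{Z}$, the induced glueing map is $f^{\ast}(A)=f(Cl_{Y}(\pi(A)))$, which is the pullback of \textbf{Proposition \ref{pullbackdecompactificacoes}}; hence $\Pi(Y+_{f}Z)=X+_{f^{\ast}}Z$ is the boundary swap, and the reverse swap along the quasi-inverse is $\Lambda$. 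Stage (iii) then follows: \textbf{Theorem \ref{Teorema1}} says that $\Pi$ and $\Lambda$ are mutually inverse isomorphisms between the perspective compactifications of $X$ and of $Y$ for the bounded coarse structures, and restricting to objects whose remainder is a $\mathcal{Z}$-set recovers the bijection of Theorem 7.1 of \cite{GM}. The preservation of the $\mathcal{Z}$-set property itself is not coarse and is not reproved here; it uses uniform contractibility and the ANR hypothesis and is cited directly from \cite{GM}, our functors contributing only that the homeomorphism type of the remainder is unchanged.

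The main obstacle is stage (i): faithfully matching the metric ``control'' formulation of \cite{GM}, which is typically phrased through homotopies or neighbourhoods of controlled diameter, with Roe's perspective condition for $\varepsilon_{d}$, phrased through nets contained in a controlled set. One must also confirm that the hypotheses align, so that \textbf{Theorem \ref{Teorema1}} applies verbatim to the continuous coarse equivalence underlying a controlled proper homotopy equivalence; once the dictionary of (i) is in place, stages (ii) and (iii) are formal.
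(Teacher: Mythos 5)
Your proposal is correct and matches the paper's approach: the statement is a remark given without proof, and your three-stage dictionary (GM's control condition $\Leftrightarrow$ perspectivity for $\varepsilon_{d}$ via Theorem 2.27 of \cite{Ro}, the boundary swap identified with the pullback functor $\Pi$ of Proposition \ref{pullbackdecompactificacoes}, and Theorem \ref{Teorema1} restricted to $\mathcal{Z}$-compactifications) is precisely the translation the remark asserts. You also correctly isolate the one point the coarse machinery cannot supply --- that the swapped remainder is again a $\mathcal{Z}$-set, which needs the uniformly contractible ANR hypotheses --- and cite it from \cite{GM} rather than reproving it, consistent with how the paper reads Theorem 7.1.
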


The hypotheses of the theorem are enough for working with discrete spaces, being useful to study discrete groups. Another class of spaces that it is enough is uniformly contractible ANR spaces with proper metric and finite macroscopic dimension:

\begin{prop}(Corollary 5.4 of \cite{GM}) Let $X$ and $Y$ be uniformly contractible ANR proper metric spaces with finite macroscopic dimension and $\varepsilon$ and $\zeta$ are the bounded coarse structures associated to the metrics of $X$ and $Y$, respectively. If $\pi: X \rightarrow Y$ is a coarse equivalence with coarse inverse $\varpi$, then there exists a continuous coarse equivalence $\pi': X \rightarrow Y$ and a continuous coarse inverse $\varpi'$ such that $\pi'$ is close to $\pi$ and $\varpi'$ is close to $\varpi$.
\end{prop}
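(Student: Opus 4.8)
The content to prove is that $\pi'$ and $\varpi'$ may be taken \emph{continuous} (the bare statement is otherwise vacuous, since $\pi'=\pi$ would do). The plan is to approximate the given coarse equivalence by a continuous one using the uniform contractibility of the target together with the finite macroscopic dimension, which keeps the relevant simplicial complex finite-dimensional. Since closeness is preserved by the construction and the conclusion only asks for maps close to $\pi$ and $\varpi$, it suffices to produce a single continuous coarse approximation of an arbitrary coarse map; the symmetric construction handles $\varpi$, and mutual quasi-inverseness is then inherited because closeness is compatible with composition by coarse maps (if $s,t\colon S\to X$ are close and $\pi$ is coarse, then $\pi\circ s$ and $\pi\circ t$ are close, as $\{(\pi s,\pi t)\}=\pi^{\times 2}(\{(s,t)\})$ is controlled).

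First I would coarsen $X$. Because $X$ is a proper metric space of finite macroscopic dimension, for a suitable scale there is a uniformly bounded open cover $\mathcal{U}$ of $X$ whose nerve $P=|N(\mathcal{U})|$ is a locally finite simplicial complex of dimension at most $n$, where $n$ bounds the macroscopic dimension. A partition of unity subordinate to $\mathcal{U}$ yields a continuous map $c\colon X\to P$, and choosing a point in each cover member and extending over simplices gives a coarse map $b\colon P\to X$ that sends each simplex to a set of uniformly bounded diameter; these are mutual coarse inverses, so $b\circ c$ is close to $\mathrm{id}_X$.

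Next I would push $\pi$ through this coarsening and fill. On the vertices $P^{(0)}$ set $\tilde{\pi}=\pi\circ b$, and extend $\tilde{\pi}$ skeleton by skeleton. For a $k$-simplex $\Delta^k$ with $k\le n$, the already-defined map sends $\partial\Delta^k$ into a subset of $Y$ of uniformly bounded diameter (controlled by the coarseness of $\pi$ and the uniform bound on $\mathrm{diam}\,b(\Delta^k)$); uniform contractibility of the ANR $Y$ then shows this boundary is null-homotopic inside a concentric ball of controlled, uniformly bounded radius, hence extends continuously over $\Delta^k$ into that ball. After at most $n+1$ stages this produces a continuous $\tilde{\pi}\colon P\to Y$ whose values stay within a uniformly bounded distance of $\pi\circ b$; in particular $\tilde\pi$ and $\pi\circ b$ are close as maps $P\to Y$. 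Setting $\pi'=\tilde{\pi}\circ c$ gives a continuous map, and $\pi'=\tilde\pi\circ c$ is close to $(\pi\circ b)\circ c=\pi\circ(b\circ c)$, which is close to $\pi$ because $b\circ c$ is close to $\mathrm{id}_X$; thus $\pi'$ is close to $\pi$, and being close to a coarse equivalence it is itself a coarse equivalence. Applying the identical argument to $\varpi$ gives a continuous $\varpi'$ close to $\varpi$; then $\pi'\circ\varpi'$ is close to $\pi\circ\varpi$, hence to $\mathrm{id}_Y$, and symmetrically for $\varpi'\circ\pi'$, so $\pi'$ and $\varpi'$ are continuous mutual quasi-inverses.

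The main obstacle is the controlled inductive extension over the skeleta: one must verify that the diameter of the image of each simplex boundary stays uniformly bounded as the dimension rises, so that uniform contractibility delivers fillings whose radii accumulate only to a finite bound. This is precisely where finite macroscopic dimension is indispensable — it caps the induction at dimension $n$, so the a~priori growing filling radii are bounded after finitely many steps — and where the ANR and uniform-contractibility hypotheses on $Y$ guarantee that each boundary, lying in a bounded ball, is null-homotopic within a slightly larger concentric ball and hence extends continuously over the simplex.
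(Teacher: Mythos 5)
You should know at the outset that the paper offers no proof of this proposition at all: it is imported verbatim as Corollary 5.4 of \cite{GM}, and the corollary that follows simply invokes it. So there is no internal argument to compare yours against; what you have produced is a reconstruction of the cited result, and your reconstruction follows essentially the same route as the source (controlled nerve approximation plus skeleton-by-skeleton filling), which is the standard argument for replacing a coarse equivalence between uniformly contractible, finite-dimensional spaces by a continuous one. You were also right to read the statement as asserting that $\pi'$ and $\varpi'$ can be taken \emph{continuous}: that word is missing from the statement as printed (literally read, $\pi'=\pi$ satisfies it), but continuity is exactly what the subsequent corollary extracts, so yours is the intended reading. The load-bearing steps of your sketch are sound: closeness is preserved under pre-composition and under post-composition with coarse maps, a map close to a coarse equivalence is itself a coarse equivalence, finite macroscopic dimension supplies a uniformly bounded cover whose nerve $P$ has dimension at most $n$, and uniform contractibility fills each simplex within a ball of controlled radius, the inductive growth of radii being cut off after $n+1$ stages.

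One imprecision is worth flagging, though it does not break the argument: you describe $b\colon P \rightarrow X$ as obtained by choosing a point in each cover member and \emph{extending over simplices}, which suggests a continuous extension into $X$; none is needed, and producing one would require running in $X$ the very filling machinery you later apply in $Y$. It suffices to let $b$ send every point of a simplex to the chosen point of a cover element corresponding to one of its vertices; this $b$ is merely coarse, but coarseness of $b$ together with closeness of $b\circ c$ to $id_{X}$ is all that the rest of your argument uses. Likewise, bornologousness and properness of $\pi'=\tilde{\pi}\circ c$ need not be verified directly, since for bounded coarse structures they follow from closeness to the coarse map $\pi$. With those two readings, your proof is correct and is, in substance, the proof behind the citation.
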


\begin{cor}Let $X$ and $Y$ be uniformly contractible ANR metric  spaces with finite macroscopic dimension that are coarse equivalent and $\varepsilon$ and $\zeta$ are the bounded coarse structures associated to the metrics of $X$ and $Y$, respectively. Then the categories $Pers(\varepsilon)$ and $Pers(\zeta)$ are isomorphic.
\end{cor}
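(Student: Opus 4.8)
The plan is to reduce the corollary to Theorem \ref{Teorema1} by replacing the given coarse equivalence with a continuous one. Since $X$ and $Y$ are coarse equivalent, there is a coarse equivalence $\pi: Y \rightarrow X$ together with a quasi-inverse $\varpi: X \rightarrow Y$, exactly the data used to build the pullback functors $\Pi$ and $\Lambda$ earlier in this section. The only obstruction to applying Theorem \ref{Teorema1} verbatim is that $\pi$ and $\varpi$ need not be continuous, and this is precisely what the preceding Proposition (Corollary 5.4 of \cite{GM}) is designed to repair.

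First I would invoke that Proposition. Its hypotheses -- uniformly contractible ANR, proper metric, finite macroscopic dimension -- are symmetric in $X$ and $Y$, and coarse equivalence is itself a symmetric relation, so the Proposition applies to the equivalence in the direction $Y \rightarrow X$ and produces a (continuous) coarse equivalence $\pi': Y \rightarrow X$ with a (continuous) coarse inverse $\varpi': X \rightarrow Y$ such that $\pi'$ is close to $\pi$ and $\varpi'$ is close to $\varpi$. I would then feed $\pi'$ and $\varpi'$ into Theorem \ref{Teorema1}: since $\pi'$ is a continuous coarse equivalence and $\varpi'$ is a continuous quasi-inverse, the theorem yields an isomorphism between $Pers(\varepsilon)$ and $Pers(\zeta)$, realized by the pullback functors attached to $\pi'$ and $\varpi'$. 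This is the whole argument; it is essentially a chaining of the two cited results.

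The step deserving explicit care is bookkeeping rather than analysis: one must confirm that the continuous representatives supplied by \cite{GM} form a genuine coarse-equivalence/quasi-inverse pair (so the hypotheses of Theorem \ref{Teorema1} hold literally) and that the direction conventions line up, with $\pi'$ running $Y \rightarrow X$ and $\varpi'$ running $X \rightarrow Y$. It is also worth recording, by way of reassurance, that the earlier proposition ``if $\pi$ is close to $\pi'$ then $f^{\ast} = f'^{\ast}$'' shows the pullback functor depends only on the closeness class of the coarse map, so passing from $\pi,\varpi$ to the continuous $\pi',\varpi'$ does not change the functors in play. Finally, I would note that properness of the metrics is implicit in the statement: it is what makes $\varepsilon$ and $\zeta$ coarsely connected proper coarse structures, which is needed both for $Pers(\varepsilon)$ and $Pers(\zeta)$ to be defined and for the Proposition of \cite{GM} to apply.
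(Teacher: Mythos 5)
Your proposal is correct and follows exactly the paper's own argument: invoke the preceding proposition (Corollary 5.4 of \cite{GM}) to replace $\pi$ and $\varpi$ by continuous close representatives $\pi'$, $\varpi'$, and then apply Theorem \ref{Teorema1}. Your additional remark that the pullback functors depend only on the closeness class of the maps is also precisely the observation the paper records immediately after its proof.
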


\begin{proof}If $\pi: X \rightarrow Y$ is a coarse equivalence and $\varpi$ its quasi-inverse, then there exists $\pi'$ close to $\pi$ and $\varpi'$ close to $\varpi$ that are continuous. So $Pers(\varepsilon)$ and $Pers(\zeta)$ are isomorphic by the previous theorem.
\end{proof}

\begin{obs}Since $\pi$ is close to $\pi'$ and $\varpi$ is close to $\varpi'$, the functors given by the pullbacks of $\pi$ and $\varpi$ are the same of the functors given by the pullbacks of $\pi'$ and $\varpi'$, respectively. So they are isomorphisms.
\end{obs}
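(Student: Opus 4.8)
The plan is to reduce the statement to the proposition, proved just above, that the pullback of an admissible map depends only on the closeness class of the map along which one pulls back, and then to transport the isomorphism of Theorem \ref{Teorema1} across the resulting equality of functors. In other words, I would not re-prove that $\Pi$ and $\Lambda$ are inverse equivalences from scratch; I would argue that they \emph{coincide} with the functors $\Pi'$ and $\Lambda'$ built from the continuous representatives $\pi'$ and $\varpi'$, for which Theorem \ref{Teorema1} already does all the work.

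First I would unwind what ``the functor given by the pullback of $\pi$'' means: by \textbf{Proposition \ref{pullbackdecompactificacoes}} it is the functor $\Pi$ sending an object $X+_{f}W$ to $Y+_{f^{\ast}}W$, with $f^{\ast}$ the pullback of $f$ along $\pi$, and sending a morphism $id+\varpi$ to $id+\varpi$. Writing $\Pi'$ for the analogous functor attached to $\pi'$, I would check $\Pi=\Pi'$ in two steps. On objects, the earlier proposition asserting $f^{\ast}=f'^{\ast}$ whenever $\pi$ is close to $\pi'$ gives $\Pi(X+_{f}W)=Y+_{f^{\ast}}W=Y+_{f'^{\ast}}W=\Pi'(X+_{f}W)$ for every admissible $f$. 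On morphisms the two functors act identically, since each leaves the remainder component $\varpi$ of a morphism $id+\varpi$ untouched; hence $\Pi=\Pi'$ as functors. The same argument applied to the quasi-inverses, using that $\varpi$ is close to $\varpi'$, yields $\Lambda=\Lambda'$ for the functors given by pullback along $\varpi$ and $\varpi'$.

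It then remains to invoke \textbf{Theorem \ref{Teorema1}}: since $\pi'$ and $\varpi'$ are continuous and form a quasi-inverse pair lying in the same closeness classes as $\pi$ and $\varpi$, the functors $\Pi'$ and $\Lambda'$ are mutually inverse isomorphisms between $Pers(\varepsilon)$ and $Pers(\zeta)$. Combining this with the equalities $\Pi=\Pi'$ and $\Lambda=\Lambda'$ established above shows that $\Pi$ and $\Lambda$ themselves are mutually inverse isomorphisms, which is exactly the assertion of the remark.

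The genuinely substantive ingredient is the invariance of the pullback under closeness, but that has already been isolated as a separate proposition, so nothing new needs proving there. The only delicate bookkeeping point is making sure $\Pi=\Pi'$ holds on morphisms and not merely on objects; this is immediate once one notices that the pullback functor modifies only the glueing datum $f\mapsto f^{\ast}$ and carries a morphism $id+\varpi$ to the identically described $id+\varpi$. I therefore expect no real obstacle: the content of the remark is precisely the compatibility of two facts already in hand — equality of pullbacks under closeness, and the isomorphism of Theorem \ref{Teorema1} for continuous representatives.
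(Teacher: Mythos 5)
Your proposal is correct and follows essentially the same route as the paper: the remark's own text is precisely this argument, resting on the earlier proposition that close coarse maps induce equal pullbacks of perspective compactifications (giving $\Pi=\Pi'$ and $\Lambda=\Lambda'$) together with Theorem \ref{Teorema1} applied to the continuous representatives $\pi'$ and $\varpi'$. Your only addition is the explicit check that the functors agree on morphisms as well as objects, which is a harmless and correct piece of bookkeeping the paper leaves implicit.
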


Let's consider the case where $\pi$ or $\varpi$ may not be continuous.

\begin{defi}Let $X$ be a space with countable basis. Let $MComp(X)$, and $MPers(\varepsilon)$ be the full subcategories of $Comp(X)$ and $Pers(\varepsilon)$, respectively, such that the objects are metrizable spaces.
\end{defi}

Let's consider $X$ and $Y$with countable basis and $X+_{f}W \in MPers(\varepsilon)$.

\begin{prop}$\zeta_{f^{\ast}} \supseteq \zeta$.
\end{prop}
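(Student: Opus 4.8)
The plan is to verify directly that every $e\in\zeta$ satisfies the perspectivity condition (2) of the cited theorem of Roe with respect to the compactification $Y+_{f^{\ast}}W$, that is, that $e$ is proper and that every sequence $\{(x_{n},y_{n})\}_{n}\subseteq e$ with $x_{n}\to x\in W$ has $y_{n}\to x$. Two preliminary observations make sequences sufficient. First, $Y+_{f^{\ast}}W$ is Hausdorff (already proved above), and since $Y$ has a countable basis and $X+_{f}W$ is metrizable (being an object of $MPers(\varepsilon)$), the metrizability criterion proved earlier shows $Y+_{f^{\ast}}W$ is metrizable. Second, by \textbf{Proposition \ref{propercontrolled}} every $e\in\zeta$ is proper, and properness is a purely topological condition on $Y$, so the only thing left is the convergence clause.

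First I would fix $e\in\zeta$ and a sequence $\{(x_{n},y_{n})\}_{n}\subseteq e$ with $x_{n}\to x\in W$. Applying the properness lemma to the space $Y$ (which is locally compact, paracompact and Hausdorff, being second countable, locally compact and Hausdorff) shows that $\{y_{n}\}$ has cluster points only in $W$. Since $Y+_{f^{\ast}}W$ is compact and metrizable, it then suffices to show that an arbitrary subsequential limit $y\in W$, say $y_{n_{k}}\to y$, equals $x$; then every subsequential limit is $x$ and hence $y_{n}\to x$.

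The crux is a closure-transfer step that replaces the missing continuity of $\pi$. For a tail $T=\{x_{n_{k}}:k\geqslant m\}$ one has $x\in Cl_{Y+_{f^{\ast}}W}(T)\cap W = f^{\ast}(Cl_{Y}T)=f(Cl_{X}(\pi(Cl_{Y}T)))$. Now \textbf{Corollary \ref{preservarelacao}} gives $\pi(Cl_{Y}T)\sim\pi(T)$ in $\varepsilon$, and combined with $S\sim Cl_{X}S$ this yields $Cl_{X}(\pi(Cl_{Y}T))\sim Cl_{X}(\pi(T))$ in $\varepsilon\subseteq\varepsilon_{f}$; since a perspective compactification assigns the same $f$-value to $\sim$-equivalent closed sets, $f(Cl_{X}(\pi(Cl_{Y}T)))=f(Cl_{X}(\pi(T)))=Cl_{X+_{f}W}(\pi(T))\cap W$. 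Hence $x\in Cl_{X+_{f}W}(\{\pi(x_{n_{k}}):k\geqslant m\})$ for every $m$, i.e. $x$ is a cluster point of $\{\pi(x_{n_{k}})\}_{k}$ in $X+_{f}W$; the identical computation applied to $\{y_{n_{k}}\}$ shows $y$ is a cluster point of $\{\pi(y_{n_{k}})\}_{k}$.

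To finish, I would use the metrizability of $X+_{f}W$ to pass to a further subsequence $\{n_{k_{j}}\}_{j}$ with $\pi(x_{n_{k_{j}}})\to x$. The pairs $(\pi(x_{n_{k_{j}}}),\pi(y_{n_{k_{j}}}))$ lie in $\pi(e)\in\varepsilon\subseteq\varepsilon_{f}$, which is perspective for $X+_{f}W$; since the first coordinates converge to $x\in W$, condition (2) forces $\pi(y_{n_{k_{j}}})\to x$. But $y_{n_{k_{j}}}\to y$, so re-running the closure-transfer step along this sub-subsequence makes $y$ a cluster point of $\{\pi(y_{n_{k_{j}}})\}_{j}$; as that sequence converges to $x$, we conclude $y=x$. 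Therefore $y_{n}\to x$, so $e$ is perspective for $Y+_{f^{\ast}}W$ and $\zeta_{f^{\ast}}\supseteq\zeta$. The main obstacle is exactly this closure-transfer step: without continuity of $\pi$ one cannot push the convergent sequence through $\pi$ directly, and the argument must instead transfer membership in boundary closures by exploiting that $f$ is invariant under the coarse equivalence $\sim$ and that cluster points are detected by closures of tails.
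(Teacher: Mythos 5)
Your proof is correct, and its engine is the same as the paper's: convert ``$x$ is a boundary limit of $\{x_{n}\}$ in $Y+_{f^{\ast}}W$'' into a statement about $f(Cl_{X}(\pi(\cdot)))$ via the definition of the pullback, and then apply perspectivity of $\pi(e)\in\varepsilon\subseteq\varepsilon_{f}$ in $X+_{f}W$ to force the two boundary points to coincide. The execution in the middle is genuinely different, though. The paper verifies criterion (1) of Roe's theorem: it takes a sequence $(x_{n},y_{n})\to(x,y)\in W^{2}$, notes (implicitly) that the set $\{x_{n}\}$ is already closed in $Y$ because its only cluster point is $x\in W$, so $\{x\}=f^{\ast}(\{x_{n}\})=f(Cl_{X}(\{\pi(x_{n})\}))$ comes straight from the definition of $f^{\ast}$; it then upgrades ``$x$ is the unique cluster point of $\{\pi(x_{n})\}$ in $W$'' to genuine convergence $\pi(x_{n})\to x$ by arguing that no subsequence of $\{\pi(x_{n})\}$ is bounded (preimages of bounded sets under the coarse equivalence $\pi$ are bounded), so there are no cluster points in $X$ at all; likewise $\pi(y_{n})\to y$, and perspectivity of $\pi(e)$ gives $x=y$. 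You instead verify criterion (2): you invoke the properness lemma to confine the cluster points of $\{y_{n}\}$ to $W$ (a point the paper's proof leaves to the reader, since it only treats the $W^{2}$ part of the closure), you replace the direct evaluation of $f^{\ast}$ on the closed sequence set by the $\sim$-invariance machinery applied to tails (Corollary \ref{preservarelacao}, the lemma $A\sim Cl_{X}(A)$, and the remark that a perspective $f$ takes equal values on $\sim$-equivalent closed sets), and you avoid the ``no bounded subsequence'' argument entirely by settling for cluster-point status plus two rounds of subsequence extraction, using metrizability of both compactifications. Both routes are sound; the paper's is shorter once one accepts that the sequence sets are closed and that a unique cluster point in a compact space is a limit, while yours never needs those observations and handles the mixed $W\times Y$ part of the closure explicitly, at the cost of heavier subsequence bookkeeping and reliance on the coarse-invariance lemmas.
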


\begin{proof}Let $e \in \zeta$. Let $(x,y) \in Cl_{(Y+_{f^{\ast}}W)^{2}}(e) \cap W^{2}$. There exists a sequence $\{(x_{n},y_{n})\}_{n\in \N}\subseteq e$ converging to $(x,y)$. We have that $\{x\} = f^{\ast}(\{x_{n}\}_{n \in \N}) = f(Cl_{X}(\{\pi(x_{n})\}_{n\in\N}))$ and $\{y\}= f^{\ast}(\{y_{n}\}_{n \in \N}) = f(Cl_{X}(\{\pi(y_{n})\}_{n\in\N}))$. So $x$ and $y$ are respectively the unique cluster points of $\{\pi(x_{n})\}_{n \in \N}$ and $\{\pi(y_{n})\}_{n \in \N}$ that are in $W$. But all subsequences of $\{\pi(x_{n})\}_{n \in \N}$ and $\{\pi(y_{n})\}_{n \in \N}$ are not bounded (since $\pi$ is a coarse equivalence), which implies that $x$ and $y$ are respectively the unique cluster points of $\{\pi(x_{n})\}_{n \in \N}$ and $\{\pi(y_{n})\}_{n \in \N}$. Then $\{(\pi(x_{n}),\pi(y_{n}))\}_{n \in \N}$ converges to $(x,y)$. But $\{(\pi(x_{n}),\pi(y_{n}))\}_{n \in \N} \subseteq \pi(e) \in \varepsilon$, which implies that $x = y$. Thus $e \in \zeta_{f^{\ast}}$.
\end{proof}

\begin{teo}\label{Teorema2}If $X$ and $Y$ have countable basis, then $MPers(\varepsilon)$ and $MPers(\zeta)$ are isomorphic. \eod
\end{teo}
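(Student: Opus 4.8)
The plan is to run the proof of Theorem \ref{Teorema1} essentially unchanged, replacing each place where that argument invoked continuity of $\pi$ or $\varpi$ by an appeal to the metrizability hypothesis together with the proposition $\zeta_{f^{\ast}} \supseteq \zeta$ established just above. Thus I would exhibit the pullback functor $\Pi$ (along $\pi$) and the pullback functor $\Lambda$ (along $\varpi$) as mutually inverse functors $\Pi: MPers(\varepsilon) \to MPers(\zeta)$ and $\Lambda: MPers(\zeta) \to MPers(\varepsilon)$.

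First I would check that $\Pi$ really lands in $MPers(\zeta)$. We already know that $\Pi$ restricts to a functor $Pers(\varepsilon) \to T_{2}Comp(Y)$, so for $X+_{f}W \in MPers(\varepsilon)$ the space $Y+_{f^{\ast}}W$ is a Hausdorff compactification of $Y$ with no continuity assumption on $\pi$. Since $X+_{f}W$ is metrizable and $Y$ has a countable basis, the metrizability-transfer proposition promotes this to metrizability of $Y+_{f^{\ast}}W$, and the proposition $\zeta_{f^{\ast}} \supseteq \zeta$ gives perspectivity; hence $Y+_{f^{\ast}}W \in MPers(\zeta)$. Swapping the roles of $(X,\varepsilon)$ and $(Y,\zeta)$ and of $\pi$ and $\varpi$ yields the symmetric statement for $\Lambda$. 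On morphisms nothing new is needed: a morphism $id+\varpi'$ of compactifications is by definition continuous, so in the Cube Lemma for pullbacks the relevant square commutes and the induced map $id+\varpi'$ between the pullbacks is continuous regardless of whether $\pi$ or $\varpi$ is continuous; both functors keep the remainder map and act by the identity on the dense factor.

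Finally I would verify that the two functors are mutually inverse. On objects this is precisely $(f^{\ast})^{\ast}=f$ (and its mirror image $(g^{\ast})^{\ast}=g$), which the earlier proposition deduces solely from the fact that $\varpi\circ\pi$ and $\pi\circ\varpi$ are close to the respective identities, using no continuity; on morphisms each composite returns the original $id+\varpi'$. This makes $\Pi$ and $\Lambda$ inverse isomorphisms of categories, giving $MPers(\varepsilon)\cong MPers(\zeta)$, which is why the statement is flagged with \eod. The one genuinely substantive step is the one already carried out in the preceding proposition: proving $\zeta_{f^{\ast}}\supseteq\zeta$ for a possibly discontinuous coarse equivalence. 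There one cannot transport a convergent net through $\pi$, so the argument instead exploits second countability to work with sequences, uses $f^{\ast}(\{x_{n}\})=f(Cl_{X}(\{\pi(x_{n})\}))$ to identify the unique boundary cluster points of $\{\pi(x_{n})\}$ and $\{\pi(y_{n})\}$, and then forces them to agree via the coarse-equivalence property. Once that proposition is granted, Theorem \ref{Teorema2} follows formally exactly as above.
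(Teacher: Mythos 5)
Your proposal is correct and is essentially the paper's own (implicit) argument: Theorem \ref{Teorema2} carries no written proof precisely because it is assembled, exactly as you describe, from the pullback functors $\Pi$ and $\Lambda$ together with the continuity-free facts already on record --- Hausdorffness of $Y+_{f^{\ast}}W$ under a coarse equivalence, the metrizability-transfer proposition, the sequence-based proposition $\zeta_{f^{\ast}}\supseteq\zeta$, and $(f^{\ast})^{\ast}=f$ from closeness of $\varpi\circ\pi$ to the identity. You also correctly single out the sequence argument for $\zeta_{f^{\ast}}\supseteq\zeta$ as the only point where second countability substitutes for the continuity hypothesis of Theorem \ref{Teorema1}.
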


We end this section with a technical lemma that is useful on \textbf{Section \ref{coproducts}}.

\begin{lema}\label{pushforwradpersp}Let $Y+_{f}W \in Pers(\zeta)$. If $\pi$ and $\varpi$ are continuous and $\pi$ is surjective, then $\Lambda(Y+_{f}W) = X+_{f_{\ast}}W$, where the pushforward is with respect to $\pi$ and $id_{W}$. 
\end{lema}

\begin{proof}Since $\pi$ is continuous, the maps $\pi\!+\!id_{W}\!: Y\!+_{f}W \!\rightarrow X\!+_{f_{\ast}}\!W$ and $\pi\!+\!id_{W}\!:  \Pi (\Lambda(Y\!+_{f}W)) \!\rightarrow \Lambda(Y\!+_{f}W)$ are continuous (by the properties of the pushforward and the pullback, respectively) and, since $\pi$ and $\varpi$ are continuous maps and coarse inverses, $\Pi(\Lambda(Y+_{f}W)) = Y+_{f}W$. Since $\Lambda(Y+_{f}W) = X+_{f^{\ast}}W$, where the pullback is with respect to $\varpi$ and $id_{W}$, and by \textbf{Proposition \ref{propriedadeuniversalpushforward}}, the diagram commutes:

$$ \xymatrix{ Y+_{f}W \ar[r]^{\pi+id} \ar@{}[d]|{\parallel} & X+_{f_{\ast}}W \ar[d]^{id+id}  \\
           \Pi(\Lambda(Y\!+_{f}W)) \ar[r]^{ \ \ \ \pi+id} & X+_{f^{\ast}}W } $$
           
Since $\pi$ is surjective, $X+_{f_{\ast}}W$ is compact (\textbf{Proposition \ref{pushforwardcompact}}). Since $X+_{f^{\ast}}W$ is Hausdorff and $X+_{f_{\ast}}W$ is compact, we have that the map $id_{X}+id_{W}: X+_{f_{\ast}}W \rightarrow X+_{f^{\ast}}W$ is a homeomorphism.
\end{proof}

\subsection{Karlsson property}

\begin{defi}Let $(X,d)$ be a metric space. A coarse arc between $x,y \in X$ is a coarse embedding $\lambda: [0,t] \rightarrow X$, where $[0,t]$ and $X$ have the coarse structures given by their metrics, such that $\lambda(0) = x$ and $\lambda(t) = y$. A set $\Upsilon$ of coarse arcs on $X$ is equicoarse if both conditions happens:

\begin{enumerate}
    \item $\forall r > 0$, $\exists s > 0$ such that $\forall \lambda \in \Upsilon$, $\forall a,b \in Dom \ \lambda$, if  $|a-b| < r$, then $d(\lambda(a),\lambda(b)) < s$.
    \item $\forall r > 0$, $\exists s > 0$ such that $\forall \lambda \in \Upsilon$, $\forall a,b \in Dom \ \lambda$, if  $d(\lambda(a),\lambda(b)) < r$, then $|a-b| < s$.
\end{enumerate}

\end{defi}

\begin{defi}Let $(X,d)$ be a proper metric space and $Y \subseteq X$. Let $\Upsilon$ be a set of equicoarse arcs on $X$ such that $\forall x,y \in Y$, $\exists \lambda\in \Upsilon$ a coarse arc between $x$ and $y$. A compactification $X+_{f}W$ has the coarse Karlsson property with respect to $\Upsilon$ if $\forall u \in \U_{f}$, there exists a bounded set $S \subseteq X$ such that every coarse arc in $\Upsilon$ that do not intersect $S$ is $u$-small, where $\U_{f}$ is the uniform structure compatible with the topology of $X+_{f}W$. We say that $X+_{f}W$ has the coarse Karlsson property if it has the coarse Karlsson property with respect to some $\Upsilon$ such that $\forall x,y \in X$, there is a coarse arc in $\Upsilon$ between $x$ and $y$.
\end{defi}

\begin{obs}If the space $X$ is geodesic and $\Upsilon$ is the set of geodesics on $X$, then the coarse Karlsson property with respect to $\Upsilon$ is the original Karlsson property. It appears on $\cite{Ka}$ on Floyd compactifications.
\end{obs}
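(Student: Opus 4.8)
The plan is to prove the remark by a direct comparison of definitions, the only genuine work being to check that the family of geodesics is a legitimate instance of the data entering the definition of the coarse Karlsson property.

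First I would verify that the set $\Upsilon$ of all geodesic segments of $X$ is a set of equicoarse arcs, so that the phrase ``coarse Karlsson property with respect to $\Upsilon$'' actually applies to it. A geodesic $\lambda:[0,t]\rightarrow X$ is an isometric embedding, so $d(\lambda(a),\lambda(b)) = |a-b|$ for all $a,b\in[0,t]$. In particular each $\lambda$ is a coarse embedding, hence a coarse arc, and both equicoarse conditions hold with $s=r$: if $|a-b|<r$ then $d(\lambda(a),\lambda(b))=|a-b|<r$, and conversely. Moreover, since $X$ is geodesic, every pair of points of $X$ is joined by some $\lambda\in\Upsilon$, so $\Upsilon$ is exactly the kind of family for which the (absolute) coarse Karlsson property is defined, with no further hypothesis to arrange.

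Next I would recall the original Karlsson property of \cite{Ka}. There one works with the uniformity of the compactification (here $\U_f$, the unique uniformity of the compact Hausdorff space $X+_{f}W$), and the property asserts that for every entourage $u$ there is a bounded set $S\subseteq X$ such that every geodesic segment avoiding $S$ has $u$-small image, i.e.\ lies in $Small(u)$. With $\Upsilon$ taken to be the set of geodesics, this is word for word the assertion that $X+_{f}W$ has the coarse Karlsson property with respect to $\Upsilon$. Hence the two notions are literally identical, and the remark follows.

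The one point requiring care, rather than a genuine obstacle, is the reconciliation of the two formulations of ``smallness.'' Karlsson's property is customarily phrased in terms of the Floyd length (or Floyd metric) of a geodesic tending to zero as the geodesic recedes from a basepoint, whereas here smallness is expressed through the entourages $u\in\U_f$ and the predicate ``$u$-small.'' I would bridge these by noting that the Floyd metric induces precisely the topology, hence the unique uniformity, of the Floyd compactification $X+_{f}W$; a geodesic has small Floyd diameter if and only if its image is $u$-small for the corresponding entourage $u$, and ``receding from every bounded set'' is exactly the quantifier ``avoiding some bounded set $S$.'' Once this translation is made explicit, no further computation is needed and the equivalence of the two properties is immediate.
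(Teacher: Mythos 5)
Your proposal is correct and is essentially the paper's own (tacit) argument: the remark carries no proof precisely because, once one checks that geodesics---being isometric embeddings---form an equicoarse family (with $s=r$ in both conditions) and that a geodesic space supplies an arc of $\Upsilon$ between any two points, the specialization of the definition is immediate, and these are exactly the checks you perform. The one imprecision worth flagging is in your bridging paragraph: Karlsson's lemma in \cite{Ka} bounds the Floyd \emph{length} of a geodesic avoiding a finite set (the sum of the Floyd lengths of its edges), which is a priori stronger than $u$-smallness of its image (a diameter bound) as used in the paper's definition; since length dominates diameter the original property implies the coarse one, but the two formulations are ``word for word'' identical only under the diameter reading of Karlsson's property, which is the reading the paper (and its later applications of the coarse Karlsson property) actually uses.
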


\begin{prop}Let $(X,d)$ be a proper metric space and a compactification $X+_{f}W$ with the coarse Karlsson property. Then, $X+_{f}W \in Pers(\varepsilon_{d})$.
\end{prop}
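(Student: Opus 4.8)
The plan is to verify that $X+_{f}W \in Pers(\varepsilon_{d})$ directly from the definition, i.e. that $\varepsilon_{f} \supseteq \varepsilon_{d}$, which amounts to showing that every $e \in \varepsilon_{d}$ is perspective. I would use the net characterisation of perspectivity (the second of the equivalent conditions in the quoted Theorem 2.27 of \cite{Ro}, with $\partial X = W$): that $e$ is proper, and that for every net $\{(x_{\gamma},y_{\gamma})\}_{\gamma \in \Gamma} \subseteq e$ with $\lim x_{\gamma} = x \in W$ one has $\lim y_{\gamma} = x$. Properness is routine: writing $R = \sup\{d(a,b) : (a,b) \in e\} < \infty$, for any topologically bounded $B \subseteq X$ the sets $\B(B,e)$ and $\B(B,e^{-1})$ lie in the $R$-neighbourhood of $B$, which is bounded and hence relatively compact because $X$ is proper.

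For the net condition, fix a family $\Upsilon$ witnessing the coarse Karlsson property (so that $\Upsilon$ joins every pair of points of $X$), and for each $\gamma$ choose $\lambda_{\gamma} \in \Upsilon$ a coarse arc from $x_{\gamma}$ to $y_{\gamma}$ with domain $[0,t_{\gamma}]$. The first step is to control these arcs uniformly: since $d(x_{\gamma},y_{\gamma}) \le R$, the second equicoarseness condition (applied with $r = R+1$) yields a bound $t_{\gamma} < s_{0}$ independent of $\gamma$, and then the first equicoarseness condition (applied with $r = s_{0}$) yields a bound $s_{1}$, again independent of $\gamma$, on the diameter of $Im(\lambda_{\gamma})$; in particular $Im(\lambda_{\gamma}) \subseteq \{w \in X : d(w,x_{\gamma}) \le s_{1}\}$.

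The decisive step is to show that for each $u \in \U_{f}$ the net $\{(x_{\gamma},y_{\gamma})\}$ is eventually in $u$. By the coarse Karlsson property pick a bounded $S \subseteq X$ such that every arc of $\Upsilon$ disjoint from $S$ is $u$-small. Let $S_{1}$ be the $s_{1}$-neighbourhood of $S$, which is bounded and hence relatively compact; then $Cl_{X+_{f}W}(S_{1}) \subseteq X$, so $x \notin Cl_{X+_{f}W}(S_{1})$ and there is a neighbourhood $N$ of $x$ in $W$ with $N \cap S_{1} = \emptyset$. Because $\lim x_{\gamma} = x$, eventually $x_{\gamma} \in N$, hence eventually $x_{\gamma} \notin S_{1}$; by the diameter bound this forces $Im(\lambda_{\gamma}) \cap S = \emptyset$, so $\lambda_{\gamma}$ is $u$-small, and since $x_{\gamma},y_{\gamma} \in Im(\lambda_{\gamma})$ we obtain $(x_{\gamma},y_{\gamma}) \in u$ eventually. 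Finally, choosing $u$ symmetric with $u \circ u$ contained in a prescribed entourage and combining eventual membership of $(x,x_{\gamma})$ (from $\lim x_{\gamma} = x$) and of $(x_{\gamma},y_{\gamma})$ in $u$, one sees that $y_{\gamma}$ eventually enters any given neighbourhood of $x$, i.e. $\lim y_{\gamma} = x$.

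I expect the decisive step to be the main obstacle. The equicoarseness hypotheses only bound the metric size of each connecting arc, and the real content is to upgrade this to the statement that the entire arc --- not merely its two endpoints --- eventually escapes every bounded set as $x_{\gamma} \to x$. This is exactly where the uniform diameter bound and the properness of $X$ must combine, so that the coarse Karlsson property can pin the pair $(x_{\gamma},y_{\gamma})$ inside arbitrarily small entourages of the diagonal; once this is in place the passage to $\lim y_{\gamma} = x$ is a standard uniform-space manipulation.
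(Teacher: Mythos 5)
Your proof is correct. It rests on the same two pillars as the paper's own argument: (i) since $e \in \varepsilon_{d}$ has bounded displacement, the two equicoarseness conditions give a uniform bound $s_{1}$ on the diameters of the arcs $\lambda_{\gamma}$ joining $x_{\gamma}$ to $y_{\gamma}$; (ii) the coarse Karlsson property turns avoidance of a bounded set into $u$-smallness. The difference is in the logical organization. The paper argues by contradiction: it takes a cluster point $y \neq x$ of $\{y_{\gamma}\}$, uses $v^{3} \subseteq u$ to force $(x_{\gamma_{1}},y_{\gamma_{1}}) \notin v$ frequently, concludes that the connecting arcs must hit $S$ while their left endpoints escape every bounded set, and derives unbounded arc diameters contradicting equicoarseness; it then still needs compactness implicitly (existence of a cluster point, and that a net in a compact space with a unique cluster point converges). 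You instead prove directly that for every $u \in \U_{f}$ the pair $(x_{\gamma},y_{\gamma})$ is eventually in $u$, by enlarging $S$ to its $s_{1}$-neighbourhood $S_{1}$ and observing that once $x_{\gamma}$ leaves $S_{1}$ the entire arc misses $S$; convergence of $y_{\gamma}$ to $x$ then follows by a standard entourage composition, with no appeal to cluster points. Your version buys a cleaner quantitative statement (eventual $u$-closeness of the pairs, which is exactly condition 2 of the quoted Theorem 2.27) and sidesteps a small imprecision in the paper, whose claim that $\{diam(Im\,\lambda) : \lambda \in \Upsilon\}$ is unbounded should really be restricted to the arcs joining the chosen pairs; the paper's version, in exchange, needs less bookkeeping with neighbourhoods of $S$. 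One cosmetic slip on your side: the neighbourhood $N$ of $x$ disjoint from $S_{1}$ is a neighbourhood in $X+_{f}W$ (for instance the complement of $Cl_{X+_{f}W}(S_{1})$), not a neighbourhood "in $W$".
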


\begin{proof}Let $\Upsilon$ be a set of coarse arcs such that $\forall x,y \in X$, there is a coarse arc in $\Upsilon$ between $x$ and $y$ and $X+_{f}W$ has the coarse Karlsson property with respect to $\Upsilon$. Let $e \in \varepsilon_{d}$. By the \textbf{Proposition \ref{propercontrolled}} $e$ is proper. Let $\{(x_{\gamma},y_{\gamma})\}_{\gamma \in \Gamma} \subseteq e$ be a net such that $\{x_{\gamma}\}_{\gamma \in \Gamma}$ converges to a point $x \in W$. Let $y$ be a cluster point of $\{y_{\gamma}\}_{\gamma \in \Gamma}$. Suppose that $y \neq x$. Then there exists $u \in \U_{f}$ such that $(x,y) \notin u$. Let $v \in \U_{f}$ such that  $v^{3}\subseteq u$.

By the Karlsson property, there is a bounded set $S \subseteq X$ such that every coarse arc in $\Upsilon$ that does not intersect $S$ is $v$-small. Let $\gamma_{0} \in \Gamma$ such that $\forall \gamma > \gamma_{0}$, $(x,x_{\gamma}) \in v$ and take $\gamma_{1} > \gamma_{0}$ such that $(y_{\gamma_{1}},y) \in v$. If $(x_{\gamma_{1}},y_{\gamma_{1}}) \in v$, then $(x,y) \in v^{3} \subseteq u$, a contradiction. So $(x_{\gamma_{1}},y_{\gamma_{1}}) \notin v$. If $\lambda \in \Upsilon$ is a coarse arc between $x_{\gamma_{1}}$ and $y_{\gamma_{1}}$, then $\lambda$ must intersect $S$. Since $\{x_{\gamma}\}_{\gamma \in \Gamma}$ have no cluster points in $X$ and $X$ is proper, for every $\alpha > 0$, for every $\gamma_{0} \in \Gamma$, we can take $\gamma_{1} > \gamma_{0}$ such that $d(x_{\gamma_{1}},S) > \alpha$, which implies that $diam(Im \ \lambda) > \alpha$. Then, $I = \{diam(Im \ \lambda): \lambda$ is an coarse arc of $\Upsilon$ between $x_{\gamma}$ and $y _{\gamma}$, for some $\gamma \in \Gamma\}$ is unbounded.

On the other hand, let $k = \sup \{d(x_{\gamma},y_{\gamma}): \gamma \in \Gamma\}$. Since the maps in $\Upsilon$ are equicoarse, for $k > 0$, there exists $t > 0$ such that $\forall \lambda  \in \Upsilon$, if $a \geqslant 0$ such that $\lambda(0) = x_{\gamma}$ and $\lambda(a) = y_{\gamma}$, for some $\gamma \in \Gamma$, then $a < t$. So the domain of every coarse arc of $\Upsilon$ between $x_{\gamma}$ and $y_{\gamma}$, for some $\gamma \in \Gamma$, is contained in $[0,t]$. We have also that, for $t > 0$, for every $\lambda \in \Upsilon$, there exists $s > 0$, such that if $a,b \in [0,t]$, then $d(\lambda(a),\lambda(b)) < s$, a contradiction to the fact that $I$ is unbounded.

So $y = x$, which implies that $\{y_{\gamma}\}_{\gamma \in \Gamma}$ converges to $x$ and then $e$ is perspective. Thus $X+_{f}W \in Pers(\varepsilon_{d})$.
\end{proof}

\begin{obs}For the Karlsson property, this result is known for equivariant compactifications of groups, since it implies that the action of the group on the boundary has the convergence property (Proposition 4.3.1 of \cite{GP}) and equivariant compactifications with the convergence property have the perspective property (Proposition 7.5.4 of \cite{Ge2}).
\end{obs}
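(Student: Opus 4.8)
The plan is to recognize that, for an equivariant compactification of a finitely generated group, the conclusion of the preceding proposition is not new: it follows by concatenating two results already in the literature. First I would specialize the setting. Let $G$ be a finitely generated group equipped with a word metric $d$, so that its Cayley graph is geodesic, and let $\Upsilon$ be the set of geodesics. By the observation immediately preceding this remark, the coarse Karlsson property with respect to $\Upsilon$ coincides with the original Karlsson property of \cite{Ka}. Hence a $G$-equivariant compactification $G+_{f}W$ with the coarse Karlsson property is precisely a Karlsson compactification in the classical sense, and the hypotheses of the two cited propositions are met.

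The chaining then proceeds in two steps. The first step invokes Proposition 4.3.1 of \cite{GP}: an equivariant compactification of $G$ with the Karlsson property induces a convergence action of $G$ on the boundary, that is, the action of $G$ on $G+_{f}W$ has the convergence property. The second step invokes Proposition 7.5.4 of \cite{Ge2}: an equivariant compactification whose boundary action has the convergence property is perspective, i.e.\ $\varepsilon_{f} \supseteq \varepsilon_{d}$. Composing the two implications yields $G+_{f}W \in Pers(\varepsilon_{d})$, which is exactly the statement of the preceding proposition restricted to equivariant compactifications of groups.

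The main obstacle is purely definitional bookkeeping: one must verify that the three notions of \emph{Karlsson property}, \emph{convergence property} and \emph{perspectivity} appearing in the three sources are mutually compatible. Concretely, the coarse Karlsson property defined in this paper must be identified with the geodesic Karlsson property assumed in \cite{GP} (handled by the geodesic specialization above together with the equicoarseness of $\Upsilon$), the convergence property of \cite{GP} must be the same dynamical notion as the one in \cite{Ge2}, and the perspectivity of \cite{Ge2} must agree with Roe's condition $\varepsilon_{f} \supseteq \varepsilon$ used here. I would also emphasize that the two cited results require $G$-equivariance, a hypothesis absent from the preceding proposition; thus this remark does not reprove that proposition in full generality, but only records that its equivariant case over groups was already obtainable through this two-step route.
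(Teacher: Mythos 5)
Your proposal is correct and follows exactly the route the paper itself indicates: the remark is precisely the two-step chain of Proposition 4.3.1 of \cite{GP} (Karlsson property implies convergence action on the boundary) composed with Proposition 7.5.4 of \cite{Ge2} (convergence property implies perspectivity), specialized to equivariant compactifications of groups with the geodesic arcs as $\Upsilon$. Your added caveats about definitional compatibility and the equivariance hypothesis are sensible but do not change the argument, which matches the paper's.
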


Let $(X,d)$ be a proper metric space and $Y$ a $r$-quasi-dense subset of $X$. Let $\Upsilon$ be a set of equicoarse arcs in $X$ such that $\forall x,y \in Y$, $\exists \lambda\in \Upsilon$ a coarse arc between $x$ and $y$. If $\lambda: [0,t] \rightarrow X$ and $x,y \in X$, we define $\lambda_{x,y}: [0,t+2r] \rightarrow X$ by $\lambda_{x,y}(0) = x$, $\lambda_{x,y}((0,r]) = \lambda(0)$, $\lambda_{x,y}(a) = \lambda(a-r)$, for $a \in [r,t+r]$, $\lambda_{x,y}([t+r,t+2r)) = \lambda(t)$ and $\lambda_{x,y}(t+2r) = y$. Let $\Upsilon_{X} = \{\lambda_{x,y}: \lambda \in \Upsilon, \ d(x,\lambda(0)) < r, \ d(y,\lambda(t)) < r, \ t = \max(dom \ \lambda)\}$.

\begin{lema}$\Upsilon_{X}$ is a set of equicoarse arcs.
\end{lema}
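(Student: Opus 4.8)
The plan is to compare each padded arc $\lambda_{x,y}$ with the original arc $\lambda$ through a single order-preserving reparametrisation, so that the uniform control coming from equicoarseness of $\Upsilon$ can be transported, up to a fixed additive constant $2r$, to the whole family $\Upsilon_X$; note that verifying the two numbered conditions of the definition automatically makes each $\lambda_{x,y}$ a coarse embedding, hence a coarse arc (it joins $x$ to $y$ by construction), so it suffices to establish those two uniform conditions. To avoid a clash with the quasi-density constant $r$, I will write the universally quantified radius in the definition of ``equicoarse'' as $R$ and the resulting bound as $S$.

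First I would introduce the clamping map $c\colon [0,t+2r]\to[0,t]$ given by $c(a)=\max(0,\min(t,a-r))$. A direct check against the four pieces of the definition of $\lambda_{x,y}$ shows that $\lambda_{x,y}(a)=\lambda(c(a))$ for every interior parameter $a\in(0,t+2r)$, while at the endpoints one has $\lambda_{x,y}(0)=x$ with $d(x,\lambda(c(0)))=d(x,\lambda(0))<r$ and $\lambda_{x,y}(t+2r)=y$ with $d(y,\lambda(c(t+2r)))=d(y,\lambda(t))<r$. The map $c$ is the composition of the shift $a\mapsto a-r$ with the retraction of $[-r,t+r]$ onto $[0,t]$; both are $1$-Lipschitz and the retraction moves each point by at most $r$, so for all $a,b$ one gets the two elementary estimates $|c(a)-c(b)|\le|a-b|\le|c(a)-c(b)|+2r$.

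Next I would record the metric comparison. Since $\lambda_{x,y}$ agrees with $\lambda\circ c$ except at the two endpoints, where its value differs by at most $r$ in the metric, the triangle inequality yields, for all $a,b\in[0,t+2r]$,
$$\bigl|\,d(\lambda_{x,y}(a),\lambda_{x,y}(b))-d(\lambda(c(a)),\lambda(c(b)))\,\bigr|<2r.$$
With these two comparisons in hand the two equicoarse conditions follow mechanically. For the first condition, given $R>0$ pick (by equicoarseness of $\Upsilon$) an $s>0$ with $d(\lambda(a'),\lambda(b'))<s$ whenever $|a'-b'|<R$; then $|a-b|<R$ forces $|c(a)-c(b)|<R$, hence $d(\lambda(c(a)),\lambda(c(b)))<s$ and $d(\lambda_{x,y}(a),\lambda_{x,y}(b))<s+2r$, so $S=s+2r$ works uniformly over $\Upsilon_X$. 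For the second condition, given $R>0$ choose $s>0$ from the second equicoarse property of $\Upsilon$ for the radius $R+2r$; then $d(\lambda_{x,y}(a),\lambda_{x,y}(b))<R$ gives $d(\lambda(c(a)),\lambda(c(b)))<R+2r$, hence $|c(a)-c(b)|<s$ and $|a-b|<s+2r$, so again $S=s+2r$ works.

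The only genuinely delicate point is the bookkeeping at the endpoints: the two displayed estimates must absorb simultaneously the reparametrisation defect (bounded by $2r$ in the parameter, from the two flat paddings of length $r$) and the endpoint perturbations $d(x,\lambda(0)),d(y,\lambda(t))<r$ (bounded by $2r$ in the metric, the worst case arising when both $a$ and $b$ are endpoints and $\lambda_{x,y}(a),\lambda_{x,y}(b)$ equal $x,y$). Because both defects are bounded by the single constant $2r$ independently of $\lambda$, $x$, $y$ and $t$, the passage from the pointwise control of each $\lambda\in\Upsilon$ to the uniform control of the whole family $\Upsilon_X$ costs only this additive constant, which is precisely what the two equicoarse conditions tolerate.
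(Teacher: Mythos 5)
Your proof is correct and takes essentially the same route as the paper: the paper also transports both equicoarse bounds from $\Upsilon$ at the cost of an additive constant $2r$, in the second condition replacing $a,b$ by points $a',b'\in[r,t+r]$ within $r$ of them in both parameter and metric, which is exactly your clamping map $c$ shifted by $r$. Your version merely packages the two endpoint/reparametrisation defects into one explicit map, handling both conditions uniformly.
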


\begin{proof}Let $k > 0$. There exists $s > 0$ such that $\forall \lambda \in \Upsilon$,  $d(a,b) < k$ implies $d(\lambda(a),\lambda(b)) < s$. Then, $\forall \lambda_{x,y} \in \Upsilon_{X}$,  $d(a,b) < k$ implies $d(\lambda_{x,y}(a),\lambda_{x,y}(b)) < s+2r$.

Let $k > 0$. There exists $s > 0$ such that $\forall \lambda \in \Upsilon$, $d(\lambda(a),\lambda(b)) < k+2r$ implies $d(a,b) < s$. Let $\lambda: [0,t] \rightarrow X$, $\lambda \in \Upsilon$ and $x,y\in X$ such that $d(x,\lambda(0)) < r$ and $d(y,\lambda(t)) < r$. If $a,b \in [0,t+2r]$ such that $d(\lambda_{x,y}(a),\lambda_{x,y}(b)) < k$, then there is $a',b' \in [r,t+r]$ such that $|a-a'| \leqslant r$, $|b-b'| \leqslant r$, $d(\lambda_{x,y}(a),\lambda_{x,y}(a')\leqslant r$ and $d(\lambda_{x,y}(b),\lambda_{x,y}(b')) \leqslant r$. Then $d(\lambda_{x,y}(a'),\lambda_{x,y}(b')) \leqslant d(\lambda_{x,y}(a'),\lambda_{x,y}(a))+ d(\lambda_{x,y}(a),\lambda_{x,y}(b))+ d(\lambda_{x,y}(b),\lambda_{x,y}(b')) \leqslant k+2r$. So $|a'-b'| < s$, which implies that $|a-b| < s+2r$.

Thus $\Upsilon_{X}$ is a set of equicoarse arcs.
\end{proof}

The next proposition is an equivalence to the definition of perspective compactification that is useful in the rest of this section. It is a weak version of the Karlsson property.

\begin{prop}Let $(X,d)$ be a proper metric space and $X+_{f}W$ a compactification of $X$. Then $X+_{f}W\in Pers(\varepsilon_{d})$ if and only if for every $u \in \U_{f}$ and $t > 0$, there is a bounded set $S \subseteq X$ such that if $x,y \in X-S$, such that $d(x,y) < t$, then $(x,y)\in u$.
\end{prop}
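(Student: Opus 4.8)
The plan is to deduce everything from the perspectivity of the single controlled set
$e_t = \{(x,y)\in X\times X : d(x,y) < t\}$. First I would observe that $e_t \in \varepsilon_d$, since $\sup\{d(x,y):(x,y)\in e_t\}\leqslant t<\infty$; because $X+_{f}W$ is perspective we have $\varepsilon_d\subseteq \varepsilon_f$, so $e_t$ is a perspective subset of $X\times X$. This is the object on which the whole argument will pivot, and no set other than $e_t$ is really needed.

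The argument then proceeds by contradiction. Suppose that for the given $u\in\U_f$ and $t>0$ no bounded $S$ works. Fixing a base point $x_0$ and using the exhaustion of $X$ by the sets $S_n = \{x\in X : d(x,x_0)\leqslant n\}$ (each compact, hence bounded, since $X$ is proper), the failure of each $S_n$ supplies points $x_n,y_n\in X-S_n$ with $d(x_n,y_n)<t$ and $(x_n,y_n)\notin u$; every such pair lies in $e_t$. Since the $x_n$ leave every bounded set and bounded sets in the proper space $X$ are relatively compact, the sequence $\{x_n\}$ has no cluster point in $X$; by compactness of $X+_{f}W$ I may pass to a subsequence with $x_n\to x$, and then necessarily $x\in W$.

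Now I would invoke the net characterization of perspective sets (condition (2) of Theorem 2.27 of \cite{Ro}): as $\{(x_n,y_n)\}\subseteq e_t$ and $\lim x_n = x\in W$, it follows that $\lim y_n = x$ as well. Finally a routine uniform-space manipulation closes the gap. Choose a symmetric $v\in\U_f$ with $v\circ v\subseteq u$. Convergence $x_n\to x$ and $y_n\to x$ in the (unique) uniformity of the compact Hausdorff space $X+_{f}W$ gives, for all large $n$, that $(x_n,x)\in v$ and $(x,y_n)\in v$, whence $(x_n,y_n)\in v\circ v\subseteq u$, contradicting $(x_n,y_n)\notin u$. This contradiction yields the existence of the desired bounded $S$.

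I expect the only delicate points to be confirming that the escaping sequence accumulates only on the remainder $W$ (this is exactly where properness of $X$ is used, and it is precisely the hypothesis required to apply condition (2) of perspectivity) and keeping the order of the coordinates and the symmetry of $v$ straight when converting the two convergences into a single membership in $u$. Everything else is bookkeeping.
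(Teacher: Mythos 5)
Your proposal follows the paper's proof almost step for step: argue by contradiction, produce pairs at distance $<t$ escaping every bounded set with $(x,y)\notin u$, observe these pairs lie in an element of $\varepsilon_{d}\subseteq\varepsilon_{f}$, use the net characterization of perspectivity to force both coordinates to converge to the same point $w\in W$, and conclude with a symmetric $v\in\U_{f}$, $v\circ v\subseteq u$. The cosmetic differences (you work with the full controlled set $e_{t}$ and index the bad pairs by a ball exhaustion $S_{n}$, while the paper works only with the set of bad pairs indexed by all bounded subsets) change nothing of substance.

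One step needs repair: you ``pass to a subsequence with $x_{n}\to x$'' by compactness, but $X+_{f}W$ is only assumed compact Hausdorff, not metrizable, and compact Hausdorff spaces need not be sequentially compact; nothing in the hypotheses rules out a perspective compactification with non-metrizable remainder, so a sequence escaping to infinity may have no convergent subsequence at all. The fix is the one the paper makes implicitly by working with nets throughout: pass to a convergent \emph{subnet} of $\{x_{n}\}$. Since your sequence has no cluster points in $X$, any such subnet converges to a point of $W$; condition (2) of Roe's Theorem 2.27 is stated for nets, and the final uniformity argument applies to nets verbatim, so the rest of your proof goes through unchanged after this one-word correction.
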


\begin{proof}$(\Rightarrow)$ Let $t>0$ and $u \in \U_{f}$. Suppose that for every $S$ bounded subset of $X$, there are $x_{S},y_{S} \in X-S$ such that $d(x_{S},y_{S}) < t$ and $(x_{S},y_{S}) \notin u$. We have that the set $\{(x_{S},y_{S}): S$ is bounded$\}$ is an element of $\varepsilon_{d}$. Since the set $\{x_{S}: S$ is bounded$\}$ is not bounded, there is a net $\{x_{S_{\gamma}}\}_{\gamma \in \Gamma}$ that converges to a point $w \in W$. Since $X+_{f}W$ is perspective, the net $\{y_{S_{\gamma}}\}_{\gamma \in \Gamma}$ also converges to  $w$. So there is $\gamma_{0} \in \Gamma$ such that $\forall \gamma > \gamma_{0}$, $(x_{S_{\gamma}}, w) \in v$ and $(w, y_{S_{\gamma}}) \in v$, where $v$ is symmetric element of $\U_{f}$ such that $v^{2} \subseteq u$. Then $(x_{S_{\gamma}},y_{S_{\gamma}}) \in v^{2} \subseteq u$, a contradiction. Thus, for every $u \in \U_{f}$ and $t > 0$, there is a bounded set $S \subseteq X$ such that if $x,y \in X-S$, such that $d(x,y) < t$, then $(x,y)\in u$.

$(\Leftarrow)$ Let $e \in \varepsilon_{d}$. Since e is a proper metric space, $(X,\varepsilon_{d})$ is a proper coarse space, which implies that $e$ is proper. Let $\{(x_{\gamma},y_{\gamma})\}_{\gamma \in \Gamma} \subseteq e$ be a net such that $\{x_{\gamma}\}_{\gamma\in \Gamma}$ converges to $x \in W$. Since $e \in \varepsilon_{d}$, we have that $sup\{d(x_{\gamma},y_{\gamma}): \gamma \in \Gamma\} = t < \infty$. Let $u \in \U_{f}$. There exists a bounded set $S \subseteq X$ such that if $a,b \in X-S$ such that $d(a,b) < t+1$ , then $(a,b) \in u$. Since e is proper and $\{x_{\gamma}\}_{\gamma\in \Gamma}$ converges to a point in $W$, the sequence $\{y_{\gamma}\}_{\gamma\in \Gamma}$ has cluster points only in $W$ (Lemma \ref{clusterponitsontheboundary}). Then there exists $\gamma_{0}\in \Gamma$ such that $\forall \gamma > \gamma_{0}$, $x_{\gamma},y_{\gamma} \notin S$, and then $(x_{\gamma},y_{\gamma}) \in u$. So $\forall u \in \U_{f}$, there exists $\gamma_{0}\in \Gamma$ such that $\forall \gamma > \gamma_{0}$, $(x_{\gamma},y_{\gamma}) \in u$, which implies that $\{y_{\gamma}\}_{\gamma\in \Gamma}$  converges to $x$. Then $e$ is perspective, which implies that $X+_{f}W$ is perspective.
\end{proof}

\begin{lema}If $X+_{f}W$ is a perspective compactification of $X$ that satisfies the coarse Karlsson property with respect to $\Upsilon$, then it satisfies the coarse Karlsson property with respect to $\Upsilon_{X}$.
\end{lema}

\begin{proof}Let $u,v\in \U_{f}$ such that $v$ is symmetric and $v^{3} \subseteq u$. Since $X+_{f}W$ has the Karlsson property with respect to $\Upsilon$, there is a bounded set $S \subseteq X$ such that every $\lambda \in \Upsilon$ that do not intersect $S$ is $v$-small. Since $X+_{f}W$ has the perspectivity property, there is a bounded set $S' \subseteq X$ such that $\forall x \in X-S$, $\mathfrak{B}(x,r)$ is $v$-small. Let $\lambda_{x,y}: [0,t+2r] \rightarrow X$ be a coarse arc in $\Upsilon_{X}$ that do not intersect $S\cup S'$. We have that $Im \ \lambda_{x,y} = Im\lambda \cup \{x,y\}$, $(x,\lambda(0))\in v$ and $(\lambda(t),y) \in v$ (since $d(x,\lambda(0)) < r$ and $d(\lambda(t),y) < r$), which implies that $Im \ \lambda_{x,y}$ is $v^{3}$-small and then it is $u$-small. Thus $X+_{f}W$ satisfies the coarse Karlsson property with respect to $\Upsilon_{X}$.
\end{proof}

\begin{prop}Let $(X,d)$ and $(Y,d')$ be two proper metric spaces and $\pi: (Y,\varepsilon_{d'}) \rightarrow (X,\varepsilon_{d})$ a continuous coarse equivalence with a continuous quasi-inverse $\varpi$. If $X+_{f}W$ is a compactification with the coarse Karlsson property, then $Y+_{f^{\ast}}W$ has the coarse Karlsson property.
\end{prop}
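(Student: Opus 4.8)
The plan is to build from the witnessing family $\Upsilon$ a family $\Upsilon'$ of equicoarse arcs in $Y$ joining every pair of points, and then to transfer the Karlsson estimate across $\varpi$ and $\pi$. First I would fix a family $\Upsilon$ of equicoarse arcs realizing the coarse Karlsson property of $X+_{f}W$ and joining every pair of points of $X$. By the proposition that a compactification with the coarse Karlsson property is perspective, $X+_{f}W \in Pers(\varepsilon_{d})$; and since $\pi$ is continuous, the proposition giving $\zeta_{f^{\ast}} \supseteq \zeta$ shows $Y+_{f^{\ast}}W \in Pers(\varepsilon_{d'})$, both being compact Hausdorff compactifications. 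I would also fix $R,r>0$ with $d(x,\pi\varpi(x))\leqslant R$ on $X$ and $d'(a,\varpi\pi(a))\leqslant r$ on $Y$. The two relevant continuous maps are $\pi+id\colon Y+_{f^{\ast}}W \to X+_{f}W$, continuous by the pullback construction, and $\varpi+id\colon X+_{f}W \to Y+_{f^{\ast}}W$; the continuity of the latter I would verify via \textbf{Proposition \ref{mapacontinuo}}, i.e. that $f(\varpi^{-1}(A)) \subseteq f^{\ast}(A)=f(Cl_{X}(\pi(A)))$ for $A \in Closed(Y)$. This holds because $\varpi^{-1}(A) \preceq Cl_{X}(\pi(A))$ in $\varepsilon_{d}$ (every $x\in\varpi^{-1}(A)$ lies within $R$ of $\pi\varpi(x)\in\pi(A)$) together with the monotonicity of $f$ under $\preceq$ valid on a perspective compactification. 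As a continuous map between compact Hausdorff spaces, $\varpi+id$ is uniformly continuous.

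For the construction of $\Upsilon'$, given $a,b \in Y$ I would pick $\lambda\in\Upsilon$ joining $\pi(a)$ and $\pi(b)$ and form $\varpi\circ\lambda$, a coarse arc from $\varpi\pi(a)$ to $\varpi\pi(b)$; since $d'(a,\varpi\pi(a))\leqslant r$ and $d'(b,\varpi\pi(b))\leqslant r$, the endpoint-splicing used to define $\Upsilon_{X}$ yields a coarse arc $(\varpi\circ\lambda)_{a,b}$ from $a$ to $b$, and I let $\Upsilon'$ be the collection of all these. The family $\{\varpi\circ\lambda:\lambda\in\Upsilon\}$ is equicoarse: condition $(1)$ follows from condition $(1)$ for $\Upsilon$ and the fact that $\varpi$ is bornologous, while condition $(2)$ follows by applying the bornologous map $\pi$ and the estimate $d(\lambda(a),\lambda(b)) \leqslant d(\pi\varpi\lambda(a),\pi\varpi\lambda(b))+2R$ before invoking condition $(2)$ for $\Upsilon$. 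Then $\Upsilon'$ is equicoarse by the lemma that the spliced family is equicoarse.

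For the transfer, given $u'\in\U_{f^{\ast}}$ I would choose a symmetric $v'\in\U_{f^{\ast}}$ with $(v')^{3}\subseteq u'$. Uniform continuity of $\varpi+id$ gives $u\in\U_{f}$ such that $\varpi$ carries $u$-small sets to $v'$-small sets. The coarse Karlsson property of $X+_{f}W$ gives a bounded $S\subseteq X$ so that every $\lambda\in\Upsilon$ avoiding $S$ is $u$-small. Since $Y+_{f^{\ast}}W$ is perspective, the proposition on far-out $r$-close pairs gives a bounded $S''\subseteq Y$ with $(p,q)\in v'$ whenever $p,q\in Y-S''$ and $d'(p,q)<r$. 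I set $S'=\varpi(S)\cup S''$, bounded because $\varpi$ sends bounded sets to bounded sets and a union of two bounded subsets of a metric space is bounded. If $(\varpi\circ\lambda)_{a,b}\in\Upsilon'$ avoids $S'$, then $\varpi(Im\,\lambda)\subseteq Im\,(\varpi\circ\lambda)_{a,b}$ misses $\varpi(S)$, so $\lambda$ misses $S$ and $\varpi(Im\,\lambda)$ is $v'$-small; moreover $a,b,\varpi\pi(a),\varpi\pi(b)$ all lie outside $S''$ with $d'(a,\varpi\pi(a)),d'(b,\varpi\pi(b))<r$, whence $(a,\varpi\pi(a)),(b,\varpi\pi(b))\in v'$. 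As $\varpi\pi(a),\varpi\pi(b)\in\varpi(Im\,\lambda)$, chaining these relations through $(v')^{3}\subseteq u'$ shows $Im\,(\varpi\circ\lambda)_{a,b}$ is $u'$-small.

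This would prove that $Y+_{f^{\ast}}W$ has the coarse Karlsson property with respect to $\Upsilon'$, which joins every pair of points of $Y$. The hard part will be the bookkeeping in the last step: arranging a single bounded $S'$ so that one avoidance hypothesis on $(\varpi\circ\lambda)_{a,b}$ simultaneously forces the underlying $\lambda$ to avoid $S$ (controlling the body of the arc) and forces the endpoints far enough out that the length-$\leqslant r$ jumps are $v'$-small (controlling the spliced ends through perspectivity of $Y+_{f^{\ast}}W$). A secondary delicate point is the continuity of $\varpi+id$, which rests on the monotonicity of $f$ under $\preceq$ holding precisely because $X+_{f}W$ is perspective.
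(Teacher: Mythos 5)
Your proof is correct, and its engine is the same as the paper's: pull the entourage back through the uniformly continuous map $\varpi+id\colon X+_{f}W\to Y+_{f^{\ast}}W$, let the Karlsson property of $X+_{f}W$ produce a bounded set $S$, and push both $S$ and the smallness estimate forward through $\varpi$. Where you part ways is in the assembly. The paper verifies the Karlsson property of $Y+_{f^{\ast}}W$ only with respect to $\varpi(\Upsilon)$, whose arcs join the points of the quasi-dense subset $\varpi(X)\subseteq Y$, and then invokes the immediately preceding proposition (Karlsson with respect to arcs joining a quasi-dense subset, plus perspectivity, implies Karlsson with respect to the spliced family); consequently its bounded set is just $\varpi(S)$ and no chaining is needed in this proof. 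You instead build the spliced family $\Upsilon'$ explicitly and redo that proposition's argument inline: your $(v')^{3}$-chaining with $S'=\varpi(S)\cup S''$ is precisely the paper's proof of that earlier proposition, specialized to $Y+_{f^{\ast}}W$ and $\varpi(\Upsilon)$, so you have unfolded a citation rather than found a different route. What your version buys is self-containment: it makes explicit that perspectivity of $Y+_{f^{\ast}}W$ (available because $\pi$ is continuous, via $\zeta_{f^{\ast}}\supseteq\zeta$) is what controls the spliced endpoints --- a hypothesis the paper also needs in order to apply its preceding proposition, but uses silently. Likewise, your justification of the continuity of $\varpi+id$ via Proposition \ref{mapacontinuo} together with the $\preceq$-monotonicity of $f$ on a perspective compactification is a valid, more hands-on alternative to the paper's implicit route through $(f^{\ast})^{\ast}=f$ and the continuity of pullbacks. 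Both arguments are sound; the paper's is shorter and more modular, yours is more transparent about where each hypothesis enters.
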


\begin{proof}Let $\Upsilon$ be a set of equicoarse arcs on $X$ such that $X+_{f}W$ has the coarse Karlsson property with respect to. Then $\varpi(\Upsilon) = \{\varpi \circ \lambda: \lambda \in \Upsilon\}$ is a set of equicoarse arcs on $Y$.  By the last proposition, it is sufficient to show that $Y+_{f^{\ast}}W$ has the coarse Karlsson property with respect to $\varpi(\Upsilon)$. Let $u\in \U_{f^\ast}$. Since $\pi$ and $\varpi$ are continuous and $X+_{f}W$ is perspective, we have that $\varpi+id: X+_{f}W \rightarrow Y+_{f^{\ast}}W$ is continuous, because the pullback of $Y+_{f^{\ast}}W$ is $\Lambda(Y+_{f^{\ast}}W)$ and $\Lambda = \Pi^{-1}$. Then $(\varpi+id)^{-1}(u)\in \U_{f}$. Since $X+_{f}W$ has the coarse Karlsson property with respect to $\Upsilon$, there exists a bounded set $S \subseteq X$ such that every coarse arc $\lambda \in \Upsilon$ that does not intersect $S$ is  $(\varpi+id)^{-1}(u)$-small. Let $\lambda \in \Upsilon$ such that $\varpi \circ \lambda$ does not intersect $\varpi(S)$. Then $\lambda$ does not intersect $S$, which implies that $\lambda$ is $(\varpi+id)^{-1}(u)$-small. This means that $(Im \ \lambda)^{2} \subseteq (\varpi+id)^{-1}(u)$, which implies that $(Im \ \lambda)^{2} \subseteq \varpi^{-1}(u\cap Y^{2})$. Then $(Im \ \varpi\circ \lambda)^{2} \subseteq \varpi\circ \varpi^{-1}(u\cap Y^{2}) \subseteq u \cap Y^{2} \subseteq u$. So $Y+_{f^{\ast}}W$ has the coarse Karlsson property with respect to $\varpi(\Upsilon)$.
\end{proof}

\begin{cor}If $\pi$ and $\varpi$ are continuous, then the functor $\Pi$ preserves the coarse Karlsson property. \eod
\end{cor}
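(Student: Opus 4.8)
The plan is to observe that the corollary is an immediate reformulation of the preceding proposition, now phrased in terms of the functor $\Pi$. First I would recall how $\Pi$ acts on objects: by the definition of $\Pi$ given in Proposition \ref{pullbackdecompactificacoes}, together with the specific pullback fixed in this subsection, the functor sends an object $X+_{f}W$ of $Pers(\varepsilon_{d})$ to $Y+_{f^{\ast}}W$, where $f^{\ast}$ is the pullback of $f$ along the coarse equivalence $\pi\colon Y\to X$. This identification of $\Pi(X+_{f}W)$ with $Y+_{f^{\ast}}W$ is the only thing one must pin down before invoking the earlier result.

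With that in hand, the statement reduces verbatim to the last proposition. Under the standing hypothesis that both $\pi$ and $\varpi$ are continuous, that proposition shows that whenever $X+_{f}W$ enjoys the coarse Karlsson property, the space $Y+_{f^{\ast}}W$ enjoys it as well. Since $Y+_{f^{\ast}}W$ is exactly $\Pi(X+_{f}W)$, this is precisely the assertion that $\Pi$ carries objects with the coarse Karlsson property to objects with the coarse Karlsson property, i.e.\ that $\Pi$ preserves this property.

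Consequently no new argument is needed, and this is what the symbol $\eod$ signals: the corollary is obtained simply by combining the description of $\Pi$ on objects with the conclusion of the preceding proposition. There is no genuine obstacle to overcome; the single point worth checking is the bookkeeping that the object produced by $\Pi$ coincides with the space $Y+_{f^{\ast}}W$ appearing in the proposition, which is immediate from the definitions of the functor and of the pullback.
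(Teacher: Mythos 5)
Your proposal is correct and matches the paper's intent exactly: the corollary is marked with the paper's ``immediate from previous considerations'' symbol, and the intended argument is precisely the one you give, namely that $\Pi(X+_{f}W) = Y+_{f^{\ast}}W$ by definition of the pullback functor, so the preceding proposition is literally the statement that $\Pi$ preserves the coarse Karlsson property.
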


\begin{prop}\label{quotientkarlsson}Let $X$ be a proper metric space, $X+_{f}W$, $X+_{f'}Z$ compactifications of $X$, $\Upsilon$ a set of equicoarse arcs on $X$ and a continuous map $id+\pi: X+_{f}W \rightarrow X+_{f'}Z$. If $X+_{f}W$ has the Karlsson property with respect to $\Upsilon$, then $X+_{f'}Z$ has the Karlsson property with respect to $\Upsilon$.
\end{prop}

\begin{proof}Let $u' \in \U_{f'}$. Then $u = (id+\pi)^{-1}(u') \in \U_{f}$. Since   $X+_{f}W$ has the Karlsson property with respect to $\Upsilon$, there exists a bounded set $S \subseteq X$ such that every coarse arc in $\Upsilon$ that do not intersect $S$ is $u$-small. Let $\lambda \in \Upsilon$ that does not intersect $S$. Let $x,y \in Im \ \lambda$. Then $(x,y) \in u$, which implies that $(x,y) = ((id+\pi)(x),(id+\pi)(y)) \in u'$, which implies that $\lambda$ is $u'$-small. Thus, $X+_{f'}Z$ has the Karlsson property with respect to $\Upsilon$.
\end{proof}

\begin{obs}In particular, if $X+_{f}W$ has the coarse Karlsson property, then $X+_{f'}Z$ has the coarse Karlsson property.
\end{obs}

\subsection{Quotients}

\begin{prop}\label{quotientpersp}Let $(X,\varepsilon)$ be a locally compact paracompact Hausdorff space with a proper coarsely connected coarse structure and $X+_{f}Y \in Pers(\varepsilon)$. If $X+_{g}Z$ is a compactification of $X$ and there exists a continuous map $id+\pi: X+_{f}Y \rightarrow X+_{g}Z$, then $X+_{g}Z \in Pers(\varepsilon)$.
\end{prop}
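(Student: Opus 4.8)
The plan is to prove directly that $\varepsilon \subseteq \varepsilon_{g}$, where $\varepsilon_{g}$ is the coarse structure induced on $X$ by the compactification $X+_{g}Z$; this structure is defined because $X+_{g}Z$ is a Hausdorff compactification of $X$ (a Hausdorff object of $Comp(X)$ onto which $X$ maps densely through $id+\pi$, since the image of $id+\pi$ is the compact, hence closed, set $Cl_{X+_{g}Z}(X)$). Fix $e\in\varepsilon$. By \textbf{Proposition \ref{propercontrolled}} the set $e$ is proper, so by the net characterisation of perspective subsets (condition $(2)$ of Theorem 2.27 of \cite{Ro}, quoted above) it suffices to show: if $\{(x_{\gamma},y_{\gamma})\}_{\gamma\in\Gamma}\subseteq e$ is a net with $x_{\gamma}\to x\in Z$ in $X+_{g}Z$, then $y_{\gamma}\to x$ in $X+_{g}Z$. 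As $X+_{g}Z$ is compact and Hausdorff, this reduces to showing that every cluster point of $\{y_{\gamma}\}_{\gamma\in\Gamma}$ in $X+_{g}Z$ equals $x$.

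So let $z$ be such a cluster point and pass to a subnet along which $y_{\gamma}\to z$ in $X+_{g}Z$. Using compactness of $X+_{f}Y$, refine once more so that $(x_{\gamma},y_{\gamma})\to(x',y')$ in $(X+_{f}Y)^{2}$ for some $x',y'\in X+_{f}Y$ (both membership in $e$ and the convergence $y_{\gamma}\to z$ are inherited by subnets). The key point is that the net is prescribed inside $X$ while its relevant limits are taken in $X+_{g}Z$, whereas the perspectivity hypothesis lives in $X+_{f}Y$; the only bridge between the two compactifications is the continuous map $id+\pi$, which restricts to the identity on $X$, used together with uniqueness of limits in the Hausdorff spaces at hand.

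First, $x'\in Y$: otherwise $x'\in X$, and since $X$ is embedded in both compactifications, $x_{\gamma}\to x'$ in $X$ and hence in $X+_{g}Z$, which with $x_{\gamma}\to x$ forces $x'=x\in Z$, impossible. Now $e\in\varepsilon\subseteq\varepsilon_{f}$ because $X+_{f}Y\in Pers(\varepsilon)$, so $e$ is perspective for $X+_{f}Y$; applying condition $(2)$ there to $x_{\gamma}\to x'\in Y$ gives $y_{\gamma}\to x'$ in $X+_{f}Y$, whence $y'=x'$. Finally, by continuity of $id+\pi$ and $x',y'=x'\in Y$ we obtain $x_{\gamma}=(id+\pi)(x_{\gamma})\to\pi(x')$ and $y_{\gamma}=(id+\pi)(y_{\gamma})\to\pi(x')$ in $X+_{g}Z$; comparing these with $x_{\gamma}\to x$ and $y_{\gamma}\to z$ and using Hausdorffness of $X+_{g}Z$ yields $\pi(x')=x$ and $\pi(x')=z$, so $z=x$.

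This establishes the net condition, so $e$ is perspective for $X+_{g}Z$, i.e. $e\in\varepsilon_{g}$; as $e\in\varepsilon$ was arbitrary, $\varepsilon\subseteq\varepsilon_{g}$ and $X+_{g}Z\in Pers(\varepsilon)$. The main obstacle is exactly the transfer just carried out: perspectivity describes how nets escaping to the boundary must stay together, and this escape behaviour has to be moved from the $X+_{f}Y$ picture (where it is known) to the $X+_{g}Z$ picture (where it is wanted) with no control on $\pi$ beyond continuity and its being the identity on $X$. Compactness of $X+_{f}Y$ (to extract the auxiliary limit $x'$) and uniqueness of limits in the Hausdorff target do this work; the one delicate check is that the lifted first coordinate $x'$ genuinely lands in the remainder $Y$, which is what licenses the use of the perspectivity of $e$ in $X+_{f}Y$.
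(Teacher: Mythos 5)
Your proof is correct, but it takes a genuinely different route from the paper's. The paper disposes of the statement in one line: since $id+\pi$ is continuous, Proposition 2.33 of \cite{Ro} says that $id:(X,\varepsilon_{f})\rightarrow(X,\varepsilon_{g})$ is a coarse map, hence $\varepsilon_{f}\subseteq\varepsilon_{g}$, and combining with $\varepsilon\subseteq\varepsilon_{f}$ (the hypothesis $X+_{f}Y\in Pers(\varepsilon)$) gives $\varepsilon\subseteq\varepsilon_{g}$. What you do is essentially prove that citation by hand in the case at issue: you fix $e\in\varepsilon$, obtain topological properness from Proposition \ref{propercontrolled}, and verify the net characterisation of perspectivity (condition $(2)$ of Theorem 2.27) in $X+_{g}Z$ by lifting the net to the compact space $X+_{f}Y$, checking that the lifted limit $x'$ lands in the remainder $Y$ (this check is the crux, and your argument for it is right), invoking perspectivity of $e$ there, and pushing the conclusion back down through $id+\pi$ with uniqueness of limits in the Hausdorff target. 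Your version buys self-containedness --- the only external input is the already-quoted Theorem 2.27 --- at the cost of length; the paper buys brevity at the cost of a black-box reference. Note also that your argument, run for an arbitrary $e\in\varepsilon_{f}$ (properness is then part of the definition of perspective, so Proposition \ref{propercontrolled} is not even needed), literally proves the cited fact $\varepsilon_{f}\subseteq\varepsilon_{g}$.

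One caveat, which your proof shares with the paper's rather than introduces: both need $X$ to be dense in $X+_{g}Z$, since $\varepsilon_{g}$ is defined via Theorem 2.27 only for compactifications, and objects of $Pers(\varepsilon)$ are by definition compactifications. Your parenthetical justification shows that the image of $id+\pi$ equals $Cl_{X+_{g}Z}(X)$, which makes $X$ dense in that image, not in all of $X+_{g}Z$; density of $X$ in $X+_{g}Z$ is equivalent to surjectivity of $\pi$, which is what the paper's remark about ``Hausdorff quotients'' implicitly has in mind. So this is a hypothesis silently built into the statement, not a gap peculiar to your argument.
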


\begin{obs}In other words, if a compactification of $X$ is perspective, then every Hausdorff quotient of it that preserves $X$ is also perspective.
\end{obs}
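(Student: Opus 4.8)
The plan is to show directly that $X+_g Z \in Pers(\varepsilon)$, i.e. that $\varepsilon \subseteq \varepsilon_g$, by checking that every $e \in \varepsilon$ is perspective with respect to the compactification $X+_g Z$. I would work from the second of the equivalent characterizations of perspectivity (the net condition). Half of that condition is free: since $(X,\varepsilon)$ is coarsely connected and proper, Proposition \ref{propercontrolled} already gives that every $e \in \varepsilon$ is proper, and properness is a purely internal property of the topology of $X$, independent of which compactification we put around it. So the entire content of the argument will be the convergence clause.

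Write $h = id+\pi : X+_f Y \to X+_g Z$ for the given continuous map, which is the identity on $X$ and restricts to $\pi : Y \to Z$ on the remainders. Fix $e \in \varepsilon$ and a net $\{(x_\gamma,y_\gamma)\}_{\gamma \in \Gamma} \subseteq e$ with $\lim_\gamma x_\gamma = x \in Z$ in $X+_g Z$; I must produce $\lim_\gamma y_\gamma = x$. Since $X+_g Z$ is compact Hausdorff, it is enough to show that $x$ is the \emph{unique} cluster point of $\{y_\gamma\}$. So I would let $w$ be an arbitrary cluster point of $\{y_\gamma\}$, pass to a subnet $\{y_\gamma\}_{\gamma \in \Gamma'} \to w$, and note that the corresponding $\{x_\gamma\}_{\gamma \in \Gamma'}$ still converges to $x$.

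The key step is to transport this data back to $X+_f Y$, where I have perspectivity available. Using compactness of $X+_f Y$, I would extract a further subnet $\{x_\gamma\}_{\gamma \in \Gamma''} \to x'$ in $X+_f Y$. By continuity of $h$ and the fact that $h$ is the identity on $X$, $h(x') = \lim_{\gamma \in \Gamma''} h(x_\gamma) = \lim_{\gamma \in \Gamma''} x_\gamma = x \in Z$; since $h(X) \subseteq X$ and $X$ is disjoint from $Z$, this forces $x' \in Y$. Now $X+_f Y \in Pers(\varepsilon)$ means $e \in \varepsilon_f$, so $e$ is perspective for $X+_f Y$; applying its net condition to $\{(x_\gamma,y_\gamma)\}_{\gamma \in \Gamma''}$ (whose first coordinates converge to $x' \in Y$) yields $\lim_{\gamma \in \Gamma''} y_\gamma = x'$ in $X+_f Y$. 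Applying $h$ once more gives $\lim_{\gamma \in \Gamma''} y_\gamma = h(x') = x$ in $X+_g Z$. But this subnet also converges to $w$, so uniqueness of limits in the Hausdorff space $X+_g Z$ forces $w = x$. Hence $x$ is the only cluster point of $\{y_\gamma\}$, so $\{y_\gamma\} \to x$, and $e$ is perspective for $X+_g Z$.

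The hard part, and really the only non-formal step, is guaranteeing that the limit $x'$ lives in the remainder $Y$ rather than in $X$; this is exactly where continuity of $id+\pi$ together with the disjointness $h(X)\subseteq X$, $h(Y)\subseteq Z$ does its work, and it is what makes perspectivity of $X+_f Y$ applicable. Everything surrounding it is routine subnet extraction in compact Hausdorff spaces. Since $e \in \varepsilon$ was arbitrary, this gives $\varepsilon \subseteq \varepsilon_g$ and hence $X+_g Z \in Pers(\varepsilon)$, as claimed.
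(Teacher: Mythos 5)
Your proof is correct, and it takes a genuinely different route from the paper's. The paper disposes of the proposition in two lines by citing Proposition 2.33 of \cite{Ro}: continuity of $id+\pi$ implies that $id\colon (X,\varepsilon_{f})\rightarrow (X,\varepsilon_{g})$ is a coarse map, hence $\varepsilon_{f}\subseteq\varepsilon_{g}$, and perspectivity of $X+_{f}Y$ then gives $\varepsilon\subseteq\varepsilon_{f}\subseteq\varepsilon_{g}$. You instead verify $\varepsilon\subseteq\varepsilon_{g}$ from first principles, using the net characterization of perspectivity (condition 2 of the quoted Theorem 2.27 of \cite{Ro}) together with Proposition \ref{propercontrolled} for properness, and transporting nets between the two compactifications by subnet extraction; your key observation, that the intermediate limit $x'$ must land in the remainder $Y$ because $id+\pi$ sends $X$ into $X$ and the target is Hausdorff, is exactly what makes the perspectivity of $X+_{f}Y$ applicable. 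In effect you have re-proved, in the special case needed, the result of Roe that the paper invokes as a black box. What your approach buys is self-containedness (every ingredient is already stated in the paper's preliminaries) and transparency about where each hypothesis enters: compactness of $X+_{f}Y$ for subnet extraction, Hausdorffness of $X+_{g}Z$ for uniqueness of limits, and the splitting of $id+\pi$ over $X$ and $Y$. It also proves slightly more than asked, namely $\varepsilon_{f}\subseteq\varepsilon_{g}$, since your argument never uses $e\in\varepsilon$ beyond properness and membership in $\varepsilon_{f}$. What the paper's proof buys is brevity, at the cost of an external citation.
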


\begin{proof}Since $id+\pi$ is continuous, we have that $id: (X,\varepsilon_{f}) \rightarrow (X,\varepsilon_{g})$ is coarse (Proposition 2.33 of \cite{Ro}), which implies that $\varepsilon_{f} \subseteq \varepsilon_{g}$. So $\varepsilon \subseteq \varepsilon_{g}$, which implies that $X+_{g}Z \in Pers(\varepsilon)$.
\end{proof}

\begin{cor}\label{discretepersp}Let $(X,\varepsilon)$ be a discrete space with the discrete coarse structure (i.e. $\varepsilon = \{e \subseteq X\times X: \#e \cap (X\times X - \Delta X) < \aleph_{0}\}$). Then $Pers(\varepsilon) = T_{2}Comp(X)$.
\end{cor}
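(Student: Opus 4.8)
The plan is to use the maximality of the Stone--\v{C}ech compactification together with the preceding proposition. Since a perspective compactification is Hausdorff by definition, the inclusion $Pers(\varepsilon) \subseteq T_{2}Comp(X)$ is automatic, and as both are full subcategories of $Comp(X)$ it remains only to show that every object of $T_{2}Comp(X)$ is perspective. Being discrete, $X$ is locally compact paracompact Hausdorff and $\varepsilon$ is coarsely connected and proper (a subset is bounded iff it is finite iff it is compact). As $X$ is locally compact it is open in its Stone--\v{C}ech compactification, so by \textbf{Proposition \ref{bomcomp}} we may write $\beta X = X +_{f_{\beta}} \partial X$ with $\partial X = \beta X - X$ compact Hausdorff; this is an object of $T_{2}Comp(X)$, and I would first establish that $\beta X \in Pers(\varepsilon)$, i.e. $\varepsilon \subseteq \varepsilon_{f_{\beta}}$.

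This perspectivity check is the only nontrivial step. Fix $e \in \varepsilon$; by \textbf{Proposition \ref{propercontrolled}} it is proper, so by the net characterization of perspective sets (condition $2$) it suffices to verify that whenever $\{(x_{\gamma},y_{\gamma})\}_{\gamma \in \Gamma} \subseteq e$ satisfies $x_{\gamma} \to x \in \partial X$, then $y_{\gamma} \to x$. The decisive feature of the discrete coarse structure is that the set $S$ of first coordinates of the finitely many off-diagonal pairs of $e$ is finite. Since $X$ is discrete and open in $\beta X$ its points are isolated, so the boundary point $x$ has a neighbourhood disjoint from the finite set $S$; hence $x_{\gamma} \notin S$ eventually, which forces $(x_{\gamma},y_{\gamma}) \in \Delta X$ and therefore $y_{\gamma} = x_{\gamma} \to x$. (Equivalently, one checks $Cl_{(\beta X)^{2}}(e) \cap ((\beta X)^{2} - X^{2}) \subseteq \Delta \partial X$ directly: the finite off-diagonal part of $e$ consists of isolated, hence closed, points of $X^{2}$, while $Cl_{(\beta X)^{2}}(\Delta X)$ meets the boundary only in $\Delta \partial X$.) Thus $\beta X \in Pers(\varepsilon)$.

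Finally I would transfer perspectivity to an arbitrary $X +_{g} Z \in T_{2}Comp(X)$. The embedding of $X$ into the compact Hausdorff space $X +_{g} Z$ extends, by the universal property of $\beta X$, to a continuous map $\beta X \to X +_{g} Z$ restricting to $id_{X}$; since the points of $X$ are isolated and $X$ is dense in $\beta X$, no point of $\partial X$ can be sent into $X$ (its preimage would be an open neighbourhood meeting $X$ in a single point), so this map has the form $id + \pi$. Applying the preceding proposition with the perspective source $\beta X = X +_{f_{\beta}} \partial X$ then yields $X +_{g} Z \in Pers(\varepsilon)$. Hence the two full subcategories have the same objects and coincide. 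The main obstacle is the middle step, and within it the crux is recognizing that discreteness collapses every controlled set to the diagonal off a finite set, so that a net converging into the remainder must eventually travel along the diagonal.
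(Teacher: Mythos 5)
Your proof is correct and takes essentially the same route as the paper: establish that $\beta X \in Pers(\varepsilon)$, then transfer perspectivity to every object of $T_{2}Comp(X)$ via the preceding proposition, using the universal property of $\beta X$ to produce the required map $id+\pi$. The only difference is that where the paper simply cites Example 2.43 of \cite{Ro} for the perspectivity of $\beta X$, you verify it directly from the fact that each controlled set has finite off-diagonal part, which is a sound and self-contained substitute.
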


\begin{proof}We have that $\beta X \in Pers(\varepsilon)$ (Example 2.43 of \cite{Ro}). By the last proposition we have that every other compactification of $X$ is perspective, since it is a quotient of $\beta X$.
\end{proof}

\subsection{Freudenthal compactifications are perspective}

\begin{defi}\label{perspectivelyconnected}Let $(X,d)$ be a metric space. We say that $X$ is perspectively connected if $\forall \epsilon > 0$, there exists $\delta > 0$ such that if $a,b \in X$ such that $d(a,b) < \epsilon$, there exists a connected set $C \subseteq X$ such that $a,b \in C$ and $diam \ C < \delta$.
\end{defi}

\begin{obs}It is easy to see that perspectively connected spaces are connected and geodesic spaces are perspectively connected.
\end{obs}

\begin{prop}\label{freudenthalperspective}Let $(X,d)$ be a proper metric space that is locally connected and perspectively connected. Then the Freudenthal compactification $X+_{f}Ends(X)$ is perspective with respect to $\varepsilon_{d}$.
\end{prop}

\begin{proof}Let $e \in \varepsilon_{d}$. We have that e is proper since $(X,\varepsilon_{d})$ is proper. Let $\{(x_{\gamma},y_{\gamma})\}_{\gamma \in \Gamma} \subseteq e$ be a net such that $\{x_{\gamma}\}_{\gamma \in \Gamma}$ converges to $x \in Ends(X)$. Since $\{(x_{\gamma},y_{\gamma})\}_{\gamma \in \Gamma} \subseteq e$, there exists $\epsilon > 0$ such that, $\forall \gamma \in \Gamma$, $d(x_{\gamma},y_{\gamma}) < \epsilon$. Since $X$ is perspectively connected, there exists $\delta > 0$ such that $\forall a,b \in X$ such that $d(a,b) < \epsilon$, there exists a compact set $C_{a,b} \subseteq X$ such that $a,b \in C_{a,b}$ and $diam \ C_{a,b} < \delta$. Let $K$ be a compact subset of $X$. There exists $\gamma_{0} \in \Gamma$ such that $\forall \gamma > \gamma_{0}$, $d(x_{\gamma}, K) > \delta$. So for every $\gamma > \gamma_{0}$, $diam \ C_{x_{\gamma},y_{\gamma}} <\delta$, which implies that  $C_{x_{\gamma},y_{\gamma}} \cap K = \emptyset$ and then $x_{\gamma}$ and $y_{\gamma}$ are in the same connected component of $X-K$ (since  $C_{x_{\gamma},y_{\gamma}}$ is connected). Let $\psi_{K}: Ends(X) \rightarrow \pi_{0}^{u}(X-K)$ be the projection map. Since  $\{x_{\gamma}\}_{\gamma \in \Gamma}$ converges to $x$ in $X+_{f}Ends(X)$, we have that  $\{x_{\gamma}\}_{\gamma \in \Gamma}$ converges to $\psi_{K}(x)$ in $X+_{f_{K}}\pi_{0}^{u}(X-K)$. Then, there exists $\gamma_{1} >\gamma_{0}$ such that $\forall \gamma > \gamma_{1}$, $x_{\gamma}$ is contained in $\psi_{K}(x)$ (seen as a connected component of $X-K$). Since $y_{\gamma}$ is in the same connected component of $x_{\gamma}$, it follows that $\forall \gamma > \gamma_{1}$, $y_{\gamma} \in \psi_{K}(x)$, which implies that  $\{y_{\gamma}\}_{\gamma \in \Gamma}$ converges to $\psi_{K}(x)$ (seen as an element of $\pi_{0}^{u}(X-K)$) in $X+_{f_{K}}\pi_{0}^{u}(X-K)$. Since it works for every choice of compact $K \subseteq X$, we have that $\{y_{\gamma}\}_{\gamma \in \Gamma}$ converges to $x$ in $X+_{f}Ends(X)$. Then $e$ is perspective.

Thus $X+_{f}Ends(X)$ is perspective.
\end{proof}

Let's see that the hypothesis of perspective connectedness is necessary:

\begin{ex}\label{quasiisobutdiffends} Let $X$ be a subset of $\R^{2}$, with the induced metric, given by $X = \{(0,y): 0 \leqslant y \leqslant 1\}\cup \{(x,y): x \geqslant 0, y = 0, 1\}$. This space $X$ is homeomorphic to $\R$, which implies that $\#Ends(X) = 2$. Consider the sequence $\{(a_{n},b_{n})\}_{n \in \N}$, where $a_{n} = (n,1)$ and $b_{n} = (n,2)$. Since $d(a_{n},b_{n}) = 1$, we have that $\{(a_{n},b_{n})\}_{n \in \N} \in \varepsilon_{d}$. But $\{a_{n}\}_{n \in \N}$ and $\{b_{n}\}_{n \in \N}$ converge to the two different points of $Ends(X)$, which doesn't happen for perspective compactifications.

Observe also that perspective connectedness is not an invariant of coarse equivalence. Let $Y = X \cup \{(n,y): n \in \N, 0 \leqslant y \leqslant 1\}$. We have that $X$ and $Y$ are quasi-isometric, but $Y$ is perspectively connected and $X$ is not. Observe also that $Ends(Y)$ is not homeomorphic to $Ends(X)$, since $Y$ is $1$-ended.
\end{ex}

\begin{cor}Let $(X,d)$ be a proper metric space that is perspectively connected and locally connected. If $W$ is a totally disconnected space, then any compactification of $X$ of the form $X+_{f}W$ is perspective with respect to $\varepsilon_{d}$.
\end{cor}

\begin{proof}It follows from the proposition above and \textbf{Proposition \ref{quotientpersp}}.
\end{proof}

\begin{prop}Let $(X,d)$ and $(Y,d')$ be proper perspectively connected locally connected spaces that are coarse equivalent. Let the isomorphism of categories $\Pi: Pers(\varepsilon_{d}) \rightarrow Pers(\varepsilon_{d'})$ induced by the coarse equivalence between $X$ and $Y$. Then $\Pi$ sends the Freudenthal compactification of $X$ to the Freudenthal compactification of $Y$.
\end{prop}

\begin{obs}By \textbf{Proposition \ref{freudenthalperspective}}, both Freudenthal compactifications are perspective.
\end{obs}

\begin{proof}Let $X+_{f}Ends(X)$ be the Freudenthal compactification of $X$ and $Y+_{g}Z$ a compactification of $Y$ with $Z$ totally disconnected. By the corollary above, the compactification $Y+_{g}Z$ is perspective with respect to $\varepsilon_{d'}$. Then the space $\Pi^{-1}(Y+_{g}Z)$ is a compactification of $X$ with boundary $Z$, which implies that there is a unique map $id_{X}+\phi: X+_{f}Ends(X) \rightarrow  \Pi^{-1}(Y+_{g}Z)$ which is continuous (by \textbf{Proposition \ref{universalpropertyends}}). Then the map $id_{Y}+\phi = \Pi(id_{X}+\phi): \\ \Pi(X+_{f}Ends(X)) \rightarrow  \Pi(\Pi^{-1}(Y+_{g}Z)) = Y+_{g}Z$ is continuous. The uniqueness of $id_{Y}+\phi$ comes from the fact that $\Pi$ is an isomorphism. So $\Pi(X+_{f}Ends(X))$ is the Freudenthal compactification of $Y$.
\end{proof}

\begin{cor}\label{coarseequivimpliesendshomeo}Let $(X,d)$ and $(Y,d')$ be proper perspectively connected locally connected spaces. If $X$ and $Y$ are coarse equivalent, then $Ends(X)$ and $Ends(Y)$ are homeomorphic.
\end{cor}

\begin{obs}This result is well known, at least for proper geodesic spaces that are quasi-isometric (Proposition 8.29 of \cite{BH}).
\end{obs}

\begin{proof}By the last proposition, the isomorphism $\Pi: Pers(\varepsilon_{d}) \rightarrow Pers(\varepsilon_{d'})$ sends the Freudenthal compactification of $X$ to the Freudenthal compactification of $Y$. But $\Pi$ preserves boundaries. So $Ends(X)$ is homeomorphic to $Ends(Y)$.
\end{proof}

\begin{obs}Observe that the spaces in \textbf{Example \ref{quasiisobutdiffends}} are proper but are not geodesic.
\end{obs}

\subsection{Coproducts}
\label{coproducts}

Let $\{C_{i}\}_{i\in \Gamma}$ be a family of compact Hausdorff spaces, $X = \dot{\bigcup}_{i\in \Gamma}C_{i}$ with the coproduct topology and $\pi: X \rightarrow \Gamma$ such that $\forall i \in \Gamma$, $\pi(C_{i}) = i$. We have that $\pi$ is a topologically proper map, where $\Gamma$ has the discrete topology.

Let $\zeta$ be the discrete coarse structure of $\Gamma$. We have that $Pers(\zeta) = T_{2}Comp(\Gamma)$.  Let $\varepsilon = \{e \subseteq X \times X: \exists f \in \zeta: e \subseteq \pi^{-1}(f)\}$.

\begin{prop}$(X,\varepsilon)$ is a proper coarsely connected coarse space.
\end{prop}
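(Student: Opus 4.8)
The plan is to recognise $\varepsilon$ as the pullback of the coarse structure $\zeta$ along the map $\pi \times \pi \colon X \times X \to \Gamma \times \Gamma$ (so that $e \in \varepsilon$ exactly when $(\pi\times\pi)(e) \in \zeta$, equivalently $e \subseteq \pi^{-1}(f)$ for some $f \in \zeta$), and then to read off each required property from the corresponding property of $\zeta$ together with the properness of $\pi$. First I would verify the five coarse-structure axioms. Since $\zeta$ is closed under passing to subsets, $e' \subseteq e \subseteq \pi^{-1}(f)$ immediately gives $e' \in \varepsilon$, and $\Delta X \subseteq \pi^{-1}(\Delta\Gamma)$ with $\Delta\Gamma \in \zeta$ gives the diagonal axiom. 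The remaining three axioms rest on the elementary set-theoretic identities $\pi^{-1}(f) \cup \pi^{-1}(f') = \pi^{-1}(f \cup f')$, $\pi^{-1}(f)^{-1} = \pi^{-1}(f^{-1})$ and $\pi^{-1}(f') \circ \pi^{-1}(f) \subseteq \pi^{-1}(f' \circ f)$; combined with the fact that $\zeta$ is already closed under unions, inverses and composition, these show that $\varepsilon$ is closed under the same operations. (In fact this argument is completely general: the preimage of any coarse structure is a coarse structure.)

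Coarse connectedness is immediate once I observe that $\zeta$ is itself coarsely connected: for $i,j \in \Gamma$ the singleton $\{(i,j)\}$ differs from the diagonal in at most one point, hence lies in $\zeta$. Given $(x,y) \in X \times X$, the set $\{(\pi(x),\pi(y))\}$ is then in $\zeta$, and $\{(x,y)\} \subseteq \pi^{-1}(\{(\pi(x),\pi(y))\})$, so $\{(x,y)\} \in \varepsilon$ and $(x,y)$ lies in an element of $\varepsilon$.

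For properness I must supply a neighbourhood of $\Delta X$ inside $\varepsilon$ and show that every bounded set is topologically bounded. For the first point, since each $C_i$ is clopen in the coproduct $X$, the set $\pi^{-1}(\Delta\Gamma) = \bigcup_i C_i \times C_i$ is open in $X \times X$, contains $\Delta X$, and lies in $\varepsilon$ because $\Delta\Gamma \in \zeta$. For the second point, which I expect to be the crux, suppose $B \subseteq X$ is bounded, so that $B \times B \subseteq \pi^{-1}(f)$ for some $f \in \zeta$, i.e. $\pi(B) \times \pi(B) \subseteq f$. Because $f$ meets the complement of $\Delta\Gamma$ in only finitely many points, $\pi(B)$ cannot be infinite (an infinite $\pi(B)$ would force infinitely many off-diagonal pairs in $\pi(B) \times \pi(B)$), so $\pi(B)$ is finite. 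Then $B \subseteq \pi^{-1}(\pi(B)) = \bigcup_{i \in \pi(B)} C_i$, a finite union of compact spaces, which is compact — this is precisely where properness of $\pi$ enters conceptually, finiteness of $\pi(B)$ yielding compactness of its preimage. As $X$ is Hausdorff this union is closed, so $Cl_X(B)$ is a closed subset of a compact set and hence compact, i.e. $B$ is topologically bounded. Assembling these verifications shows that $(X,\varepsilon)$ is a proper coarsely connected coarse space.
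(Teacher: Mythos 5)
Your proposal is correct and follows essentially the same route as the paper: verify the five axioms by pulling back along $\pi$, get coarse connectedness from singletons $\{(\pi(x),\pi(y))\}\in\zeta$, and obtain properness from the openness of $\pi^{-1}(\Delta\Gamma)$ together with the fact that a bounded set has finite image under $\pi$, whose preimage is compact. The only differences are cosmetic: where the paper cites continuity and topological properness of $\pi$ abstractly, you inline those facts concretely (clopenness of the $C_i$, finiteness of $\pi(B)$ from the finitely-many-off-diagonal-points description of $\zeta$), which is equally valid.
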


\begin{proof}We have that $\Delta X \subseteq \pi^{-1}(\Delta \Gamma)$, which implies that $\Delta X \in \varepsilon$.

Let $e \in \varepsilon$. There exists $f \in \zeta$ such that $e \subseteq \pi^{-1}(f)$, which implies that $e^{-1}\subseteq \pi^{-1}(f^{-1})$ and then $e^{-1} \in \varepsilon$.

Let $e,e' \in \varepsilon$.  There exists $f,f' \in \zeta$ such that $e \subseteq \pi^{-1}(f)$ and $e' \subseteq \pi^{-1}(f')$. We have that $f \cup f' \in \zeta$ and $e\cup e' \subseteq \pi^{-1}(f\cup f')$, which implies that $e\cup e' \in \varepsilon$.

Let $e,e' \in \varepsilon$.  There exists $f,f' \in \zeta$ such that $e \subseteq \pi^{-1}(f)$ and $e' \subseteq \pi^{-1}(f')$. Let $(a,b) \in e'\circ e$. There exists $c \in X:$ $(a,c)\in e$ and $(c,b) \in e'$, which implies that $(\pi(a),\pi(c))\in f$ and $(\pi(c),\pi(b)) \in f'$ and then $(\pi(a),\pi(b)) \in f' \circ f$. So $(a,b) \in \pi^{-1}(f'\circ f)$, which implies that $e'\circ e \subseteq \pi^{-1}(f'\circ f)$ and then $e'\circ e \in \varepsilon$.

Let $e \in \varepsilon$ and $e' \subseteq e$. There exists $f \in \zeta$ such that $e \subseteq \pi^{-1}(f)$, which implies that $e' \subseteq \pi^{-1}(f)$. Then $e' \in \varepsilon$.

Thus $\varepsilon$ is a coarse structure.

Let $(a,b) \in X \times X$. We have that $\{(\pi(a),\pi(b))\} \in \zeta$, since $\Gamma$ is coarsely connected. So $(a,b) \in \pi^{-1}(\{(\pi(a),\pi(b))\})$ and then $\{(a,b)\} \in \varepsilon$. Thus $(X,\varepsilon)$ is coarsely connected.

Let $U\in \zeta$ be a neighborhood of $\Delta \Gamma$. We have that $\pi^{-1}(U) \in \varepsilon$ and, since $\pi$ is continuous, it is a neighborhood of $\Delta X$.

Let $A \subseteq X$ be a bounded set. So $A \times A \in \varepsilon$, which implies that there exists $f \in \zeta$ such that $A\times A \subseteq \pi^{-1}(f)$ and then $\pi(A)\times \pi(A) \subseteq f$. So $\pi(A)\times \pi(A) \in \zeta$, which implies that $\pi(A)$ is bounded and then topologically bounded. Since $\pi$ is proper, we have that $A$ must be topologically bounded.

Thus, $(X,\varepsilon)$ is a proper coarse space.
\end{proof}

We say that $\varepsilon$ is the coarse structure induced by the map $\pi: X \rightarrow \Gamma$.

\begin{prop}$\pi: (X,\varepsilon) \rightarrow (\Gamma, \zeta)$ is coarse.
\end{prop}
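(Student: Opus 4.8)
The plan is to verify directly the two defining conditions of a coarse map, using only the definition of $\varepsilon$ as the pullback coarse structure $\varepsilon = \{e \subseteq X \times X: \exists f \in \zeta, \ e \subseteq \pi^{-1}(f)\}$ (where, following the paper's convention, $\pi^{-1}$ and $\pi$ applied to subsets of $X \times X$ denote the actions of $\pi \times \pi$).

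First I would check that $\pi$ sends controlled sets to controlled sets. Let $e \in \varepsilon$. By the definition of $\varepsilon$, there exists $f \in \zeta$ with $e \subseteq \pi^{-1}(f)$. Applying $\pi \times \pi$ gives $\pi(e) \subseteq \pi(\pi^{-1}(f)) \subseteq f$, and since $\zeta$ is closed under passing to subsets (axiom $2$ of a coarse structure), it follows that $\pi(e) \in \zeta$.

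Next I would verify that preimages of bounded sets are bounded. Let $B \subseteq \Gamma$ be bounded, that is, $B \times B \in \zeta$. One checks immediately that $\pi^{-1}(B) \times \pi^{-1}(B) = (\pi\times\pi)^{-1}(B \times B)$, since a pair $(a,b)$ lies in the left-hand side precisely when $\pi(a), \pi(b) \in B$, i.e. when $(\pi(a),\pi(b)) \in B \times B$. Thus $\pi^{-1}(B) \times \pi^{-1}(B)$ is of the form $\pi^{-1}(f)$ for $f = B \times B \in \zeta$, whence $\pi^{-1}(B) \times \pi^{-1}(B) \in \varepsilon$ by the very definition of $\varepsilon$. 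This says exactly that $\pi^{-1}(B)$ is bounded.

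The only point requiring care is keeping the notation straight between $\pi$ acting on points and $\pi \times \pi$ acting on pairs, which the paper abbreviates uniformly as $\pi$; there is no genuine obstacle, as both conditions are immediate consequences of the fact that $\varepsilon$ is by construction the collection of subsets of $X \times X$ dominated by $\pi$-preimages of elements of $\zeta$.
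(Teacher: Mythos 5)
Your proof is correct. The first half (controlled sets push forward to controlled sets) is word-for-word the paper's argument: take $f \in \zeta$ with $e \subseteq \pi^{-1}(f)$, observe $\pi(e) \subseteq f$, and use closure of $\zeta$ under subsets. Where you differ is the bounded-preimage condition: the paper dispatches it with the single sentence ``we already have that $\pi$ is proper,'' which leans on the earlier observation that $\pi$ is \emph{topologically} proper together with the preceding proposition that $(X,\varepsilon)$ is a proper coarsely connected coarse space (so that bounded and topologically bounded subsets coincide, via Proposition \ref{propercontrolled}). You instead verify the condition directly from the definition of $\varepsilon$: for $B \subseteq \Gamma$ bounded, $\pi^{-1}(B) \times \pi^{-1}(B) = (\pi\times\pi)^{-1}(B \times B)$ is literally of the form $\pi^{-1}(f)$ with $f = B \times B \in \zeta$, hence lies in $\varepsilon$. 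Your route is more self-contained and purely coarse-theoretic --- it needs no topology and no appeal to the proper-coarse-space machinery --- while the paper's route reuses already-established structure at the cost of an implicit chain of identifications (coarse properness versus topological properness) that it does not spell out. Both are valid; yours is arguably the cleaner proof of this particular statement.
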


\begin{proof}We already have that $\pi$ is proper. Let $e \in \varepsilon$. There exists $f \in \zeta$ such that $e \subseteq \pi^{-1}(f)$, which implies that $\pi(e) \subseteq f$ and then $\pi(e) \in \zeta$. Thus, $\pi$ is coarse.
\end{proof}

Let $\iota: \Gamma \rightarrow X$ be a section of $\pi$.

\begin{prop}$\iota: (\Gamma, \zeta) \rightarrow (X,\varepsilon)$ is coarse.
\end{prop}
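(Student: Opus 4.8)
The plan is to verify the two defining conditions of a coarse map directly, using only the single structural fact available about $\iota$, namely that it is a section of $\pi$, i.e. $\pi \circ \iota = id_{\Gamma}$. Throughout I read $\pi^{-1}(f)$ for $f \subseteq \Gamma \times \Gamma$ as the preimage $(\pi \times \pi)^{-1}(f)$ under the product map, as is implicit in the definition of $\varepsilon$.

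First I would check that $\iota$ sends controlled sets to controlled sets. Let $e \in \zeta$; I want to exhibit some $f \in \zeta$ with $\iota(e) \subseteq \pi^{-1}(f)$. The natural choice is $f = e$ itself: for any $(a,b) \in e$ the point $(\iota(a),\iota(b)) \in \iota(e)$ satisfies $(\pi(\iota(a)),\pi(\iota(b))) = (a,b) \in e$, because $\pi \circ \iota = id_{\Gamma}$, so $(\iota(a),\iota(b)) \in \pi^{-1}(e)$. Hence $\iota(e) \subseteq \pi^{-1}(e)$ with $e \in \zeta$, and by the definition of $\varepsilon$ this gives $\iota(e) \in \varepsilon$.

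Next I would check properness, i.e. that $\iota^{-1}(B)$ is bounded in $\Gamma$ whenever $B \subseteq X$ is bounded. From $B$ bounded we get $B \times B \in \varepsilon$, so there is $f \in \zeta$ with $B \times B \subseteq \pi^{-1}(f)$, which forces $\pi(B) \times \pi(B) \subseteq f$ and hence $\pi(B) \times \pi(B) \in \zeta$, so $\pi(B)$ is bounded in $(\Gamma,\zeta)$. Since $\pi \circ \iota = id_{\Gamma}$, any $i \in \iota^{-1}(B)$ satisfies $i = \pi(\iota(i)) \in \pi(B)$, so $\iota^{-1}(B) \subseteq \pi(B)$; as a subset of a bounded set, $\iota^{-1}(B) \times \iota^{-1}(B) \subseteq \pi(B) \times \pi(B) \in \zeta$, so by the hereditary axiom of the coarse structure $\iota^{-1}(B)$ is itself bounded.

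Combining the two conditions shows that $\iota$ is coarse. I do not expect any genuine obstacle here: the entire argument is a formal consequence of $\pi \circ \iota = id_{\Gamma}$ together with the definition $\varepsilon = \{e : \exists f \in \zeta,\ e \subseteq \pi^{-1}(f)\}$. The only point requiring a moment's care is consistently interpreting $\pi^{-1}$ as the two-variable preimage under $\pi \times \pi$ rather than a one-variable set preimage.
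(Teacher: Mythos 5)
Your proof is correct, and the first half (controlled sets) is exactly the paper's argument: $\iota(e) \subseteq \pi^{-1}(e)$ via $\pi \circ \iota = id_{\Gamma}$, then the hereditary axiom. The properness half, however, takes a genuinely different route. The paper argues topologically: a bounded subset of $(X,\varepsilon)$ is topologically bounded (since $(X,\varepsilon)$ was shown to be a proper coarsely connected coarse space), its compact closure meets only finitely many of the pieces $C_{i}$, so its $\iota$-preimage is finite, and finite subsets of $\Gamma$ are bounded in the discrete coarse structure $\zeta$. You instead stay entirely inside coarse geometry: from $B \times B \subseteq \pi^{-1}(f)$ you get $\pi(B)$ bounded, and the identity $\pi \circ \iota = id_{\Gamma}$ gives the inclusion $\iota^{-1}(B) \subseteq \pi(B)$, so $\iota^{-1}(B)$ is bounded by heredity. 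Your argument is more general and more economical: it never uses the discreteness of $\zeta$, the compactness of the $C_{i}$, or any topology at all, so it shows that \emph{any} section of \emph{any} map $\pi$ is coarse whenever the source carries the pulled-back coarse structure $\varepsilon = \{e : \exists f \in \zeta,\ e \subseteq \pi^{-1}(f)\}$. The paper's version, by contrast, leans on structure specific to this setting (and even leaves implicit the passage from coarsely bounded to compact, via Proposition \ref{propercontrolled}, and from finite to bounded in $\zeta$); what it buys is consistency with the surrounding propositions, which are all phrased through the interplay of $\varepsilon$ with the topology of the coproduct.
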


\begin{proof}Let $A \subseteq X$ be a compact set. Then, there exists $i_{1},...,i_{n} \in \Gamma$ such that $A \subseteq C_{i_{1}}\cup...\cup C_{i_{n}}$, which implies that $\iota^{-1}(A)$ is finite. Then $\iota$ is proper.

Let $f \in \zeta$. We have that $\iota(f) \subseteq \pi^{-1}(f) \in \varepsilon$, which implies that $\iota(f) \in \varepsilon$.

Thus $\iota$ is coarse.
\end{proof}

\begin{prop}$\pi$ and $\iota$ are quasi-inverses.
\end{prop}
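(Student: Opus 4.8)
The plan is to unwind the definition of quasi-inverse directly: I must check that $\pi \circ \iota$ is close to $id_{\Gamma}$ in $\zeta$ and that $\iota \circ \pi$ is close to $id_{X}$ in $\varepsilon$. The first condition is immediate from the hypothesis that $\iota$ is a section of $\pi$. Indeed $\pi \circ \iota = id_{\Gamma}$, so the set $\{((\pi\circ\iota)(i), id_{\Gamma}(i)) : i \in \Gamma\}$ is exactly $\Delta\Gamma$, which lies in $\zeta$ by the first coarse-structure axiom; being literally equal, the two maps are in particular close.

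For the second condition I would form the set $S = \{((\iota\circ\pi)(x), x) : x \in X\}$ and show $S \in \varepsilon$. The key point is that each $x \in X$ lies in a unique $C_{i}$, so $\pi(x) = i$ and $\iota(\pi(x)) \in C_{i}$; hence $\pi(\iota(\pi(x))) = \pi(x)$ by the section identity, and the two coordinates of every pair in $S$ have the same image under $\pi$. This says exactly $S \subseteq \pi^{-1}(\Delta\Gamma)$ (with the convention used throughout this section that $\pi^{-1}$ denotes preimage under $\pi \times \pi$). Since $\Delta\Gamma \in \zeta$, the definition of $\varepsilon$ gives $\pi^{-1}(\Delta\Gamma) \in \varepsilon$, and then downward closure of $\varepsilon$ (axiom 2 of a coarse structure) yields $S \in \varepsilon$. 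Therefore $\iota \circ \pi$ is close to $id_{X}$.

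Combining the two verifications shows that $\pi$ and $\iota$ are quasi-inverses; together with the preceding two propositions (that $\pi$ and $\iota$ are coarse maps), this exhibits $(X,\varepsilon)$ and $(\Gamma,\zeta)$ as coarsely equivalent. I expect no genuine obstacle here: the whole content is the inclusion $S \subseteq \pi^{-1}(\Delta\Gamma)$, which is forced by the section identity $\pi \circ \iota = id_{\Gamma}$, and everything else is a direct appeal to the coarse-structure axioms and to the definition of $\varepsilon$.
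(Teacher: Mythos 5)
Your proposal is correct and follows essentially the same route as the paper: both verify $\pi\circ\iota = id_{\Gamma}$ directly from the section property, and both show closeness of $\iota\circ\pi$ to $id_{X}$ by observing that the set of pairs lies in $\bigcup_{i\in\Gamma}C_{i}\times C_{i} = \pi^{-1}(\Delta\Gamma)$, which belongs to $\varepsilon$ by the very definition of $\varepsilon$. (Your extra appeal to downward closure is harmless but unnecessary, since the definition of $\varepsilon$ as $\{e : \exists f\in\zeta,\ e\subseteq\pi^{-1}(f)\}$ already absorbs it.)
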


\begin{proof}By the definition of $\iota$, we have that $\pi \circ \iota = id_{\Gamma}$. Since $\forall i\in \Gamma$, $\forall a \in C_{i}$, $\iota\circ \pi(a) \in C_{i}$, we have that $\{(a,\iota \circ \pi(a)): a \in X\} \subseteq \bigcup_{i\in \Gamma}C_{i}\times C_{i}  = \pi^{-1}(\Delta \Gamma)$, which implies that $\{(a,\iota \circ \pi(a)): a \in X\} \in \varepsilon$. So $\iota \circ \pi$ and $id_{X}$ are close.  Thus, $\pi$ and $\iota$ are quasi-inverses.
\end{proof}

\begin{teo}\label{teoremaprincipal}$Pers(\varepsilon)\cong T_{2}Comp(\Gamma)$. Furthermore, such isomorphism preserves the boundaries of the spaces. \eod
\end{teo}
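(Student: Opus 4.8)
The plan is to recognize the statement as a direct application of \textbf{Theorem \ref{Teorema1}} to the coarse equivalence just constructed between $(X,\varepsilon)$ and $(\Gamma,\zeta)$, combined with the identification $Pers(\zeta) = T_{2}Comp(\Gamma)$ that was already recorded for the discrete coarse structure on $\Gamma$.

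First I would verify that both $\pi: X \to \Gamma$ and its section $\iota: \Gamma \to X$ are continuous, since \textbf{Theorem \ref{Teorema1}} requires a \emph{continuous} coarse equivalence with a \emph{continuous} quasi-inverse. As $\Gamma$ carries the discrete topology, every map out of $\Gamma$ is continuous, so $\iota$ is continuous. For $\pi$, each $C_{i}$ is clopen in the coproduct $X = \dot{\bigcup}_{i\in\Gamma}C_{i}$, so for any $S \subseteq \Gamma$ the set $\pi^{-1}(S) = \bigcup_{i\in S}C_{i}$ is open; hence $\pi$ is continuous. Together with the preceding propositions — that $(X,\varepsilon)$ and $(\Gamma,\zeta)$ are coarsely connected proper coarse spaces, that $\pi$ and $\iota$ are coarse, and that they are quasi-inverses — this exhibits $(X,\varepsilon)$ and $(\Gamma,\zeta)$ as coarsely equivalent through continuous maps with continuous quasi-inverse.

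Next I would invoke \textbf{Theorem \ref{Teorema1}} with the role of the coarse equivalence ``$\pi: Y \to X$'' played by $\iota: \Gamma \to X$ (so $Y = \Gamma$) and the role of its quasi-inverse ``$\varpi: X \to Y$'' played by $\pi: X \to \Gamma$. This yields an isomorphism of categories $Pers(\varepsilon) \cong Pers(\zeta)$, realized by the pullback functor $\Pi: Pers(\varepsilon) \to T_{2}Comp(\Gamma)$ relative to $\iota$ and its inverse relative to $\pi$. Since $\zeta$ is the discrete coarse structure on the discrete space $\Gamma$, the corollary on discrete coarse structures gives $Pers(\zeta) = T_{2}Comp(\Gamma)$, and composing the two identifications produces $Pers(\varepsilon) \cong T_{2}Comp(\Gamma)$.

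Finally, for the statement about boundaries, I would unwind that the functors realizing this isomorphism are the pullback functors of \textbf{Proposition \ref{pullbackdecompactificacoes}}, which send $Y+_{f}Z \mapsto X+_{f^{\ast}}Z$, leaving the second summand — the remainder — untouched as a topological space. Hence a perspective compactification $X+_{g}W$ and its image in $T_{2}Comp(\Gamma)$ share the same remainder $W$, so the isomorphism preserves boundaries. I do not expect a genuine obstacle here: the only point demanding care is the continuity of $\pi$, since that is precisely what licenses the use of the strong \textbf{Theorem \ref{Teorema1}} rather than the metrizable \textbf{Theorem \ref{Teorema2}}; everything else is assembling results already established in this section, which is why the statement is marked as immediate.
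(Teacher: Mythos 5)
Your proposal is correct and follows exactly the route the paper intends: the theorem carries the $\square$ marker precisely because the preceding propositions (continuity and coarseness of $\pi$ and $\iota$, their being quasi-inverses, and $Pers(\zeta)=T_{2}Comp(\Gamma)$ for the discrete coarse structure) set up a direct application of \textbf{Theorem \ref{Teorema1}}. Your added checks — continuity of $\pi$ via the clopen decomposition, continuity of $\iota$ from discreteness of $\Gamma$, and preservation of remainders because the pullback functor $Y+_{f}Z\mapsto X+_{f^{\ast}}Z$ leaves the second summand fixed — are exactly the "previous considerations" the paper is appealing to.
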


\begin{proof}It follows from \textbf{Corollary \ref{discretepersp}} and \textbf{Theorem \ref{Teorema1}}.
\end{proof}

We have also a criterion to say when a compactification of $X$ is perspective with respect to $\varepsilon$:

\begin{prop}\label{persphauss}Let $X+_{f}Y\in T_{2}Comp(X)$ and $\sim = \Delta(X+_{f}Y) \cup \bigcup_{i\in \Gamma} \pi^{-1}(i)$. Then $X+_{f}Y\in Pers(\varepsilon)$ if and only if $X+_{f}Y/\!\sim$ is Hausdorff.
\end{prop}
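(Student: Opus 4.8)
The plan is to reduce both sides of the equivalence to a single statement about the closure of one distinguished controlled set. Write $W = X+_{f}Y$ and let $u = \pi^{-1}(\Delta\Gamma) = \bigcup_{i\in\Gamma}C_{i}\times C_{i}$, the set of all pairs of points lying in a common $C_{i}$; viewed as a subset of $W^{2}$, the relation $\sim$ is exactly $\Delta(W)\cup u$. Since $W$ is Hausdorff compact, the Aleksandrov Theorem tells us that $W/\sim$ is Hausdorff if and only if $\sim$ is closed in $W^{2}$. So it suffices to show that $\sim$ is closed in $W^{2}$ if and only if $u$ is perspective, and then that $u$ perspective is equivalent to $X+_{f}Y\in Pers(\varepsilon)$.

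First I would establish that $u$ is closed in $X^{2}$. In the coproduct $X$ each $C_{i}$ is clopen, so the sets $C_{i}\times C_{i}$ are closed and the family $\{C_{i}\times C_{i}\}_{i\in\Gamma}$ is locally finite in $X^{2}$: a basic open box $C_{i}\times C_{j}$ meets $C_{k}\times C_{k}$ only when $i=j=k$. Hence $u$ is closed in $X^{2}$ by the locally finite union proposition. Since $X$ is open in $W$, the set $X^{2}$ is open in $W^{2}$, and the subspace-closure identity gives $Cl_{W^{2}}(u)\cap X^{2} = Cl_{X^{2}}(u) = u$, so that $Cl_{W^{2}}(u) - u = Cl_{W^{2}}(u)\cap(W^{2}-X^{2})$. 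Because $W$ is Hausdorff, $\Delta W$ is closed, whence $Cl_{W^{2}}(\sim) = \Delta W \cup Cl_{W^{2}}(u)$; as $u\subseteq X^{2}$ and $\Delta W \cap (W^{2}-X^{2}) = \Delta Y$, the relation $\sim$ is closed precisely when $Cl_{W^{2}}(u)\cap(W^{2}-X^{2})\subseteq \Delta Y$. This last inclusion is exactly condition (1) of the characterization of perspective subsets applied to $e=u$, so $\sim$ is closed if and only if $u$ is perspective.

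It then remains to pass between ``$u$ is perspective'' and $\varepsilon\subseteq\varepsilon_{f}$. One direction is immediate: $u = \pi^{-1}(\Delta\Gamma)\in\varepsilon$, so $\varepsilon\subseteq\varepsilon_{f}$ forces $u\in\varepsilon_{f}$, i.e. $u$ perspective. For the converse, take any $e\in\varepsilon$ and choose $h\in\zeta$ with $e\subseteq\pi^{-1}(h)$. Since $\zeta$ is the discrete coarse structure, $h = \Delta\Gamma\cup F$ with $F$ finite and off-diagonal, so $\pi^{-1}(h) = u\cup\pi^{-1}(F)$ where $\pi^{-1}(F)=\bigcup_{(i,j)\in F}C_{i}\times C_{j}\subseteq K\times K$ for a compact $K\subseteq X$ (a finite union of the $C_{i}$). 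Such a bounded set is perspective, since its closure in $W^{2}$ stays inside the compact, hence closed, set $K\times K\subseteq X^{2}$ and therefore misses $W^{2}-X^{2}$. As $\varepsilon_{f}$ is a coarse structure, it is closed under subsets and finite unions, so $e\subseteq u\cup\pi^{-1}(F)\in\varepsilon_{f}$ gives $e\in\varepsilon_{f}$; thus $u$ perspective implies $\varepsilon\subseteq\varepsilon_{f}$.

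The main obstacle I anticipate is the bookkeeping of the middle paragraph: one must verify that $u$ is genuinely closed in $X^{2}$ (this is where the local finiteness of the coproduct decomposition is essential) and that the open-subspace identity $Cl_{W^{2}}(u)\cap X^{2}=u$ is applied correctly, since it is precisely this step that converts the topological condition ``$\sim$ closed'' into the coarse-geometric condition ``$u$ perspective''. The surrounding coarse-structure manipulations are then routine, resting on the facts that bounded sets are automatically perspective and that $\varepsilon$ is generated, up to bounded perturbations, by the single controlled set $u$.
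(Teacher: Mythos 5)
Your proposal is correct, but it takes a genuinely different route from the paper's proof. (Your reading of the statement's typo --- interpreting $\bigcup_{i}\pi^{-1}(i)$ as $\bigcup_{i}\pi^{-1}(i)^{2}=\bigcup_{i}C_{i}\times C_{i}$ --- is the intended one.) Write $W=X+_{f}Y$. You convert both sides of the equivalence into statements about the single controlled set $u=\pi^{-1}(\Delta\Gamma)$: Hausdorffness of the quotient becomes closedness of $\sim$ in $W^{2}$ by the Aleksandrov theorem; closedness of $\sim$ becomes condition (1) of Roe's Theorem 2.27 applied to $e=u$, using that $u$ is closed in $X^{2}$ (a locally finite union of the closed boxes $C_{i}\times C_{i}$) so that $Cl_{W^{2}}(u)\cap X^{2}=u$; and $u\in\varepsilon_{f}$ is equivalent to $\varepsilon\subseteq\varepsilon_{f}$ because every $e\in\varepsilon$ lies in $u\cup\pi^{-1}(F)$ with $F$ finite and off-diagonal, $\pi^{-1}(F)$ is perspective for trivial reasons (its closure stays in a compact, hence closed, subset of $X^{2}$), and $\varepsilon_{f}$ is a coarse structure, hence closed under finite unions and subsets. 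The paper argues instead through the categorical machinery already developed: for the forward direction it invokes the isomorphism $\Pi\colon Pers(\zeta)\rightarrow Pers(\varepsilon)$ of Theorem \ref{teoremaprincipal} to write $X+_{f}Y=\Pi(\Gamma+_{g}Y)$ and identifies $(X+_{f}Y)/\sim$ with the Hausdorff space $\Gamma+_{g}Y$ via a continuous bijection from a compact space to a Hausdorff one; for the converse it recognizes the Hausdorff quotient as some $\Gamma+_{g}Y$, uses $Pers(\zeta)=T_{2}Comp(\Gamma)$ (which rests on $\beta\Gamma$ being perspective and on quotients preserving perspectivity) to get $\zeta\subseteq\zeta_{g}$, and then verifies perspectivity of each $e\in\varepsilon$ directly by a net-convergence argument together with a separate properness check. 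Your argument is more elementary and self-contained: it needs only facts quoted in the preliminaries, bypasses Theorem \ref{teoremaprincipal} and the $\beta\Gamma$ input entirely, and isolates the clean observation that perspectivity of the whole structure $\varepsilon$ is detected by its single generator $u$. What the paper's route buys is the explicit identification of the quotient as a perspective compactification $\Gamma+_{g}Y$ of $\Gamma$, which is the form in which the proposition is actually exploited downstream (e.g.\ in Proposition \ref{homeoconvex}).
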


\begin{proof}$(\Rightarrow)$ Let $\Pi: Pers(\zeta) \rightarrow Pers(\varepsilon)$ be the isomorphism induced by $\pi$. We have that $X+_{f}Y = \Pi(\Gamma+_{g}Y)$ for some $\Gamma+_{g}Y \in Pers(\zeta)$. So $f = g ^{\ast}$, which implies that the map $\pi+id: X+_{f}Y \rightarrow \Gamma+_{g}Y$ is continuous. There exists a bijective map $t: X+_{f}Y/\!\sim \rightarrow  \Gamma+_{g}Y$ such that the following diagram commutes:

$$ \xymatrix{ X+_{f}Y \ar[r]^{\pi+id} \ar[d]_{\rho} & \Gamma+_{g}Y   \\
           X+_{f}Y/\!\sim \ar[ru]^{t} & } $$

Where $\rho$ is the quotient map. By the universal property of the quotient map $\rho$, the map $t$ is continuous and, since $X+_{f}Y/\!\sim$ is compact and $\Gamma+_{g}Y$ is Hausdorff, it is a homeomorphism. Thus $X+_{f}Y/\!\sim$ is Hausdorff.

$(\Leftarrow)$ Let $e \in \varepsilon$ and $\{(x_{i},y_{i})\}_{i\in \Upsilon} \subseteq e$ a net such that $\{x_{i}\}_{i\in \Upsilon}$ converges to an element $x \in Y$. We have that $X+_{f}Y/\sim$ is Hausdorff, which implies that it is homeomorphic to $\Gamma+_{g}Y$ for some choice of $g$ such that the diagram commutes:

$$ \xymatrix{ X+_{f}Y \ar[r]^{\pi+id} \ar[d]_{\rho} & \Gamma+_{g}Y   \\
           X+_{f}Y/\!\sim \ar[ru]^{t} & } $$

Where $\rho$ is the quotient map and $t$ is a homeomorphism. We have that $\Gamma+_{g}Y$ is perspective, which implies that $\zeta \subseteq \zeta_{g}$. So $\{(\pi(x_{i}),\pi(y_{i}))\}_{i\in \Upsilon} \subseteq \pi(e) \in \zeta$ and $\{\pi(x_{i})\}_{i\in \Upsilon}$ converges to $x$ (because $\pi+id$ is continuous). Since $\zeta \subseteq \zeta_{g}$, we have that $\{\pi(y_{i})\}_{i\in \Upsilon}$ converges to $x$. Since $\#(\pi+id)^{-1}(x) = 1$, we have that $\{y_{i}\}_{i\in \Upsilon}$ also converges to $x$ (\textbf{Proposition \ref{liftnet}}).

Let $A$ be a bounded subset of $X$. We have that $\pi(A)$ is bounded and $\pi(e)$ is proper, which implies that $\B(\pi(A),\pi(e))$ is bounded. But $\pi(\B(A,e)) \subseteq \B(\pi(A),\pi(e))$, which implies that $\pi(\B(A,e))$ is bounded. Then the set $\pi^{-1}(\pi(\B(A,e)))$ is bounded, since the map $\pi$ is proper. Since $\B(A,e) \subseteq \pi^{-1}(\pi(\B(A,e)))$, we have that $\B(A,e)$ is bounded. Analogously $\B(A,e^{-1})$ is also bounded. Then $e$ is proper.

Thus $e \in \varepsilon_{f}$ and then  $X+_{f}Y\in Pers(\varepsilon)$.
\end{proof}

\begin{cor}Let $X$ be a Hausdorff locally compact space. If $X$ can be decomposed into a coproduct of any infinite cardinality of compact spaces, then $X$ has an infinite amount of non-equivalent compactifications. \eod
\end{cor}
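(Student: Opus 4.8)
The plan is to reduce everything to the elementary fact that an infinite discrete space carries infinitely many pairwise non-equivalent compactifications, and then to transport these along the isomorphism of \textbf{Theorem \ref{teoremaprincipal}}.

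First I would record the hypotheses in the exact form of this subsection: write $X = \dot{\bigcup}_{i\in\Gamma}C_{i}$ with each $C_{i}$ a non-empty Hausdorff compact space and $\#\Gamma$ infinite, take $\pi:X\to\Gamma$ the canonical projection, give $\Gamma$ the discrete coarse structure $\zeta$ and $X$ the induced coarse structure $\varepsilon = \{e: \exists f\in\zeta,\ e\subseteq\pi^{-1}(f)\}$. \textbf{Theorem \ref{teoremaprincipal}} then supplies an isomorphism of categories $Pers(\varepsilon)\cong T_{2}Comp(\Gamma)$ which preserves remainders. Since $Pers(\varepsilon)$ is a full subcategory of $Comp(X)$ and its objects are, by definition, compactifications of $X$, it suffices to exhibit infinitely many pairwise non-equivalent objects of $T_{2}Comp(\Gamma)$ and push them across this isomorphism.

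For the discrete space $\Gamma$ I would, for each integer $n\geq 1$, partition $\Gamma$ into $n$ infinite pieces $\Gamma = A_{1}\dot{\cup}\cdots\dot{\cup}A_{n}$ (possible because $\Gamma$ is infinite) and build the $n$-point compactification $\Gamma_{n}$: adjoin points $p_{1},\dots,p_{n}$, keep every point of $\Gamma$ isolated, and take as basic neighbourhoods of $p_{k}$ the sets $\{p_{k}\}\cup(A_{k}\setminus F)$ with $F$ finite. A routine verification shows $\Gamma_{n}$ is Hausdorff, compact, and contains $\Gamma$ as a dense subspace, so $\Gamma_{n}\in T_{2}Comp(\Gamma)$, with remainder of cardinality exactly $n$. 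Any equivalence of compactifications is a homeomorphism restricting to the identity on $\Gamma$, hence induces a homeomorphism of remainders; as the remainders of $\Gamma_{m}$ and $\Gamma_{n}$ have different cardinalities for $m\neq n$, the family $\{\Gamma_{n}\}_{n\geq 1}$ is pairwise non-equivalent.

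Applying the remainder-preserving isomorphism, the images of the $\Gamma_{n}$ are compactifications of $X$ whose remainders again have exactly $n$ points, and the same cardinality argument shows they are pairwise non-equivalent; this yields infinitely many non-equivalent compactifications of $X$. There is no genuine obstacle in the argument: the only points needing care are conventional, namely confirming that \emph{equivalent compactifications} coincides with \emph{isomorphic in} $Comp(X)$ (morphisms being continuous maps that are the identity on $X$), so that the categorical isomorphism transports non-equivalence faithfully, and that each piece $A_{k}$ must be taken infinite so that $p_{k}$ is a genuine limit point and $\Gamma$ stays dense in $\Gamma_{n}$.
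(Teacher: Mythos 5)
Your proposal is correct and matches the paper's intended argument: the corollary is stated with no written proof precisely because it is meant to follow by transporting the (well-known) infinitely many non-equivalent Hausdorff compactifications of the infinite discrete space $\Gamma$ across the boundary-preserving isomorphism $Pers(\varepsilon)\cong T_{2}Comp(\Gamma)$ of Theorem \ref{teoremaprincipal}, exactly as you do. Your explicit $n$-point compactifications of $\Gamma$ and the remainder-cardinality argument are a clean way of filling in the details the paper leaves implicit.
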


As consequence, we have an easy proof that spaces like the first uncountable ordinal $[0,\omega_{1})$ and the deleted Tychonoff plank $[0,\omega_{1}]\times [0,\omega_{0}] - \{(\omega_{1},\omega_{0})\}$ cannot be decomposed into a coproduct of any cardinality of compact spaces, since their Stone-Cech compactifications coincide with the one-point compactifications (Example 19.13 of \cite{SW}).

Let $C$ be a compact Hausdorff space and let's consider that $\Gamma = \N$ and $\forall i\in \N$, there exists homeomorphisms $\eta_{i}: C \rightarrow C_{i}$ and $\eta'_{i}: C \rightarrow C'_{i}$, where $X = \dot{\bigcup}_{i\in \N}C_{i} =  \dot{\bigcup}_{i\in \N}C'_{i}$. Let's consider $\pi,\pi': X \rightarrow \N$ such that $\forall i \in \N$, $\pi(C_{i}) = i$ and $\pi'(C'_{i}) = i$ and $\varepsilon$ and $\varepsilon'$ the coarse structures on $X$ induced by $\pi$ and $\pi'$, respectively.

\begin{prop}(Pelczy$\acute{n}$ski, p. 87 of \cite{Pe}) Let $Z$ be a compact metrizable space. Then there exists, up to homeomorphisms, a unique compactification of $\N$ with boundary $Z$.
\end{prop}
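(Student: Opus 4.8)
The plan is to treat existence and uniqueness separately, phrasing everything in the Artin--Wraith language so that the machinery already developed does most of the work. Throughout I regard a compactification of $\N$ with remainder $Z$ as a space $\N +_{f} Z$ (with $\N$ open and discrete, so that $g = \emptyset$), where the admissible map $f \colon Closed(\N) \to Closed(Z)$ is forced to be $f(B) = Cl_{\N+_f Z}(B) \cap Z$, i.e.\ the set of points of $Z$ to which the index set $B \subseteq \N$ accumulates.

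For existence, I would fix a compatible metric $d$ on $Z$ and, using separability of $Z$, a sequence $(z_n)_{n\in\N}$ dense in $Z$. Define $f(B)$ to be the set of cluster points in $Z$ of the sequence $(z_n)_{n \in B}$; since the cluster set of a union of index sets is the union of the cluster sets, $f$ is admissible. Then $f(\N) = Z$ because $(z_n)$ is dense, so $\N$ is dense by the density criterion; $\N +_f Z$ is compact because every infinite $B \subseteq \N$ gives an infinite sequence in the compact space $Z$, whence $f(B) \neq \emptyset$, by the compactness criterion; and $\N +_f Z$ is Hausdorff because finite $B$ have $f(B) = \emptyset$ and, given $a \neq b$ in $Z$, choosing disjoint open $U \ni a$, $V \ni b$ and setting $A = \{n : z_n \notin V\}$, $B = \{n : z_n \notin U\}$ yields $A \cup B = \N$ with $f(A) \subseteq Z \setminus V \not\ni b$ and $f(B) \subseteq Z \setminus U \not\ni a$, by the Hausdorff criterion. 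Finally $\N +_f Z$ is a compact Hausdorff space covered by the countably many second countable subspaces $\{n\}$ ($n \in \N$) and $Z$, hence metrizable by Corollary \ref{uniaometrizavel}. This produces a metrizable compactification of $\N$ with remainder $Z$.

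For uniqueness, let $\N +_{f_1} Z$ and $\N +_{f_2} Z$ be two such compactifications. By Proposition \ref{mapacontinuo}, a bijection of the form $\psi + id_Z$ (with $\psi$ a permutation of $\N$) is continuous from $\N+_{f_1}Z$ to $\N+_{f_2}Z$ exactly when $f_1(B) \subseteq f_2(\psi(B))$ for every $B \subseteq \N$; since $\N+_{f_1}Z$ is compact and $\N+_{f_2}Z$ is Hausdorff, any such continuous bijection is automatically a homeomorphism fixing $Z$. So it suffices to produce the permutation $\psi$. Fixing metrics $\rho_i$ on each $\N+_{f_i}Z$, I pick for each $n$ a nearest point $q_i(n) \in Z$; since $\{n : \rho_i(n,Z) \geq \varepsilon\}$ is closed, disjoint from $Z$, hence finite, one has $\rho_i(n,Z) \to 0$, so that $z \in f_i(B)$ iff some subsequence of $(q_i(n))_{n\in B}$ converges to $z$. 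Consequently it is enough to build a bijection $\psi$ with $d\big(q_1(n), q_2(\psi(n))\big) \to 0$ as $n \to \infty$: then a subsequence of $(q_1(n))_{n\in B}$ converging to $z$ forces the corresponding subsequence of $(q_2(\psi(n)))$ to converge to $z$ as well, giving $f_1(B) \subseteq f_2(\psi(B))$.

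The main obstacle is precisely the construction of this distance-shrinking bijection, which I would carry out by a back-and-forth argument. The key input is that density of $\N$ in each $\N +_{f_i} Z$ forces every $z \in Z$ to be the limit of infinitely many isolated points, so for each target location in $Z$ there are infinitely many indices $n$ with $q_1(n)$ (resp.\ $q_2(n)$) arbitrarily close to it. At stage $k$ one alternately (i) assigns the least not-yet-used domain index $n$ to an unused range index $m$ with $d(q_1(n), q_2(m)) < 1/k$, and (ii) assigns the least not-yet-used range index $m$ to an unused domain index $n$ with the same bound; the infinitude just noted guarantees an admissible partner at each step, surjectivity and injectivity come from exhausting both index sets, and the $1/k$ schedule forces $d(q_1(n), q_2(\psi(n))) \to 0$. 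Care is needed only in interleaving the two directions so that the distance bound is respected while both copies of $\N$ are exhausted; this is routine once the infinitude of nearby indices is established.
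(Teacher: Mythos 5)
The paper never proves this proposition: it is imported as a black box, with a citation to Pelczy\'{n}ski, and then fed into Proposition \ref{homeoconvex} and Theorem \ref{Cantor}. So your proposal is by necessity a different route --- you make the statement self-contained --- and, modulo one slip noted below, it is correct. What your route buys is that it runs entirely on the paper's own machinery: the cluster-set map $f$ is checked admissible, the compactness and Hausdorff criteria for $\N+_{f}Z$ do the existence verifications, Corollary \ref{uniaometrizavel} gives metrizability, and Proposition \ref{mapacontinuo} converts your back-and-forth bijection $\psi$ into a continuous bijection $\psi+id_{Z}$, which is a homeomorphism since the source is compact and the target Hausdorff. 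The uniqueness argument (the real content of Pelczy\'{n}ski's theorem) is sound: $\{n:\rho_{i}(n,Z)\geqslant\varepsilon\}$ is a closed subset of the compactification contained in $\N$, hence compact, hence finite, so $\rho_{i}(n,Z)\to 0$; the identification of $f_{i}(B)$ with the cluster set of $(q_{i}(n))_{n\in B}$ is correct; density of $\N$ supplies, for each $z\in Z$ and $\varepsilon>0$, infinitely many $n$ with $d(q_{i}(n),z)<\varepsilon$, which is exactly what keeps the back-and-forth alive; and the $1/k$ schedule forces $d(q_{1}(n),q_{2}(\psi(n)))\to 0$, hence $f_{1}(B)\subseteq f_{2}(\psi(B))$ for all $B\subseteq\N$. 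Note you actually prove something stronger than the stated proposition: the homeomorphism may be taken to be the identity on $Z$, i.e.\ of the split form $\theta+id$, which is precisely what the paper invokes without comment in the proof of Proposition \ref{homeoconvex}; your argument therefore also justifies that step.

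The one genuine gap is in the existence half: the inference ``$f(\N)=Z$ because $(z_{n})$ is dense'' fails when $Z$ has isolated points. For $Z=\{0\}\cup\{1/k:k\geqslant 1\}$ and a dense sequence listing each point exactly once, the cluster set of the full sequence is $\{0\}$, so $f(\N)\neq Z$, $\N$ is not dense in $\N+_{f}Z$, and you have not produced a compactification. The repair is immediate: enumerate a countable dense subset of $Z$ with each element repeated infinitely often; then every point of $Z$ is a cluster point of the sequence and the rest of your existence argument (admissibility, compactness, Hausdorffness, metrizability) goes through verbatim. Two smaller points worth making explicit: when you ``fix metrics $\rho_{i}$'' in the uniqueness half you are using that any compactification of $\N$ with metrizable remainder is metrizable, which is the unnamed corollary following Corollary \ref{uniaometrizavel}; and the identity $f(A\cup B)=f(A)\cup f(B)$ for cluster sets, while true, deserves the one-line argument that a point clustering along $A\cup B$ but not along $A$ must cluster along $B$.
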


\begin{prop}\label{homeoconvex}Let $Z$ be a compact metrizable space. If $X+_{f}Z\in Pers(\varepsilon)$ and $X+_{g}Z \in Pers(\varepsilon')$, then they are homeomorphic.
\end{prop}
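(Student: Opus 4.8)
The plan is to transport both spaces to compactifications of $\N$, invoke Pelczy\'nski's uniqueness result there, and then lift the resulting homeomorphism back to $X$ using the freedom that all the pieces $C_i$ and $C'_j$ are homeomorphic copies of the single space $C$.

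First I would apply Theorem \ref{teoremaprincipal}. The isomorphism $Pers(\varepsilon)\cong T_2Comp(\N)=Pers(\zeta)$ is realized by the pullback functor $\Pi$ along $\pi$, so there is a unique $\N+_{\hat f}Z\in T_2Comp(\N)$ with $\Pi(\N+_{\hat f}Z)=X+_f Z$, that is $f=\hat f^{\ast}$ (the pullback along $\pi$); since the isomorphism preserves boundaries, $\N+_{\hat f}Z$ is a compactification of $\N$ with remainder $Z$. Applying the analogous isomorphism for $\pi'$ to $X+_g Z\in Pers(\varepsilon')$ produces $\N+_{\hat g}Z$, a compactification of $\N$ with remainder $Z$ and $g=\hat g^{\ast}$ (the pullback along $\pi'$). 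Because $Z$ is compact metrizable, the Pelczy\'nski result stated above yields a homeomorphism $h:\N+_{\hat f}Z\to\N+_{\hat g}Z$.

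The key observation is that $h$ need not fix $\N$ pointwise. Since $\N$ is locally compact and dense in its Hausdorff compactification, it is open there, so each point of $\N$ is isolated, whereas no remainder point is isolated (each lies in the closure of $\N$ but outside it). As $h$ preserves isolated points, $h(\N)=\N$ and $h(Z)=Z$, so $h$ restricts to a permutation $\sigma$ of $\N$ and a homeomorphism $h_Z$ of $Z$, i.e.\ $h=\sigma+h_Z$. To realize $\sigma$ on $X$, I would define $\Theta:X\to X$ piecewise by $\Theta|_{C_i}=\eta'_{\sigma(i)}\circ\eta_i^{-1}:C_i\to C'_{\sigma(i)}$. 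Since $X$ carries the coproduct topology and each $C_i$ is clopen, $\Theta$ is a homeomorphism of $X$, with inverse $\Theta^{-1}|_{C'_j}=\eta_{\sigma^{-1}(j)}\circ(\eta'_j)^{-1}$, and by construction $\pi'\circ\Theta=\sigma\circ\pi$.

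Finally I would lift to the compactifications by the Cube Lemma for Pullbacks. With the squares
$$ \xymatrix{ X \ar[r]^{\Theta} \ar[d]_{\pi} & X \ar[d]^{\pi'} \\ \N \ar[r]^{\sigma} & \N } \qquad \xymatrix{ Z \ar[r]^{h_Z} \ar[d]_{id} & Z \ar[d]^{id} \\ Z \ar[r]^{h_Z} & Z } $$
commuting and $\sigma+h_Z=h$ continuous, the Cube Lemma for Pullbacks gives continuity of $\Theta+h_Z:X+_{\hat f^{\ast}}Z\to X+_{\hat g^{\ast}}Z$, that is of $\Theta+h_Z:X+_f Z\to X+_g Z$. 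Applying the same lemma to $h^{-1}=\sigma^{-1}+h_Z^{-1}$ with $\Theta^{-1}$ (the squares now commute since $\pi\circ\Theta^{-1}=\sigma^{-1}\circ\pi'$) yields continuity of the set-theoretic inverse $\Theta^{-1}+h_Z^{-1}$, so $\Theta+h_Z$ is a homeomorphism and $X+_f Z\cong X+_g Z$. I expect the main obstacle to be exactly this middle step: because Pelczy\'nski's homeomorphism scrambles $\N$, one cannot transport directly through the functor, and one must instead build the auxiliary homeomorphism $\Theta$ implementing $\sigma$ and check the pullback diagrams so that the Cube Lemma applies.
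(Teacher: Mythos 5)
Your proposal is correct and follows essentially the same route as the paper: descend to compactifications of $\N$ (the paper via pushforwards $f_{\ast}$, $g_{\ast}$ rather than the categorical isomorphism, but these agree by perspectivity), apply Pelczy\'nski, build the piecewise homeomorphism $\Theta$ from the $\eta_i$, $\eta'_j$, and conclude with the Cube Lemma and $(\hat f)^{\ast}=f$, $(\hat g)^{\ast}=g$. If anything, your version is slightly more careful than the paper's, which writes the Pelczy\'nski homeomorphism as $\theta+id$ (identity on the remainder), whereas you correctly allow an arbitrary boundary homeomorphism $h_Z$ and check that it causes no trouble in the Cube Lemma.
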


\begin{obs}Such homeomorphism doesn't need to be a morphism of $Comp(X)$.
\end{obs}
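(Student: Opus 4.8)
The plan is to prove the Remark by producing a concrete instance of the situation of \textbf{Proposition \ref{homeoconvex}} in which $X+_{f}Z$ and $X+_{g}Z$ are homeomorphic yet \emph{no} homeomorphism between them restricts to the identity on $X$. Since a morphism of $Comp(X)$ is by definition the identity on $X$, this shows that the homeomorphism furnished by \textbf{Proposition \ref{homeoconvex}} cannot in general be taken inside $Comp(X)$. First I would record the obstruction in algebraic form. If $\Phi: X+_{f}Z \to X+_{g}Z$ were a homeomorphism with $\Phi|_{X} = id_{X}$, then, $X$ being dense, $\Phi$ would carry $Cl_{X+_{f}Z}(S) = S\cup f(S)$ onto $Cl_{X+_{g}Z}(S) = S\cup g(S)$ for every $S \in Closed(X)$; writing $\theta = \Phi|_{Z}$, this says that $\theta$ is a self-homeomorphism of $Z$ with $\theta(f(S)) = g(S)$ for all $S$. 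Thus it suffices to build an example admitting no such $\theta$.

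For the example I would take $C = \{0,1\}$ and write $X = \{a_{i}: i\in \N\}\,\dot{\cup}\,\{b_{i}: i \in \N\}$ with the discrete topology, so $X$ is a copy of $\N$. Let $\sigma$ be the involution of $\N$ swapping $2k \leftrightarrow 2k+1$. I set $C_{i} = \{a_{i},b_{i}\}$ and $C_{i}' = \{a_{i}, b_{\sigma(i)}\}$; these are two partitions of $X$ into two-point fibres, with $\pi(a_{i}) = \pi(b_{i}) = i$ while $\pi'(a_{i}) = i$ and $\pi'(b_{j}) = \sigma(j)$. The induced coarse structures $\varepsilon$ and $\varepsilon'$ genuinely differ (for instance $\{(a_{i},b_{i})\}_{i \in \N}$ lies in $\varepsilon$ but not in $\varepsilon'$, since $\pi'$ carries it to the infinite off-diagonal set $\{(i,\sigma(i))\}_{i}$), so this is a bona fide instance of the set-up preceding \textbf{Proposition \ref{homeoconvex}}.

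Next I would take $Z = \{p,q\}$ and the compactification $\N +_{h} Z$ given by $E = \{\text{evens}\} \to p$ and $O = \{\text{odds}\} \to q$, i.e. $h(T) = \{p : |T\cap E| = \aleph_{0}\} \cup \{q : |T\cap O| = \aleph_{0}\}$, an object of $T_{2}Comp(\N)$. Pulling back along $\pi$ and along $\pi'$ (\textbf{Proposition \ref{pullbackdecompactificacoes}}) yields $X+_{f}Z \in Pers(\varepsilon)$ and $X+_{g}Z\in Pers(\varepsilon')$ with $f(S) = h(\pi(S))$ and $g(S) = h(\pi'(S))$, the closures in the pullback formula being trivial because $X$ and $Z$ are discrete. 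Both have the two-point remainder $Z$, so by \textbf{Proposition \ref{homeoconvex}} (or directly, since $X\cong\N$ and a two-point remainder is metrizable) they are homeomorphic. To rule out $\theta$, I would test $S_{1} = \{a_{2k}\}_{k\in\N}$ and $S_{2} = \{b_{2k}\}_{k\in\N}$: here $\pi(S_{1}) = \pi'(S_{1}) = E$ forces $f(S_{1}) = g(S_{1}) = \{p\}$, whereas $\pi(S_{2}) = E$ but $\pi'(S_{2}) = \sigma(E) = O$, so $f(S_{2}) = \{p\}$ while $g(S_{2}) = \{q\}$. The first relation forces $\theta(p) = p$, and the second then forces $\theta(p) = q$, a contradiction; hence no admissible $\theta$, and therefore no identity-on-$X$ homeomorphism, exists.

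The routine points to nail down are that $\pi,\pi'$ are proper, continuous and surjective (immediate, as the fibres are finite and $X$ is discrete) and that the pulled-back spaces really are Hausdorff compactifications of $X$ with remainder exactly $Z$ (from \textbf{Proposition \ref{compact}}, the fact that the pullback functor preserves Hausdorffness and remainders, and $f(X) = h(\N) = Z$ giving density), so that the hypotheses of \textbf{Proposition \ref{homeoconvex}} are met. The main obstacle — really the only delicate step — is the reduction of the first paragraph: one must verify that an identity-on-$X$ homeomorphism is completely determined by a single self-homeomorphism $\theta$ of $Z$ satisfying $\theta\circ f = g$ on closed sets, so that the explicit incompatibility at $S_{1}$ and $S_{2}$ is a genuine obstruction and not an artefact of one particular choice of homeomorphism.
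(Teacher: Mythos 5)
Your proposal is correct, and it actually proves more than the paper does. The paper offers no argument for this remark at all: it is implicitly justified simply by inspecting the homeomorphism produced in Proposition \ref{homeoconvex}, which has the form $\eta+id$ with $\eta(x) = \eta'_{\theta(i)}\circ\eta_{i}^{-1}(x)$ on $C_{i}$, and which is therefore visibly not the identity on $X$ unless $\theta$ and the identifications happen to align. That weak reading only says the \emph{particular} map constructed need not be a morphism of $Comp(X)$; you establish the strong reading, namely a concrete instance in which \emph{no} homeomorphism between $X+_{f}Z$ and $X+_{g}Z$ restricts to $id_{X}$, so the caveat is essential and not an artefact of the construction. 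Your reduction is sound: if $\Phi|_{X}=id_{X}$ then $\Phi(Z)=Z$ by bijectivity, and since a homeomorphism commutes with closures, $\Phi(S\cup f(S)) = S\cup g(S)$ for $S \in Closed(X)$, forcing $\theta = \Phi|_{Z}$ to satisfy $\theta(f(S)) = g(S)$. Your test sets then work exactly as claimed: $\pi(S_{1})=\pi'(S_{1})=E$ gives $\theta(p)=p$, while $\pi(S_{2})=E$, $\pi'(S_{2})=O$ gives $\theta(p)=q$, a contradiction; note your argument in fact excludes every morphism of $Comp(X)$ between the two objects (by Proposition \ref{mapacontinuo}, a continuous $id+\theta$ needs $f(S)\subseteq \theta^{-1}(g(S))$, which fails on the same pair $S_{1},S_{2}$), so the two compactifications are homeomorphic yet non-isomorphic in $Comp(X)$. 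The peripheral checks also hold: both $\{C_{i}\}$ and $\{C'_{i}\}$ are partitions into two-point fibres of the compact space $C=\{0,1\}$ as in the setup preceding Proposition \ref{homeoconvex}; $\varepsilon\neq\varepsilon'$ since $\{(a_{i},b_{i})\}_{i}$ maps under $\pi'$ to the infinite off-diagonal set $\{(i,\sigma(i))\}_{i}$, which fits in no element of the discrete coarse structure $\zeta$; $f(X)=h(\N)=Z$ gives density; and the homeomorphy of the two compactifications follows either from Proposition \ref{homeoconvex} or directly from the cited result of Pelczy\'{n}ski, since $X\cong\N$. In short, where the paper's implicit justification buys brevity, your route buys the genuinely stronger and more informative fact that the isomorphism of categories in Theorem \ref{teoremaprincipal} can send the same underlying set with two different coarse structures to objects of $Comp(X)$ that no morphism of $Comp(X)$ compares, even though they are homeomorphic as spaces.
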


\begin{proof}There exists an homeomorphism $\theta+id: \N+_{f_{\ast}}Z \rightarrow \N+_{g_{\ast}}Z$ (the pushforwards are taken with respect to the pairs $\pi$ and $id$ and $\pi'$ and $id$, respectively and they are Hausdorff since $X+_{f}Z$ and $X+_{g}Z$ are perspective and by \textbf{Lemma \ref{pushforwradpersp}}). Let $\eta: X \rightarrow X$ be the homeomorphism such that $\forall i \in \N$, $\forall x \in C_{i}$, $\eta(x) = \eta'_{\theta(i)}\circ \eta_{i}^{-1}(x)$. The following diagrams are commutative:

$$ \xymatrix{ X \ar[r]^{\eta} \ar[d]^{\pi} & X \ar[d]_{\pi'}  & & Z \ar[r]^{id} \ar[d]^{id} & Z \ar[d]_{id} \\
           \N \ar[r]^{\theta} & \N & & Z \ar[r]^{id} & Z} $$

So by the Cube Lemma the map $\eta+id: X+_{(f_{\ast})^{\ast}}Z \rightarrow X+_{(g_{\ast})^{\ast}}Z$ is a homeomorphism. Since those compactifications have the perspectivity property, $(f_{\ast})^{\ast} = f$ and $(g_{\ast})^{\ast} = g$ (again by \textbf{Lemma \ref{pushforwradpersp}}), which implies that the map $\eta+id: X+_{f}Z \rightarrow X+_{g}Z$ is a homeomorphism.
\end{proof}

\subsection{The Cantor set minus one point}

\begin{defi}(Topological quasiconvexity of an equivalence relation) Let $X$ be a Hausdorff compact space and $\sim$ an equivalence relation on $X$. Then $\sim$ is topologically quasiconvex if $\forall q \in X$, the equivalence class $[q]$ is closed and $\forall u \in \U$,  $\#\{[x]\subseteq X:$ $[x] \notin Small(u)\}< \aleph_{0}$, with $\U$ the only uniform structure compatible with the topology of $X$.
\end{defi}

This terminology comes after the concept of dynamic quasiconvexity in geometric group theory. The two definitions are related but it is not our objective to work with groups on this paper (this topic will be covered in \cite{So3}).

\begin{prop}\label{quaseconvexidadetop}Let $X$ be a Hausdorff compact space and $\sim$ a quasiconvex equivalence class on $X$. If $A \subseteq X/ \sim$, we define $\sim_{A} = \Delta^{2}X \cup\bigcup\limits_{[x] \in A} [x]^{2}$. Then $\forall A \subseteq X/ \sim$,  $X/\sim_{A}$ is Hausdorff. \end{prop}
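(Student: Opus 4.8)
The plan is to reduce the Hausdorffness of the quotient to a closedness statement via the Aleksandrov Theorem from the preliminaries. Since $X$ is Hausdorff compact and $\sim_{A}$ is an equivalence relation on $X$, that theorem tells us $X/\sim_{A}$ is Hausdorff exactly when $\sim_{A}$ is closed in $X^{2}$. So first I would check that $\sim_{A}$ really is an equivalence relation: reflexivity is $\Delta^{2}X \subseteq \sim_{A}$ and symmetry is immediate since each $[x]^{2}=[x]\times[x]$ and the diagonal are symmetric; transitivity holds because if $(a,b)$ and $(b,c)$ are related by distinct nondiagonal pairs, the shared point $b$ lies in both classes, forcing them to be the same class of $A$. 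The entire problem then becomes: $\sim_{A}$ is closed.

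To prove closedness I would take $(a,b)\in Cl_{X^{2}}(\sim_{A})$ and show $(a,b)\in\sim_{A}$. If $a=b$ this is trivial because $\Delta^{2}X\subseteq\sim_{A}$, so assume $a\neq b$. Since $X$ is compact Hausdorff it carries the unique uniform structure $\U$, and closed symmetric entourages form a base; starting from any entourage missing $(a,b)$, pass to a symmetric $w$ with $w^{3}$ inside it and set $v=Cl_{X^2}(w)$, using $\overline{w}\subseteq w\circ w\circ w$. This yields a closed symmetric $v\in\U$ with $(a,b)\notin v$.

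The key step, where quasiconvexity enters, is the decomposition of $\sim_{A}$ relative to this $v$. By quasiconvexity only finitely many classes $[x_{1}],\dots,[x_{n}]$ fail to be $v$-small, i.e. satisfy $[x_{i}]^{2}\not\subseteq v$, and each class is closed, so the set $D=\Delta^{2}X\cup\bigcup\{[x_{i}]^{2}:[x_{i}]\in A\}$ is a finite union of closed sets, hence closed. Every remaining contribution to $\sim_{A}$ comes from $v$-small classes of $A$, whose squares lie inside $v$; call their union $E\subseteq v$. Then $\sim_{A}=D\cup E$, so $Cl_{X^{2}}(\sim_{A})=D\cup Cl_{X^{2}}(E)\subseteq D\cup v$ because $v$ is closed. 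As $(a,b)\notin v$, we get $(a,b)\in D$, and since $a\neq b$ it is not on the diagonal, so $(a,b)\in[x_{i}]^{2}$ for some $[x_{i}]\in A$, giving $(a,b)\in\sim_{A}$.

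The main obstacle is the bookkeeping in this decomposition rather than any deep idea: the finiteness furnished by quasiconvexity is precisely what allows the ``bad'' non-small classes to be collected into a closed set while all remaining classes are confined inside the single entourage $v$, and it is essential that $v$ be chosen \emph{closed} so that passing to the closure does not enlarge $E$ beyond $v$. Once the splitting $\sim_{A}=D\cup E$ is in place the closedness of $\sim_{A}$ follows formally, and the Aleksandrov Theorem then delivers that $X/\sim_{A}$ is Hausdorff for every $A\subseteq X/\sim$.
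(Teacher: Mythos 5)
Your proof is correct. It shares the paper's overall strategy---reduce Hausdorffness of the quotient to closedness of $\sim_{A}$ via the Aleksandrov theorem, and use topological quasiconvexity to isolate finitely many non-small classes whose squares form a closed set---but the key closedness step is executed by a different mechanism. The paper argues locally at a point $(x,y)\in Cl_{X^{2}}(\sim_{A})-\Delta^{2}X$: it picks a symmetric $v$ with $v^{5}\subseteq u$ and shows that the neighbourhood $\B(x,v)\times\B(y,v)$ misses $\Delta^{2}X$ and every $[q]^{2}$ with $[q]\in Small(v)$, so that $(x,y)$ must lie in the closure of the finite (hence closed) union $\bigcup_{[q]\in F}[q]^{2}$. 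You argue globally: you decompose $\sim_{A}=D\cup E$ with $D$ (the diagonal together with the finitely many non-$v$-small classes of $A$) closed and $E$ contained in a \emph{closed} symmetric entourage $v$, so that $Cl_{X^{2}}(\sim_{A})\subseteq D\cup v$ and any limit point outside $v$ lands in $D\subseteq\ \sim_{A}$. The price of your route is the extra standard lemma that closed symmetric entourages form a base of the uniformity (via $Cl_{X^{2}}(w)\subseteq w\circ w\circ w$), which the paper never needs; what it buys is the elimination of the $v^{5}$ triangle-inequality bookkeeping and a decomposition that makes the role of the finiteness hypothesis completely transparent. A minor bonus of your write-up is the explicit verification that $\sim_{A}$ is an equivalence relation, which the paper (and its application of the Aleksandrov theorem) leaves implicit.
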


\begin{proof}Let $(x,y) \in Cl_{X^{2}}(\sim_{A}) - \Delta^{2}X$. Since $X$ is Hausdorff, $\exists u \in \U:$ $(x,y) \notin u$ (with $\U$ the unique uniform structure compatible with the topology of $X$). Let $v \in \U$ such that $v$ is symmetric and $v^{5} \subseteq u$. Take $a = \B(x,v)\times \B(y,v)$. If $[q]\in Small(v)$ and $a \cap [q]^{2}\neq \emptyset$, then $\B(x,v) \cup [q],\B(y,v) \cup [q] \in Small(v^{3})$ which implies that $\B(x,v) \cup \B(y,v) \in Small(v^{5})$, absurd since $(x,y) \notin v^{5} \subseteq u$. So $a \cap (\Delta^{2}X \cup \bigcup \limits_{[q]\in Small(v)} [q]^{2}) = \emptyset$. From the topological quasiconvexity, we have that the set $F = \{[q]\in A: [q] \notin Small(v)\}$ is finite. Since $(x,y) \in  Cl_{X^{2}}(\sim_{A})$, for every $U \subseteq a$, neighbourhood of $(x,y)$,  $U \cap \sim_{A} \neq \emptyset$, which implies that $U \cap \bigcup \limits_{[q]\in F} [q]^{2} \neq \emptyset$. So $(x,y) \in Cl_{X^{2}}(\bigcup \limits_{[q]\in F} [q]^{2}) = \bigcup \limits_{[q]\in F} Cl_{X^{2}}([q]^{2}) = \bigcup \limits_{[q]\in F} [q]^{2}$, which implies that $(x,y) \in \sim_{A}$. So $\sim_{A}$ is closed. Since $X$ is Hausdorff compact and $\sim_{A}$ is closed, it follows from Aleksandrov Theorem that $X/\!\sim_{A}$ is Hausdorff.
\end{proof}

Let $K_{0}$ denote the Cantor set minus one point.

\begin{lema}Let $X$ be a Hausdorff locally compact Lindelöf 0-dimensional space and $\U$ a uniform structure compatible with the topology of $X$. Then $\forall u \in \U$, $\exists V$ a partition of $X$ by compact open $u$-small sets.
\end{lema}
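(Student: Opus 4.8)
The plan is to build the partition in three moves: first cover $X$ by compact open $u$-small sets, then thin the cover down to a countable one using the Lindelöf property, and finally disjointify this countable cover while keeping every piece compact, open, and $u$-small.

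For the first move, I would choose a symmetric $v \in \U$ with $v \circ v \subseteq u$ (possible by the axioms of a uniform structure). Fix $x \in X$. The set $\B(\{x\}, v)$ is a neighbourhood of $x$, so it contains an open neighbourhood $W_{x}$ of $x$. Because $X$ is locally compact Hausdorff, $x$ has an open neighbourhood whose closure is compact and contained in $W_{x}$; intersecting this with a clopen set supplied by $0$-dimensionality, I obtain a compact open set $U_{x}$ with $x \in U_{x} \subseteq \B(\{x\}, v)$. (Concretely, compact open sets form a basis of $X$, since a clopen neighbourhood sitting inside a relatively compact open set is itself compact.) Each such $U_{x}$ is $u$-small: if $y, z \in U_{x}$ then $(y,x), (z,x) \in v$, and the symmetry of $v$ together with $v \circ v \subseteq u$ gives $(y,z) \in u$, so $U_{x} \times U_{x} \subseteq u$, i.e. $U_{x} \in Small(u)$.

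The family $\{U_{x}\}_{x \in X}$ is an open cover of $X$; by the Lindelöf property it admits a countable subcover $\{U_{n}\}_{n \in \N}$, each $U_{n}$ compact, open and $u$-small. Now I disjointify by setting $V_{n} = U_{n} \setminus (U_{1} \cup \cdots \cup U_{n-1})$. Since $X$ is $0$-dimensional and each $U_{k}$ is clopen, $V_{n}$ is clopen; being a closed subset of the compact set $U_{n}$, it is also compact. The $V_{n}$ are pairwise disjoint with $\bigcup_{n} V_{n} = \bigcup_{n} U_{n} = X$, and $V_{n} \subseteq U_{n}$ forces $V_{n} \times V_{n} \subseteq U_{n} \times U_{n} \subseteq u$, so each $V_{n}$ remains $u$-small. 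Discarding the empty ones, $V = \{V_{n} : V_{n} \neq \emptyset\}$ is the desired partition of $X$ into compact open $u$-small sets. I expect the only delicate point to be the simultaneous demand that the covering pieces be compact, open, and small at once: this is exactly where local compactness (for compactness), $0$-dimensionality (for clopenness, which moreover survives the disjointification), and the uniform structure (for smallness) must all be used together, while the Lindelöf property is what makes the countable disjointification procedure available.
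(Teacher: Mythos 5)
Your proof is correct and follows essentially the same route as the paper: produce a cover of $X$ by compact open $u$-small sets, extract a countable subcover via the Lindel\"of property, and disjointify by setting $V_{n} = U_{n} \setminus (U_{1} \cup \cdots \cup U_{n-1})$. The only difference is that you explicitly construct the initial cover from the uniformity (choosing $v$ symmetric with $v \circ v \subseteq u$ and shrinking $\B(\{x\},v)$ to a compact open neighbourhood), a step the paper merely asserts before refining, so your write-up is if anything more complete.
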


\begin{proof}
We have that $\forall u \in \U$, $\exists U$ a clopen cover of $K_{0}$ by $u$-small sets. Since $X$ is 0-dimensional and locally compact, we are able to build a refinement $U'$ such that  every element is compact and open. Since $X$ is Lindelöf we are able to take a subcover of $U'$, $U'' = \{U_{i}\}_{i \in \N}$. We take $V_{1} = U_{1}$, $V_{i} = U_{i} - (U_{1} \cup ... \cup U_{i-1})$ for $i \in \N$ and $V = \{V_{i}\}_{i \in \N}$. We have that $V$ is a refinement of $U''$ by compact open sets and it is also a partition of $X$. Since $V$ is a refinement of $U$, it consists of $u$-small sets. So $\forall u \in \U$, $\exists V$ a countable partition of $X$ by compact open $u$-small sets.
\end{proof}

\begin{prop}Let $K_{0}+_{f}Z$ be a compactification of $K_{0}$ with $Z$ metrizable. So there exists $\{L_{i}\}_{i\in \N}$ a partition of $K_{0}$ by compact open subsets such that $K_{0}+_{f}Z$ is perspective with respect to the coarse structure induced by the quotient map $\pi: K_{0} \rightarrow \N$ defined by the relation $\bigcup_{i\in\N} L_{i}^{2}$.
\end{prop}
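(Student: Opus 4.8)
The plan is to exhibit the partition explicitly and then reduce perspectivity to a Hausdorffness statement via Proposition \ref{persphauss}, which in turn I would obtain from the topological quasiconvexity criterion of Proposition \ref{quaseconvexidadetop}. The first thing to record is that $K_{0}+_{f}Z$ is metrizable: $K_{0}$ has a countable basis and $Z$ is a metrizable compact space, so the earlier corollary on compactifications with metrizable remainder applies. In particular $K_{0}+_{f}Z$ is Hausdorff, hence an object of $T_{2}Comp(K_{0})$, and its unique compatible uniform structure $\U$ admits a decreasing countable basis of entourages $u_{1}\supseteq u_{2}\supseteq\cdots$. This countable decreasing basis is what makes the whole argument run.

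Next I would build an exhaustion of $K_{0}$ by compact open sets. Writing $K_{0}=C\setminus\{p\}$ with $C$ the Cantor set and choosing a decreasing clopen neighbourhood basis $\{U_{n}\}$ of $p$ in $C$ with $\bigcap_{n}U_{n}=\{p\}$, the sets $D_{n}=C\setminus U_{n}$ are compact and open in $K_{0}$, increase to $K_{0}$, and the shells $S_{n}=D_{n}\setminus D_{n-1}$ (with $D_{0}=\emptyset$) are compact open and partition $K_{0}$. Each $S_{n}$, being a compact, Lindel\"of, $0$-dimensional Hausdorff space, admits by the previous lemma a partition into compact open $u_{n}$-small subsets; since it partitions a compact space by open sets it is finite, and because $S_{n}$ is open in $K_{0}$ every such piece is open in $K_{0}$ as well. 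Collecting all the pieces arising over all $n$ yields a countable partition $\{L_{i}\}_{i\in\N}$ of $K_{0}$ by compact open sets, and I let $\pi\colon K_{0}\to\N$ be the associated quotient collapsing each $L_{i}$.

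The heart of the verification is that the relation $\sim\,=\Delta(K_{0}+_{f}Z)\cup\bigcup_{i}L_{i}^{2}$ is topologically quasiconvex on $K_{0}+_{f}Z$. Each equivalence class is either a singleton of $Z$ (closed) or one of the $L_{i}$ (compact, hence closed). For the finiteness condition, given $u\in\U$ I choose $m$ with $u_{m}\subseteq u$; every piece contained in a shell $S_{n}$ with $n\geqslant m$ is $u_{n}$-small, hence $u_{m}$-small, hence $u$-small, so the classes that fail to be $u$-small all lie among the finitely many pieces contained in $S_{1}\cup\cdots\cup S_{m-1}$. Thus $\#\{[x]:[x]\notin Small(u)\}<\aleph_{0}$ for every $u$, and $\sim$ is topologically quasiconvex. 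Applying Proposition \ref{quaseconvexidadetop} with $A$ the whole quotient $(K_{0}+_{f}Z)/\!\sim$ gives that $K_{0}+_{f}Z/\!\sim$ is Hausdorff, and then Proposition \ref{persphauss} translates this into $K_{0}+_{f}Z\in Pers(\varepsilon)$, where $\varepsilon$ is the coarse structure induced by $\pi$; this is precisely perspectivity relative to $\pi$.

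The main obstacle I anticipate is producing a \emph{single} partition that is $u$-small off a finite set simultaneously for \emph{every} entourage $u$, rather than one partition per entourage. This is exactly what the diagonal construction resolves: by matching the $n$-th shell $S_{n}$ with the $n$-th basic entourage $u_{n}$ and refining $S_{n}$ into $u_{n}$-small pieces, cofinitely many pieces are automatically $u_{m}$-small for each fixed $m$. The construction depends on having a countable decreasing basis of entourages, so the metrizability of $K_{0}+_{f}Z$ coming from the metrizability of $Z$ is doing essential work; everything else is routine bookkeeping with the shells and the two cited propositions.
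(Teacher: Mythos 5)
Your proposal is correct and takes essentially the same approach as the paper: metrizability of $K_{0}+_{f}Z$ (via the corollary on compactifications with metrizable remainder) gives a countable decreasing basis $\{u_{n}\}$ of entourages, a countable partition of $K_{0}$ by compact open sets is refined so that the $n$-th batch of pieces is $u_{n}$-small, and the resulting diagonal smallness estimate yields topological quasiconvexity, whence perspectivity follows from Propositions \ref{quaseconvexidadetop} and \ref{persphauss} exactly as you argue. The only cosmetic difference is how the initial countable partition is seeded: the paper applies its Lindel\"of lemma to $K_{0}$ with the entourage $u_{1}$ and then refines each resulting piece, whereas you build shells $D_{n}\setminus D_{n-1}$ from a clopen neighbourhood basis of the deleted point of the Cantor set and refine shell by shell; both constructions then run the identical diagonal argument.
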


\begin{proof}Let $\U$ be the unique uniform structure compatible with the topology of $K_{0}+_{f}Z$. By the \textbf{Propositions \ref{persphauss} and \ref{quaseconvexidadetop}} it is enough to proof that there is a partition $K_{0}$ by compact open sets $\{L_{i}\}_{i\in \N}$ such that $\forall u  \in \U$, $\#\{j \in \N: L_{j} \notin Small(u) \} < \aleph_{0}$.

Since $K_{0}+_{f}Z$ is metrizable, there exists a base  $\{u_{i}\}_{i\in \N}$  of $\U$. Take $U = \{V_{i}\}_{i \in \N}$ a partition of $K_{0}$ by compact open $u_{1}$-small sets. We have that $\forall i \in \N$,  $V_{i}$ is a compact $0$-dimensional Hausdorff space. So it has a partition $U_{i} =\{ V_{i,1},...,V_{i,k_{i}}\}$ by compact open $u_{i}$-small sets. We have that $\forall i,j \in \N, \ V_{i,j}$ is open in $K_{0}+_{f}Z$, since it is open in $V_{i}$. It follows that $U' = \bigcup_{i \in \N}U_{i}$ is a partition of $K_{0}$ by compact open sets. Let $u \in \U$ and $L \in \{V \in U': \ V \notin Small(u)\}$. Let $i \in \N$ such that $u_{i} \subseteq u$. We have that $L \notin Small(u_{i})$, which implies that $L \in \bigcup\limits_{j = 1}^{i-1}U_{j}$ which is a finite set. Thus $\{V \in U': \ V \notin Small(u)\}$ is finite.
\end{proof}

\begin{teo}\label{Cantor}Let $Z$ be a compact metrizable space. Then there exists, up to homeomorphisms, a unique compactification of $K_{0}$ with boundary $Z$.
\end{teo}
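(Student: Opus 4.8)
The plan is to split the statement into an existence half and a uniqueness half, reducing each to the coproduct machinery of the previous subsection together with Pełczyński's theorem on compactifications of $\N$. The structural fact underpinning everything is that every nonempty compact open subset $L$ of $K_{0}$ is homeomorphic to the Cantor set: such an $L$ is clopen in $K_{0}$, hence compact, metrizable and totally disconnected, and it is perfect, since $K_{0}$ (the Cantor set minus a point) is perfect and $L$ is open, so no point of $L$ is isolated in $L$; by Brouwer's characterization of the Cantor set, $L$ is a Cantor set. Consequently, whenever $K_{0}$ is partitioned into compact open pieces, there are infinitely many of them (a finite union of compacta would make $K_{0}$ compact), the partition is countable (as $K_{0}$ is Lindel\"of), and each piece is a Cantor set. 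Thus any such partition realizes $K_{0}$ as a coproduct $\dot{\bigcup}_{i\in\N}C_{i}$ with every $C_{i}$ homeomorphic to a fixed Cantor set $C$, placing us exactly in the situation of the Coproducts subsection with $\Gamma=\N$.

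For existence, given $Z$ compact metrizable, Pełczy\'nski's proposition provides a compactification $\N+_{h}Z\in T_{2}Comp(\N)$ with boundary $Z$. Fixing any decomposition $K_{0}=\dot{\bigcup}_{i\in\N}C_{i}$ into clopen Cantor sets, with associated projection $\pi:K_{0}\rightarrow\N$ and induced coarse structure $\varepsilon$, Theorem \ref{teoremaprincipal} gives a boundary-preserving isomorphism $Pers(\varepsilon)\cong T_{2}Comp(\N)$ (recall that $T_{2}Comp(\N)=Pers(\zeta)$ for the discrete coarse structure $\zeta$). The image of $\N+_{h}Z$ under this isomorphism is a perspective, in particular Hausdorff, compactification $K_{0}+_{f}Z$ of $K_{0}$ whose boundary is $Z$. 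This settles existence.

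For uniqueness, let $K_{0}+_{f}Z$ and $K_{0}+_{g}Z$ be two Hausdorff compactifications with boundary $Z$. Applying the previous proposition to each produces partitions $\{L_{i}\}_{i\in\N}$ and $\{L_{i}'\}_{i\in\N}$ of $K_{0}$ into compact open sets, hence by the first paragraph into Cantor sets, such that $K_{0}+_{f}Z$ is perspective relative to the projection $\pi$ determined by $\{L_{i}\}$ and $K_{0}+_{g}Z$ is perspective relative to the projection $\pi'$ determined by $\{L_{i}'\}$; that is, $K_{0}+_{f}Z\in Pers(\varepsilon)$ and $K_{0}+_{g}Z\in Pers(\varepsilon')$ for the induced coarse structures $\varepsilon,\varepsilon'$. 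Since both partitions consist of copies of the Cantor set indexed by $\N$, Proposition \ref{homeoconvex} applies verbatim and yields a homeomorphism $K_{0}+_{f}Z\cong K_{0}+_{g}Z$, completing the proof.

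I expect the main obstacle to be conceptual rather than computational: the substantive work is already carried by the preceding proposition, which guarantees that an arbitrary metrizable-boundary compactification of $K_{0}$ is perspective with respect to some Cantor decomposition, and by Proposition \ref{homeoconvex}. The delicate point to keep in mind is that the homeomorphism furnished by Proposition \ref{homeoconvex} need not fix $K_{0}$ pointwise (it has the form $\eta+id$ with $\eta$ a self-homeomorphism of $K_{0}$ permuting the pieces), so the result holds only up to homeomorphism and not as an identity of objects of $Comp(K_{0})$, which is precisely what the statement asserts.
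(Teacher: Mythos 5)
Your proof is correct and follows the paper's own route: the paper's proof of Theorem \ref{Cantor} is precisely your uniqueness paragraph (the preceding proposition produces partitions relative to which the two compactifications are perspective, and then Proposition \ref{homeoconvex} supplies the homeomorphism). Your write-up is in fact more complete than the paper's, which silently omits both the existence half (your Pelczy\'nski plus Theorem \ref{teoremaprincipal} argument) and the verification, via Brouwer's characterization, that all pieces of the two partitions are countably infinite in number and homeomorphic to one common model space $C$ (the Cantor set) --- a hypothesis that Proposition \ref{homeoconvex} genuinely requires.
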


\begin{proof}Let $K_{0}+_{f}Z$ and $K_{0}+_{g}Z$ be compactifications of $K_{0}$. Then, there exists $\{L_{i}\}_{i\in \N}$ and $\{L'_{i}\}_{i\in \N}$ partitions of $K_{0}$ such that the compactifications are perspective with respect to the coarse structure induced by the quotient maps induced by the partitions, respectively. So, by the \textbf{Proposition \ref{homeoconvex}}, there exists an homeomorphism between $K_{0}+_{f}Z$ and $K_{0}+_{g}Z$.
\end{proof}

\end{document}